\pdfoutput=1
\documentclass[reqno, 12pt]{amsart}
\def\ssign{\textsection\nobreak\hspace{1pt plus 0.3pt}}
\makeatletter
\let\origsection=\section 
\def\mysection{\@mystartsection{section}{1}\z@{.7\linespacing\@plus\linespacing}{.5\linespacing}{\normalfont\scshape\centering\ssign}}
\def\section{\@ifstar{\origsection*}{\mysection}}
\def\appendix{\par\c@section\z@ \c@subsection\z@
   \let\sectionname\appendixname
   \let\section=\origsection 
   \def\thesection{\@Alph\c@section}} 
\def\@mystartsection#1#2#3#4#5#6{\if@noskipsec \leavevmode \fi 
 \par \@tempskipa #4\relax
 \@afterindenttrue
 \ifdim \@tempskipa <\z@ \@tempskipa -\@tempskipa \@afterindentfalse\fi
 \if@nobreak \everypar{}\else
     \addpenalty\@secpenalty\addvspace\@tempskipa\fi
 \@dblarg{\@mysect{#1}{#2}{#3}{#4}{#5}{#6}}}
\def\@mysect#1#2#3#4#5#6[#7]#8{\edef\@toclevel{\ifnum#2=\@m 0\else\number#2\fi}\ifnum #2>\c@secnumdepth \let\@secnumber\@empty
  \else \@xp\let\@xp\@secnumber\csname the#1\endcsname\fi
  \@tempskipa #5\relax
  \ifnum #2>\c@secnumdepth
    \let\@svsec\@empty
  \else
   ~\refstepcounter{#1}\edef\@secnumpunct{\ifdim\@tempskipa>\z@ \@ifnotempty{#8}{\@nx\enspace}\else
        \@ifempty{#8}{.}{\@nx\enspace}\fi
    }\@ifempty{#8}{\ifnum #2=\tw@ \def\@secnumfont{\bfseries}\fi}{}\protected@edef\@svsec{\ifnum#2<\@m
        \@ifundefined{#1name}{}{\ignorespaces\csname #1name\endcsname\space
        }\fi
      \@seccntformat{#1}}\fi
  \ifdim \@tempskipa>\z@ \begingroup #6\relax
    \@hangfrom{\hskip #3\relax\@svsec}{\interlinepenalty\@M #8\par}\endgroup
    \ifnum#2>\@m \else \@tocwrite{#1}{#8}\fi
  \else
  \def\@svsechd{#6\hskip #3\@svsec
    \@ifnotempty{#8}{\ignorespaces#8\unskip
       \@addpunct.}\ifnum#2>\@m \else \@tocwrite{#1}{#8}\fi
  }\fi
  \global\@nobreaktrue
  \@xsect{#5}}
\makeatother

\usepackage{amsmath,amssymb,amsthm}
\usepackage{mathrsfs}
\usepackage{mathabx}\changenotsign
\usepackage{dsfont}

\usepackage[dvipsnames]{xcolor}
\usepackage{hyperref}
\hypersetup{
	colorlinks,
	linkcolor={red!60!black},
	citecolor={green!60!black},
	urlcolor={blue!60!black}
}

\definecolor{codelightgray}{gray}{0.8}
\definecolor{codeverylightgray}{gray}{0.9}

\usepackage{array,multirow,colortbl,enumerate}

\usepackage[open,openlevel=2,atend]{bookmark}

\usepackage[abbrev,msc-links,backrefs]{amsrefs}
\usepackage{amsrefs}
\usepackage{doi}

\renewcommand{\PrintDOI}[1]{\doi{#1}}

\usepackage[OT2, T1]{fontenc}
\usepackage{lmodern}
\usepackage[babel]{microtype}
\usepackage[english]{babel}

\DeclareRobustCommand{\rn}[1]{{\fontencoding{OT2}\selectfont#1}}

\usepackage{geometry}
\numberwithin{equation}{section}
\numberwithin{figure}{section}

\usepackage{enumitem}
\def\rmlabel{\upshape({\itshape \roman*\,})}

\let\polishlcross=\l
\def\l{\ifmmode\ell\else\polishlcross\fi}

\def\paragraph#1{	\noindent\textbf{#1.}\enspace}

\let\sm=\setminus

\makeatletter
\def\moverlay{\mathpalette\mov@rlay}
\def\mov@rlay#1#2{\leavevmode\vtop{   \baselineskip\z@skip \lineskiplimit-\maxdimen
		\ialign{\hfil$\m@th#1##$\hfil\cr#2\crcr}}}
\newcommand{\charfusion}[3][\mathord]{
	#1{\ifx#1\mathop\vphantom{#2}\fi
		\mathpalette\mov@rlay{#2\cr#3}
	}
	\ifx#1\mathop\expandafter\displaylimits\fi}
\makeatother

\newcommand{\dcup}{\charfusion[\mathbin]{\cup}{\cdot}}

\DeclareFontFamily{U}  {MnSymbolC}{}
\DeclareSymbolFont{MnSyC}         {U}  {MnSymbolC}{m}{n}
\DeclareFontShape{U}{MnSymbolC}{m}{n}{
	<-6>  MnSymbolC5
	<6-7>  MnSymbolC6
	<7-8>  MnSymbolC7
	<8-9>  MnSymbolC8
	<9-10> MnSymbolC9
	<10-12> MnSymbolC10
	<12->   MnSymbolC12}{}
\DeclareMathSymbol{\powerset}{\mathord}{MnSyC}{180}

\usepackage{tikz}
\usetikzlibrary{calc,decorations.pathmorphing,decorations.pathreplacing}
\usetikzlibrary{intersections}
\usetikzlibrary {arrows.meta} 
\pgfdeclarelayer{background}
\pgfdeclarelayer{foreground}
\pgfdeclarelayer{front}
\pgfsetlayers{background,main,foreground,front}

\usepackage{subcaption}
\let\epsilon=\varepsilon
\let\eps=\epsilon
\let\rho=\varrho
\let\theta=\vartheta

\def\FF{{\mathds F}}
\def\NN{{\mathds N}}

\def\ZZ{{\mathds Z}}

\def\RR{{\mathds R}}

\theoremstyle{plain}
\newtheorem{thm}{Theorem}[section]
\newtheorem{theorem}[thm]{Theorem}

\newtheorem{prop}[thm]{Proposition}
\newtheorem{proposition}[thm]{Proposition}
\newtheorem{claim}[thm]{Claim}
\newtheorem{fact}[thm]{Fact}
\newtheorem{cor}[thm]{Corollary}
\newtheorem{lemma}[thm]{Lemma}

\theoremstyle{definition}

\newtheorem{dfn}[thm]{Definition}
\newtheorem{definition}[thm]{Definition}

\usepackage{accents}

\let\lra=\longrightarrow
\let\phi=\varphi

\DeclareSymbolFont{stmry}{U}{stmry}{m}{n}
\DeclareMathSymbol\arrownot\mathrel{stmry}{"58}
\DeclareMathSymbol\Arrownot\mathrel{stmry}{"59}

\def\Sym{\mathrm{Sym}}
\def\sfr{\mathrm{sf}}
\def\SFR{\mathrm{SF}}
\def\SFRR{\widetilde{\mathrm{SF}}}
\let\vn=\varnothing

\def\VL{\mathrm{VL}}

\usepackage{nicefrac, xfrac}

\usepackage[makeroom]{cancel}
\usepackage{tikz}
\usetikzlibrary{shapes.misc}

\tikzset{cross/.style={cross out, draw=black, minimum size=2*(#1-\pgflinewidth), inner sep=0pt, outer sep=0pt},
cross/.default={6pt}}

\tikzset{crossnor/.style={cross out, draw=black, minimum size=2*(#1-\pgflinewidth), inner sep=0pt, outer sep=0pt},
crossnor/.default={5pt}}

\tikzset{crossleila/.style={cross out, draw=violet, minimum size=2*(#1-\pgflinewidth), inner sep=0pt, outer sep=0pt},
crossleila/.default={6pt}}

\tikzset{crosstr/.style={cross out, draw=teal, minimum size=2*(#1-\pgflinewidth), inner sep=0pt, outer sep=0pt},
crosstr/.default={6pt}}

\tikzset{crossgr/.style={cross out, draw=black, minimum size=2*(#1-\pgflinewidth), inner sep=0pt, outer sep=0pt},
crossgr/.default={10pt}}

\tikzset{crossgrtr/.style={cross out, draw=teal, minimum size=2*(#1-\pgflinewidth), inner sep=0pt, outer sep=0pt},
crossgrtr/.default={10pt}}

\tikzset{crosspt/.style={cross out, draw=black, minimum size=2*(#1-\pgflinewidth), inner sep=0pt, outer sep=0pt},
crosspt/.default={3pt}}

\tikzset{crosspttr/.style={cross out, draw=teal, minimum size=2*(#1-\pgflinewidth), inner sep=0pt, outer sep=0pt},
crosspttr/.default={4.2pt}}

\let\sm=\smallsetminus

\begin{document}
\title[Large sum-free sets in finite vector spaces II]{Large sum-free sets in finite vector spaces II.}

\author[Christian Reiher]{Christian Reiher}

\address{Fachbereich Mathematik, Universit\"at Hamburg, Hamburg, Germany}
\email{christian.reiher@uni-hamburg.de}

\author[Sofia Zotova]{Sofia Zotova}
\address{Mathematisches Institut, Universit\"at Bonn, Bonn, Germany}
\email{s87szoto@uni-bonn.de}
\subjclass[2020]{11B13, 11B30, 11P70}
\keywords{sum-free sets, finite vector spaces}

\begin{abstract}
	Answering a question of Leo Versteegen, we prove that for \(n\ge 3\) every 
	sum-free set \(A\subseteq\FF_5^n\) with \(|A|\ge 28\cdot 5^{n-3}\) is either 
	contained in the union of two parallel hyperplanes, or isomorphic 
	to \(\Lambda\times \FF_5^{n-3}\), where \(\Lambda\subseteq \FF_5^3\) denotes 
	a certain sum-free set of size~\(28\) discovered by Vsevolod Lev and Leo Versteegen.  
\end{abstract}

\maketitle

\section{Introduction}
A subset \(A\) of an abelian group \(G\) is called {\it sum-free} if there are no 
solutions of \(x+y=z\) with \(x, y, z\in A\). The study of such sets has a long history, 
which can be traced back to work of Issai Schur~\cite{Schur} motivated by Fermat's last 
theorem. 

Given a finite abelian group \(G\) we write \(\sfr_0(G)\) for the maximum cardinality of 
a sum-free subset of \(G\). Building on earlier work by many researchers, the determination 
of this group invariant was completed by Ben Green and Imre Ruzsa~\cite{Gr05}. 
Once~\(\sfr_0(G)\) is known, one can ask further for a classification 
of the collection 
\[
	\SFRR_0(G)=\{A\subseteq G\colon \text{\(A\) is sum-free and } |A|=\sfr_0(G)\}
\]
of all maximum sum-free sets. The solution of this problem was completed by Balasubramanian, 
Prakash, and Ramana~\cite{Ba16}. The next natural question is to determine the largest 
possible size~\(\sfr_1(G)\) of a sum-free set \(A\subseteq G\) that it not contained in 
any set from \(\SFRR_0(G)\). 

In an early contribution, Davydov and Tombak~\cite{DT} solved this problem for binary vector 
spaces, where it has connections to coding theory.
We have \(\sfr_0(\FF_2^n)=2^{n-1}\) for every \(n\ge 1\) and the only maximum sum-free 
sets are affine hyperplanes not containing the origin. For~\({n\le 3}\) all sum-free subsets
can be covered by such hyperplanes, while for \(n\ge 4\) Davydov and Tombak 
proved \(\sfr_1(\FF_2^n)=5\cdot 2^{n-4}\). 

For ternary vector spaces, the analogous problem was solved by Vsevolod Lev~\cite{VL05}.
The maximum sum-free subsets of~\(\FF_3^n\) are again affine hyperplanes not passing through 
the origin, so they have size~\(3^{n-1}\). In the nontrivial case \(n\ge 3\) the main result 
of~\cite{VL05} states~\({\sfr_1(\FF_3^n)=5\cdot 3^{n-3}}\).

Continuing this line of research, Vsevolod Lev~\cite{VL23}, followed by Leo 
Versteegen~\cite{LV23}, initiated the investigation of \(\sfr_1(\FF_5^n)\). 
Perhaps interestingly, for all primes \(p\ge 7\) and all dimensions~\(n\ge 1\) 
the invariant \(\sfr_1(\FF_p^n)\) has been determined in the meantime 
(see~\cite{RZ24a} and the references therein), so \(p=5\) is the last open case. 
It is well known that \(\sfr_0(\FF_5^n)=2\cdot 5^{n-1}\) and that the only 
maximum sum-free subsets of \(\FF_5^n\) are sets of the form \(H\cup (-H)\), where \(H\) 
denotes an affine hyperplane not containing the origin (cf.\ Fact~\ref{f:23}). 
A sum-free set \(A\subseteq \FF_5^n\) 
is called {\it normal} if it is a subset of such a set. Thus \(\sfr_1(\FF_5^n)\) is the largest
cardinality of a non-normal sum-free subset of \(\FF_5^n\). Since all sum-free subsets 
of \(\FF_5\) are normal, this invariant is only interesting for~\({n\ge 2}\). 
Leo Versteegen~\cite{LV23} showed \(\sfr_1(\FF^2_5)=5\) and 
based on his analysis it is easy to classify all extremal examples.   

\begin{theorem}\label{5n=2}
	If a sum-free set \(A\subseteq \FF_5^2\) with \(|A|\ge 5\) is not normal, then it is 
	isomorphic to one of the two sets displayed in Figure~\ref{fig:5sets}.        
\end{theorem}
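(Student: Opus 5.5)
We need to classify non-normal sum-free sets $A\subseteq\FF_5^2$ with $|A|\ge 5$. Since we cannot see Figure~\ref{fig:5sets}, the plan is a direct case analysis whose outcome is two explicit sets up to $GL_2(\FF_5)$. First note $0\notin A$ and, since Versteegen showed $\sfr_1(\FF_5^2)=5$, we have $|A|=5$ exactly.

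\emph{Reduction via lines through the origin.} Partition $\FF_5^2\sm\{0\}$ into the six punctured lines $L\sm\{0\}$, $L$ a one-dimensional subspace. For each such $L$ the set $A\cap L$ is a sum-free subset of $L\cong\FF_5$, so it has at most $2$ elements, and if it has two then it is $\{x,-x\}$ or $\{x,2x\}$ (up to sign/scaling these are the maximum sum-free sets $\{1,4\}$ and $\{2,3\}$ of $\FF_5$, i.e.\ $\{x,-x\}$ with $x$ arbitrary; note $\{1,2\}$ is not sum-free as $1+1=2$, and $\{1,3\}$ gives $3+3=1$). So each line contributes a subset of some antipodal pair $\{x,-x\}$. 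Consequently $A$ meets at least three lines. The key step is then: if $A$ is contained in $H\cup(-H)$ for an affine line $H$ avoiding $0$ we are normal, so we must show that exactly two configurations escape this.

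\emph{Case analysis.} Let $k$ be the number of lines $L$ with $|A\cap L|=2$; then $5=|A|\le k+6$ and the distribution of $A$ over the six lines is one of the patterns $(2,2,1)$, $(2,1,1,1)$, $(1,1,1,1,1)$ (with the remaining lines empty). In the first two patterns, pick two lines with nonempty intersection and use $GL_2$ to send chosen points to $e_1,e_2$ (or $\pm e_1$, $e_2$); then the remaining points are constrained by the forbidden relations $x+y\notin A$, $2x\notin A$, which is a finite check over the $24$ points. In the last pattern $A$ contains at most one point of each line, and one normalises three of its points to $e_1,e_2$ and a third point $ae_1+be_2$, again reducing to a small enumeration. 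In each case I would test normality by checking whether there exists a linear functional $\phi$ with $\phi(A)\subseteq\{1,4\}$ (equivalently $\{2,3\}$ after scaling), which is exactly the condition $A\subseteq H\cup(-H)$; the surviving non-normal sets are then compared up to $GL_2(\FF_5)$, and should be precisely the two in Figure~\ref{fig:5sets}.

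\emph{Main obstacle.} The hard part is keeping the enumeration honest and small: making sure the normalisations really exhaust all cases under $GL_2(\FF_5)$ and correctly identifying which surviving sets are isomorphic. I would try first to exploit symmetry ($A\mapsto -A$ is automatic in $GL_2$, and the stabiliser of the normalised points), and as a sanity check count the sets: the total number of non-normal sum-free $5$-sets should equal the sum of the orbit sizes $|GL_2(\FF_5)|/|\mathrm{Stab}|$ of the two representatives, with $|GL_2(\FF_5)|=480$.
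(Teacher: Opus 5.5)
Your reductions are fine, apart from a slip: \(\{x,2x\}\) is never sum-free, since \(x+x=2x\). So the only two-element sum-free subsets of a punctured line through \(0\) are the pairs \(\{x,-x\}\), as you end up saying. The genuine gap is that the proposal stops exactly where the theorem begins. You never carry out the enumeration and never exhibit a surviving non-normal set; you only assert that the survivors ``should be'' the two sets of Figure~\ref{fig:5sets}. The statement \emph{is} the outcome of that finite check, so as written nothing beyond the trivial reductions is proved. In particular, nothing rules out a third isomorphism class.

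The plan can be completed, and one observation keeps it short. In each of your normalisations \(e_1,e_2\in A\), so any functional \(\phi\) with \(\phi(A)\subseteq\{\pm1\}\) is \(\pm(1,1)\) or \(\pm(1,-1)\). Hence \(A\) is non-normal iff it contains a point with \(x+y\not\in\{\pm1\}\) and a point with \(x-y\not\in\{\pm1\}\).

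\emph{Pattern \((2,2,1)\).} With \(A\supseteq\{\pm e_1,\pm e_2\}\), the admissible fifth points are \((\pm2,\pm1)\), \((\pm1,\pm2)\), \((\pm2,\pm2)\). Each has \(x+y\) or \(x-y\) in \(\{\pm1\}\), so \(A\) is always normal.

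\emph{Pattern \((2,1,1,1)\).} With \(A\supseteq\{\pm e_1,e_2\}\), the two further points must not differ by \(\pm e_1\) or \(\pm e_2\), and must not sum to \((\pm1,0)\) or \((0,1)\). Checking this gives exactly three non-normal completions: \(\{(-1,2),(1,2)\}\), \(\{(-2,-2),(2,1)\}\), \(\{(-2,1),(2,-2)\}\). All are isomorphic to the right-hand set \(\{(-1,1),(0,1),(1,1),(1,-2),(-1,-1)\}\).

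\emph{Pattern \((1,1,1,1,1)\).} With \(A\supseteq\{e_1,e_2\}\), one finds five non-normal completions. Each has four points on an affine line avoiding \(0\), and all are isomorphic to the left-hand set.

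This is the case analysis you would have to write out. The paper avoids almost all of it by quoting Versteegen's Lemma~\ref{3collin}. Move three collinear points of \(A\) to \((-1,1),(0,1),(1,1)\). Sum-freeness then excludes the rows \(y=0\) and \(y=2\) and three boxes of the row \(y=-2\). Non-normality forces \((1,-2)\in A\), up to the reflection \(x\mapsto -x\). After that only \((-2,1)\) and \((-1,-1)\) remain, and since they differ by \((1,-2)\) they cannot both lie in \(A\). That argument also gives \(|A|=5\) directly, without citing \(\sfr_1(\FF_5^2)=5\).
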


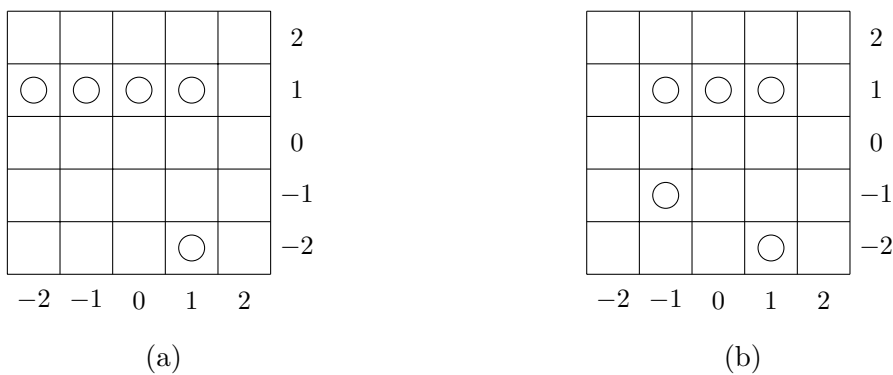
\begin{figure}[h]
	\centering
\hspace{4em}
	\begin{subfigure}[b]{.32\textwidth}
	\centering
		\begin{tikzpicture}[scale=0.7]
		\foreach \x in {-2,...,3} \draw (-2.5, \x-.5)--(2.5, \x-.5) (\x-.5, -2.5)--(\x-.5, 2.5);
		\foreach \x in {-2,...,2} 
			\node at (\x,0) [label={[yshift=-2.5cm]\footnotesize\(\x\)}] {};
		\foreach \y in {-2,...,2} 
			\node at (0,\y) [label={[xshift=2.1cm, yshift=-0.4cm]\footnotesize\(\y\)}] {};
		\foreach \x/\y in {-2/1,-1/1,0/1,1/1,1/-2} \draw (\x,\y) circle (7pt);
		\end{tikzpicture}
		\caption{} 
	\end{subfigure}
\hfill
	\begin{subfigure}[b]{.32\textwidth}
	\centering
		\begin{tikzpicture}[scale=0.7]
		\foreach \x in {-2,...,3} \draw (-2.5, \x-.5)--(2.5, \x-.5) (\x-.5, -2.5)--(\x-.5, 2.5);
		\foreach \x in {-2,...,2} 
			\node at (\x,0) [label={[yshift=-2.5cm]\footnotesize\(\x\)}] {};
		\foreach \y in {-2,...,2} 
			\node at (0,\y) [label={[xshift=2.1cm, yshift=-0.4cm]\footnotesize\(\y\)}] {};		
		\foreach \x/\y in {-1/1,0/1,1/1,1/-2,-1/-1} \draw (\x,\y) circle (7pt);
		\end{tikzpicture}
	\caption{} 
	\end{subfigure}
\hspace{4em}
	\caption{Non-normal sum-free sets of size \(5\)}		
	\label{fig:5sets}
\end{figure}

Here and throughout this work~\(\FF_5^2\) is drawn as a \((5\times5)\)-grid 
whose~\(25\) {\it boxes} represent the points of~\(\FF_5^2\). For the sake 
of symmetry, we let the coordinates in \(\FF_5\) run from \(-2\) to \(2\), 
so that the origin \((0,0)\) corresponds to the box in the centre. The five 
elements of our non-normal sum-free sets are indicated by circles. 
Moreover, two subsets of \(\FF_5^2\) are said to be {\it isomorphic} if they 
agree up to a change of coordinates, i.e., if they are in the same orbit of 
the natural action of \(\mathrm{Aut}(\FF_5^2)\) on the subsets of~\(\FF_5^2\). 

For \(n\ge 3\) the upper bounds \(\sfr_1(\FF_5^n)<1.5\cdot 5^{n-1}\) and then 
\(\sfr_1(\FF_5^n)\le 6\cdot 5^{n-2}\) were obtained in~\cite{VL23}*{Theorem~3}
and \cite{LV23}*{Theorem~1.4}, respectively. As to lower bounds, a correspondence  
between Vsevolod Lev and Leo Versteegen led the latter to the following 
configuration.    

\begin{dfn}\label{dfn:VL}
	Let \(\Lambda\subseteq \FF_5^3\) be the set 
	\begin{align*}
		\Lambda&=\{(1,0),(-1,0),(0,1),(0,-1)\}\times\FF_5 \dcup \{(-1,2,0),(-1,2,1), \\ 
			& \hspace{4em} (2,1,1), (2,1,2),(1,-2,0),(1,-2,-1),(-2,-1,-1),(-2,-1,-2)\}\,.
	\end{align*}
	For \(n\ge 3\) a subset of \(\FF_5^n\) isomorphic to \(\Lambda\times \FF_5^{n-3}\) 
	is called a {\it \(\VL\)-set}.
\end{dfn}

Our choice of terminology reflects the coincidental fact that, up to symmetry, Vsevolod Lev 
and Leo Versteegen have the same initials. It is not difficult to verify that \(\Lambda\) is 
sum-free, but not normal. The same holds, more generally, for all \(\VL\)-sets, which proves 
the lower bound~\({\sfr_1(\FF_5^{n-3})\ge 28\cdot 5^{n-3}}\). Our main result asserts that 
this construction is optimal. 

\begin{theorem}\label{thm:main}
	For every \(n\ge 3\) we have \(\sfr_1(\FF_5^n)=28\cdot5^{n-3}\). Moreover, 
	every sum-free set~\({A\subseteq \FF_5^{n-3}}\) of size \(|A|\ge 28\cdot 5^{n-3}\) 
	is either normal or a \(\VL\)-set.
\end{theorem}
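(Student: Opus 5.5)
We argue by induction on \(n\), using the fibres of \(A\) over a suitable quotient. Fix a linear surjection \(\pi\colon\FF_5^n\to\FF_5^2\), chosen later. For each \(v\in\FF_5^2\) write \(A_v=A\cap\pi^{-1}(v)\), viewed as a subset of a coset of \(W=\ker\pi\cong\FF_5^{n-2}\). We use two standard facts.

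\begin{enumerate}[label=\rmlabel]
\item Whenever \(u+v=w\), the sets \(A_u+A_v\) and \(A_w\) are disjoint. Hence Kneser/Cauchy–Davenport type inequalities give \(|A_u|+|A_v|+|A_w|\le 2|W|\), up to stabiliser corrections.
\item The support \(S=\{v\colon A_v\ne\vn\}\) is not necessarily sum-free. However, the set \(T=\{v\colon |A_v|>|W|/2\}\) of heavy fibres is sum-free in \(\FF_5^2\).
\end{enumerate}

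The plan is first to find a projection for which the fibre structure is rigid. We start from the bound \(\sfr_1(\FF_5^n)\le 6\cdot 5^{n-2}\) of \cite{LV23} and the hypothesis \(|A|\ge 28\cdot5^{n-3}=5.6\cdot 5^{n-2}\). Averaging over all choices of \(\pi\) (equivalently, over all two-dimensional quotients), we pick one for which the density vector \((|A_v|/|W|)_{v\in\FF_5^2}\) has total weight at least \(5.6\).

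With this weight, the inequalities (i) for all Schur triples \((u,v,u+v)\) in \(\FF_5^2\) form a small linear program. Solving it, together with the classification of Theorem~\ref{5n=2} and of normal sets in \(\FF_5^2\), should force one of two pictures. Either the heavy support \(T\) lies in a normal set \(H\cup(-H)\) of \(\FF_5^2\) and all other fibres are tiny, or \(T\) is the four-point configuration \(\{(\pm1,0),(0,\pm1)\}\) underlying \(\Lambda\).

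In the first picture we show directly that \(A\) is normal. Every light fibre \(A_v\) with \(v\notin H\cup(-H)\) would combine with the almost-full heavy fibres to produce a sum, contradicting sum-freeness. So \(A\subseteq\pi^{-1}(H\cup(-H))\), which is normal.

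In the second picture the four heavy fibres are almost full, so by (i) the remaining fibres are strongly constrained. Suppose the light fibres over \(\{(\pm1,\pm2),(\pm2,\pm1)\}\), or the relevant points, have positive total size. Then the sum-free condition against the heavy fibres forces each heavy fibre to be exactly all of \(W\) along the lines in question. It also forces the light fibres to be structured subsets of \(W\): unions of cosets of a hyperplane of \(W\), that is, pieces of the form \(\{0,1\}\times\FF_5^{n-3}\) in suitable coordinates. Here we apply induction, or the known stability results for \(\FF_5^{n-2}\) (sum-free-type pairs with \(|X|+|Y|\) large are periodic), to conclude that all light fibres share a common period \(U\le W\) of codimension \(1\). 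Quotienting by \(U\) reduces the problem to \(\FF_5^3\). There a finite (computer-free but case-heavy) check shows that \(\Lambda\) is, up to isomorphism, the unique non-normal sum-free set of size \(28\), and that no larger one exists.

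The main obstacle is the stability step. Near-extremal density inequalities for the fibres must be upgraded to exact structure: full heavy fibres and periodic light fibres with a common codimension-one period. The bound \(28\cdot5^{n-3}\) leaves essentially no slack, so every estimate must be sharp. I would first try Kneser's theorem in \(W\) applied to \(A_u+A_v\) for the pairs of light fibres summing into a heavy fibre, since equality in Kneser forces periodicity. A separate \(n=3\) base-case analysis then handles the resulting small configurations.
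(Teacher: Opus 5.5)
Your plan has a genuine gap at its centre: nothing in it selects a projection $\pi$ whose fibre densities carry information, and without that the ``small linear program'' forces nothing.

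\textbf{The choice of $\pi$.} Your averaging step is vacuous, since for \emph{every} surjection $\pi$ the total weight is $|A|/5^{n-2}\ge 5.6$. The Schur-triple constraints are also too weak to do the work you want. Their correct form is $f(P)+f(Q)+\lceil f(R)\rceil\le 6$ in units of $5^{n-3}$, i.e.\ about $1.2|W|$ rather than $2|W|$, together with the integrality condition~\ref{it:fish2}. Even so, they are satisfied by profiles carrying no structure. Take the extremal normal set $A=H\cup(-H)$ and any $\pi$ whose kernel is not contained in the direction of $H$: every fibre then has density exactly $2/5$. So there is no heavy fibre and no trace of $H$.

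Rigidity appears only once some affine subspace of codimension $2$ meets $A$ in more than $3\cdot 5^{n-3}$ points; this is the hypothesis $\|f\|_\infty>3$ of Proposition~\ref{prop:fish}. Producing such a subspace is where most of the paper's work goes:
\begin{enumerate}
\item It finds a plane containing six points of $A$, hence a line $L$ with $A\cap L=\{\pm v\}$.
\item It averages over hyperplanes $H\supseteq L$ to get $|A\cap H|>28\cdot 5^{n-4}$.
\item It applies the \emph{induction hypothesis} to obtain $A\cap H\subseteq T\cup(-T)$. This only yields $|A\cap T|+|A\cap(-T)|>5.6\cdot 5^{n-3}$.
\item It then needs Kneser's theorem (a common period $K$) and the acceptable-function analysis of Proposition~\ref{prop:klein} in $\FF_5^3$ to upgrade this to a codimension-$2$ subspace of density above $3/5$.
\end{enumerate}
None of this has a counterpart in your proposal, and in your plan the induction hypothesis plays no role in locating structure.

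\textbf{The dichotomy.} Even with a good projection, the dichotomy you expect is false. The true output of the planar analysis (Proposition~\ref{prop:fish}) is one of two things. Either the support is covered by three parallel lines, which still needs the separate Kneser argument of Lemma~\ref{lem:3planes} to give normality. Or the profile is one of $f_\alpha,f_\beta,f_\gamma$.

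The profile $f_\beta$ is realised by a genuine $\VL$-set, for instance $\{(\pm1,0,z)\colon z\ne0\}\cup\{(x,y,c_y)\colon y\ne0\}$ with $c_{\pm1}=0$ and $c_{\pm2}=\pm1$, times $\FF_5^{n-3}$. Its heavy fibres $(\pm1,0)$ lie in a normal set and all other fibres have density $1/5$. So it falls under your first picture, yet $A$ is not normal. Your argument there breaks because $|A_{(1,0)}|+|A_v|=|W|$ exactly, so Corollary~\ref{Bdumm} gives nothing. Likewise $f_\gamma$ passes every planar test and is excluded only inside $\FF_5^3$.

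\textbf{The base case.} Your closing ``finite check'' that $\Lambda$ is the unique non-normal sum-free $28$-set in $\FF_5^3$ is the case $n=3$ of the theorem itself, so it cannot be invoked. What is actually needed is the much narrower Lemma~\ref{lem:3dim}, for sets whose column sizes are exactly $f_\alpha$, $f_\beta$ or $f_\gamma$.
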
 

\section{Preliminaries}

\subsection{Kneser's theorem}
Given two subsets \(A\) and \(B\) of an abelian group \(G\) we write
\[
	A+B=\{a+b\colon a\in A \text{ and }  b\in B\}
\]
for their {\it sum set}. So \(A\) is sum-free if and only if \((A+A)\cap A=\vn\). 
We need a general lower bound on the cardinalities of such sum sets due to 
Kneser~\cites{Kn53, Kn55}. Its formulation involves the concept of the 
{\it symmetry set} of an arbitrary subset \(X\subseteq G\), which is defined 
by  
\[
	\Sym(X)=\{g\in G \colon g+X=X\}\,.
\]
Clearly, this set is a subgroup of \(G\) and \(X\) is a union of cosets of \(\Sym(X)\). 

\begin{theorem}[Kneser]\label{Kneser}
    If \(A\) and \(B\) denote two finite nonempty subsets of an abelian group~\(G\) 
    and \(K=\Sym(A+B)\), then 
    \[
        |A+B|\ge |A+K|+|B+K|-|K|\,.
    \]
\end{theorem}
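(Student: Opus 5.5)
Kneser's theorem is a classical result that the paper cites rather than proves. Still, here is the standard route I would take, namely induction via the Dyson $e$-transform. First, reduce to the case $K=\Sym(A+B)$ trivial. Put $A'=A+K$ and $B'=B+K$. Then $A'+B'=A+B+K=A+B$. Pass to the quotient $\bar G=G/K$. There $\bar A+\bar B$ has trivial stabiliser, and the claim becomes $|\bar A+\bar B|\ge|\bar A|+|\bar B|-1$, which is the claimed inequality after multiplying by $|K|$. So it suffices to prove the following. If $A+B$ has trivial stabiliser, then $|A+B|\ge|A|+|B|-1$. Equivalently, in general, $|A+B|\ge|A+H|+|B+H|-|H|$ with $H=\Sym(A+B)$.

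I will prove this by induction on $|B|$, and for fixed $|B|$ by descending induction on $|A|$. The case $|B|=1$ is trivial, since then $A+B$ is a translate of $A$ and $H=\Sym(A)$ satisfies $A+H=A$. For the inductive step, translate so that $0\in B$. If $A+B=A$, then $B\subseteq\Sym(A)=\Sym(A+B)=H$. Hence $|A+B|=|A+H|=|A+H|+|B+H|-|H|$, and we are done. Otherwise pick $e\in A$ with $e+B\not\subseteq A$. Form the Dyson transform $A(e)=A\cup(e+B)$ and $B(e)=B\cap(A-e)$. One checks that $A(e)+B(e)\subseteq A+B$ and $|A(e)|+|B(e)|=|A|+|B|$. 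Also $0\in B(e)$, and $B(e)\subsetneq B$ unless something degenerate happens. By induction applied to the pair $(A(e),B(e))$, with $H_e=\Sym(A(e)+B(e))$, we get
\[
|A+B|\ge|A(e)+B(e)|\ge|A(e)+H_e|+|B(e)+H_e|-|H_e|.
\]

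The main obstacle is that $H_e$ may differ from $H$, so this bound does not transfer directly. The standard remedy, following Kneser and the later expositions by Mann and Nathanson, is a second induction on $|A+B|$ together with a careful case analysis. If $A(e)+B(e)+H_e\subseteq A+B$ is "far" from all of $A+B$, one decomposes $A+B$ along $H_e$-cosets. One then applies the induction hypothesis to the pieces $A\setminus(A+H_e)$-type sets, using that $A+B$ is a union of $H$-cosets and $H_e$-cosets in the relevant parts. In the critical subcase one shows $H_e\subseteq H$ or obtains the inequality with $H_e$ replaced by $H_e+H$. This bookkeeping via the lattice of subgroups is the technical heart of the argument. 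I would follow the classical proof in Nathanson's "Additive Number Theory: Inverse Problems" rather than reinventing it. In the paper one simply cites~\cites{Kn53, Kn55}.
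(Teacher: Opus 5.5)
The paper gives no proof of this statement. Kneser's theorem is quoted from Kneser's papers and used as a black box, so your fallback of simply citing it is exactly what the paper does. Read as a proof, however, your sketch has a genuine gap: it stops exactly where the theorem becomes nontrivial.

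The preliminary steps are fine. This covers the passage to $G/K$, the case $A+B=A$, and the Dyson transform with $A(e)+B(e)\subseteq A+B$, $|A(e)|+|B(e)|=|A|+|B|$, $0\in B(e)$ and $B(e)\neq B$. The last of these is automatic from the choice of $e$, so nothing degenerate can occur. But the inductive hypothesis for $(A(e),B(e))$ only gives $|A+B|\ge |A|+|B|-|H_e|$. In the reduced situation, where $\Sym(A+B)=\{0\}$ and $|A+B|\le |A|+|B|-2$, this merely says that $H_e\neq\{0\}$. The entire content of Kneser's theorem is the argument that this cannot happen, and your proposal supplies none:
\begin{itemize}
\item ``one shows $H_e\subseteq H$'' is, for $H=\{0\}$, just a restatement of the goal;
\item the ``$A\setminus(A+H_e)$-type sets'' are always empty, because $0\in H_e$ gives $A\subseteq A+H_e$;
\item ``a second induction on $|A+B|$ with a careful case analysis'' names no mechanism.
\end{itemize}
Separately, the descending induction on $|A|$ is not well-founded in an infinite group, although you never actually use it.

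At that point you know two things. First, $B\not\subseteq B(e)+H_e$, since otherwise $A+B\subseteq A(e)+B(e)+H_e=A(e)+B(e)\subseteq A+B$ would make $A+B$ periodic with respect to the nontrivial group $H_e$. Second, chaining your displayed inequality with $|A+B|\le|A|+|B|-2$ gives $|(A+B)\setminus(A(e)+B(e))|\le |H_e|-2$. A proof has to turn these facts into a contradiction. That is precisely the part of Kneser's argument, and of its later expositions, that requires real work, and nothing in your proposal addresses it. As it stands, you have a citation together with the routine opening moves of the standard proof, not a proof.
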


The right side is clearly a multiple of \(|K|\) and, if \(G\) is finite, so is \(|G|\). 
Therefore, Kneser's theorem immediately implies the following result, which could also 
be proved in a much more direct manner. 

\begin{cor}\label{Bdumm}
    If two subsets \(A\) and \(B\) of a finite abelian group \(G\) satisfy \(|A|+|B|>|G|\), 
    then \(A+B=G\). 
\end{cor}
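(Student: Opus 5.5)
The plan is to prove Corollary~\ref{Bdumm} both ways: first by deducing it from Theorem~\ref{Kneser} as the paper suggests, and then by the standard pigeonhole argument as a self-contained check.

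For the pigeonhole argument, fix an arbitrary \(g\in G\) and consider the set \(g-B=\{g-b\colon b\in B\}\). Since \(x\mapsto g-x\) is a bijection of \(G\), we have \(|g-B|=|B|\). The hypothesis \(|A|+|B|>|G|\) then forces \(A\) and \(g-B\) to intersect, because otherwise \(|A|+|g-B|\le |G|\). An element \(a\in A\cap(g-B)\) can be written as \(a=g-b\) with \(b\in B\), so \(g=a+b\in A+B\). As \(g\) was arbitrary, \(A+B=G\). Note that the hypothesis already makes both sets nonempty, since \(|A|,|B|\le|G|\) and \(|A|+|B|>|G|\).

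For the route through Kneser's theorem, put \(K=\Sym(A+B)\). Theorem~\ref{Kneser} gives \(|A+B|\ge |A+K|+|B+K|-|K|\). The sets \(A+K\), \(B+K\) and \(A+B\) are unions of cosets of \(K\), as is \(G\), so all their sizes are multiples of \(|K|\). From \(|A+K|+|B+K|\ge|A|+|B|>|G|\) and divisibility by \(|K|\) we get \(|A+K|+|B+K|\ge |G|+|K|\). Hence \(|A+B|\ge|G|\), that is, \(A+B=G\).

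There is no genuine obstacle here. The only point that needs care in the Kneser route is the rounding step: it upgrades the strict inequality \(>|G|\) to \(\ge |G|+|K|\), and this uses that \(|G|\) is itself a multiple of \(|K|\).
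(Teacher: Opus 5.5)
Your proof is correct. The Kneser route, with the rounding step that uses divisibility of \(|A+K|+|B+K|\) and \(|G|\) by \(|K|\), is exactly how the paper derives the corollary, and your pigeonhole argument via \(A\cap(g-B)\neq\vn\) is the ``much more direct'' proof the paper alludes to but does not write out.
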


Later on, we shall encounter more substantial applications of Kneser's theorem. To illustrate 
the general idea, we quickly derive the well-known formula for \(\sfr_0(\FF_5^n)\) from this 
result. 

\begin{fact}\label{f:23}
	If \(A, B, C\subseteq \FF_5^n\) are three nonempty subsets satisfying \((A+B)\cap C=\vn\), 
	then 
	\[
		|A|+|B|+|C|\le 6\cdot 5^{n-1}\,.
	\]
	In particular, we have \(\sfr_0(\FF_5^n)=2\cdot 5^{n-1}\) for every natural number~\(n\). 
\end{fact}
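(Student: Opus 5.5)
Apply Kneser's theorem (Theorem~\ref{Kneser}) to the pair \(A,B\) and set \(K=\Sym(A+B)\). Since \(G=\FF_5^n\) is an elementary abelian \(5\)-group, \(K\) is a subspace, so \(|K|=5^k\) for some \(0\le k\le n\). Write \(|A+K|=a|K|\), \(|B+K|=b|K|\) and \(|C+K|=c|K|\) with positive integers \(a,b,c\). Kneser gives \(|A+B|\ge (a+b-1)|K|\).

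Because \(A+B\) is a union of \(K\)-cosets and is disjoint from \(C\), it is also disjoint from \(C+K\). Hence
\[
	(a+b-1)|K|+c|K|\le |A+B|+|C+K|\le 5^n,
\]
which means \(a+b+c\le 5^{n-k}+1\). If \(k=n\), then \(A+B=G\), contradicting \(C\neq\vn\). So \(k<n\), and we may pass to the quotient \(G/K\cong\FF_5^{m}\) with \(m=n-k\ge1\).

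The main point is to improve \(a+b+c\le 5^m+1\) to a bound of the shape \(6\cdot 5^{m-1}\). I do not expect the raw Kneser count to do this directly. Instead, I would bound \(a+b+c\) in the quotient by restricting to a line. Let \(\bar A,\bar B,\bar C\) be the images of \(A,B,C\) in \(G/K\), so \((\bar A+\bar B)\cap\bar C=\vn\). Then split \(G/K\) into \(5^{m-1}\) cosets of a one-dimensional subspace~\(L\), and average over the cosets. This is the step I expect to be the obstacle, since the three sets interact across different cosets.

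The simpler route is induction on \(n\). The base case \(n=1\) is Cauchy--Davenport (also a consequence of Kneser in \(\FF_5\), where \(K\) is trivial): \(|A+B|\ge\min(5,|A|+|B|-1)\), so \(|A|+|B|+|C|\le 6\).

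For general \(n\), I would use the quotient bound directly. We have
\[
	|A|+|B|+|C|\le (a+b+c)|K|\le (5^m+1)5^k .
\]
This already gives \(\le 6\cdot 5^{n-1}\) when \(m=1\). When \(m\ge2\), apply the induction hypothesis to \(\bar A,\bar B,\bar C\subseteq \FF_5^m\) with \(m<n\) if \(k\ge1\). This yields \(a+b+c\le 6\cdot5^{m-1}\) and hence \(|A|+|B|+|C|\le 6\cdot 5^{n-1}\).

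The remaining case is \(k=0\) with \(m=n\ge2\), where \(A+B\) is aperiodic. Here choose a hyperplane direction and fibre: write \(A_t,B_t,C_t\) for the intersections with the parallel hyperplanes \(\{x_1=t\}\). For nonempty fibres with \(s+t=u\) we have \((A_s+B_t)\cap C_u=\vn\), so induction gives \(|A_s|+|B_t|+|C_u|\le 6\cdot5^{n-2}\). The plan is then to sum these inequalities over a suitable family of triples \((s,t,u)\). The supports \(S,T,U\subseteq\FF_5\) of the fibres satisfy \((S+T)\cap U=\vn\) and so \(|S|+|T|+|U|\le 6\). Choose the triples so that each fibre appears once; with \(|S|+|T|+|U|\le6\) fibres each of size at most \(5^{n-1}\), carefully pairing them should give the total \(6\cdot5^{n-1}\). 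I expect this pairing, when the supports have unequal sizes, to be the fiddliest part.

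Finally, for the "in particular" statement, take \(A=B=C\) sum-free. Then \(3|A|\le 6\cdot5^{n-1}\), so \(|A|\le 2\cdot5^{n-1}\). Equality is attained by \(H\cup(-H)\) with \(H=\{x_1=1\}\): its sums lie in \(\{x_1\in\{2,0,-2\}\}\), which is disjoint from \(\{x_1=\pm1\}\).
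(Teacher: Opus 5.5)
\paragraph{A gap in the \(k=0\) branch}
As written, your argument breaks in the case \(k=0\), \(n\ge 2\). The claim that the fibre supports satisfy \((S+T)\cap U=\vn\) does not follow from \((A+B)\cap C=\vn\). The sets \(A_s+B_t\) and \(C_{s+t}\) both lie in the hyperplane \(\{x_1=s+t\}\), and they can be disjoint even when all three fibres are nonempty.

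Here is a concrete counterexample. Take \(A=B=C=\{(-2,1),(-1,1),(0,1),(1,1),(1,-2)\}\subseteq\FF_5^2\), which is sum-free.
\begin{itemize}
\item Its sumset \(A+A\) is the full row \(\FF_5\times\{2\}\), together with four points of \(\FF_5\times\{-1\}\) and the single point \((2,1)\).
\item A set of size \(10\) with a nontrivial period would be a union of two parallel lines. This set meets the rows in \(5,4,1\) points, so it is aperiodic and \(k=0\).
\item For every linear epimorphism \(\phi\colon\FF_5^2\to\FF_5\), the support \(\phi(A)\) is not sum-free. If it were, then \(\phi(A)\subseteq\{a,-a\}\) and \(A\) would be normal.
\end{itemize}
So no choice of direction rescues the fibring, and the pairing you leave open has nothing to stand on. Even if the support claim were true, no pairing would be needed: at most six nonempty fibres, each of size at most \(5^{n-1}\), already give the bound.

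\paragraph{The fix, which is the paper's proof}
The detour is unnecessary. Your own inequality
\(|A|+|B|+|C|\le (a+b+c)|K|\le (5^m+1)5^k=5^n+5^k\),
combined with \(k\le n-1\), already gives \(6\cdot 5^{n-1}\) for every \(m\ge 1\), not only for \(m=1\). Equivalently, \(5^m+1\le 6\cdot 5^{m-1}\) always holds. For \(k=0\) it reads \(|A|+|B|+|C|\le 5^n+1\).

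Your remark that the raw Kneser count cannot finish the job is therefore mistaken. This one-line count, with \(|K|\le 5^{n-1}\) because \(C\ne\vn\), is exactly the paper's argument. You can discard the entire induction: the Cauchy--Davenport base case, the correct but superfluous quotient step for \(k\ge 1\), \(m\ge 2\), and the fibring.

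Your treatment of the ``in particular'' part is fine.
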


\begin{proof}
	Consider the symmetry set \(K=\Sym(A+B)\). This is a linear subspace of \(\FF_5^n\) 
	and~\({A+B}\) is a union of translates of \(K\). Since \(A+B\) is disjoint to the nonempty 
	set \(C\), the dimension of \(K\) can be at most \(n-1\), whence \(|K|\le 5^{n-1}\). 
	Moreover, Kneser's theorem yields 
	\[
		5^n
		\ge 
		|A+B|+|C|
		\ge 
		|A+K|+|B+K|-|K|+|C|
		\ge 
		|A|+|B|+|C|-5^{n-1}
	\]
	and the desired upper bound \(|A|+|B|+|C|\le 6\cdot 5^{n-1}\) follows. 
	
	In the special case \(A=B=C\) this tells us \(|A|\le 2\cdot 5^{n-1}\) 
	for every sum-free subset \(A\subseteq \FF_5^n\), which proves the 
	upper bound \(\sfr_0(\FF_5^n)\le 2\cdot 5^{n-1}\). In the other direction,  
	for any affine hyperplane \(H\) not passing through the origin the sum-free set 
	\(H\cup(-H)\) exemplifies the lower bound \(\sfr_0(\FF_5^n)\ge 2\cdot 5^{n-1}\). 
\end{proof}

\subsection{Proof of Theorem~\ref{5n=2}}\label{n=2}

In this subsection, we classify the maximum non-normal sum-free subsets of \(\FF^2_5\). 
The main step towards this goal is the following lemma of Leo 
Versteegen~\cite{VL23}*{Lemma 3.1}, which we quote in a coordinate-free way. 

\begin{lemma}\label{3collin}
    If \(A\subseteq\FF_5^2\) is sum-free and \(|A|\ge 5\), then at least three elements 
    of \(A\) are contained in a line not passing through the origin. \qed
\end{lemma}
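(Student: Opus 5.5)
The plan is to argue by contradiction, using a coordinate description and a short case analysis. So suppose \(A\subseteq\FF_5^2\) is sum-free, \(|A|\ge 5\), and every affine line avoiding the origin meets \(A\) in at most two points. Clearly \(0\notin A\). First I would record the one-dimensional input. The sum-free subsets of \(\FF_5\sm\{0\}\) of size two are exactly \(\{x,-x\}\). For instance \(\{1,2\}\) fails since \(1+1=2\), and \(\{1,3\}\) fails since \(3+3=1\). No sum-free subset of \(\FF_5\) has three elements, because \(\sfr_0(\FF_5)=2\) by Fact~\ref{f:23}. Hence each of the six lines through the origin meets \(A\) in at most two points, and if it meets \(A\) in two points these are antipodal.

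Next I would fix a direction, say the first coordinate axis \(L_0\). I choose this direction so that \(|A\cap L_0|\) is as large as possible among the six lines through the origin. For \(i\in\FF_5\) put
\[
	A_i=\{t\in\FF_5\colon (t,i)\in A\}\,.
\]
Our assumption says \(|A_i|\le 2\) for \(i\ne 0\). We also have \(|A_0|\le 2\), with \(A_0\) symmetric whenever \(|A_0|=2\). Being sum-free translates into the conditions
\[
	(A_i+A_j)\cap A_{i+j}=\vn \qquad\text{for all } i,j\in\FF_5\,.
\]
Together with the Cauchy--Davenport bound \(|A_i+A_j|\ge\min(5,|A_i|+|A_j|-1)\) (a special case of Theorem~\ref{Kneser}), these conditions say for example: if \(|A_1|=2\), then \(A_1+A_1\) has exactly three elements and must avoid \(A_2\). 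Since \(\sum_i|A_i|\ge 5\) and each fibre holds at most two points, there are only a few distributions \((|A_0|,|A_1|,|A_{-1}|,|A_2|,|A_{-2}|)\) to consider.

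The core of the argument is this distribution-by-distribution analysis. Relations such as \((A_1+A_{-1})\cap A_0=\vn\), \((A_1+A_1)\cap A_2=\vn\), \((A_2+A_2)\cap A_{-1}=\vn\) and \((A_{-1}+A_{-1})\cap A_{-2}=\vn\) pin down the fibres up to translation and the scaling \((t,i)\mapsto(ct,i)\). For each surviving configuration I would then inspect the lines of the other five directions and exhibit one that avoids the origin and contains three points of \(A\), giving the contradiction. The maximality of \(|A\cap L_0|\) is used to discard distributions in which some other origin line would carry more points than \(L_0\). I expect the main obstacle to be organising this finite case analysis cleanly rather than any single hard step. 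If it becomes unwieldy, I would first dispose of the case where some line through the origin contains an antipodal pair \(\{x,-x\}\subseteq A\). In that case \(A\cap(A-x)=\vn\) and \(A\cap(A+x)=\vn\), which forces the remaining \(\ge 3\) points onto few lines parallel to \(x\). In the complementary case every origin line contains at most one point of \(A\), so the five points have pairwise non-parallel position vectors, and a direct pigeonhole over the six parallel classes should produce a collinear triple.
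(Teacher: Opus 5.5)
Your proposal has a genuine gap. The step where sum-freeness actually produces a collinear triple is announced but never carried out. (The paper quotes this lemma from Versteegen without proof, so your argument has to stand on its own.) Your preliminary facts are correct: each line through the origin meets \(A\) in at most an antipodal pair \(\{x,-x\}\), and \((A_i+A_j)\cap A_{i+j}=\vn\). But the sentence about inspecting ``each surviving configuration'' is the entire proof, and no configuration is ever listed or refuted.

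Note also that you cannot normalise fibres ``up to translation''. Sum-freeness is only invariant under linear maps, and those preserving the horizontal direction are \((x,y)\mapsto(cx+dy,ey)\), which act on fibre \(i\) as \(t\mapsto ct+di\).

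The two shortcuts in your last paragraph fail as stated.
\begin{itemize}
\item With \(\pm x\in A\), the relations \(A\cap(A\pm x)=\vn\) only say that every line parallel to \(x\) carries at most two non-adjacent points. You had assumed that anyway, so three further points spread over the four remaining parallel lines give no contradiction.
\item In the complementary case, no pigeonhole argument over parallel classes can work, because it never uses sum-freeness. The points \((-2,0),(1,2),(1,-2),(2,2),(2,-2)\) lie on five different lines through the origin, and no three of them are collinear (with the origin they form a \(6\)-arc, an ellipse). This set is excluded only because \((1,2)+(2,-2)=(-2,0)\).
\end{itemize}

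Your case split is still the right one. Each half closes after a linear normalisation and a short exclusion check.

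\emph{Antipodal case.} Suppose \((\pm1,0)\in A\). A third point \((s,i)\) has \(i\neq0\). The map \((x,y)\mapsto(x-si^{-1}y,\,i^{-1}y)\) fixes \((\pm1,0)\) and moves \((s,i)\) to \((0,1)\). The sum-free condition applied with these three points, together with the lines \(x+y=1\) and \(y=x+1\), leaves only the candidates \((\pm2,1),(0,-1),(\pm2,2),(\pm1,-2)\). All of them lie on the lines \(y=x-1\) and \(y=-x-1\). These lines pass through \((1,0)\) and \((-1,0)\) respectively and avoid the origin, so each admits at most one more point. Every pair taking one candidate from each line is killed by a sum, for example \((2,1)=2\cdot(1,-2)\), \((2,2)+(-1,-2)=(1,0)\), or \((1,-2)+(-1,-2)=(0,1)\). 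Hence \(|A|\le 4\).

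\emph{No antipodal pair.} Normalise two points to \((1,0)\) and \((0,1)\). Then no two points of \(A\) are horizontally or vertically adjacent. The other three points must occupy three of the four lines \(\langle(1,\pm1)\rangle,\langle(1,\pm2)\rangle\). Checking the ten remaining candidates shows that every choice creates a forbidden sum or three collinear points on a line avoiding the origin. For instance, \((2,2)\) forces \((-2,1)\) and \((1,-2)\), and all three lie on \(x+y=-1\).

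Supplying these checks is exactly what your proposal still lacks.
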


\begin{proof}[Proof of Theorem~\ref{5n=2}]
	Let \(A\subseteq\FF_5^2\) be a non-normal sum-free set of size \(|A|\ge 5\). 
	By Lemma~\ref{3collin}, we can suppose without loss of generality that 
	\((-1,1),(0,1),(1,1)\in A\). These three points are drawn as circles in 
	Figure~\ref{fig:n=2.1}. All their sums, differences, and halves are indicated 
	by crosses, because they cannot belong to the sum-free set \(A\) anymore. 
	
\begin{figure}[h]
	\centering
\hspace{4em}
	\begin{subfigure}[b]{.32\textwidth}
	\centering
		\begin{tikzpicture}[scale=0.7]
		\foreach \x in {-2,...,3} \draw (-2.5, \x-.5)--(2.5, \x-.5) (\x-.5, -2.5)--(\x-.5, 2.5);
		\foreach \x in {-2,...,2} 
			\node at (\x,0) [label={[yshift=-2.5cm]\footnotesize\(\x\)}] {};
		\foreach \y in {-2,...,2} 
			\node at (0,\y) [label={[xshift=2.1cm, yshift=-0.4cm]\footnotesize\(\y\)}] {};
		\foreach \x in {-1,0,1} \draw (\x,1) circle (7pt);
		\foreach \x in {-2,...,2} \foreach \y in {0,2} \draw (\x,\y) node[crossnor]{};
		\foreach \x in {-2,0,2} \draw (\x,-2) node[crossnor]{};
		\end{tikzpicture}
		\caption{} \label{fig:n=2.1}
	\end{subfigure}
\hfill
	\begin{subfigure}[b]{.32\textwidth}
	\centering
		\begin{tikzpicture}[scale=0.7]
		\foreach \x in {-2,...,3} \draw (-2.5, \x-.5)--(2.5, \x-.5) (\x-.5, -2.5)--(\x-.5, 2.5);
		\foreach \x in {-2,...,2} 
			\node at (\x,0) [label={[yshift=-2.5cm]\footnotesize\(\x\)}] {};
		\foreach \y in {-2,...,2} 
			\node at (0,\y) [label={[xshift=2.1cm, yshift=-0.4cm]\footnotesize\(\y\)}] {};		
		\foreach \x/\y in {-1/1,0/1,1/1,1/-2} \draw (\x,\y) circle (7pt);
		\foreach \x in {-2,...,2} \foreach \y in {0,2} \draw (\x,\y) node[crossnor]{};
		
		\foreach \x/\y in {2/1,-2/-1,0/-1,1/-1,2/-1,-2/-2,-1/-2,0/-2,2/-2}
			\draw (\x,\y) node[crossnor]{};
		\end{tikzpicture}
	\caption{} \label{fig:n=2.2}
	\end{subfigure}
\hspace{4em}
	\caption{}\label{fig:1147}
\end{figure}

	Since the set \(A\) fails to be normal, it cannot be covered by \(\FF_5\times \{-1, 1\}\)
	and, therefore, one of \((\pm 1, -2)\) needs to be in \(A\). For reasons of symmetry 
	we can suppose \((1, -2)\in A\). As shown in Figure~\ref{fig:n=2.2}, this excludes several 
	further points from~\(A\). 
	
	Altogether, the only points whose status is currently unknown are \((-2,1)\) and \((-1,-1)\).
	Each of them could be in \(A\), but due to \((-1,-1)-(-2,1)=(1,-2)\) they cannot be in \(A\)
	at the same time. This proves \(|A|=5\) and that \(A\) is isomorphic to one of the two sets 
	in Figure~\ref{fig:5sets}. Both of these configurations are indeed sum-free.
\end{proof}

\subsection{Overview}\label{sec:overview}

As the proof of Theorem~\ref{thm:main} is fairly long, we want to break it into three 
major steps, that are mostly independent of each other. The central idea is that given 
a dense sum-free set \(A\subseteq \FF_5^n\) and a linear 
epimorphism \(\phi\colon \FF_5^n\lra \FF_5^m\) onto a low-dimensional space, the associated 
function 
\[
	f_\phi^A\colon \FF_5^m\lra \RR_{\ge 0}\,, 
	\qquad x\longmapsto \frac{|A\cap \phi^{-1}(x)|}{5^{n-3}}
\]
has non-trivial properties, which can then be used to deduce structural information on \(A\). 
For instance, we shall see later that if \(m=1\), \(|A|\ge 28\cdot 5^{n-3}\), and the 
support of \(f^A_\phi\) has size at most three, then \(A\) has to be normal 
(see Lemma~\ref{lem:3planes}). When \(m=2\) some properties such functions \(f^A_\phi\) need 
to have are gathered in the following concept (cf.\ Lemma~\ref{lem:1922}). 

\begin{dfn}\label{dfn:fish}
	A function \(f\colon \FF_5^2\lra \RR_{\ge 0}\) is said to be {\it fishy} if 
	\begin{enumerate}[label=(\(F\arabic*\))]
		\item\label{it:fish1} \(\|f\|_1\ge 28\), \(\|f\|_\infty\le 5\), 
		\item\label{it:fish2} there is no \(X\subseteq \FF_5^2\) such 
			that \(f(X)=\sum_{P\in X} f(P)\) is half of an odd integer, 
		\item\label{it:fish3} and for all \(P, Q\in \FF_5^2\) and all \(\eps\in\{-1, 1\}\) 
			with \(f(P), f(Q), f(P+\eps Q)>0\) 
			we have \[f(P)+f(Q)+\lceil f(P+\eps Q)\rceil\le 6\,.\]
	\end{enumerate}
\end{dfn}

Here the \(1\)-norm and the infinity norm of \(f\) are defined 
by \(\|f\|_1=\sum_{P\in\FF_5^2} |f(P)|\) 
and \(\|f\|_\infty=\max\bigl\{|f(P)|\colon P\in \FF_5^2\bigr\}\),
respectively; since \(f\ge 0\), the absolute value signs could be omitted. Three examples of 
fishy functions relevant to our classification of the extremal configurations 
(i.e., the \(\VL\)-sets) are shown in Figure~\ref{fig:prop-fishy}. Empty boxes in these figures 
indicate that the corresponding function attains the value~\(0\). 

\begin{figure}[h]
	\centering
	\begin{subfigure}[b]{.32\textwidth}
	\centering
		\begin{tikzpicture}[scale=0.7]
		\foreach \x in {-2,...,3} \draw (-2.5, \x-.5)--(2.5, \x-.5) (\x-.5, -2.5)--(\x-.5, 2.5);
		\foreach \x in {-2,...,2} 
			\node at (\x,0) [label={[yshift=-2.5cm]\footnotesize\(\x\)}] {};
		\foreach \y in {-2,...,2} 
			\node at (0,\y) [label={[xshift=2.1cm, yshift=-0.4cm]\footnotesize\(\y\)}] {};
		\foreach \x/\y in {1/0,-1/0,0/1,0/-1} \draw (\x,\y) node{\(5\)};
		\foreach \x/\y in {-1/2,2/1,1/-2,-2/-1} \draw (\x,\y) node{\(2\)};
		\end{tikzpicture}
	\caption{\(f_\alpha\)}\label{fig:fA}
	\end{subfigure}
\hfill
	\begin{subfigure}[b]{.32\textwidth}
	\centering
		\begin{tikzpicture}[scale=.7]
		\foreach \x in {-2,...,3} \draw (-2.5, \x-.5)--(2.5, \x-.5) (\x-.5, -2.5)--(\x-.5, 2.5);
		\foreach \x in {-2,...,2} 
			\node at (\x,0) [label={[yshift=-2.5cm]\footnotesize\(\x\)}] {};
		\foreach \y in {-2,...,2} 
			\node at (0,\y) [label={[xshift=2.1cm, yshift=-0.4cm]\footnotesize\(\y\)}] {};
		\foreach \x in {-2,...,2} \foreach \y in {-2,-1,1,2} \draw (\x,\y) node{\(1\)};
		\foreach \x in {-1,1} \draw (\x,0) node{\(4\)};
		\end{tikzpicture}
	\caption{\(f_\beta\)}\label{fig:fB}
	\end{subfigure}
\hfill
	\begin{subfigure}[b]{.32\textwidth}
	\centering
		\begin{tikzpicture}[scale=.7]
		\foreach \x in {-2,...,3} \draw (-2.5, \x-.5)--(2.5, \x-.5) (\x-.5, -2.5)--(\x-.5, 2.5);
		\foreach \x in {-2,...,2} 
			\node at (\x,0) [label={[yshift=-2.5cm]\footnotesize\(\x\)}] {};
		\foreach \y in {-2,...,2} 
			\node at (0,\y) [label={[xshift=2.1cm, yshift=-0.4cm]\footnotesize\(\y\)}] {};
		\foreach \x in {-2,-1,1,2} \foreach \y in {-2,2} \draw (\x,\y) node{\(1\)};
		\foreach \x in {-2,2} \foreach \y in {-1,1} \draw (\x,\y) node{\(1\)};
		\foreach \x/\y in {1/0,-1/0,0/1,0/-1} \draw (\x,\y) node{\(4\)};
		\end{tikzpicture}
	\caption{\(f_\gamma\)}\label{fig:fC}
	\end{subfigure}
	\caption{Three functions}\label{fig:prop-fishy}
\end{figure}
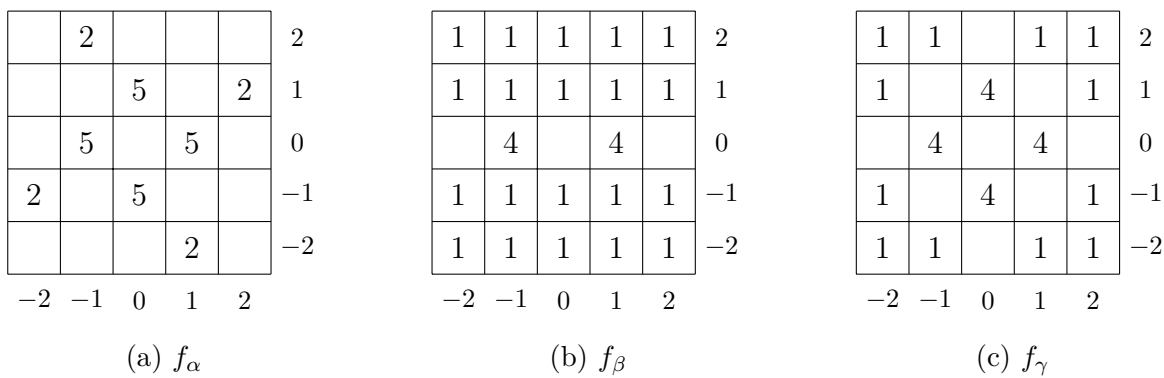

The first major step towards Theorem~\ref{thm:main} is the following result on 
fishy functions.

\begin{prop}\label{prop:fish}
	If \(f\colon \FF_5^2\lra \RR_{\ge 0}\) is fishy and \(\|f\|_\infty>3\), 
	then either the support of \(f\) can be covered by three parallel lines 
	or \(f\) is, modulo some automorphism of \(\FF_5^2\), one of the three 
	functions depicted in Figure~\ref{fig:prop-fishy}. More explicitly, this 
	means that there are an automorphism~\(\phi\) of~\(\FF_5^2\) and a 
	letter \(\tau\in\{\alpha, \beta, \gamma\}\) such that \(f=f_\tau\circ\phi\).  
\end{prop}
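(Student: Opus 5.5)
We fix a point \(P_0\) with \(f(P_0)=\|f\|_\infty>3\) and, after an automorphism, assume \(P_0=(1,0)\). The whole argument is a finite case analysis on the values of \(f\) on the \(24\) remaining points, driven by condition~\ref{it:fish3}.

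\emph{Step 1: points near \(P_0\).} If \(f(Q)>0\) and \(f(P_0\pm Q)>0\), then~\ref{it:fish3} gives \(f(Q)+\lceil f(P_0\pm Q)\rceil\le 6-f(P_0)<3\). Similarly, \(f(P_0)+f(P_0)+\lceil f(2P_0)\rceil\le 6\) forces \(f(2P_0)=0\), and \(f(-P_0)\) is constrained by \(P_0+(-P_0)=0\) together with the requirement that \(f(0)\) be small.

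\emph{Step 2: the lines parallel to \(\langle P_0\rangle\).} On each line \(\ell_c=\{(x,c)\}\) with \(c\neq 0\), the points \(Q\) and \(Q\pm P_0\) are adjacent. So the support of \(f\) on \(\ell_c\) behaves like a subset of \(\ZZ/5\) without two cyclically adjacent points of large weight; any pair of adjacent support points must have total weight below roughly \(3\). This yields an upper bound \(M_c\) on \(f(\ell_c)\) of about \(5\). I plan to tabulate these bounds, together with the bound on \(f(\ell_0)\), which is at most about \(2\|f\|_\infty\) because \(\pm P_0\) are the only candidates there. Since \(\|f\|_1\ge 28\), the total slack is tiny, and the possible supports are forced into a few patterns.

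\emph{Step 3: the non-collinear case.} If the support of \(f\) is not covered by three parallel lines, then in every direction at least four lines meet the support. We use~\ref{it:fish3} with \(P,Q\) ranging over pairs of heavy points (value \(>3\)) to show there are either two heavy points (\(\pm P_0\), as in \(f_\beta\)) or four (\(\pm P_0,\pm Q_0\) with \(Q_0\) independent, as in \(f_\alpha\) and \(f_\gamma\)). In each subcase, equality must hold almost everywhere in the Step~2 estimates.

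Condition~\ref{it:fish2} enters when values are non-integral. For a sum-rich heavy configuration, the tight linear inequalities would allow half-integer values, for instance \(f(P)=2.5\) or \(3.5\), and~\ref{it:fish2} rules these out. This step pins the values to integers or to sums of fractional parts with no half-odd subset.

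Finally, we normalize by an automorphism (fixing \(P_0\), and \(Q_0\) when present) and match the configuration with \(f_\alpha\), \(f_\beta\) or \(f_\gamma\).

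The main obstacle is Step~3. Step~2 gives only upper bounds, while \(\|f\|_1\ge28\) requires near-equality simultaneously in several directions. The difficulty is organizing the linear-programming-like bookkeeping so that every configuration of total mass \(\ge 28\) is either covered by three parallel lines or rigid.

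My first attempt would be a double counting. For each of the six directions, sum the \(f\)-masses of its five lines and apply~\ref{it:fish3} to triples \(P,Q,P+Q\) lying on a common structure. I would then look for a weighted combination of these inequalities that yields \(\|f\|_1\le 28\), with equality characterizing the three exceptional functions. If no clean combination appears, I would fall back on explicit enumeration of the supports of heavy points up to automorphism, of which there are few.
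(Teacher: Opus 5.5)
Your outline has a genuine gap. Step~3 carries the entire content of the proposition, including the assertion that there are exactly two or four points with \(f>3\), and it is never carried out. Neither strategy you offer for it works as stated.

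No weighted combination of the constraints can give \(\|f\|_1\le 28\). Take the function equal to \(5\) on the two rows \(\FF_5\times\{\pm 1\}\) and \(0\) elsewhere. It is fishy, because condition~\ref{it:fish3} is vacuous: sums and differences of its support points have second coordinate in \(\{0,\pm 2\}\). It satisfies \(\|f\|_\infty=5\) and has \(\|f\|_1=50\). So the hypothesis that the support is not covered by three parallel lines must be exploited combinatorially. Your proposal contains no mechanism that would ever produce such a cover.

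Step~2 is also too weak to create the ``tiny slack'' you rely on. You only derive \(f(Q)+f(Q+P_0)<3\) for adjacent support points. Putting \(P_0\) into the ceiling slot of~\ref{it:fish3} (apply it to \(Q+P_0\), \(Q\), and \(\eps=-1\)) gives \(f(Q)+f(Q+P_0)\le 6-\lceil f(P_0)\rceil\). That is at most \(2\), and at most \(1\) when \(f(P_0)>4\). With your bound the rows of \(f_\beta\) (five values \(1\)) are far from equality, so ``equality almost everywhere'' cannot single out \(f_\beta\). Moreover, a row with two nonadjacent heavy points can carry up to \(2\|f\|_\infty\); the rows \(y=\pm1\) of \(f_\alpha\) carry \(7\). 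So your per-row bound of ``about \(5\)'' is false. Even the sharp row bounds in Tables~\ref{tab:grosserFall} and~\ref{tab:mittlererFall} allow a total far above \(28\).

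The missing idea is structural. By~\ref{it:fish3} the set of heavy points is sum-free in \(\FF_5^2\). Here the heavy points are \(S=\{P\colon f(P)>2\}\), or \(\{P_1,\dots,P_5\}\) when \(f(P_4)+f(P_5)>5\). So Lemma~\ref{3collin} and Theorem~\ref{5n=2} apply: once \(|S|\ge 6\), one gets \(S\subseteq L\cup(-L)\) for a line \(L\) missing the origin. Corollary~\ref{Bdumm} then shows that sums and differences of large pieces of \(S\) fill the whole lines \(L+L\) and \(L-L\), which must avoid the support. This is what produces the three parallel lines (see Lemma~\ref{lem:albern} and the first case at the end of the proof).

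The paper embeds this in a split into two cases:
\begin{itemize}
\item \(\|f\|_\infty>4\), handled by Lemma~\ref{lem:fish4}. There Lemma~\ref{lem:Spiel} treats supports in which no two points differ by either of two independent vectors.
\item \(3<\|f\|_\infty\le 4\). This uses Lemma~\ref{lem:P4P5} (\(f(P_4)+f(P_5)\le 5\) unless \(f\cong f_\alpha\)), Lemma~\ref{lem:gamma} to isolate \(f_\gamma\), Claim~\ref{lm18A}, and Claim~\ref{clm:0944} to isolate \(f_\beta\). The count \(8+2|S|\le f(S)\) of Claim~\ref{mittlererfall} then forces \(|S|\ge 8\).
\end{itemize}

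Your fallback of enumerating the ``few'' configurations of heavy points does not substitute for this. The light points and the real values of \(f\) matter throughout, and you give no argument that such an enumeration closes.
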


By means of Kneser's theorem, this can be shown to have the following consequence 
on sum-free sets. 

\begin{proposition}\label{prop:0133}
	If for some \(n\ge 3\) a sum-free set \(A\subseteq \FF^n_5\) has 
	size \(|A|\ge 28\cdot 5^{n-3}\) and some affine subspace \(T\)
	of \(\FF^n_5\) with codimension \(2\) satisfies \(|A\cap T|>3\cdot 5^{n-3}\),
	then \(A\) is normal or a~\(\VL\)-set. 
\end{proposition}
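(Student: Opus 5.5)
Choose a linear epimorphism \(\phi\colon\FF_5^n\lra\FF_5^2\) such that \(T=\phi^{-1}(P_0)\) for some \(P_0\in\FF_5^2\), and consider \(f=f^A_\phi\). The plan is to verify that \(f\) is fishy and has \(\|f\|_\infty\ge f(P_0)>3\), so that Proposition~\ref{prop:fish} applies. Afterwards we lift its two conclusions back to \(A\).

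Condition \ref{it:fish1} is immediate: we have \(\|f\|_1=|A|/5^{n-3}\ge 28\), and each fibre has \(5^{n-2}\) points, which gives \(\|f\|_\infty\le 5\).

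For \ref{it:fish3}, take \(P,Q\) and \(\eps\) with all three fibres \(A_P=A\cap\phi^{-1}(P)\), \(A_Q\) and \(A_{P+\eps Q}\) nonempty. Then \((A_P+\eps A_Q)\cap A_{P+\eps Q}=\vn\), and all three sets live in cosets of \(W=\ker\phi\cong\FF_5^{n-2}\). Translating and applying Fact~\ref{f:23} inside \(W\) gives \(|A_P|+|A_Q|+|A_{P+\eps Q}|\le 6\cdot 5^{n-3}\), i.e., \(f(P)+f(Q)+f(P+\eps Q)\le 6\). To upgrade the last summand to its ceiling, we look more closely at the proof of Fact~\ref{f:23}. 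Let \(K=\Sym(A_P+\eps A_Q)\) and \(S=A_P+\eps A_Q\). If \(\dim K\le n-4\), Kneser's theorem leaves slack \(5^{n-3}-|K|\ge 4\cdot 5^{n-4}\), and this slack is not automatically enough. The cleaner route is that \(S\) is a union of \(K\)-cosets and the complement of \(S\) in the coset of \(W\) contains \(A_{P+\eps Q}\). This yields
\[|A_P+K|+|A_Q+K|-|K|+|A_{P+\eps Q}|\le 5^{n-2},\]
where everything except the last term is a multiple of \(|K|\). When \(|K|=5^{n-3}\) we may round \(|A_{P+\eps Q}|\) up to a multiple of \(5^{n-3}\), which is exactly the ceiling. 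When \(|K|\) is smaller, we still get \(|A_P|+|A_Q|\le |A_P+K|+|A_Q+K|\le 5^{n-2}+|K|-|A_{P+\eps Q}|\). We then need \(|K|\le 5^{n-4}\) to make this at most \(6\cdot5^{n-3}-\lceil f(P+\eps Q)\rceil 5^{n-3}\), using that the left side in \(K\)-units is an integer. I expect a short case check on the size of \(K\) to close this.

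Condition \ref{it:fish2} is the delicate one, and I expect it to be the main obstacle. The quantity \(f(X)\cdot 5^{n-3}=|A\cap\phi^{-1}(X)|\) is an integer, so \(f(X)\) being half an odd integer means \(2|A\cap\phi^{-1}(X)|\) is an odd multiple of \(5^{n-3}\). That is impossible for \(n\ge 4\), since \(5^{n-3}\) is odd and \(2|\cdot|\) is even. For \(n=3\) the same parity argument applies directly: \(f(X)\) is an integer, so it is never a half-odd number. Hence \ref{it:fish2} is in fact automatic from integrality and parity, and the real work is \ref{it:fish3}.

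Finally, apply Proposition~\ref{prop:fish}. If the support of \(f\) lies on three parallel lines, then \(A\) lies in three parallel affine hyperplanes \(\psi^{-1}(c)\) for a suitable \(\psi\colon\FF_5^n\to\FF_5\). Lemma~\ref{lem:3planes} (announced in the overview) then makes \(A\) normal. Otherwise, after an automorphism, \(f=f_\tau\) with \(\tau\in\{\alpha,\beta,\gamma\}\).

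For \(f_\beta\) and \(f_\gamma\), the fibres of value \(4\) together with the fibres of value \(1\) can be fed back into the sharp cases of Fact~\ref{f:23} and Kneser's theorem, where equality forces the structure. We aim to show that these cases force \(A\) into two parallel hyperplanes \(H\cup(-H)\) or lead to a contradiction. For \(f_\alpha\), the four full fibres \(\{(\pm1,0),(0,\pm1)\}\) each contain all of their coset. The condition \(A_P+A_Q\cap A_{P+Q}=\vn\), with equality in Kneser's theorem, then forces each value-\(2\) fibre to be a union of two cosets of a common hyperplane \(U\) of \(W\), positioned as in \(\Lambda\). This identifies \(A\) with \(\Lambda\times\FF_5^{n-3}\), i.e., a \(\VL\)-set. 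The equality analysis in these three cases is the other substantial piece of work.
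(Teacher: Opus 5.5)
The overall architecture is the same as the paper's. You show that \(f=f^A_\phi\) is fishy, apply Proposition~\ref{prop:fish}, use Lemma~\ref{lem:3planes} for the three-lines case, and then lift the exceptional functions.

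The step you left open in~\ref{it:fish3} closes without a case check on \(|K|\). Write \(R=P+\eps Q\). The set \(A_R+K\) is disjoint from the union of \(K\)-cosets \(A_P+\eps A_Q\), and \(|K|\) divides \(5^{n-3}\). Hence \(|A_R+K|\ge|A_R|>(\lceil f(R)\rceil-1)5^{n-3}\) upgrades to \(|A_R+K|\ge(\lceil f(R)\rceil-1)5^{n-3}+|K|\), and Kneser gives \(|A_P|+|A_Q|\le(6-\lceil f(R)\rceil)5^{n-3}\) directly. This is Lemma~\ref{lem:1419} as used in Lemma~\ref{lem:1922}.

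The genuine gap is the case \(f\cong f_\beta\). You aim to show that it forces \(A\subseteq H\cup(-H)\) or a contradiction, and neither outcome holds in general.

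\emph{A normal set never has this projection.} If \(H=\{\lambda=1\}\), then either \(\lambda\) vanishes on \(\ker\phi\), so the support of \(f\) lies on two parallel lines, or \(\lambda\) is nonconstant on every fibre, so \(f\le 2\) everywhere. The function \(f_\beta\) violates both.

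\emph{Yet sum-free sets with this projection exist.} Take \(\{(x,y,c_y)\colon x\in\FF_5,\ y\neq 0\}\cup\{(\pm1,0,z)\colon z\neq 0\}\subseteq\FF_5^3\) with \(c_{\pm1}=0\) and \(c_{\pm2}=\pm1\). This set is sum-free of size \(28\) and its standard projection is \(f_\beta\). Its projection along the first coordinate axis is isomorphic to \(f_\alpha\), and multiplied by \(\FF_5^{n-3}\) it is a \(\VL\)-set.

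So \(f_\beta\) is simply another shadow of \(\Lambda\), and the missing idea is this change of projection. The paper's route (Lemmas~\ref{lem:3dim} and~\ref{lem:VL}) runs as follows. Corollary~\ref{lem:1500} with \(1+4+1=6\) gives a common \((n-3)\)-dimensional symmetry group for all fibres, which reduces the problem to \(\FF_5^3\). There sum-freeness forces \(I_{x,y}=\{c_y\}\) along each nonzero row, with \(c_{-y}=-c_y\). After normalising \(c_{\pm1}=0\) and \(c_2\in\{1,2\}\), one gets \(I_{-1,0}=\FF_5^\times\). Projecting along the first axis then lands in the \(f_\alpha\) case.

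For \(f_\gamma\) a contradiction is indeed the right outcome, but you give no argument for it. In the \(f_\alpha\) case, the phrase ``positioned as in \(\Lambda\)'' also needs a reason. The paper observes that the eight points above the line \(\langle(1,-2)\rangle\) form a sum-free subset of a plane, which is normal by Theorem~\ref{5n=2}; this pins down the two-coset fibres.
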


We also need to address certain functions \(f\colon \FF_5^3\lra \RR_{\ge 0}\).
Prior to stating the relevant result, we introduce further notation and terminology. 
For every point \(P=(x, y)\in \FF_5^2\) we write 
\[
	L_P=L_{x, y}=\bigl\{(x, y, z)\colon z\in \FF_5\bigr\}
\] 
for its preimage with respect to the projection \(\FF_5^3\lra \FF_5^2\) to the 
first two coordinates. This is a line in \(\FF_5^3\) parallel to the third coordinate 
axis. Given any function \(f\colon \FF_5^3\lra \RR\) we call the function 
\(g\colon \FF_5^2\lra\RR\) defined by \(g(P)=f(L_{P})\) the {\it standard projection} 
of \(f\).    

\begin{dfn}
	Consider a function \(f\colon \FF_5^3\lra \RR_{\ge 0}\) with support \(I\) 
	and standard projection~\(g\) such that
	\begin{enumerate}[label=(\(A\arabic*\))]
		\item\label{it:owl1} \(\|f\|_\infty\le 1\),
		\item\label{it:owl2} \(g\) is fishy,
		\item\label{it:owl3} and if \(\eps\in\{-1, 1\}\), \(P, Q\in\FF_5^3\), 
			and \(P+\eps Q\in I\), then \(f(P)+f(Q)\le 1\). 
	\end{enumerate}
	We call \(f\) {\it acceptable} if in addition there exists a special 
	point \(P_\star\in \FF_5^2\) such that 
	\begin{enumerate}[label=(\(A\arabic*\)), resume]
		\item\label{it:owl4} \(g(P_\star)>2.8\) and \(g(-P_\star)>2.5\), 
		\item\label{it:owl5} \(|L_{P_\star}\cap I|=3\), 
		\item\label{it:owl6} and for all \(Q\in \FF_5^2\) with \(g(Q), g(P_\star+Q)>0\) 
			and \(g(Q)+g(P_\star+Q)>2.2\) we have 
			\[|L_Q\cap I|+|L_{P_\star+Q}\cap I|=3\,.\]
	\end{enumerate}
\end{dfn}

Here is our main result on acceptable functions. 

\begin{proposition}\label{prop:klein}
	For every acceptable function \(f\colon \FF_5^3\lra \RR_{\ge 0}\) there exists 
	a line \(L\subseteq \FF_5^3\) such that \(f(L)>3\).
\end{proposition}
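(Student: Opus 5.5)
We argue by contradiction: suppose \(f\) is acceptable but \(f(L)\le 3\) for every line \(L\subseteq\FF_5^3\). In particular \(g(P)=f(L_P)\le 3\) for all \(P\in\FF_5^2\). The plan is to show that this forces \(\|g\|_1<28\), which contradicts \ref{it:fish1} for the fishy function \(g\). The mass of \(g\) will be counted along the five lines of \(\FF_5^2\) parallel to \(\langle P_\star\rangle\).

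\emph{Step 1: the line \(\langle P_\star\rangle\) itself.} Apply \ref{it:fish3} to \(g\) with \(P=P_\star\), \(Q=-P_\star\). Since \(g(P_\star)+g(-P_\star)>5.3\), this gives \(g(0)=0\). With \(P=Q=P_\star\) we get \(2g(P_\star)>5.6\), so \(g(2P_\star)=0\). Symmetrically, \(g(-2P_\star)=0\). Thus this line carries mass at most \(6\), and the four remaining parallel lines must carry more than \(22\), i.e.\ on average more than \(5.5\) each.

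\emph{Step 2: local constraints on the other cosets.} Fix \(Q\notin\langle P_\star\rangle\) with \(g(Q),g(P_\star+Q)>0\). Then \ref{it:fish3} with \(P_\star\) gives
\[
g(Q)+g(P_\star+Q)<6-g(P_\star)<3.2.
\]
Similarly, \ref{it:fish3} with \(-P_\star\) gives \(g(Q)+g(Q-P_\star)<3.5\). By \ref{it:owl1} and \ref{it:owl5}, the three points of \(L_{P_\star}\cap I\) each carry \(f\)-value greater than \(0.8\); let \(D\subseteq\FF_5\) be the set of their third coordinates.

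For \(X\in L_Q\), condition \ref{it:owl3} (with \(P=X+R\), \(\eps=-1\), \(R\in L_{P_\star}\cap I\)) yields \(f(X)+f(X+R)\le 1\). This couples the fibres \(L_Q\) and \(L_{P_\star+Q}\) through the three shifts in \(D\), and similarly couples \(L_{-P_\star}\) with the remaining fibres.

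Whenever \(g(Q)+g(P_\star+Q)>2.2\), condition \ref{it:owl6} says the two fibres together have exactly three support points. Each point has value at most \(1\), and the shift constraints pair these points up. The aim is to deduce the sharper bound \(g(Q)+g(P_\star+Q)\le 2\), perhaps with the help of the parity condition \ref{it:fish2} to exclude borderline values like \(2.5\). The conclusion would be that every adjacent positive pair along a coset of \(\langle P_\star\rangle\) has sum at most \(2.2\) (or at most \(2\)).

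\emph{Step 3: optimisation on a \(5\)-cycle.} Each coset \(Q+\langle P_\star\rangle\) is a \(5\)-cycle under translation by \(P_\star\). The values on it lie in \([0,3]\), and adjacent pairs that are both positive have sum at most \(2.2\). Since the cycle is odd, one checks that its total is at most about \(5.5\): either two non-adjacent vertices carry \(3\) and the rest is forced to be tiny, or the values are spread with pair sums bounded by \(2.2\).

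Combining Steps 1 and 3 gives \(\|g\|_1\le 6+4\cdot 5.5=28\), and equality cases should be excluded using the strict inequalities in \ref{it:owl4} and \ref{it:fish2}. This yields the desired contradiction \(\|g\|_1<28\).

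\emph{Main obstacle.} The hard part will be Step 2: converting \ref{it:owl3}, \ref{it:owl5} and \ref{it:owl6} into a clean pair bound, via a case analysis over the possible shapes of \(D\). The first thing to try is to view \(L_Q\cup L_{P_\star+Q}\) as a bipartite graph on \(\FF_5\times\{0,1\}\), with edges given by the shifts in \(D\), and to bound the total weight via fractional matchings. Checking that the cycle optimisation in Step 3 genuinely stays below \(28\), rather than landing slightly above, may force extra use of \ref{it:fish3} across different cosets.
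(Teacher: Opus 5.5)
Your decomposition matches the opening of the paper's proof. There the rows $R_j$ are exactly the cosets of $\langle P_\star\rangle$, and your Step~1 is the paper's bound $g(R_0)\le 6$. The gap is the per-coset bound of $5.5$.

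\textbf{Step 3 is false as stated.} Your constraint only couples \emph{positive} neighbours. So two non-adjacent vertices of the $5$-cycle can both carry $3$ while the other three vanish; this is your own first alternative. That coset then carries $6$, and you only get $\|g\|_1\le 6+4\cdot 6=30$.

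\textbf{Step 2 cannot be obtained from the local constraints you list.} Condition \ref{it:fish3} with $\lceil g(P_\star)\rceil=3$ only gives $g(Q)+g(P_\star+Q)\le 3$. For instance, suppose $I_{P_\star}=I_{-P_\star}=\{-1,0,1\}$, and take $I_Q=\{a\}$, $I_{P_\star+Q}=\{a-2,a+2\}$ with all three values equal to $1$. This satisfies \ref{it:owl6} and every shift constraint through $L_{\pm P_\star}$, because the relevant third coordinates differ by $\pm 2$. Yet the pair sum is $3$. This is exactly the structure that Claim~\ref{clm:2001} shows to be forced. The paper genuinely has to handle rows in which such a pair carries more than $2.5$.

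\textbf{The approach itself cannot work.} You use the hypothesis $f(L)\le 3$ only for the vertical lines $L_P$, i.e.\ only through $\|g\|_\infty\le 3$, and no argument of that kind can succeed. Let $f$ be the indicator function of $\{\pm 1\}\times\FF_5\times\{0,1,2\}$, which is sum-free since all first coordinates are $\pm 1$, and let $P_\star=(1,0)$. Then $g=3$ on $\{\pm1\}\times\FF_5$ and $g=0$ elsewhere. Conditions \ref{it:fish3} and \ref{it:owl6} hold vacuously, \ref{it:owl3} is just sum-freeness, and \ref{it:owl1}, \ref{it:owl4}, \ref{it:owl5} are clear. So $f$ is acceptable with $\|g\|_\infty=3$, yet $\|g\|_1=30$ and every coset of $\langle P_\star\rangle$ carries $6$. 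The line promised by the proposition is transverse to the $z$-axis, e.g.\ $\{1\}\times\FF_5\times\{0\}$, which carries mass $5$.

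The missing idea is therefore to use $f(L)\le 3$ for non-vertical lines of $\FF_5^3$. The paper first normalises $I_{\pm P_\star}=\{-1,0,1\}$ and proves Claims~\ref{clm:1958} and~\ref{clm:2001}. It then shows:
\begin{enumerate}[label=\rmlabel]
\item A row $R_j$ with $j\ne 0$ and $g(R_j)>5.5$ meets $J$ in exactly three points (Claims~\ref{5.5_5}, \ref{5.5_2}, \ref{5.5_4}). The last of these uses $f(B)\le 3$ for $B$ on the line $(0,1,0)+\langle(1,-1,0)\rangle$.
\item The indices $y$ and $2y$ cannot both belong to such rich rows (Claim~\ref{clm:psi}).
\item Finally, $\|g\|_1<28$ follows using further transverse lines, such as $(0,0,\pm 2)+\langle(1,-2,0)\rangle$ and $(0,1,2)+\langle(1,-1,1)\rangle$.
\end{enumerate}
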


Combined with Proposition~\ref{prop:0133} and another application of Kneser's theorem 
this will eventually lead us to the following statement. 

\begin{proposition}\label{prop:0135}
	Let \(A\subseteq \FF_5^n\) be a sum-free set of size \(|A|\ge 28\cdot 5^{n-3}\),
	where \(n\ge 3\). If there exists an affine subspace \(T\) of \(\FF^n_5\) with 
	codimension \(2\) such that \(|A\cap T|+|A\cap (-T)|>28\cdot 5^{n-4}\), then \(A\) 
	is normal or a \(\VL\)-set.
\end{proposition}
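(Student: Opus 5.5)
We reduce Proposition~\ref{prop:0135} to Proposition~\ref{prop:0133}. If some codimension~\(2\) affine subspace \(T'\) has \(|A\cap T'|>3\cdot 5^{n-3}\), we are done by that proposition. So we assume throughout that \(|A\cap T'|\le 3\cdot 5^{n-3}\) for every such \(T'\), and we aim for a contradiction.

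First we choose coordinates. Let \(T=P+W\), where \(W\) has codimension~\(2\), and choose a linear epimorphism \(\psi\colon\FF_5^n\lra\FF_5^3\) whose kernel \(U\subseteq W\) has codimension~\(3\). For each \(x\in\FF_5^3\) put
\[
	f(x)=\frac{|A\cap\psi^{-1}(x)|}{5^{n-3}}\in[0,1]\,.
\]
After a change of coordinates in \(\FF_5^3\), the fibres of the map \(\FF_5^3\to\FF_5^2\) dropping the third coordinate are the images of the cosets of \(W\). The standard projection \(g\) of \(f\) is then the function \(f^A_\phi\) of the overview for the composite \(\phi\colon\FF_5^n\to\FF_5^2\), and \(T\) corresponds to some \(L_{P_\star}\). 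By Lemma~\ref{lem:1922}, \(g\) is fishy, which gives~\ref{it:owl2}; \ref{it:owl1} is trivial. The hypothesis reads \(g(P_\star)+g(-P_\star)>2.8\). Lines in \(\FF_5^3\) pull back to codimension~\(2\) affine subspaces, so our standing assumption says \(f(L)\le 3\) for every line \(L\). Proposition~\ref{prop:klein} then yields a contradiction once we show that \(f\) is acceptable, possibly after refining \(\psi\), i.e.\ after choosing \(U\) well. Since \(\|g\|_\infty\le 3\), Proposition~\ref{prop:fish} is not directly usable. This is why acceptability is the route.

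Condition~\ref{it:owl3} is where Kneser's theorem enters. If \(P+\eps Q\in I\), then the fibre sets \(A_P,\eps A_Q\) have sum disjoint from the nonempty set \(A_{P+\eps Q}\) inside a coset of \(U\). We would like Fact~\ref{f:23} with \(|U|=5^{n-3}\), but that only gives \(f(P)+f(Q)+f(P+\eps Q)\le 6/5\), which is weaker than needed. We therefore must choose \(U\) such that each relevant symmetry group \(\Sym(A_P+\eps A_Q)\), translated, is all of \(U\) or trivial in the right sense. In practice we will show, via Kneser, that \(f(P)+f(Q)>1\) forces \(A_P\pm A_Q\) to fill its whole coset, which kills \(A_{P+\eps Q}\). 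The full-coset case of Kneser's theorem (\(K=U\)) gives exactly this as soon as \(|A_P|+|A_Q|>|U|\), by Corollary~\ref{Bdumm}. So~\ref{it:owl3} is just Corollary~\ref{Bdumm} applied in the coset of \(U\), and no choice of \(U\) is needed there.

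The main work is~\ref{it:owl4}--\ref{it:owl6}. By symmetry we may take \(g(P_\star)\ge g(-P_\star)\). The bounds \(g\le 3\) and \(g(P_\star)+g(-P_\star)>2.8\) only give \(g(-P_\star)>2.8-3\), which is far from the required \(g(-P_\star)>2.5\) and \(g(P_\star)>2.8\). So we must push these values up using fishiness. Concretely, \ref{it:fish3} with \(P=P_\star\), \(Q=-P_\star\) and the large total \(\|g\|_1\ge 28\) forces the other values to be small unless both \(\pm P_\star\) are heavy. Then we pick the third coordinate, i.e.\ \(U\) inside \(W\), so that \(A\cap T\) meets exactly three cosets of \(U\); this is~\ref{it:owl5}. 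The idea is to apply Kneser's theorem to \(A\cap T\) and \(A\cap(-T)\), whose sum lies in \(W\) and avoids \(A\cap W\). The symmetry group \(K\) of that sum set, or a hyperplane of \(W\) containing it, is the natural choice for \(U\). The counting \(|A_T|+|A_{-T}|>2.8\cdot5^{n-3}\) should force \(A\cap T\) to be covered by three \(K\)-cosets with density near~\(1\). Condition~\ref{it:owl6} follows the same way for pairs \(Q,P_\star+Q\), which sum to \(P_\star\) (or have difference \(P_\star\)). Kneser applied to \(A_Q+A_{P_\star+Q}\)-type sums versus \(A\cap T\), with the same \(U\), bounds the coset counts by \(3\). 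We expect this step to be the main obstacle: showing that a single \(U\) simultaneously witnesses the three-coset structure for \(T\) and for all heavy pairs, and that the numerical thresholds \(2.8\), \(2.5\), \(2.2\) are exactly what Kneser's inequality delivers.
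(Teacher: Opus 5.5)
Your overall frame matches the paper's. You work under the standing assumption $\|g\|_\infty\le 3$, make a suitable $f$ on $\FF_5^3$ acceptable, get a line with $f(L)>3$ from Proposition~\ref{prop:klein}, and finish with Proposition~\ref{prop:0133}. Your treatment of \ref{it:owl1}--\ref{it:owl3} is fine. But there is a genuine gap exactly where the work lies: choosing $U$ so that \ref{it:owl5} and \ref{it:owl6} hold.

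First, a numerical slip. The hypothesis means $g(P_\star)+g(-P_\star)>28\cdot 5^{n-4}/5^{n-3}=5.6$, not $2.8$. Together with $\|g\|_\infty\le 3$ this gives at once, after relabelling, $g(P_\star)>2.8$ and $g(-P_\star)>2.6$, so \ref{it:owl4} is immediate. Your ``push up via fishiness'' step is therefore unnecessary, and it would not work anyway: \ref{it:fish3} applied to $P_\star,-P_\star$ only controls $g(0)$ and $g(2P_\star)$ and never bounds $g(\pm P_\star)$ from below.

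Second, and more seriously, your proposed source of $U$ fails. Since $|A\cap T|+|A\cap(-T)|>1.12\cdot 5^{n-2}=1.12\,|W|$, Corollary~\ref{Bdumm} shows that the suitably translated sum $(A\cap T)+(A\cap(-T))$ is all of $W$. Its symmetry group is then $W$ itself, and all you learn is $A\cap W=\varnothing$. No hyperplane of $W$ comes out, so \ref{it:owl5} remains unproved.

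The missing idea is to let the heavy pairs of \ref{it:owl6} produce $U$.
\begin{itemize}
\item Lemma~\ref{lem:314}, applied to the fishy $g$, gives a point $Q$ with $g(Q),g(P_\star+Q)>0$ and $g(Q)+g(P_\star+Q)>2.2$. This is where $\|g\|_\infty\le 3$, $g(P_\star)>2.8$ and $g(-P_\star)>2.5$ are needed.
\item The set $A^\phi_{P_\star+Q}-A^\phi_Q$ lies in $\phi^{-1}(P_\star)$ and misses $A^\phi_{P_\star}$. So apply Lemma~\ref{lem:1419} in $\ker\phi$ to translates of $A^\phi_{P_\star+Q}$, $-A^\phi_Q$ and $A^\phi_{P_\star}$ with $t=14$; note $2.8\cdot 5^{n-3}=14\cdot 5^{n-4}$ and $2.2\cdot 5^{n-3}=(25-14)5^{n-4}$.
\item This yields a hyperplane $K_Q\le\ker\phi$ such that $A^\phi_{P_\star}$ lies in $\lceil 14/5\rceil=3$ cosets of $K_Q$, while $A^\phi_Q$ and $A^\phi_{P_\star+Q}$ together meet $a+b=3$ cosets. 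This is exactly where the thresholds $2.8$ and $2.2$ come from.
\end{itemize}

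The ``single $U$'' problem you flag as the main obstacle is settled by a one-line count. If $A^\phi_{P_\star}$ were covered by three cosets of each of two distinct hyperplanes of $\ker\phi$, it would lie in nine cosets of their intersection. That forces $g(P_\star)\le 9\cdot 5^{n-4}/5^{n-3}=1.8$, a contradiction. Hence all $K_Q$ equal one hyperplane $K$, and you take $U=K$. Then \ref{it:owl5} and \ref{it:owl6} follow from these coset counts together with $g(Q)\le |L_Q\cap I|$, $g(P_\star)>2.8$ and $g(Q)+g(P_\star+Q)>2.2$.
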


We conclude this section by explaining why this result implies our main theorem. 

\begin{proof}[Proof of Theorem~\ref{thm:main} assuming Proposition~\ref{prop:0135}]
     We will establish the following statement by induction on \(n\ge 2\). 
     
     \begin{quotation}
     	\it 
		Every sum-free set \(A\subseteq\FF_5^n\) of size \(|A|\geq28\cdot5^{n-3}\) 
		is normal or \(\VL\).
	\end{quotation}

    The base case, $n=2$, amounts to the fact that every sum-free set \(A\subseteq\FF_5^2\)
    with at least six elements is normal, which is a simple consequence of Theorem~\ref{5n=2}. 
    In the inductive step from~\({n-1}\) to \(n\) we consider any sum-free set 
    \(A\subseteq\FF_5^n\) of size~\({|A|\geq28\cdot5^{n-3}}\), where \(n\ge 3\). 
    Notice that every two-dimensional subspace \(E\le \FF^n_5\) contains~\(24\) 
    nonzero vectors, while each nonzero \(v\in \FF^n_5\) is contained in the same number 
    of two-dimensional subspaces~\(E\). So because of \(0\not\in A\) an averaging  
    argument yields a two-dimensional subspace~\({E\le \FF^n_5}\) such that 
	\[
        |A\cap E|
        \ge
        \left\lceil\frac{24|A|}{5^n-1}\right\rceil
        \ge
        \left\lceil\frac{24\cdot 28}{125}\right\rceil
        =
        6\,.
    \]
    
    By Theorem~\ref{5n=2} the sum-free subset \(A\cap E\) of \(E\) is normal and, hence, there 
    is a one-dimensional space \(L\le E\) intersecting \(A\) in a set of the 
    form \(A\cap L=\{v, -v\}\). We now want to find a hyperplane \(H\) such 
    that \(L\le H\le \FF^n_5\) and \(|A\cap H|\) is large. 
    Since all vectors in~\({\FF^n_5\sm L}\) are in the same number 
    of such hyperplanes, another averaging argument provides 
    a hyperplane \(H\) satisfying 
    \begin{align*}
        |A\cap H|
        &\ge
        2+\left\lceil\frac{(|A|-2)(5^{n-1}-5)}{5^n-5}\right\rceil
        \ge
        2+\frac{(28\cdot5^{n-3}-2)\cdot(5^{n-1}-5)}{5^n} \\
        &>
        2+28\cdot5^{n-4}-\frac{38}{25}
        >
        28\cdot5^{n-4}\,.
    \end{align*}
    
    Since the last inequality is strict, the induction hypothesis applied in \(H\) 
    shows that~\({A\cap H}\) is a normal subset of~\(H\).
    This means that there exists an affine hyperplane~\(T\) in~\(H\) such 
    that~\({A\cap H\subseteq T\cup (-T)}\). Now~\(T\) satisfies the hypothesis 
    of Proposition~\ref{prop:0135} and, consequently,~\(A\) is normal or 
    a \(\VL\)-set. 
\end{proof}

\subsection*{Organisation}
We will prove Proposition~\ref{prop:fish} in Section~\ref{sec:fish} and 
Proposition~\ref{prop:klein} in Section~\ref{sec:klein}. The derivation 
of Propositions~\ref{prop:0133} and~\ref{prop:0135} will then be carried 
out in Section~\ref{sec:kneser}. We conclude with some open problems in 
Section~\ref{sec:conclude}.
  
\section{Fishy functions}\label{sec:fish}

Throughout this section, which is dedicated to the proof of Proposition~\ref{prop:fish}, 
we denote for every \(j\in\FF_5\) the \(j^{\mathrm{th}}\) row of \(\FF_5^2\), i.e., 
the set \(\FF_5\times\{j\}\) by \(R_j\). 
For reasons that will become apparent later, we require the following observation. 

\begin{lemma}\label{lem:Spiel}
    If \(I\subseteq \FF_5^2\) has size at least \(9\) and the complement of \(I-I\)
    contains two linearly independent vectors, then \(I\) is contained in the union 
    of three parallel lines.
\end{lemma}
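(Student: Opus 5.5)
Since \(I-I\) is symmetric, its complement is symmetric too. We may therefore apply an automorphism of \(\FF_5^2\) sending the two given linearly independent vectors to \((1,0)\) and \((0,1)\). After this, the hypothesis says that no two points of \(I\) differ by \((\pm1,0)\) or by \((0,\pm1)\). Concretely, within each row \(R_j\) the set of first coordinates \(\{x\colon (x,j)\in I\}\) contains no two cyclically adjacent residues of \(\FF_5\). The same holds within each column. Since an independent set in the \(5\)-cycle has at most two elements, every row and every column meets \(I\) in at most two points.

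Counting with \(|I|\ge 9\) and five rows gives strong restrictions.
\begin{enumerate}
\item At least four rows contain exactly two points of \(I\).
\item No row is empty, because four rows would give at most \(8\) points.
\item The same two statements hold for columns.
\end{enumerate}
A row with two points has column set of the form \(\{a,a+2\}\), a non-adjacent pair in the cycle. The vertical condition says that the column sets of consecutive rows \(R_j\) and \(R_{j+1}\) are disjoint.

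I will classify the admissible configurations. Fix a row with two points, normalised by a translation in the first coordinate to have column set \(\{0,2\}\). The next row must then use a non-adjacent pair (or a singleton) inside \(\{1,3,4\}\), and the available pairs are \(\{1,3\}\) and \(\{1,4\}\). Propagating this around the five rows, the pairs must go around the cycle in a consistent way. This is cut down further by the column condition: every column must contain at least one point, at least four columns must contain two, and adjacent columns must have disjoint row sets. I expect only a handful of configurations to survive, essentially up to the symmetries \((x,y)\mapsto(\pm x,\pm y)\), swapping the coordinates, and translations. They should look like the ``knight's move'' patterns \(\{(x,y)\colon y-2x\in S\}\) or \(\{(x,y)\colon y+2x\in S\}\) with \(|S|\le 3\), together with small perturbations of them.

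For each surviving configuration I then exhibit explicitly a direction (one of \((1,2)\), \((1,-2)\), \((1,1)\), \((1,-1)\)) in which \(I\) meets at most three parallel lines. This is the desired conclusion, since automorphisms map lines to lines.

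The main obstacle is the bookkeeping in the propagation step: keeping track of which row patterns remain compatible with both the row and the column constraints without missing a case. I would organise it by the position of the (at most one) row with a single point, and use the symmetry between rows and columns to halve the work.
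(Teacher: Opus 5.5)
Your reduction is correct and is the same as the paper's. Once the two missing differences are sent to \((1,0)\) and \((0,1)\), every row meets \(I\) in at most two non-adjacent columns, at most one row has fewer than two points, and consecutive rows have disjoint column sets. The gap is that the proposal stops exactly where the content of the lemma begins. The classification is only announced (``I expect only a handful of configurations to survive''), so nothing after your counting step is actually proved.

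The shape you predict for the survivors is also wrong. For \(d\neq0\), the pattern \(\{y\mp2x\in\{s,s+d\}\}\) contains pairs of points differing by \((0,d)\), \((1,d\pm2)\) and \((-1,d\mp2)\), and one of these is always \((0,\pm1)\) or \((\pm1,0)\). In fact no admissible set with \(|I|\ge 9\) lies in three lines of direction \((1,\pm2)\); they all run along \((1,1)\) or \((1,-1)\).

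The missing idea is already in your own computation. The two admissible successors \(\{1,3\}\) and \(\{-1,1\}\) of \(\{0,2\}\) are the translates \(\{0,2\}\pm1\). So, writing \(C_j\) for the column set of \(R_j\), you have \(C_{j+1}=C_j+s_{j+1}\) with \(s_{j+1}\in\{\pm1\}\) whenever both rows contain two points. From here the proof splits into two cases.

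\textbf{All five rows contain two points.} The five shifts sum to \(0\) in \(\FF_5\), which forces them all to be equal. Then \(I\) is \(\{x-y\in\{c,c+2\}\}\) or \(\{x+y\in\{c,c+2\}\}\), i.e.\ two parallel lines.

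\textbf{One row contains a single point.} Let \(R_4\) be that row and normalise \(C_0=\{0,2\}\). The point of \(R_4\) must avoid \(C_0\cup C_3\). Checking the eight choices of \((s_1,s_2,s_3)\) shows that \(x-y\) takes at most three values on \(I\) when at least two shifts are \(+1\), and \(x+y\) does so when at least two shifts are \(-1\).

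This finishes the proof in a few lines. The column bookkeeping you planned is unnecessary, since vertical adjacency is exactly the condition \(C_j\cap C_{j+1}=\varnothing\).

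The paper does the same thing by forcing points rather than tracking shifts. With \(R_0\cap I=\{(\pm1,0)\}\) it derives \((0,\pm1),(2,1),(1,2)\in I\). Then either \((-2,-2)\in I\), giving three lines parallel to \((1,-1)\), or \(I\) misses \(\langle(1,1)\rangle\). In the latter case, failing to be covered by three translates of \(\langle(1,1)\rangle\) would force \((-1,2),(2,-1)\in I\), and then \(|I|\le 8\).
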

    
\begin{figure}[h]
	\centering
	\begin{subfigure}[b]{.32\textwidth}
	\centering
		\begin{tikzpicture}[scale=0.7]
		\foreach \x in {-2,...,3} \draw (-2.5, \x-.5)--(2.5, \x-.5) (\x-.5, -2.5)--(\x-.5, 2.5);
		\foreach \x in {-2,...,2} 
			\node at (\x,0) [label={[yshift=-2.5cm]\footnotesize\(\x\)}] {};
		\foreach \y in {-2,...,2} 
			\node at (0,\y) [label={[xshift=2.1cm, yshift=-0.4cm]\footnotesize\(\y\)}] {};
		\foreach \x in {-1, 1} \draw (\x, 0) circle(7pt);
		\foreach \x/\y in {-1/1, 0/0, 1/-1, -2/0, 1/1, 2/0, -1/-1}
			\draw (\x,\y) node[cross]{};
		\end{tikzpicture}
		\caption{}\label{fig:Spiel1}
	\end{subfigure}
\hfill
	\begin{subfigure}[b]{.32\textwidth}
	\centering
		\begin{tikzpicture}[scale=0.7]
		\foreach \x in {-2,...,3} \draw (-2.5, \x-.5)--(2.5, \x-.5) (\x-.5, -2.5)--(\x-.5, 2.5);
		\foreach \x in {-2,...,2} 
			\node at (\x,0) [label={[yshift=-2.5cm]\footnotesize\(\x\)}] {};
		\foreach \y in {-2,...,2} 
			\node at (0,\y) [label={[xshift=2.1cm, yshift=-0.4cm]\footnotesize\(\y\)}] {};
		\foreach \x/\y in {1/0, -1/0, 0/1, 0/-1} \draw (\x,\y) circle(7pt);
		\foreach \x/\y in {-1/1, 0/0, 1/-1, -2/0, 1/1, 2/0, -1/-1, 0/2, 0/-2}
			\draw (\x,\y) node[cross]{};
		\end{tikzpicture}
		\caption{}\label{fig:Spiel2}
	\end{subfigure}
\hfill
	\begin{subfigure}[b]{.32\textwidth}
	\centering
		\begin{tikzpicture}[scale=0.7]
		\foreach \x in {-2,...,3} \draw (-2.5, \x-.5)--(2.5, \x-.5) (\x-.5, -2.5)--(\x-.5, 2.5);
		\foreach \x in {-2,...,2} 
			\node at (\x,0) [label={[yshift=-2.5cm]\footnotesize\(\x\)}] {};
		\foreach \y in {-2,...,2} 
			\node at (0,\y) [label={[xshift=2.1cm, yshift=-0.4cm]\footnotesize\(\y\)}] {};
		\foreach \x/\y in {1/0, -1/0, 0/1, 0/-1, 2/1} \draw (\x,\y) circle(7pt);
		\foreach \x/\y in {-1/1, 0/0, 1/-1, -2/0, 1/1, 2/0, -1/-1, 0/2, 0/-2, -2/1, 2/2}
			\draw (\x,\y) node[cross]{};
		\end{tikzpicture}
		\caption{}\label{fig:Spiel3}
	\end{subfigure}
	\caption{}
	\vspace{-1em}
\end{figure}

\begin{proof}
    As the statement is invariant under affine maps, we can assume that \((0, 1)\) 
    and~\((1, 0)\) are not in \(I-I\). This means that no two elements of \(I\) are 
    horizontally or vertically adjacent. Therefore, each row contains at most two 
    members of \(I\). By the box principle and translation invariance, we can suppose 
    that with the possible exception of \(R_{-2}\) all rows contain exactly two elements 
    of \(I\) and that \((0, \pm 1)\) are in \(R_0\cap I\). This situation is depicted 
    in Figure~\ref{fig:Spiel1}, where members of~\(I\) are represented as circles and 
    the crosses indicate boxes that are certainly not in~\(I\)---because they are 
    neighbours of boxes in~\(I\).   
            
\begin{figure}[h]
	\centering
	\begin{subfigure}[b]{.32\textwidth}
	\centering
		\begin{tikzpicture}[scale=0.7]
		\foreach \x in {-2,...,3} \draw (-2.5, \x-.5)--(2.5, \x-.5) (\x-.5, -2.5)--(\x-.5, 2.5);
		\foreach \x in {-2,...,2} 
			\node at (\x,0) [label={[yshift=-2.5cm]\footnotesize\(\x\)}] {};
		\foreach \y in {-2,...,2} 
			\node at (0,\y) [label={[xshift=2.1cm, yshift=-0.4cm]\footnotesize\(\y\)}] {};
		\foreach \x/\y in {1/0, -1/0, 0/1, 0/-1, 2/1, 1/2} \draw (\x,\y) circle(7pt);
		\foreach \x/\y in {-1/1, 0/0, 1/-1, -2/0, 1/1, 2/0, -1/-1, 0/2, 0/-2, -2/1, 2/2, 1/-2}
			\draw (\x,\y) node[cross]{};
		\end{tikzpicture}
		\caption{}\label{fig:Spiel4}
	\end{subfigure}
\hfill
	\begin{subfigure}[b]{.32\textwidth}
	\centering
		\begin{tikzpicture}[scale=0.7]
		\foreach \x in {-2,...,3} \draw (-2.5, \x-.5)--(2.5, \x-.5) (\x-.5, -2.5)--(\x-.5, 2.5);
		\foreach \x in {-2,...,2} 
			\node at (\x,0) [label={[yshift=-2.5cm]\footnotesize\(\x\)}] {};
		\foreach \y in {-2,...,2} 
			\node at (0,\y) [label={[xshift=2.1cm, yshift=-0.4cm]\footnotesize\(\y\)}] {};
		\foreach \x/\y in {1/0, -1/0, 0/1, 0/-1, 2/1, 1/2, -2/-2} \draw (\x,\y) circle(7pt);
		\foreach \x/\y in {-1/1, 0/0, 1/-1, -2/0, 1/1, 2/0, -1/-1, 0/2, 0/-2, -2/1, 
			2/2, 1/-2, 2/-2, -2/2, -2/-1, -1/-2} \draw (\x,\y) node[cross]{};
		\end{tikzpicture}
    	\caption{}\label{fig:Spiel5}
	\end{subfigure}
\hfill
	\begin{subfigure}[b]{.32\textwidth}
	\centering
		\begin{tikzpicture}[scale=0.7]
		\foreach \x in {-2,...,3} \draw (-2.5, \x-.5)--(2.5, \x-.5) (\x-.5, -2.5)--(\x-.5, 2.5);
		\foreach \x in {-2,...,2} 
			\node at (\x,0) [label={[yshift=-2.5cm]\footnotesize\(\x\)}] {};
		\foreach \y in {-2,...,2} 
			\node at (0,\y) [label={[xshift=2.1cm, yshift=-0.4cm]\footnotesize\(\y\)}] {};
		\foreach \x/\y in {1/0, -1/0, 0/1, 0/-1, 2/1, 1/2, -1/2, 2/-1} \draw (\x,\y) circle(7pt);
		\foreach \x/\y in {-1/1, 0/0, 1/-1, -2/0, 1/1, 2/0, -1/-1, 0/2, 0/-2, -2/1, 
			2/2, 1/-2, 2/-2, -2/2, -2/-1, -1/-2, -2/-2} \draw (\x,\y) node[cross]{};
		\end{tikzpicture}
    	\caption{}\label{fig:Spiel6}
	\end{subfigure}
	\caption{}
\end{figure}
	
	One of the two non-adjacent members of \(R_1\cap I\) 
    needs to be \((0, 1)\). For \(R_{-1}\) the same argument gives \((0, -1)\in I\) 
    and we reach the situation drawn in Figure~\ref{fig:Spiel2}.
	By symmetry about the second coordinate axis, we can suppose that the second 
    element of~\(R_1\cap I\) is \((2, 1)\) (see Figure~\ref{fig:Spiel3}). 
	Next, one of the two non-adjacent elements of \(R_2\cap I\) has to be~\((1, 2)\), 
	which brings 
    as into the situation drawn in Figure~\ref{fig:Spiel4}.   

	If \((-2,-2)\in I\), then \(I\) is contained in a union of three translates 
	of \(\langle (1, -1)\rangle\) and we are done (see Figure~\ref{fig:Spiel5}).
	So suppose \((-2,-2)\not\in I\) from now on, which causes \(I\) to be disjoint to 
	the line \(\langle (1, 1)\rangle\). If \(I\) fails to be coverable by three translates
	of this line, then \((-1, 2)\) and~\((2, -1)\) are in \(I\), and we arrive at the 
	contradiction \(|I|\le 8\) (see Figure~\ref{fig:Spiel6}).
\end{proof}

The four boxes, where the function~\(f_\alpha\) from Figure~\ref{fig:fA} attains the 
value~\(2\), are on a common line through the origin. In cases where we reach the 
outcome \(f\cong f_\alpha\) of Proposition~\ref{prop:fish}, the next lemma will help 
us to identify these values~\(2\).

\begin{lemma}\label{lem:pedantic}
	If \(f\) denotes a fishy function and a point \(P\in\FF_5^2\) satisfies \(f(P), f(2P)>0\), 
	then 
	\[
		f(-2P)+f(-P)+f(P)+f(2P)\le 8\,.
	\] 
	Moreover, equality can only hold if \(f(-2P)=f(-P)=f(P)=f(2P)=2\).
\end{lemma}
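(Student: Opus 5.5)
Throughout I write \(a=f(P)\), \(b=f(2P)\), \(c=f(-P)\) and \(d=f(-2P)\); by hypothesis \(a,b>0\). The plan is to use only axiom~\ref{it:fish3}. I will apply it to pairs of points lying on the line \(\langle P\rangle\), using the identities
\[
P+P=2P,\quad 2P+2P=-P,\quad (-P)+(-P)=-2P,\quad (-2P)+(-2P)=P,\quad (-2P)-2P=P,\quad P-2P=-P.
\]
I read~\ref{it:fish3} as allowing \(Q=P\), since it is stated for all \(P,Q\). Whenever all three values involved are positive, \ref{it:fish3} turns each identity into a linear inequality; the ceiling is only ever weakened via \(\lceil t\rceil\ge t\). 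I then split into cases according to which of \(c,d\) are positive.

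\emph{Case \(c,d>0\).} All four values are positive, so the first four identities give
\[
2a+b\le 6,\qquad 2b+c\le 6,\qquad 2c+d\le 6,\qquad 2d+a\le 6.
\]
Adding them yields \(3(a+b+c+d)\le 24\), i.e.\ the claimed bound \(a+b+c+d\le 8\). If equality holds, all four inequalities must be equalities. The resulting cyclic linear system \(2a+b=2b+c=2c+d=2d+a=6\) has nonsingular coefficient matrix, because its determinant is \(2^4-(-1)^4\cdot(-1)\cdot\ldots\ne 0\); concretely, eliminating successively gives \(15a=30\). Its unique solution is \(a=b=c=d=2\), which is the equality statement.

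\emph{Case \(c=0\), \(d>0\).} Apply~\ref{it:fish3} to the points \(-2P\) and \(2P\) with \(\eps=-1\), using \((-2P)-2P=P\) and \(a>0\). This gives \(d+b+\lceil a\rceil\le 6\). Hence
\[
a+b+c+d\le 6+a-\lceil a\rceil\le 6<8.
\]

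\emph{Case \(c>0\), \(d=0\).} Symmetrically, apply~\ref{it:fish3} to \(P\) and \(2P\) with \(\eps=-1\), using \(P-2P=-P\). This gives \(a+b+\lceil c\rceil\le 6\), so the sum is at most \(6\).

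\emph{Case \(c=d=0\).} Apply~\ref{it:fish3} to \(P+P=2P\), giving \(2a+\lceil b\rceil\le 6\). Since \(a>0\), this yields \(a+b\le 6-a<6\).

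In the last three cases the bound is strict, so equality can only occur in the first case, where we showed it forces all four values to be \(2\). The only point needing care is the legitimacy of taking \(Q=P\) in~\ref{it:fish3}. The statement quantifies over all \(P,Q\in\FF_5^2\), so I expect this to be harmless; no real obstacle arises beyond organising the case split.
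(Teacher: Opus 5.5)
Your proof is correct and is essentially the paper's argument: the same case split according to which of \(f(-P)\), \(f(-2P)\) vanish, the same single applications of \ref{it:fish3} in the degenerate cases (including \(Q=P\), which the paper also uses), and in the main case four instances of \ref{it:fish3} summed and divided by \(3\). The only difference is the choice of relations in the main case. The paper uses \(P+2P=-2P\), \(-P+2P=P\), \(P+(-2P)=-P\), \(-P+(-2P)=2P\); each of these omits exactly one of the four values, so equality forces every value to be \(8-6=2\) at once. Your doubling relations instead require solving the cyclic system. Your elimination \(15a=30\) does this correctly, and the garbled determinant expression can simply be dropped.
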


\begin{proof}
	Suppose first that \(f(\lambda P)\) is positive for every \(\lambda\in\FF_5^\times\).
	Four applications of~\ref{it:fish3} with~\({\eps=1}\) reveal
	\begin{align*}
		f(P)+f(2P)+f(-2P)&\le 6\,, \\
		f(-P)+f(2P)+f(P)&\le 6\,, \\
		f(P)+f(-2P)+f(-P)&\le 6\,, \\
		f(-P)+f(-2P)+f(2P)&\le 6\,. 
	\end{align*}
	By adding these estimates and dividing by \(3\) we obtain
	\(f(-2P)+f(-P)+f(P)+f(2P)\le 8\). Moreover, if this holds 
	with equality, then the previous four inequalities hold with 
	equality as well and \(f(-2P)=f(-P)=f(P)=f(2P)=2\) follows. 
	
	If exactly one of \(f(-P)\), \(f(-2P)\) vanishes, our claim follows from the 
	first or second of the above estimates. In the remaining case, \(f(-P)=f(-2P)=0\),  
	we exploit that~\ref{it:fish3} implies~\({2f(P)+f(2P)\le 6}\). 
\end{proof}

We call \(\FF_5^2=\{P_1, \dots, P_{25}\}\) a {\it nonincreasing enumeration} 
for a function \(f\colon \FF_5^2\lra\RR_{\ge 0}\) 
if 
\[
	f(P_1)\ge f(P_2)\ge \dots\ge f(P_{25})\,.
\]
The two lemmata that follow will allow us to derive upper bounds 
on \(\|f\|_1=\sum_{i=1}^{25} f(P_i)\) for certain fishy functions~\(f\). 

\begin{lemma}\label{lem:P4P5}
    Let \(\FF_5^2=\{P_1, \dots, P_{25}\}\) be a nonincreasing enumeration for
     a fishy function~\(f\). 
    If the support of \(f\) cannot be covered by three parallel lines
    and \(f\) fails to be isomorphic to \(f_\alpha\), then
    \begin{enumerate}[label=\rmlabel]  
    	\item\label{it:P45i} \(f(P_4)+f(P_5)\le 5\),
    	\item\label{it:P45ii} \(f(P_1)+\dots+f(P_5)\le 20\),
		\item\label{it:P45iii} and \(f(P_1)+\dots+f(P_6)<22.5\).
	\end{enumerate}
\end{lemma}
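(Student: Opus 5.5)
Part (i) is the heart of the matter; parts (ii) and (iii) follow from it together with \ref{it:fish1}. Indeed, given~(i) we have \(f(P_5)\le 2.5\), hence \(f(P_i)\le 2.5\) for all \(i\ge 5\). Using \(\|f\|_\infty\le 5\) we get \(f(P_1)+f(P_2)+f(P_3)\le 15\), and adding (i) gives (ii). For (iii) it suffices to show \(f(P_6)<2.5\) whenever the sum in (ii) is close to \(20\). More precisely, \(f(P_1)+\dots+f(P_6)\le 15+f(P_4)+f(P_5)+f(P_6)\le 15+\tfrac32(f(P_4)+f(P_5))\le 22.5\). Equality would force \(f(P_1)=f(P_2)=f(P_3)=5\) and \(f(P_4)=f(P_5)=f(P_6)=2.5\). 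But then a single point \(X=\{P_4\}\) has \(f\)-value \(\tfrac52\), which is half of an odd integer and contradicts \ref{it:fish2}. So strict inequality holds.

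For (i) I argue by contradiction: suppose \(f(P_4)+f(P_5)>5\). Then \(f(P_i)>2.5\) for \(i=1,\dots,4\). The tool is \ref{it:fish3}: whenever three points \(P,Q,P\pm Q\) all lie in the support and two of them have values above \(2.5\), the third has \(\lceil f\rceil\le 0\), which is impossible. More generally, any two points with values summing to more than \(5\) cannot form a triple \(P,Q,P+\eps Q\) with a third point of the support. Let \(S=\{P_1,\dots,P_5\}\). No three elements of \(S\) can satisfy \(R=P\pm Q\) unless the values are small. Since \(f(P_1)+\dots+f(P_4)>10\), for any \(P_i,P_j\) among the four heavy points the ceiling of the third value would have to be below \(1\), so no point \(P_i\pm P_j\) lies in the support.

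Consider first the case \(i,j\le 4\). The points \(\pm P_i\pm P_j\) and the doubles \(\pm 2P_i\) (from \(P+P\)) are then excluded from the support. The case \(P=Q\) in \ref{it:fish3} gives \(2f(P)+\lceil f(2P)\rceil\le 6\), so with \(f(P)>2.5\) the point \(2P\) lies outside the support. The plan is to show that the heavy set \(\{P_1,\dots,P_4\}\) is sum-free-like in the symmetric sense. Such four points with all pairwise sums and differences avoided have a rigid structure in \(\FF_5^2\): up to an automorphism they are \(\{(\pm1,0),(0,\pm1)\}\), or they lie on two parallel lines symmetric about the origin. In the first case, the support avoids \(I\pm I\) for the heavy set, which leaves exactly the remaining boxes, namely those of \(f_\alpha\) or of coverable shapes. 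Combining Lemma~\ref{lem:pedantic} with \(\|f\|_1\ge 28\) should then force \(f=f_\alpha\) or a support covered by three parallel lines, contradicting the hypotheses.

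The main obstacle is exactly this finite case analysis for (i). One must classify configurations of four (or five) points with values above \(2.5\), list the excluded boxes, and then check that the leftover mass cannot reach \(28\) unless one of the excluded outcomes occurs. I expect to use Lemma~\ref{lem:Spiel} in the non-\(f_\alpha\) case: the complement of \(\mathrm{supp}(f)-\mathrm{supp}(f)\) will contain two independent vectors coming from differences of heavy points. Then either the support has at most \(8\) points, so that \(\|f\|_1\le 4\cdot 5+4\cdot 2=28\) with equality analysed via Lemma~\ref{lem:pedantic}, or the support is covered by three parallel lines.
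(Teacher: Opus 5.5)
Your derivation of (ii) and (iii) from (i) is correct and is essentially the paper's. The gap is in (i), which carries the whole content of the lemma. What you give there is a plan, and both of its load-bearing claims fail.

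\emph{The exclusion list and the classification of the heavy set.} Property \ref{it:fish3} only constrains triples $P,Q,P+\eps Q$. So for heavy $P_i,P_j$ it removes $P_i+P_j$, $\pm(P_i-P_j)$ and $2P_i$ from the support, and it removes $-2P_i$ only from the heavy set, because $2(-2P_i)=P_i$. It says nothing about $-(P_i+P_j)$: the values $3,3,3$ at $(1,0),(0,1),(-1,-1)$ satisfy \ref{it:fish3}.

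Your over-long list is exactly what would make $T\cup(-T)$ a symmetric sum-free set and yield your dichotomy via Theorem~\ref{5n=2}. With the correct list the dichotomy is false. Take $T=\{(-1,1),(0,1),(1,1),(1,-2)\}$. It satisfies $T\cap(T+T)=T\cap(T-T)=T\cap(\pm 2T)=\varnothing$. Yet it lies in no $L\cup(-L)$: two of its three points on $R_1$ would lie on $L$ or on $-L$, forcing $\{L,-L\}=\{R_1,R_{-1}\}$, which misses $(1,-2)$.

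Such a $T$ is excluded only by the hypothesis that the support is not covered by three parallel lines. Here $R_0$, $(0,2)$, $(\pm1,2)$ and $(\pm2,2)=2(\pm1,1)$ all vanish, so the support lies in $R_1\cup R_{-1}\cup R_{-2}$. Your sketch never says where this hypothesis is used.

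\emph{The appeal to Lemma~\ref{lem:Spiel}.} That lemma needs two independent vectors outside $\mathrm{supp}(f)-\mathrm{supp}(f)$. Differences of heavy points are only shown to lie outside $\mathrm{supp}(f)$ itself. Via \ref{it:fish3}, getting $u\notin\mathrm{supp}(f)-\mathrm{supp}(f)$ requires $f(u)=5$, which is how Lemma~\ref{lem:fish4} uses it. With only $f(u)>2.5$ you learn merely $f(x)<2.5$ whenever $x$ and $x+u$ both lie in the support.

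\emph{What is missing is the paper's route.}
\begin{enumerate}
\item Since $f(P_i)+f(P_j)>5$ for all distinct $i,j\le 5$, the five-point set $S=\{P_1,\dots,P_5\}$ is sum-free.
\item Lemma~\ref{3collin} then places $(-1,1),(0,1),(1,1)$ in $S$, which kills $R_0$ and $(0,2),(\pm1,2)$.
\item Only now does non-coverability give, up to reflection, $f(2,2)>0$. This forces $(1,1)=P_5$ and leaves three candidate positions for the last two heavy points.
\item In one resulting case, two parallel lines miss the support, so the support is covered by three parallel lines.
\item In the other case, four bounds $f(P)+f(Q)\le 5$ (from \ref{it:fish3} using $f(2,2)>0$) plus Lemma~\ref{lem:pedantic} on $\langle(1,1)\rangle$ give $\|f\|_1\le 28$. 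The equality analysis then yields $f\cong f_\alpha$.
\end{enumerate}
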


\begin{proof}
    It suffices to prove part~\ref{it:P45i}, because then 
    \(f(P_1)+f(P_2)+f(P_3)\le 3\|f\|_\infty\le 15\) yields~\ref{it:P45ii}. 
    Moreover \(f(P_6)\le f(P_5)\le \frac 12\bigl(f(P_4)+f(P_5)\bigr)=2.5\) 
    and~\ref{it:fish2} lead to~\({f(P_6)<2.5}\), 
    so that~\ref{it:P45iii} follows as well.  
    
    Assume for the sake of contradiction that \(f(P_4)+f(P_5)>5\). 
    By~\ref{it:fish3} applied to~\(P=Q=P_4\) 
    and \(\eps=-1\) we have \(f(0, 0)=0\). Similarly, 
    \begin{equation}\label{eq:1307}
    	\text{ if \(i, j\in [5]\) are distinct, then \(f(P_i\pm P_j)=0\)}\,.
	\end{equation}
    
    In particular, the set \(S=\{P_1, \dots, P_5\}\) is sum-free.  
    By Lemma~\ref{3collin} we can therefore suppose without loss of generality 
    that \((-1,1), (0,1), (1,1)\in S\). 
    In Figure~\ref{fig:P4P5.1} elements of~\(S\) are drawn as circles and 
    boxes~\(P\) satisfying \(f(P)=0\) are marked by crosses.  

\begin{figure}[h]
	\centering
	\begin{subfigure}[b]{.32\textwidth}
	\centering
		\begin{tikzpicture}[scale=0.7]
		\foreach \x in {-2,...,3} \draw (-2.5, \x-.5)--(2.5, \x-.5) (\x-.5, -2.5)--(\x-.5, 2.5);
		\foreach \x in {-2,...,2} \draw (\x,0) node[cross]{};
		\foreach \x in {-1,...,1} \draw (\x,2) node[cross]{};
		\foreach \x in {-1,...,1} \draw (\x,1) circle (7pt);
		\end{tikzpicture}
		\caption{}\label{fig:P4P5.1}
	\end{subfigure}
\hfill
	\begin{subfigure}[b]{.32\textwidth}
	\centering
		\begin{tikzpicture}[scale=0.7]
		\foreach \x in {-2,...,3} \draw (-2.5, \x-.5)--(2.5, \x-.5) (\x-.5, -2.5)--(\x-.5, 2.5);
		\foreach \x in {-2,...,2} \draw (\x,0) node[cross]{};
		\foreach \x in {-2,...,1} \draw (\x,2) node[cross]{};
		\foreach \x in {-1,0} \draw (\x,1) circle(7pt);
		\draw (1,1) node{\(P_5\)};
		\draw (2,2) node{\tiny \(>0\)};
		\draw (0,-2) node{$\xcancel{T}$};
		\draw (2,-2) node{$\xcancel{T}$};
		\end{tikzpicture}
	\caption{}\label{fig:P4P5.2}
	\end{subfigure}
\hfill
	\begin{subfigure}[b]{.32\textwidth}
	\centering
		\begin{tikzpicture}[scale=0.7]
		\foreach \x in {-2,...,3} \draw (-2.5, \x-.5)--(2.5, \x-.5) (\x-.5, -2.5)--(\x-.5, 2.5);
		\foreach \x in {-2,...,2} \draw (\x,0) node[cross]{};
		\foreach \x in {-2,...,1} \draw (\x,2) node[cross]{};
		\foreach \x in {-1,0} \draw (\x,1) circle(7pt);
		\draw (1,1) node{\(P_5\)};
		\draw (2,2) node{\tiny \(>0\)};
		\foreach \x/\y in {-2/-1, -1/-1, 2/-1, -2/-2, 0/-2, 1/-2, 2/-2, -2/1, 2/1}
			\draw (\x,\y) node{$\xcancel{T}$};
		\end{tikzpicture}
	\caption{}\label{fig:P4P5.3}
	\end{subfigure}
	\caption{}\label{fig:P4P51}
\end{figure}

We now start to analyse the set \(T=\{P_1,\dots, P_4\}\). 
	By \(f(P_4)\ge \frac12\bigl(f(P_4)+f(P_5)\bigr)>2.5\) and~\ref{it:fish3}
	we have
	\begin{equation}\label{eq:1109}
		f(2P)=0 \text{ for each } P\in T\,.
	\end{equation}
	Thus \(-2P\in T\) would imply \(f(P)=f\bigl(2(-2P)\bigr)=0\), which is absurd.  
	This proves
	\begin{equation}\label{eq:1559}	
		-2P\not\in T \text{ whenever } P\in T\,.
	\end{equation}
	
	Since the support of \(f\) cannot be covered by three rows, we can assume by symmetry 
	that~\({f(2, 2)>0}\). This implies \((1, 1)\not\in T\), so that only the possibility
	\((1, 1)=P_5\) remains. Thus \((-1, 1), (0, 1)\in T\) and~\eqref{eq:1559} 
	entails \((0, -2), (2, -2)\not\in T\). 
	The current situation is shown in Figure~\ref{fig:P4P5.2}.
	If one of the seven points 
	\[
		(-2, 1), (2, 1), (-2,-1), (-1,-1), (2,-1), (-2,-2), (1, -2)
	\]
	belonged to \(T\), then~\eqref{eq:1307} would lead to the contradiction \(f(2,2)=0\). 
	In Figure~\ref{fig:P4P5.3} these seven points are marked by a crossed out~\(T\).      
	So there remain only three candidates for the remaining two elements of~\(T\).       

\medskip
   
    {\it \hskip2em First Case: \((-1,-2)\in T\)}
        
    \smallskip

    Due to~\eqref{eq:1307} and \((-1,-2)+(1,1)=(0,-1)\) we have \(f(0, -1)=0\), 
    so \(T\) is given by the circles in Figure~\ref{fig:P4P5case1.1}.
    The remaining crosses in this figure follow from~\eqref{eq:1307} and~\eqref{eq:1109}. 
    So the two parallel lines \(\langle (2,1)\rangle\) and \((1,0)+\langle (2,1)\rangle\)
    are disjoint to the support of~\(f\). Consequently, the support of~\(f\) can be 
    covered by three parallel lines.
        
\begin{figure}[h]
	\centering
	\begin{subfigure}[b]{.32\textwidth}
	\centering
		\begin{tikzpicture}[scale=0.7]
		\foreach \x/\y in {-1/2, 2/1, 0/0, -2/-1, 1/-2}
			\filldraw[SeaGreen] (\x,\y) +(-.5,-.5) rectangle ++(.5,.5);
		\foreach \x/\y in {0/2, -2/1, 1/0, -1/-1, 2/-2}
			\filldraw[Red] (\x,\y) +(-.5,-.5) rectangle ++(.5,.5);
		\foreach \x in {-2,...,3} \draw (-2.5, \x-.5)--(2.5, \x-.5) (\x-.5, -2.5)--(\x-.5, 2.5);
		\foreach \x/\y in {-2/2, -1/2, 0/2, 1/2, -2/1, 2/1, -2/0, 
			-1/0, 0/0, 1/0, 2/0, -2/-1, -1/-1, 1/-2, 2/-2}
			\draw (\x,\y) node[cross]{};
		\foreach \x/\y in {-1/1, 0/1, -1/-2, 1/-1}\draw (\x,\y) circle(7pt);
		\draw (1,1) node{\(P_5\)};
		\draw (2,2) node{\tiny \(>0\)};
		\end{tikzpicture}
	\caption{}\label{fig:P4P5case1.1}
	\end{subfigure}
\hfill
	\begin{subfigure}[b]{.32\textwidth}
	\centering
		\begin{tikzpicture}[scale=0.7]
		\foreach \x in {-2,...,3} \draw (-2.5, \x-.5)--(2.5, \x-.5) (\x-.5, -2.5)--(\x-.5, 2.5);
		\foreach \x in {-2,...,2} \draw (\x,0) node[cross]{};
		\foreach \x in {-2,...,1} \draw (\x,2) node[cross]{};
		\foreach \x in {-1,...,2} \draw (\x,-2) node[cross]{};
		\foreach \x/\y in {-1/1, 0/1, 0/-1, 1/-1} \draw (\x,\y) circle(7pt);
		\draw (1,1) node{\(P_5\)};
		\draw (2,2) node{\tiny \(>0\)};
		\end{tikzpicture}
		\caption{}\label{fig:P4P5case2.1}
	\end{subfigure}
\hfill
	\begin{subfigure}[b]{.32\textwidth}
	\centering
		\begin{tikzpicture}[scale=0.7]
		\foreach \x in {-2,...,3} \draw (-2.5, \x-.5)--(2.5, \x-.5) (\x-.5, -2.5)--(\x-.5, 2.5);
		\foreach \x in {-2,...,2} \draw (\x,0) node[cross]{};
		\foreach \x in {-2,...,1} \draw (\x,2) node[cross]{};
		\foreach \x in {-1,...,2} \draw (\x,-2) node[cross]{};
		\foreach \x/\y in {-2/1, 2/1, -2/-1, 2/-1} \draw (\x,\y) node[cross]{};
		\foreach \x in {-2, -1, 1, 2} \draw (\x,\x) node{\(2\)};
		\foreach \x/\y in {-1/1, 0/1, 0/-1, 1/-1} \draw (\x,\y) node{\(5\)};
		\end{tikzpicture}
	\caption{}\label{fig:P4P5case1.2}
	\end{subfigure}
	\caption{}
\end{figure}

   {\it \hskip2em Second Case: \((-1,-2)\not\in T\)}
        
    \smallskip

    Now \((0, -1)\) and \((1, -1)\) are the two remaining elements of \(T\) (recall 
    Figure~\ref{fig:P4P5.3}).
    As indicated in Figure~\ref{fig:P4P5case2.1},~\eqref{eq:1307} excludes 
    some further points from the support of \(f\). 
    In view of~\ref{it:fish3}, \(\|f\|_\infty\le 5\), and \(f(2, 2)>0\) 
    we have
    \begin{align*}
        f(2,1)+f(0,-1)&\le 5\,,\\
        f(-2,1)+f(1,-1)&\le 5\,,\\
        f(-1,1)+f(2,-1)&\le 5\,,\\
        f(0,1)+f(-2,-1)&\le 5\,.
    \end{align*}
        
    Furthermore, Lemma~\ref{lem:pedantic} reveals
    \[
    	f(1,1)+f(2,2)+f(-2,-2)+f(-1,-1)\le 8\,.
	\]
	Adding these estimates and taking our knowledge about the support of \(f\) 
	into account we infer \(\|f\|_1\le 4\cdot 5+8=28\). So equality needs to hold throughout.
	
	In particular, Lemma~\ref{lem:pedantic} reveals \(f(1,1)=f(2,2)=f(-2,-2)=f(-1,-1)=2\).
	We also know \(f(2,1)+f(0,-1)=5\), whence \(f(2, 1)+f(0, -1)+\lceil f(2, 2)\rceil =7>6\).
	Due to~\ref{it:fish3} this yields \(f(2, 1)=0\) and, hence, 
	\(f(0, -1)=5\). Repeating this argument three further times we see that \(f\) is the 
	function visualised in Figure~\ref{fig:P4P5case1.2}.
	Up to an appropriate automorphism of~\(\FF_5^2\) this function agrees with \(f_\alpha\)
	(see Figure~\ref{fig:fA}).
\end{proof}

\begin{lemma}\label{lem:albern}
	Given a fishy function \(f\colon \FF_5^2\lra\RR_{\ge 0}\) set 
	\[
		S=\bigl\{P\in \FF_5^2\colon f(P)>2\bigr\}
		\quad \text{ and } \quad 
		T=\bigl\{P\in \FF_5^2\colon f(P)>1\bigr\}\,.
	\]
	If \(|S|\ge 6\) and \(f(T)>25\), then the support of \(f\) is 
	contained in three parallel lines.
\end{lemma}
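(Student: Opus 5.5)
Our plan is to reduce everything to Lemma~\ref{lem:Spiel}. Axiom~\ref{it:fish3} with \(\eps=\pm1\) shows that any two points \(P,Q\in S\) (with \(P=Q\) allowed) satisfy
\[
	f(P)+f(Q)+\lceil f(P\pm Q)\rceil>4+\lceil f(P\pm Q)\rceil.
\]
Hence \(P\pm Q\) lies outside the support, or it lies in the support with \(f(P\pm Q)\le 1\). The same reasoning for \(P\in S\) and \(Q\in T\) gives \(f(P)+f(Q)>3\), so \(f(P\pm Q)\le 2\) whenever \(P\pm Q\) is in the support. So sums and differences of elements of \(S\) have small or vanishing \(f\)-value. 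In particular \(S\) is sum-free, because \(P+Q\in S\) would violate~\ref{it:fish3}. Also \(0\notin\) the support, by \(P=Q\in S\) with \(\eps=-1\).

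Next we find two linearly independent vectors outside \(I-I\), where \(I\) is a suitable large set. The natural candidate is \(I=T\), which carries mass \(f(T)>25\) and satisfies \(\|f\|_\infty\le5\), so \(|T|\ge6\). We want \(|I|\ge 9\). A cleaner route is to use the complement \(Z\) of the support and show it contains much of \(S\pm S\). Since \(S\) is sum-free with \(|S|\ge6\), Corollary~\ref{Bdumm} and Kneser's theorem give \(|S+S|\ge 2|S|-|K|\), where \(K=\Sym(S+S)\). The case \(K=\FF_5^2\) is impossible, and if \(K\) is a line then \(S\) lies in few cosets of a line, which already suggests the conclusion.

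We then count mass. Points of \(T\sm S\) have value in \((1,2]\), and points outside \(T\) contribute at most \(1\) each. Combining \(f(T)>25\) with \(\|f\|_\infty\le 5\) and the restrictions above, we try to show that many points of \(S+S\) and \(S-S\) lie outside the support, and then deduce that two independent directions \(v\) have no pair of support points differing by \(v\). Concretely, for \(v\in S\), every \(P\in T\) with \(P+v\in T\) gives \(f(P)+f(v)+\lceil f(P+v)\rceil>1+2+2=5\). This is not yet a contradiction, so the differences must be taken with \(v\in S\) and the points lying in \(S\) as well. Thus the set \(I\) to feed into Lemma~\ref{lem:Spiel} should be chosen more carefully. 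One option is \(I=S\) together with high-value points, using parity~\ref{it:fish2} to exclude boundary values such as \(2.5\).

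The main obstacle is obtaining \(|I|\ge9\), since \(|S|\ge6\) alone is too small. We would first try \(I=T\). We expect that \(f(T)>25\) and the bound \(f(P)+f(Q)+\lceil f(P\pm Q)\rceil\le 6\) force \(|T|\ge 9\) or force a direct structural conclusion. We also need to show that some two independent elements \(v\in S\) never occur as differences within \(T\). Pairs \(P,P+v\in T\) with \(v\in S\) are only possible when both \(f(P)\) and \(f(P+v)\) are at most about \(2\), and the mass bound limits how many such pairs exist. If this fails, we fall back on a case analysis in the style of Lemma~\ref{lem:P4P5}. There we place three collinear points of \(S\) using Lemma~\ref{3collin}, propagate zeros via~\ref{it:fish3}, and check directly that the support fits in three parallel lines.
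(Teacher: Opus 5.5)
\textbf{Verdict: there is a genuine gap.} Your proposal never becomes an argument. The route you commit to, feeding some set $I$ into Lemma~\ref{lem:Spiel}, cannot work in general, and the step that actually drives the proof is missing.

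\emph{Why Lemma~\ref{lem:Spiel} fails here.} Its hypothesis $|I|\ge 9$ is not forced. Let $f=5$ on the six points $(0,\pm1),(1,\pm1),(2,\pm1)$ and $f=0$ elsewhere. All sums and differences of support points lie in $R_0\cup R_{\pm2}$, so \ref{it:fish3} is vacuous and $f$ is fishy. It has $|S|=6$ and $f(T)=30>25$, yet its whole support has only six points. So neither $I=T$ nor $I=$ the support is available, and your hope that the mass bound forces $|T|\ge 9$ is false. Moreover, Lemma~\ref{lem:Spiel} applied to $T$ would only locate $T$, not the points where $0<f\le 1$, and the statement is about the full support. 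Your fallback (Lemma~\ref{3collin} followed by an unspecified case analysis) is not carried out, and the Kneser remarks stay vague.

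\emph{The missing idea.} You already observed that $S$ is sum-free with $|S|\ge 6$. By Theorem~\ref{5n=2}, such a set is normal, so after an automorphism $S\subseteq R_1\cup R_{-1}$. This is what makes the proof work:
\begin{enumerate}
\item Let $A,B\subseteq\FF_5$ be the $x$-coordinates of $S\cap R_1$ and $S\cap R_{-1}$. Since $|A|+|B|\ge 6$, Corollary~\ref{Bdumm} gives $A-B=B-A=\FF_5$. One of $A,B$ has at least three elements, so also $(A-A)\cup(B-B)=\FF_5$. Hence $S-S\supseteq R_0\cup R_2\cup R_{-2}$.
\item Your own observation that $f(P\pm Q)\le 1$ for $P,Q\in S$ says exactly that $T\cap(S\pm S)=\varnothing$. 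Combined with the first step, $T\subseteq R_1\cup R_{-1}$, so $f(R_1)+f(R_{-1})\ge f(T)>25$.
\item For each $x\in\FF_5$, the five pairs $(i,1),(i-x,-1)$ partition $R_1\cup R_{-1}$. Hence some pair has $f$-sum $>5$, and both values are positive because $\|f\|_\infty\le 5$.
\item Their difference is $(x,2)$. If $f(x,2)>0$, then \ref{it:fish3} would give $f(i,1)+f(i-x,-1)+\lceil f(x,2)\rceil>6$, a contradiction. So $f(x,2)=0$ for every $x$.
\item Symmetrically, $R_{-2}$ misses the support, so the support lies in $R_{-1}\cup R_0\cup R_1$.
\end{enumerate}
Neither Kneser-type counting nor Lemma~\ref{lem:Spiel} is needed.
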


\begin{proof}
	Since \(f\) is fishy, we have
    \begin{equation}\label{eq:2245}
    	T\cap (S+S)=T\cap (S-S)=\varnothing\,.
	\end{equation} 
    In particular, the set \(S\) is sum-free and by Theorem~\ref{5n=2}
	it has to be normal. Consequently, we can suppose \(S\subseteq R_1\cup R_{-1}\). 
	The sets 
	\[
		A=\bigl\{x\in \FF_5\colon (x, 1)\in S\bigr\} 
		\quad \text{ and } \quad 
		B=\bigl\{x\in \FF_5\colon (x, -1)\in S\bigr\} 
	\]
	satisfy \(|A|+|B|=|S|\ge 6\) which by Corollary~\ref{Bdumm} implies 
	\[
		A-B=(A-A)\cup (B-B)=B-A=\FF_5\,.
	\]
	Together with~\eqref{eq:2245} this leads to \(T\subseteq R_1\cup R_{-1}\). 
	Now for each \(x\in \FF_5\) the estimate 
	\[
		\sum_{i\in \FF_5}\bigl(f(i, 1)+f(i-x, -1)\bigr)=f(R_1)+f(R_{-1})\ge f(T)>25
	\]
	leads to some \(i\in \FF_5\) such that \(f(i, 1)+f(i-x, -1)>5\).
	Because of~\({(i,1)-(i-x,-1)=(x,2)}\) and~\ref{it:fish3} this implies \(f(x,2)=0\). 
	As \(x\) was arbitrary, this shows that the support of \(f\) is disjoint to \(R_2\) 
	and, for the same reason, it is disjoint to \(R_{-2}\) as well. In other words, 
	the support of \(f\) is contained in the union of the three parallel 
	lines~\(R_{-1}\),~\(R_0\), and~\(R_1\).
\end{proof}

After these preparations, we can establish Proposition~\ref{prop:fish} 
under the additional assumption~\({\|f\|_\infty>4}\).

\begin{lemma}\label{lem:fish4}
	If \(f\) is fishy, \(\|f\|_{\infty}>4\), and the support of \(f\) 
	cannot be covered by three parallel lines, then \(f\) is isomorphic to \(f_\alpha\), 
	i.e., we have \(f=f_\alpha\circ \phi\) for some automorphism \(\phi\) of \(\FF_5^2\).  
\end{lemma}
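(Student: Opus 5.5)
The plan is to argue by contradiction. Assume \(f\) is fishy, \(\|f\|_\infty>4\), the support of \(f\) cannot be covered by three parallel lines, and \(f\not\cong f_\alpha\). Then Lemma~\ref{lem:P4P5} applies to a nonincreasing enumeration \(P_1,\dots,P_{25}\), giving \(f(P_1)+\dots+f(P_5)\le 20\), \(f(P_1)+\dots+f(P_6)<22.5\) and \(f(P_4)+f(P_5)\le 5\). The goal is to show that the remaining mass is too small, i.e.\ to reach \(\|f\|_1<28\), contradicting~\ref{it:fish1}.

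The point \(P_1\), with \(f(P_1)>4\), is very restrictive under~\ref{it:fish3}.
\begin{itemize}
\item Taking \(P=Q=P_1\) gives \(f(0)=0\) (case \(\eps=-1\)) and \(f(2P_1)=0\) (case \(\eps=1\)).
\item More generally, if \(Q\) and \(Q\pm P_1\) are both in the support, then
\[
f(Q)+\lceil f(Q\pm P_1)\rceil\le 6-f(P_1)<2 .
\]
Hence \(f(Q)<1\) and \(f(Q\pm P_1)\le 1\).
\end{itemize}
So on each of the five lines parallel to \(\langle P_1\rangle\), viewed as a \(5\)-cycle under translation by \(P_1\), any two cyclically adjacent support points carry values below \(1\) and at most \(1\), respectively. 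A maximal run of \(k\) consecutive support points therefore has total mass less than \(k\), except that its endpoint (the last point of the run) may be large. In particular a line carries mass \(>2\) only through at most two non-adjacent "big" points, each of which is isolated from all other support points of that line.

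The key step is to combine this line-by-line structure with the global bounds from Lemma~\ref{lem:P4P5}.
\begin{itemize}
\item Every point of value \(\ge 1\) other than the run endpoints is excluded, so the number of points with \(f>1\) is at most two per line. Two per line is the extreme case; I will aim to show that this extreme cannot occur on all lines simultaneously.
\item Summing over the five lines, the mass carried by points of value \(<1\) is controlled by run lengths, while the big points are controlled by \(f(P_1)+\dots+f(P_6)<22.5\).
\item Since \(f(P_6)<2.5\) and \(f(P_4)+f(P_5)\le 5\), the big points beyond the top six contribute at most about \(2.5\) each.
\item I then expect a bookkeeping inequality of the form \(\|f\|_1\le 22.5+(\text{small terms})\).
\end{itemize}
The small terms must be shown to stay below \(5.5\), using that the line \(\langle P_1\rangle\) loses \(0\) and \(2P_1\), and that at most \(25-6\) remaining points exist, most of them forced to have value \(<1\) or to vanish.

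If this direct count leaves slack, I would invoke Lemma~\ref{lem:albern}: if at least six points have value \(>2\) and the points of value \(>1\) carry more than \(25\), the support lies in three parallel lines, which is excluded. So either \(|\{f>2\}|\le 5\), in which case the top five bound of \(20\) plus the points with values in \((1,2]\) and below \(1\) must be bounded by \(8\), or \(f(\{f>1\})\le 25\), in which case the points with \(f\le 1\) must contribute more than \(3\). The main obstacle is this final accounting: showing that the points of value in \((0,1]\), restricted by the adjacency condition along \(\langle P_1\rangle\)-lines and by~\ref{it:fish3} applied to pairs of large points (their sums and differences are forced out of the support), cannot make up the deficit. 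I would first try a direct count of how many points can lie in \(\{f>0\}\setminus\{f>1\}\), using~\ref{it:fish2} to break ties at the threshold \(2.5\). If that count is too weak, I would fall back to a case split on \(|\{f>2\}|\) paralleling the case analysis in the proof of Lemma~\ref{lem:P4P5}.
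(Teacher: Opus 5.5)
Your plan has a genuine gap. The contradiction \(\|f\|_1<28\) cannot be derived from the ingredients you intend to combine: fishiness, the structure along lines parallel to \(\langle P_1\rangle\), the conclusions of Lemma~\ref{lem:P4P5}, and the dichotomy from Lemma~\ref{lem:albern}.

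Here is a counterexample to the bookkeeping. Let \(f(1,0)=f(-1,0)=f(1,1)=5\) and \(f(-1,1)=3\). Let \(f\) take the value \(2\) at the six remaining points of \(\{1,-1\}\times\FF_5\), and vanish elsewhere.
\begin{itemize}
\item Sums and differences of points with first coordinate \(\pm1\) have first coordinate in \(\{0,\pm2\}\), so~\ref{it:fish3} holds vacuously.
\item \(f\) is integer valued, \(\|f\|_\infty=5>4\) and \(\|f\|_1=30\), so \(f\) is fishy.
\item Its nonincreasing enumeration gives \(f(P_4)+f(P_5)=5\), \(f(P_1)+\dots+f(P_5)=20\) and \(f(P_1)+\dots+f(P_6)=22<22.5\), and only four points exceed \(2\).
\item With \(P_1=(1,0)\), every row contains exactly two non-adjacent support points of value \(>1\).
\end{itemize}
So every fact your bookkeeping would use holds. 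The ``extreme case'' you hope to exclude occurs on all five lines at once, and the mass beyond the top six is \(8\), not less than \(5.5\). This \(f\) is excluded by the hypothesis only because its support lies on two parallel lines. Hence the non-coverability must be used once more, in a structural rather than numerical way, and your fallback (``a case split paralleling Lemma~\ref{lem:P4P5}'') does not say how.

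The missing ingredient is Lemma~\ref{lem:Spiel}. In the paper, the counting serves not to bound \(\|f\|_1\) but to produce a second large value in a direction independent of \(P_1=(1,0)\). The steps are:
\begin{itemize}
\item no row is contained in the support \(I\);
\item \(18+|T|\le f(T)\) for \(T=\{f>1\}\), hence \(|T|\ge 8\) and \(f(T)\ge 26\);
\item Lemma~\ref{lem:albern} then gives \(f(P_6)\le 2\), which forces \(f(0,1)\ge 13-|T|\).
\end{itemize}
In each of the cases \(|T|=8\), \(|T|=9\), \(|T|\ge 10\) this shows that no two points of \(I\) differ by \((1,0)\) or \((0,1)\), while \(|I|\ge 9\). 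Lemma~\ref{lem:Spiel} then puts \(I\) into three parallel lines. The contradiction is with coverability, not with~\ref{it:fish1}.

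A smaller point: applying~\ref{it:fish3} with \(\lceil f(P_1)\rceil=5\) gives \(f(Q)+f(Q+P_1)\le 1\) whenever both points lie in the support. So no point of a run of length at least two can be large, contrary to your ``endpoint'' caveat.
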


\begin{proof}
Arguing indirectly we assume that \(f\) is a counterexample. Fix a nonincreasing 
enumeration \(\FF_5^2=\{P_1, \dots, P_{25}\}\) and denote the support of~\(f\)
by~\(I\). Without loss of generality we can suppose \(P_1=(1,0)\). So \(f(1, 0)>4\),
and~\ref{it:fish3} implies for every point \((x,y)\in\FF_5^2\) that 
\begin{equation}\label{eq:nachbarngrosserfall}
    \text{if \((x,y), (x+1,y)\in I\), then \(f(x,y)+f(x+1,y)\le 1\)}\,.
\end{equation}
	
In particular, we have \((0, 0), (2, 0)\not\in I\). For each row~\(R_j\)  
let \(\rho_j=\max\{f(i, j)\colon i\in \FF_5\}\) 
be the largest value \(f\) attains on that row. Using~\eqref{eq:nachbarngrosserfall}
one easily checks for every \(j\in\FF_5\) that 

\begin{table}[h]
    \centering
    \begin{tabular}{c|c}
        if & then \\ 
        \hline     
        \(|R_j\cap I|=1\) & \(f(R_j)\le \rho_j\le 5\), \\
        \(|R_j\cap I|=2\) & \(f(R_j)\le 2\rho_j\le 10\), \\
        \(|R_j\cap I|=3\) & \(f(R_j)\le \rho_j+1\le 6\), \\
        \(|R_j\cap I|=4\) & \(f(R_j)\le 2\), \\
        \(|R_j\cap I|=5\) & \(f(R_j)<2.5\). 
    \end{tabular}
    \caption{Upper bounds on \(f(R_j)\) if \(f(1, 0)>4\)}
    \label{tab:grosserFall}
\end{table}

Notice that for the last row averaging~\eqref{eq:nachbarngrosserfall} over 
all \(x\in \FF_5\) only yields \(f(R_j)\le 2.5\), but due to~\ref{it:fish2} 
this cannot hold with equality. In fact, such rows can be ruled out entirely. 

\begin{claim}\label{no5gross}
    There is no row completely contained in \(I\). 
\end{claim}

\begin{proof}
    Due to \((0, 0)\not\in I\) we have \(R_0\not\subseteq I\). So if our claim fails, we can 
    assume, without loss of generality, that \(R_2\subseteq I\). 
	Because of \(2R_1=R_2\) and \((1, 0)-R_{-2}=R_2\) we have \(\rho_1<2.5\) 
	as well as \(\rho_{-2}<1\). So in view of Table~\ref{tab:grosserFall} we 
	obtain the upper bounds 
	\[
		f(R_{-2})<2.5\,, \quad f(R_1)<5\,, \quad \text{ and } \quad f(R_2)<2.5\,.
	\]
	They lead to the lower bounds 
	\[
		\min\bigl\{f(R_0), f(R_{-1})\bigr\}\ge 28-(2.5+10+5+2.5)=8
	\]
	and 
	\[
		f(R_1)\ge 28-(2.5+10+10+2.5)=3\,,
    \]
    which by another application of Table~\ref{tab:grosserFall} 
    imply \(|R_0\cap I|=|R_{-1}\cap I|=2\) and \(|R_1\cap I|\in \{1, 2, 3\}\). 
    But now 
    \begin{align*}
        23&<28-f(R_2)-f(R_{-2})\\
        &\le f(R_0)+f(R_{-1})+f(R_1)\\
        &\le \max\{f(P_1)+\dots+f(P_6),f(P_1)+\dots+f(P_5)+1\}
    \end{align*}
    contradicts Lemma~\ref{lem:P4P5}.
\end{proof}

Now we will study the subset \(T=\bigl\{Q\in\FF_5^2\colon f(Q)>1\bigr\}\) of \(I\). 

\begin{claim}\label{grosserfall}
    We have \(18+|T|\le f(T)\).
\end{claim}

\begin{proof}
    A simple case distinction using~\eqref{eq:nachbarngrosserfall} and 
    Claim \ref{no5gross} discloses  
    \[
    	f(R_j)+|T\cap R_j|\le 2+f(T\cap R_j)
	\]
    for every \(j\in\FF_5\). Summing this over all five rows we obtain
    \[
    	28+|T|\le \|f\|_1+|T|\le 10+f(T)\,,
	\]
    and our claim follows.   
\end{proof}
    
\begin{claim}\label{T>5gross}
    We have \(|T|\ge 8\) and \(f(T)\ge 26\).
\end{claim}
 
\begin{proof}
	The previous claim and \(\|f\|_\infty\le 5\) imply \(18+|T|\le 5|T|\), whence \(|T|\ge 5\).
	This allows us to estimate the sum \(f(T)\) more carefully with the help 
	of Lemma~\ref{lem:P4P5}, so that we arrive at 
	\[
		18+|T|
		\le 
		\bigl(f(P_1)+\dots+f(P_5)\bigr)+(|T|-5)f(P_6)
		\le 
		20+2.5(|T|-5)\,,
	\]
	i.e., \(|T|\ge 7\). However, \(|T|=7\) would require that 
	we have the equality~\({f(P_6)=2.5}\), which contradicts~\ref{it:fish2}. 
	Finally, \(|T|\ge 8\) 
	and Claim~\ref{grosserfall} entail~\({f(T)\ge 26}\).
\end{proof}

Now Lemma~\ref{lem:albern} yields \(f(P_6)\le 2\), which enables us to refine 
our analysis of Claim~\ref{grosserfall} even further. We thereby find  
\[
	18+|T|\le f(T)\le 2\cdot 5+f(P_3)+5+2(|T|-5)=5+2|T|+f(P_3)\,, 
\]
whence \(f(P_3)\ge 13-|T|\). Since \(P_2\) and \(P_3\) are in \(T\), they cannot be 
in \(R_0\) simultaneously, so we can assume without loss of generality that 
\begin{equation}\label{eq:0052}
	f(0, 1)\ge 13-|T|\,.
\end{equation}

We distinguish three cases according to the cardinality of \(T\). 

\medskip

{\it \hskip2em First Case: $|T|=8$}
          
\smallskip

Now~\eqref{eq:0052} and \(\|f\|_\infty\le 5\) lead to \(f(0,1)=f(1,0)=5\). 
Consequently, no two boxes in \(I\) can be adjacent vertically or horizontally. 
Moreover, \(f(T)\le 5+2|T|+f(P_3)=26<28\le f(I)\) 
yields \(|I|\ge |T|+1\ge 9\). So by Lemma~\ref{lem:Spiel} \(I\) can be covered by three 
parallel lines and we are done. 

\medskip

{\it \hskip2em Second Case: $|T|=9$}
        
\smallskip

This time~\eqref{eq:0052} reads \(f(0, 1)\ge 4\). Recall that we started 
with \(f(1, 0)\ge 4\). Owing to~\ref{it:fish3} this implies that a box that is 
vertically or horizontally adjacent to a box in \(T\) cannot belong to \(I\). 
In particular, each column and each row contains at most two boxes from \(T\). 
Due to~\({|T|=9}\) there are four rows and four columns each of which contain exactly 
two boxes from \(T\) and no further boxes from \(I\). Consequently, all boxes from \(I\sm T\)
need to be in the remaining column and in the remaining row, whence \(|I\sm T|\le 1\).   
It follows that no two boxes from \(I\) are vertically or horizontally adjacent 
and Lemma~\ref{lem:Spiel} tells us again that \(I\) can be covered by three parallel lines. 

\medskip

{\it \hskip2em Third Case: $|T|\ge 10$}
        
\smallskip

Since no box horizontally adjacent to a box in \(T\) can belong to \(I\), this is 
only possible if~\({T=I}\) consists of exactly \(10\) boxes, two from each row. 
Moreover,~\eqref{eq:0052} yields \(f(0, 1)\ge 3\). Together with \(T=I\) 
and~\ref{it:fish3} this implies that no two boxes in \(I\) 
can be vertically adjacent, and a final application of Lemma~\ref{lem:Spiel} shows 
that \(I\) is contained in the union of three parallel lines. 
\end{proof}

The next result explains how the function \(f_\gamma\) enters our classification. 

\begin{lemma}\label{lem:gamma}
	If \(f\colon \FF_5^2\lra\RR_{\ge 0}\) is fishy, \(3<\|f\|_\infty\le 4\) and there 
	are at least four points \(P\in\FF_5^2\) with~\(f(P)\ge 3\), then either the support 
	of \(f\) can can be covered by three parallel lines or \(f\) is isomorphic 
	to~\(f_\gamma\). 
\end{lemma}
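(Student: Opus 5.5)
Let \(S=\{P\colon f(P)\ge 3\}\), so \(|S|\ge 4\) by hypothesis. Applying~\ref{it:fish3} with \(P,Q\in S\) (where \(f(P)+f(Q)\ge 6\)) shows \(f(P\pm Q)=0\) for all \(P,Q\in S\), including \(P=Q\). Hence \(f(0)=0\), \(f(2P)=0\) for \(P\in S\), and \(S\) is sum-free with \(S\cap(S-S)=\vn\).

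The first step is to pin down the shape of \(S\). If \(|S|\ge 5\), Lemma~\ref{3collin} gives three points of \(S\) on a line avoiding the origin. A short case analysis in the style of Lemma~\ref{lem:P4P5}, together with the requirement that the support not be covered by three parallel lines, should then force either a contradiction or the covering outcome. I expect the main case to be \(|S|=4\). Since every point of \(S\) sends \(2P\) out of the support, and \(-2P\in S\) would give \(f(P)=f(2(-2P))=0\), we get \(-2P\notin S\) for \(P\in S\). From here I would classify the 4-element sets \(S\) with \(S\cap(S\pm S)=\vn\) up to automorphism. Normalising \((1,0)\in S\) and arguing as in Lemma~\ref{lem:P4P5}, I expect two families: \(S=\{\pm e_1,\pm e_2\}\), which is the \(f_\gamma\) pattern, and sets with three points on an affine line such as \(\{(-1,1),(0,1),(1,1),\,\cdot\,\}\). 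In the latter family I would exclude the support from two parallel lines via the zeros \(S\pm S\), aiming to reach the three-line conclusion.

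In the case \(S=\{\pm(1,0),\pm(0,1)\}\), the zeros \(S\pm S\) are \(0\), \((\pm2,0)\), \((0,\pm2)\) and \((\pm1,\pm1)\). So the support lies in \(S\) together with the eight points \((\pm2,\pm1)\) and \((\pm1,\pm2)\) (the support of \(f_\gamma\) minus \(S\)) and the four points \((\pm2,\pm2)\). Each of the twelve non-\(S\) points has \(f<3\). Next I would bound \(\|f\|_1\) by pairing. For each point \(Q\) outside \(S\), there are points \(P\in S\) with \(P\pm Q\) in the remaining support, and~\ref{it:fish3} gives inequalities like \(f(P)+f(Q)+\lceil f(P+Q)\rceil\le 6\). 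Summing suitable ones should yield \(\|f\|_1\le 28\), with equality only if \(f\) is \(4\) on \(S\), \(1\) on the eight points, and \(0\) on \((\pm2,\pm2)\). Since \(\|f\|_1\ge 28\), this forces \(f=f_\gamma\). A positive value at some \((\pm2,\pm2)\) should cost more than it contributes, because each such point is a sum of two of the eight points, for example \((2,2)=(1,2)+(1,0)\), and of points of \(S\). Condition~\ref{it:fish2} will be used, as in Lemma~\ref{lem:P4P5}, to rule out half-integer boundary cases in these estimates.

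The main obstacle is choosing the right family of~\ref{it:fish3} inequalities, including the ceiling terms, that sums exactly to \(28\) with the stated equality case. It is also delicate to handle the subcases where some of the eight points are absent from the support, since~\ref{it:fish3} only applies when all three values are positive. There I would instead use \(\|f\|_\infty\le 4\) and the counts of Lemma~\ref{lem:albern}, since \(f(T)>25\) together with \(|S|\ge 6\) is unavailable but similar averaging applies. The \(|S|\ge 5\) and non-cross cases I expect to be routine but lengthy grid chasing.
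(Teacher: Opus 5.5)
\textbf{There is a genuine gap: your case split for \(|S|=4\) is not exhaustive.} The condition \(-2P\not\in S\) already follows from sum-freeness, since \((-2P)+(-2P)=P\). So all you know is that \(S\) is a \(4\)-element sum-free set. Such a set need not be isomorphic to \(\{\pm(1,0),\pm(0,1)\}\), nor contain three collinear points: take \(S=\{(0,1),(1,1),(0,-1),(1,-1)\}\). This \(S\) is sum-free and has no three collinear points. It is not isomorphic to the cross, because \(S\ne -S\). Here \(S\pm S\) forces \(f=0\) on \(R_0\cup R_2\cup R_{-2}\) except at \((-2,0)\) and \((-2,\pm2)\), and nowhere in \(R_{\pm1}\sm S\). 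So the zeros only place the support inside \(R_1\cup R_{-1}\) plus a vertical line, which is not a covering by three parallel lines. Such configurations can only be excluded by a mass argument, and your proposal does not supply one.

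Your collinear branch is fine: three points of \(S\) in one row give \(A+A=A-A=\FF_5\) by Cauchy--Davenport, which kills two rows. This also settles \(|S|\ge5\), which is in any case Lemma~\ref{lem:P4P5}\ref{it:P45i}, since \(\|f\|_\infty\le4\) rules out \(f_\alpha\).

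\textbf{Your symmetric branch aims at the wrong target.} \(f_\gamma\) equals \(1\) on all twelve points \((\pm1,\pm2)\), \((\pm2,\pm1)\), \((\pm2,\pm2)\). The equality case you predict, vanishing on \((\pm2,\pm2)\), has \(\|f\|_1=24<28\). So the heuristic that mass at \((\pm2,\pm2)\) ``costs more than it contributes'' is false.

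\textbf{The missing idea} handles every shape of \(U=S\) at once: exploit the ceiling in \ref{it:fish3} at a maximum point.
Normalise \(f(1,0)=\|f\|_\infty>3\). Then \(\lceil f(1,0)\rceil=4\), and \ref{it:fish3} gives \(f(x,y)+f(x+1,y)\le2\) whenever both points lie in the support. Lemma~\ref{lem:P4P5}\ref{it:P45i} gives \(|U|=4\) and \(f\le2\) off \(U\). A short separate argument shows that no row lies entirely in the support. A case check on each row, viewed as a \(5\)-cycle, then yields \(f(R_j)\le 2|R_j\cap U|+4\). Summing gives \(\|f\|_1\le28\), so equality holds throughout. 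Hence \(f=4\) on \(U\), so \(f\le1\) off \(U\) by Lemma~\ref{lem:P4P5} again. Every row then carries the values \(4,4,0,0,0\), or \(4,1,1,0,0\), or \(1,1,1,1,0\). Choosing \(j\) with \(R_{2j}\) of the last type but \(R_j\) not, and using \ref{it:fish3}, pins down \(U=\{\pm(1,0),\pm(0,1)\}\) and forces \(f=f_\gamma\). No classification of sum-free \(4\)-sets is needed.
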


\begin{proof}
	By Lemma~\ref{lem:P4P5} the set \(U=\{P\in \FF_5^2\colon f(P)\ge 3\}\) has size at 
	most four, so only the case~\({|U|=4}\) is interesting. Suppose without loss of 
	generality that \(f(1, 0)=\|f\|_\infty>3\), which entails that \((0, 0)\) and \((2, 0)\)
	are not in the support \(I\) of \(f\), and \((-2, 0)\not\in U\). Moreover,~\ref{it:fish3} 
	implies 
	\begin{equation}\label{eq:2112}
		f(x, y)+f(x+1, y)\le 2 \quad \text{ whenever } f(x, y), f(x+1, y)>0\,.
	\end{equation}
	
	\begin{claim}
		No row is completely contained in \(I\). 
	\end{claim}
	
	\begin{proof}
		Assume for the sake of contradiction that \(R_{-2}\subseteq I\). Due to 
		\(2R_{-1}=R_{-2}\), \(R_0-R_2=R_{-2}\), and \(R_{-2}+R_0=R_{-2}\)
		we have \(U\subseteq R_0\cup R_1\). Since no two boxes in \(U\) are horizontally 
		adjacent, we can suppose without loss of generality 
		that \(U=\{(-1, 0), (1, 0), (-1, 1), (1, 1)\}\).  
		
		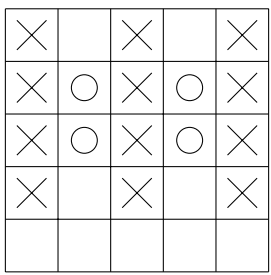
\begin{figure}[h]
    \centering
    \begin{tikzpicture}[scale=0.7]
		\foreach \x in {-2,...,3} \draw (-2.5, \x-.5)--(2.5, \x-.5) (\x-.5, -2.5)--(\x-.5, 2.5);
		\foreach \x in {-2,0,2} \foreach \y in {-1,...,2} \draw (\x,\y) node[cross]{};
		\foreach \x in {-1,1} \foreach \y in {0,1} \draw (\x,\y) circle (7pt);
	\end{tikzpicture}
	\caption{Circles indicate boxes \(Q\) with \(f(Q)>2.5\).}\label{fig:klfall}
    \end{figure}
    	
		As indicated in Figure~\ref{fig:klfall}, this excludes twelve points from \(I\). 
		Due to \(\|f\|_\infty\le 5\), \(f(2,-2)>0\), and~\ref{it:fish3} we have 
		\begin{align*} 
			f(1,0)+f(-1,2) &\le 5\,.
			\intertext{Arguing similarly, we also find}
			f(1,-1)+f(-1,1) &\le 5\,, \\
			f(-1,0)+f(1,2) &\le 5\,, \\
			f(-1,-1)+f(1,1) &\le 5\,.
		\end{align*}
		Moreover,~\eqref{eq:2112} yields \(f(R_{-2})\le 5\) by averaging. 
		Adding all these estimates, we derive the contradiction 
		\(28\le \|f\|_1\le 5\cdot 5=25\).
	\end{proof}
	
	Assuming from now on that \(I\) cannot be covered by three 
	lines, Lemma~\ref{lem:P4P5} tells us \(f(P)\le 2\) for every \(P\in \FF_5^2\sm U\). 
	Now a quick case analysis using \eqref{eq:2112} discloses 
	\begin{equation}\label{eq:2116}
		f(R_j)\le 2|R_j\cap U|+4
	\end{equation}
	for every \(j\in\FF_5\). Summing this over all \(j\in\FF_5\) we infer 
	\[
		28\le \|f\|_1\le 2|U|+20=28\,.
	\]
	Thus~\eqref{eq:2116} holds with equality for every \(j\in\FF_5\). 
	This requires \(f(P)=4\) for every \(P\in U\) and another application 
	of Lemma~\ref{lem:P4P5} discloses \(f(P)\le 1\) for all \(P\in \FF_5^2\sm U\).   
	Consequently, the five values \(f\) attains in each row are, in some 
	order, either 
	\begin{enumerate}[label=\rmlabel]
		\item\label{it:fb1} \(4\), \(4\), \(0\), \(0\), \(0\),
		\item\label{it:fb2} \(4\), \(1\), \(1\), \(0\), \(0\),
		\item\label{it:fb3} or \(1\), \(1\), \(1\), \(1\), \(0\).
	\end{enumerate}
	
	\begin{figure}[h]
	\centering
	\begin{subfigure}[b]{.32\textwidth}
	\centering
		\begin{tikzpicture}[scale=0.7]
		\foreach \x in {-2,...,3} \draw (-2.5, \x-.5)--(2.5, \x-.5) (\x-.5, -2.5)--(\x-.5, 2.5);
		\foreach \x in {-2,-1,1,2} \draw (\x,2) node{\(1\)};
		\foreach \x/\y in {0/2,0/0,2/0} \draw (\x,\y) node[cross]{};
		\draw (1,0) node{\(4\)};
		\end{tikzpicture}
		\caption{}\label{fig:gamma1}
	\end{subfigure}
\hfill
	\begin{subfigure}[b]{.32\textwidth}
	\centering
		\begin{tikzpicture}[scale=0.7]
		\foreach \x in {-2,...,3} \draw (-2.5, \x-.5)--(2.5, \x-.5) (\x-.5, -2.5)--(\x-.5, 2.5);
		\foreach \x in {-2,-1,1,2} \draw (\x,2) node{\(1\)};
		\foreach \x in {-2,2} \draw (\x,1) node{\(1\)};
		\foreach \x/\y in {0/1, 1/0} \draw (\x,\y) node{\(4\)};
		\foreach \x/\y in {0/2,0/0,2/0,-1/1,1/1,1/-1} \draw (\x,\y) node[cross]{};
		\end{tikzpicture}
	\caption{}\label{fig:gamma2}
	\end{subfigure}
\hfill
	\begin{subfigure}[b]{.32\textwidth}
	\centering
		\begin{tikzpicture}[scale=0.7]
		\foreach \x in {-2,...,3} \draw (-2.5, \x-.5)--(2.5, \x-.5) (\x-.5, -2.5)--(\x-.5, 2.5);
		\foreach \x/\y in {0/2,0/0,2/0,-1/1,1/1,1/-1,-2/0,0/-2,-1/-1} 
			\draw (\x,\y) node[cross]{};
		\foreach \x in {-2,-1,1,2} \foreach \y in {-2,2} \draw (\x,\y) node{\(1\)};
		\foreach \x in {-2,2} \foreach \y in {-1,1} \draw (\x,\y) node{\(1\)};
		\foreach \x/\y in {1/0,-1/0,0/1,0/-1} \draw (\x,\y) node{\(4\)};
		\end{tikzpicture}
	\caption{}\label{fig:gamma3}
	\end{subfigure}
	\caption{}
\end{figure}

	Because of~\(|U|=4\), the set \(M\) of all indices \(j\in \FF_5\) such 
	that~\(R_j\) is of type~\ref{it:fb3} satisfies~\({1\le |M|\le 3}\).
	Together with \(0\not\in M\) this leads to some \(j\in \FF_5\sm M\) such that \(2j\in M\).  
	We can suppose without loss of generality that \(j=1\) has these properties 
	and, moreover, that~\({f(0, 2)=0}\). Now the restriction of~\(f\) to~\(R_2\)
	is the function we see in Figure~\ref{fig:gamma1}.	
	
	For every \(x\in \FF_5^\times\) it follows from \(f(2x, 2)>0\) that \(f(x,1)<2.5\).
	Since~\(R_1\) is not of type~\ref{it:fb3}, this leaves only one possibility 
	for the restriction of \(f\) to \(R_1\), drawn in Figure~\ref{fig:gamma2}. 
	Using~\ref{it:fish3} all but four 
	boxes can now be shown not to be in \(U\) and only the possibility 
	\[
		U=\{(1, 0), (-1, 0), (0, 1), (0, -1)\}
	\] 
	remains. It is now clear that~\(f\) vanishes on the boxes marked by crosses in 
	Figure~\ref{fig:gamma3} and~\({f=f_\gamma}\) follows.  
\end{proof}

We are now sufficiently prepared for the main result of this section. 

\begin{proof}[Proof of Proposition~\ref{prop:fish}]
	Let \(f\colon \FF^2_5\lra\RR_{\ge 0}\) be a fishy function with \(\|f\|_\infty>3\) 
	whose support~\(I\) cannot be covered by three parallel lines and which is not isomorphic 
	to \(f_\alpha\). We need to show that \(f\) is isomorphic either to~\(f_\beta\) or 
	to~\(f_\gamma\). 
	
	Without loss generality we can suppose 
	\[
		f(1, 0)>3\,.
	\] 
	We keep more flexibility, however, if we do not confine ourselves 
	to the more restrictive assumption \(f(1, 0)=\|f\|_\infty\) at this 
	point. Lemma~\ref{lem:fish4} discloses
	\begin{equation}\label{eq:2320}
		\|f\|_\infty\le 4\,.
	\end{equation}
	  
	Now~\ref{it:fish3} tells us for every point \((x, y)\in\FF_5^2\) that
	\begin{equation}\label{eq:nachbarnmittlererfall}
    	\text{ if \((x,y), (x+1,y)\in I\), then \(f(x,y)+f(x+1,y)\le 2\)}\,,
	\end{equation}
	which yields \((0, 0), (2, 0)\not\in I\). Setting 
	\(\rho_j=\max\{f(i, j)\colon i\in \FF_5\}\) for 
	every \(j\in \FF_5\) we deduce from~\eqref{eq:2320} 
	and~\eqref{eq:nachbarnmittlererfall} that  

\begin{table}[h]
    \centering
    \begin{tabular}{c|c}
        if & then \\ 
        \hline     
        \(|R_j\cap I|=1\) & \(f(R_j)\le \rho_j\le 4\,,\) \\
        \(|R_j\cap I|=2\) & \(f(R_j)\le 2\rho_j\le 8\,,\) \\
        \(|R_j\cap I|=3\) & \(f(R_j)\le \rho_j+2\le 6\,,\) \\
        \(|R_j\cap I|=4\) & \(f(R_j)\le 4\,,\) \\
        \(|R_j\cap I|=5\) & \(f(R_j)\le 5\,.\) 
    \end{tabular}
    \caption{Upper bounds on \(f(R_j)\) if \(f(1, 0)> 3\)}
    \label{tab:mittlererFall}
\end{table}
        
\begin{claim}\label{lm18A}
    If \(Q\in\FF_5^2\) satisfies \(f(Q)>3\), then \(-2Q\not\in I\).
\end{claim}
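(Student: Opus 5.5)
We argue by contradiction. Write \(R=-2Q\) and suppose \(f(R)>0\), that is, \(R\in I\). The point is that \(Q\) and \(R\) are tightly linked: \(2R=-4Q=Q\), \(Q-R=3Q=R\), \(R+Q=-Q\) and \(R-Q=2Q\).

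\emph{Local step.} Apply~\ref{it:fish3} with \(P=Q\), second point \(R\) and \(\eps=-1\). Since \(Q-R=R\), this gives \(f(Q)+f(R)+\lceil f(R)\rceil\le 6\). Because \(f(Q)>3\), it forces \(0<f(R)\le 1\), hence \(\lceil f(R)\rceil=1\) and \(f(Q)+f(R)\le 5\). In the same way \(f(R)>0\) combined with \(f(Q)>3\) kills, or severely restricts, \(f(\pm Q)\) and \(f(2Q)\). For example, if \(f(2Q)>0\), then \(P=R\), second point \(Q\), \(\eps=-1\) gives \(f(R)+f(Q)+\lceil f(2Q)\rceil\le 6\), so \(f(2Q)\le 1\). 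These relations alone do not give a contradiction, since values like \(f(Q)=4\), \(f(R)=1\) are locally admissible. So the claim needs the global hypotheses: \(\|f\|_1\ge 28\), the support \(I\) not coverable by three parallel lines, and \(f\not\cong f_\alpha\).

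\emph{Global step.} Normalise by an automorphism so that \(Q=(1,0)\); this is where the flexibility of not insisting on \(f(1,0)=\|f\|_\infty\) is used. Then \(R=(-2,0)\), and on \(R_0\) we know \((0,0),(2,0)\notin I\) and \((\pm 2,0)\), \((1,0)\) are the relevant boxes. By~\eqref{eq:nachbarnmittlererfall}, \(f(-1,0)+f(-2,0)\le 2\), so \(f(R_0)\le f(1,0)+2\le 6\). Next I would show that \(R\in I\) forbids, in every other row \(R_j\), the row patterns that reach the extreme entries of Table~\ref{tab:mittlererFall}. The tool is~\ref{it:fish3} applied to pairs \(P,P\pm R\), that is, boxes two apart horizontally, together with pairs \(P,P\pm Q\), adjacent boxes, as already encoded in~\eqref{eq:nachbarnmittlererfall}. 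Concretely, a row with two large entries \(a,b>2\) must place them non-adjacently, hence at horizontal distance \(\pm2\), that is, differing by \(\pm R\). Then~\ref{it:fish3} with \(P=(x,j)\), second point \(R\), gives \(a+f(R)+\lceil b\rceil\le 6\) (or \(b+f(R)+\lceil a\rceil\le6\) if it is the other of the two that differs by \(+R\)), so \(a+b<5\). The bound \(f(R_j)\le 8\) for two-element rows thus drops to below \(5\), and similarly for rows with three support points.

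\emph{Summation and main obstacle.} Summing the improved row bounds should give \(\|f\|_1<28\), contradicting~\ref{it:fish1}. If the crude bounds leave a small gap, I would close it with Lemma~\ref{lem:P4P5}\ref{it:P45iii}, which bounds the six largest values by \(<22.5\), together with~\ref{it:fish2} to exclude half-integer equality cases such as values exactly \(2.5\). The main obstacle I expect is exactly this bookkeeping: rows with one support point can still carry up to \(4\), so several such rows could in principle sum to \(28\). Such rows must be excluded by showing that their large entries \(P\) interact with \(Q\) or \(R\) through~\ref{it:fish3}, via \(P\pm Q\) or \(P\pm R\), or that too many heavy points contradict Lemma~\ref{lem:P4P5}. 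Failing that, the support would lie in few rows, contradicting the assumption that \(I\) is not coverable by three parallel lines.
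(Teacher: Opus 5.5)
Your skeleton (normalise to \(Q=(1,0)\), so \(-2Q=(-2,0)\); bound the rows; sum) matches the paper's. However, the row bounds you propose are partly wrong, and the summation cannot close by itself.

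First, from \(a+f(R)+\lceil b\rceil\le 6\) and \(b+f(R)+\lceil a\rceil\le 6\) you only get \(a+b<6\) (take \(a=b=2.9\), \(f(R)=0.05\)), not \(a+b<5\). The right application of~\ref{it:fish3} takes \((-2,0)\) as the \emph{third} point. Since \((x,y)-(x+2,y)=(-2,0)\), it gives \(f(x,y)+f(x+2,y)+\lceil f(-2,0)\rceil\le 6\), i.e.\ \(f(x,y)+f(x+2,y)\le 5\) for \emph{every} pair at horizontal distance two, with \(\le\) rather than \(<\).

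Together with~\eqref{eq:nachbarnmittlererfall}, this yields \(f(R_j)\le 5\) for all rows except those with exactly three support points and \(\rho_j>3\). Such a row has the cyclic shape big, \(0\), small, small, \(0\), and still carries up to \(\rho_j+2\le 6\); the values \(4,0,1,1,0\) satisfy both pair bounds. So your claim that three-point rows also drop below \(5\) is false. These rows are the real obstacle, not the one-point rows, which are bounded by \(4\) and harmless.

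Consequently, summation only gives \(28\le\|f\|_1\le 25+|M|\) with \(M=\{j\colon f(R_j)>5\}\), where each \(j\in M\) contributes a box with value \(>3\). Lemma~\ref{lem:P4P5} caps \(|M|\) at \(4\), which is not enough.

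The missing ingredient is Lemma~\ref{lem:gamma}. Four boxes with \(f\ge 3\) force either \(f\cong f_\gamma\), where the claim holds trivially, or a cover of \(I\) by three parallel lines. Hence \(|M|\le 3\).

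Even then you land exactly on the equality case: \(|M|=3\) and row sums \(5,5,6,6,6\). This needs a structural argument, which your proposal lacks:
\begin{enumerate}
\item The two rows of sum \(5\) have \(\rho_j<3\) (Lemma~\ref{lem:gamma} again), hence have two or five support points.
\item A two-point row of sum \(5\) contains a value \(>2.5\) by~\ref{it:fish2}. So Lemma~\ref{lem:P4P5} forces one of the two rows, say \(R_2\), to be full.
\item Then \(2R_1=R_2\) and~\ref{it:fish3} force \(\rho_1<2.5\), so \(R_1\) is also a full row of sum \(5\).
\item Finally, \(2R_{-2}=R_1\) produces a third such row, a contradiction.
\end{enumerate}
Your suggested fallbacks (Lemma~\ref{lem:P4P5}\ref{it:P45iii}, \ref{it:fish2}, and non-coverability of the support) do not replace the use of Lemma~\ref{lem:gamma} or this equality-case argument.
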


\begin{proof}
	It suffices to show~\({(-2,0)\not\in I}\); assuming contrariwise that \(f(-2, 0)>0\), 
	we conclude 
	\[
		f(x,y)+f(x+2,y)\le 5 \text{ for all \((x,y)\in\FF_5^2\)}
	\]
	from~\ref{it:fish3}. Together with~\eqref{eq:nachbarnmittlererfall} 
	and Table~\ref{tab:mittlererFall} this shows that if \(f(R_j)>5\) holds 
	for some row~\(R_j\), then \(|R_j\cap I|=3\) and \(\rho_j>3\). Moreover, 
	even in this case we have \(f(R_j)\le 6\). 
	
	As our claim holds trivially if \(f\) is isomorphic to \(f_\gamma\), 
	Lemma~\ref{lem:gamma} tells us that the set 
	\[
		M=\{j\in \FF_5\colon f(R_j)>5\}
	\]
	has size at most three. On the other hand, 
	\[
		28\le \sum_{j\in \FF_5} f(R_j)\le 6|M|+5(5-|M|)=25+|M|
	\]
	implies \(|M|\ge 3\). Thus we have equality throughout and the 
	numbers  \(f(R_{-2}),\dots, f(R_2)\) are, in some order, \(5, 5, 6, 6, 6\).  
	Both indices \(j\) with \(f(R_j)=5\) satisfy \(\rho_j<3\) by Lemma~\ref{lem:gamma}
	and, therefore, \(|R_j\cap I|\in \{2, 5\}\). It cannot be the case that
	\(|R_j\cap I|=2\) holds for both of them, because then every row contains a box \(P\)
	with \(f(P)>2.5\), contrary to Lemma~\ref{lem:P4P5}. 
	So we can assume, without loss of generality, that \(f(R_2)=5\) and \(R_2\subseteq I\). 
	But now \(2R_1=R_2\) yields~\({\rho_1<2.5}\), which requires \(f(R_1)=5\) 
	and \(R_1\subseteq I\). Another repetition of this argument, this time 
	using \(2R_{-2}=R_1\), yields a contradiction.  
\end{proof}

\begin{claim}\label{clm:0944}
    If some row is a subset of \(I\), then \(f\) is isomorphic to \(f_\beta\) 
    (see Figure~\ref{fig:fB}).
\end{claim}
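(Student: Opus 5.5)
By symmetry we may assume the full row is \(R_2\subseteq I\). Row \(R_0\) cannot be full, since \((0,0)\notin I\). The automorphisms \((x,y)\mapsto(x,\lambda y)\) with \(\lambda\in\FF_5^\times\) fix \((1,0)\) and permute the nonzero rows transitively, so they move any full row to \(R_2\). The plan is to show \(\|f\|_1\le 28\) with all intermediate estimates tight, and to read off \(f=f_\beta\) from the equality cases.

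\emph{Step 1: bounds for \(R_0\) and \(R_2\).} Since \((0,0),(2,0)\notin I\) and, by Claim~\ref{lm18A}, also \((-2,0)\notin I\), we get \(R_0\cap I\subseteq\{(\pm1,0)\}\). Hence \(f(R_0)\le 8\) by~\eqref{eq:2320}. For \(R_2\) we average~\eqref{eq:nachbarnmittlererfall} over the five cyclically adjacent pairs of the full row, which gives \(f(R_2)\le 5\). Equality forces every adjacent pair to sum to exactly \(2\). Around the odd \(5\)-cycle this means all values equal \(1\), because alternating values \(a,2-a\) must close up.

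\emph{Step 2: bounds for \(R_1\), \(R_{-2}\), \(R_{-1}\).}
\begin{itemize}
\item Row \(R_1\): since \(2R_1=R_2\subseteq I\), condition~\ref{it:fish3} with \(P=Q=(x,1)\) gives \(f(x,1)\le 2.5\). Then~\eqref{eq:nachbarnmittlererfall} and Table~\ref{tab:mittlererFall} yield \(f(R_1)\le 5\).
\item Row \(R_{-2}\): since \((1,0)-R_{-2}=R_2\subseteq I\), condition~\ref{it:fish3} gives \(f(1,0)+f(y,-2)\le 5\), so \(f(y,-2)<2\) for all \(y\). The same case distinction gives \(f(R_{-2})\le 5\).
\item Row \(R_{-1}\): if \(f(Q)>3\) for some \(Q=(x,-1)\), then \(-2Q=(-2x,2)\in R_2\subseteq I\), contradicting Claim~\ref{lm18A}. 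So \(\rho_{-1}\le 3\). Combining this with the cross-row inequality \(f(a,1)+f(b,-1)\le 5\) (from \(R_1-R_{-1}=R_2\subseteq I\) and~\ref{it:fish3}) should give \(f(R_1)+f(R_{-1})\le 10\).
\end{itemize}

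The last point is where I expect the main obstacle. The dangerous configuration is \(|R_{-1}\cap I|=2\) with values close to \(3\), together with a sparse \(R_1\). I would handle it by a short case analysis on \(|R_1\cap I|\) and \(|R_{-1}\cap I|\), using \(\rho_1+\rho_{-1}\le5\), the Table, condition~\ref{it:fish2} to turn non-strict \(2.5\) bounds into strict ones, and Lemma~\ref{lem:P4P5} to limit how many values exceed \(2.5\). If needed, Lemma~\ref{lem:gamma} rules out a fourth value \(\ge3\).

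\emph{Step 3: equality analysis.} Adding the bounds gives \(28\le\|f\|_1\le 8+5+10+5=28\), so every estimate is tight. From \(f(R_0)=8\) we get \(f(\pm1,0)=4\). For the nonzero rows, tightness should force each of \(R_{\pm1},R_{-2}\) to be full with \(f(R_j)=5\): a row with at most three points and all values \(<2.5\) has sum below \(5\). This is exactly the equality case of Step 1, so every value in these rows is \(1\). Hence \(f\) agrees with \(f_\beta\) in Figure~\ref{fig:fB}.
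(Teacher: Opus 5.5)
Your normalisation to \(R_2\subseteq I\) is fine, and so are the bounds \(f(R_0)\le 8\) and \(f(R_2)\le 5\) with the odd-cycle equality case, and \(\rho_1<2.5\), \(\rho_{-2}<2\), \(\rho_{-1}\le 3\); all of this matches the paper. The genuine gap is the step you flag yourself: \(f(R_1)+f(R_{-1})\le 10\) is never proved, and the tools you list do not prove it.

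If \(f(R_{-1})>5\), then \(\rho_{-1}\le 3\) and Table~\ref{tab:mittlererFall} force \(R_{-1}\cap I=\{P,Q\}\) with \(f(P)+f(Q)>5\). For \(|R_1\cap I|\le 4\) your cross inequality \(\rho_1+\rho_{-1}\le 5\) does suffice, e.g.\ \(2\rho_1+2\rho_{-1}\le 10\), or \(\rho_1+2+2\rho_{-1}\le 7+\rho_{-1}\le 10\), or \(4+6\). But the dangerous configuration is \(R_1\subseteq I\), not a sparse \(R_1\). For a full row the bound \(f(R_1)\le 5\) does not improve as \(\rho_1\) shrinks, so \(\rho_1+\rho_{-1}\le 5\) gives nothing and you are stuck at \(5+6=11\). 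None of Lemma~\ref{lem:P4P5}, Lemma~\ref{lem:gamma} or~\ref{it:fish2} evidently excludes this case, so a further use of~\ref{it:fish3} is needed.

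There are two ways to supply it:
\begin{itemize}
\item Within your scheme: if \(R_1\subseteq I\) and \(P=(p,-1)\in I\), then \((1-p,1)+P=(1,0)\) gives \(f(P)\le 6-\lceil f(1,0)\rceil-f(1-p,1)<2\).
\item The paper's route, which bounds \(R_{-1}\) against \(R_{-2}\) instead of \(R_1\): \(2f(P)\ge f(P)+f(Q)>5\) kills \(2P\) and \(P+Q\). These are two distinct points of \(R_{-2}\), so \(|R_{-2}\cap I|\le 3\), \(f(R_{-2})<4\), and \(\|f\|_1<4+6+8+5+5=28\). This gives \(f(R_{-1})\le 5\) outright, with no case analysis on \(R_1\).
\end{itemize}

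Step 3 has a smaller hole. Tightness only yields \(f(R_1)+f(R_{-1})=10\), and that does not force \(f(R_{-1})=5\). For instance, two values \(2.8\) in \(R_{-1}\) and two values \(2.2\) in \(R_1\) are compatible with \(\rho_1<2.5\), \(\rho_{-1}\le 3\), \(\rho_1+\rho_{-1}\le 5\) and the Table. Your remark that a non-full row with all values \(<2.5\) has sum below \(5\) applies to \(R_1\) and \(R_{-2}\), but not to \(R_{-1}\).

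Argue in the paper's order instead:
\begin{itemize}
\item \(f(R_{-2})=5\) together with \(\rho_{-2}<2\) forces \(R_{-2}\subseteq I\).
\item Then \(2R_{-1}=R_{-2}\) and~\ref{it:fish3} give \(\rho_{-1}<2.5\), hence \(f(R_{-1})\le 5\).
\item So \(f(R_1)=f(R_{-1})=5\), and both rows are full.
\end{itemize}
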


\begin{proof}
    Since \((0, 0)\not\in I\), we can suppose, without loss generality, that \(R_2\subseteq I\),
    which according to Table~\ref{tab:mittlererFall} implies \(f(R_2)\le 5\). 
    Owing to 
    \[
    	(1, 0)-R_{-2}=R_2\,, \quad 
		2R_1=R_2\,, \quad 
		-2R_{-1}=R_2\,,
	\]
	and Claim~\ref{lm18A} we have 
    \begin{equation}\label{eq:1025}    
    	\rho_{-2}<2\,, \quad 
		\rho_1<2.5\,, \quad 
		\text{ and } \quad \rho_{-1}\le 3\,,
	\end{equation}
	whence
	\[
		f(R_{-2})\le 5\,, \quad 
		f(R_1)\le 5\,, \quad 
		\text{ and } \quad 
		f(R_{-1})\le 6\,.
	\] 
	
	Improving the third bound we contend that 
	\begin{equation}\label{eq:1028} 
		f(R_{-1})\le 5\,.
	\end{equation}
	Otherwise, due to \(\rho_{-1}\le 3\), Table~\ref{tab:mittlererFall}
	yields \(|R_{-1}\cap I|=2\). Writing \(R_{-1}\cap I=\{P, Q\}\) with \(f(P)\ge f(Q)\)
	we could conclude from~\(2f(P)\ge f(P)+f(Q)>5\) that \(f(2P)=f(P+Q)=0\). 
	Since \(2P\) and \(P+Q\) are distinct points in \(R_{-2}\), this 
	shows \(|R_{-2}\cap I|\le 3\), which together with~\({\rho_{-2}<2}\) 
	entails \(f(R_{-2})<4\). Thus we get the contradiction 
	\[
		28
		\le 
		\|f\|_1
		=
		f(R_{-2})+f(R_{-1})+f(R_0)+f(R_1)+f(R_2)
		<
		4+6+8+5+5=28\,.
	\]
	
	Having thereby proved~\eqref{eq:1028} we observe that 
	\[
		28
		\le 
		\|f\|_1
		=
		f(R_{-2})+f(R_{-1})+f(R_0)+f(R_1)+f(R_2)
		\le
		5+5+8+5+5=28
	\]
	holds with equality. So in view of~\eqref{eq:1025} and Table~\ref{tab:mittlererFall}
	the rows \(R_1\) and \(R_{-2}\) are also contained in~\(I\). Because of \(2R_{-1}=R_{-2}\)
	this entails \(\rho_{-1}<2.5\), so that \(R_{-1}\subseteq I\) holds as well. 
	
	Summarising we have \(R_j\subseteq I\) and \(f(R_j)=5\) for every nonzero \(j\in \FF_5\). 
	This equality in Table~\ref{tab:mittlererFall} is only possible if \(f\) always attains 
	the value \(1\) on these four rows. Together with~\(f(R_0)=8\) this shows \(f=f_\beta\). 
\end{proof}

We can henceforth suppose 
\begin{equation}\label{eq:1057}
	R_j\not\subseteq I \text{ for every } j\in \FF_5\,.
\end{equation}
Concerning the set 
\[
	S=\bigl\{Q\in\FF_5^2\colon f(Q)>2\bigr\}
\]
this has the following consequence. 

\begin{claim}\label{mittlererfall}
	We have \(8+2|S|\leq f(S)\).
\end{claim}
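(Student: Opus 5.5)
The plan is to follow the proof of Claim~\ref{grosserfall}: establish a row-by-row inequality and then sum it over the five rows. Concretely, I will show that for every \(j\in\FF_5\)
\[
	f(R_j\sm S)\le 4-2|R_j\cap S|\,,
\]
which is equivalent to \(f(R_j)+2|R_j\cap S|\le 4+f(R_j\cap S)\). Summing over all \(j\in\FF_5\) and using \(\|f\|_1\ge 28\) from~\ref{it:fish1} gives
\[
	28+2|S|\le \|f\|_1+2|S|\le 20+f(S)\,,
\]
which is exactly the claim.

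For the row inequality, view each row \(R_j\) as a \(5\)-cycle, with horizontal adjacency \((x,j)\sim(x+1,j)\). The two tools are~\eqref{eq:nachbarnmittlererfall}, which says that two horizontally adjacent boxes of \(I\) have values summing to at most \(2\), and~\eqref{eq:1057}, which says \(|R_j\cap I|\le 4\).

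A box \(Q\in S\) satisfies \(f(Q)>2\). By~\eqref{eq:nachbarnmittlererfall} neither horizontal neighbour of \(Q\) can lie in \(I\). In particular, no two boxes of \(S\) are adjacent. Since the \(5\)-cycle has no three pairwise non-adjacent vertices, \(|R_j\cap S|\le 2\).

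I would then distinguish three cases according to \(|R_j\cap S|\).

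\textbf{Case \(|R_j\cap S|=0\).} All values on the row are at most \(2\), and at most four boxes of the row lie in \(I\). If \(I\cap R_j\) has four boxes, they form a path of four consecutive boxes. Split this path into two adjacent pairs; each pair contributes at most \(2\), so the row sum is at most \(4\). If \(I\cap R_j\) has three boxes, they form either a path of three or an adjacent pair plus an isolated box. In both cases the sum is at most \(2+2=4\). With at most two boxes the bound \(2\cdot 2=4\) is trivial.

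\textbf{Case \(|R_j\cap S|=1\).} The \(S\)-box and its two neighbours account for three boxes. What remains of the row is two boxes that are adjacent to each other. These contribute at most \(2\), using~\eqref{eq:nachbarnmittlererfall} if both lie in \(I\) and the bound \(f\le 2\) off \(S\) if only one does. This gives \(4-2\cdot1=2\).

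\textbf{Case \(|R_j\cap S|=2\).} The two \(S\)-boxes are non-adjacent, and together with their neighbours they exclude every other box of the row from \(I\). So \(f(R_j\sm S)=0=4-2\cdot 2\).

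No step looks like a real obstacle. The only point needing care is the \(|R_j\cap S|=0\) case with four boxes in \(I\), and this is exactly where~\eqref{eq:1057} is needed. Without it, a full row with all values \(\le 2\) could reach total \(5\), and the per-row bound would fail.
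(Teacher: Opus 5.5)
Your proposal is correct and is essentially the paper's own proof: the paper states the row inequality \(f(R_j)+2|S\cap R_j|\le 4+f(S\cap R_j)\) as the outcome of ``a quick case analysis based on~\eqref{eq:nachbarnmittlererfall} and~\eqref{eq:1057}'' and then sums over the five rows using \(\|f\|_1\ge 28\). Your three cases (\(|R_j\cap S|=0,1,2\)) correctly fill in the case analysis that the paper leaves to the reader, including the observation that~\eqref{eq:1057} is needed only when the row contains no box of \(S\) and four boxes of \(I\).
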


\begin{proof}
    A quick case analysis based on~\eqref{eq:nachbarnmittlererfall} and~\eqref{eq:1057} shows 
    \begin{equation}\label{eq:star}
        f(R_j)+2|S\cap R_j|\le 4+f(S\cap R_j)
    \end{equation}    
    for every \(j\in\FF_5\). By summing these five inequalities we obtain
    \[
    	28+2|S|\le \|f\|_1+2|S|\le 20+f(S)\,,
	\]
    which implies our claim.
\end{proof}
 
In view of \(\|f\|_\infty\le 4 \) the previous claim implies \(8+2|S|\le f(S)\le 4|S|\), 
whence \(|S|\ge 4\). In the special case \(|S|=4\) we need to have the equality
\(f(P_1)=f(P_2)=f(P_3)=f(P_4)=4\) and Lemma~\ref{lem:gamma} implies \(f\cong f_\gamma\). 
Assuming from now on that this is not the case, we have~\(|S|\ge 5\). 
The case \(|S|=5\) itself is impossible, for then Lemma~\ref{lem:P4P5} and Claim~\ref{mittlererfall} would imply the contradiction
\(18\le f(S)\le 3\cdot 4+5=17\). More generally, we obtain    
\[
    8+2|S|\le f(S)<4+4+f(P_3)+5+2.5(|S|-5)\,,
\]
whence 
\begin{equation}\label{eq:0335}
	15-2f(P_3)< |S|\,.
\end{equation} 

In particular, we have \(|S|\ge 8\). By~\ref{it:fish3} the set \(S\) is also sum-free
and, hence, normal.  
    
\medskip

{\it \hskip2em First Case: \(3<f(P_3)\le 4\)}
        
\smallskip

	Let~\({L \subseteq \FF_5^2}\) be a line satisfying \(S\subseteq L\cup (-L)\). 
	The box principle allows us to suppose that the set \(F=L \cap\{P_1,P_2,P_3\}\) has at 
	least size~\(2\). The set $S'=\bigl(S\cup (-S)\bigr)\cap L\) 
	satisfies \(|S'|\ge \frac12 |S|\ge 4\). Due to \(|F|+|S'|\ge 6\) and Corollary~\ref{Bdumm}
	the sets \(F+S'\) and \(F-S'\)  are the entire lines \(L+L\) and \(L-L\), respectively. 
	As they need to be disjoint to \(I\), we can cover \(I\) by three translates of \(L\).

\medskip

{\it \hskip2em Second Case: \(f(P_3)\le 3\)}
        
\smallskip

    Now~\eqref{eq:0335} tells us \(|S|\ge 10\). Recall that \(f(P)>2\)
    holds for every \(P\in S\). Using~\eqref{eq:nachbarnmittlererfall} 
    we infer that \(S=I\) contains exactly two boxes from each row. 
    In particular, the normality of \(S\) implies that \(I\) consists  
    of two parallel lines. 
\end{proof}

We conclude this section with another lemma on fishy functions, that will assist us later 
when proving Propositions~\ref{prop:klein} and~\ref{prop:0135}.

\begin{lemma}\label{lem:314}
	Let \(f\colon \FF_5^2\lra \RR_{\ge 0}\) be a fishy function with support~\(I\). 
	If \(\|f\|_\infty\le 3\), then for every point \(P\in \FF_5^2\) with \(f(P)>2.8\) 
	and \(f(-P)>2.5\) there exists a point \(Q\) with 	
	\[
		Q, P+Q\in I \quad \text{  and } \quad f(Q)+f(P+Q)>2.2\,.
	\]  
\end{lemma}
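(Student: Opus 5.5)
Argue by contradiction. Suppose no point \(Q\) has \(Q,P+Q\in I\) and \(f(Q)+f(P+Q)>2.2\). Fix coordinates with \(P=(1,0)\). Since \(f(P),f(-P)>2.5\), axiom~\ref{it:fish3} with \(Q=P\), \(\eps=-1\) gives \(f(0,0)=0\). With \(Q=P\), \(\eps=1\), together with \(f(P)+f(P)>5\), it gives \(f(2,0)=0\), and symmetrically \(f(-2,0)=0\). So \(R_0\cap I=\{P,-P\}\) and \(f(R_0)\le 6\).

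The contradiction hypothesis says exactly that along every row the values on horizontally adjacent boxes of \(I\) sum to at most \(2.2\). This plays the role of~\eqref{eq:nachbarnmittlererfall}, with the threshold \(2.2\) in place of \(2\). Together with \(\|f\|_\infty\le 3\), the plan is a row table analogous to Table~\ref{tab:mittlererFall}:
\begin{itemize}
\item a row meeting \(I\) in one box carries at most \(3\);
\item in two boxes, at most \(6\);
\item in three boxes, at most \(\rho_j+2.2\le 5.2\), with a sharper bound when the two adjacent ones are small;
\item in four boxes, at most \(4.4\);
\item in five boxes, at most \(5.5\), improved to \(<5.5\) by~\ref{it:fish2} since \(f(R_j)\) would be half an odd integer.
\end{itemize}
Rows \(R_{\pm1},R_{\pm2}\) then carry at most \(6\) each, so \(\|f\|_1\le 6+4\cdot 6=30\). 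The margin over \(28\) is only \(2\), so the task is to shave off that slack.

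For the shaving, use that rows with sum near \(6\) must be of type \(|R_j\cap I|=2\) with two values near \(3\). Let \(S=\{Q\colon f(Q)>2\}\). Axiom~\ref{it:fish3} gives \(T\cap(S\pm S)=\vn\) for \(T=\{f>1\}\). Each row with \(f(R_j)>5.5\) contains two boxes of \(S\). If at least three such rows exist besides \(R_0\), then \(|S|\ge 8\). I would then argue as in the First Case of the proof of Proposition~\ref{prop:fish}: \(S\) is sum-free, hence normal by Theorem~\ref{5n=2}, so \(S\subseteq L\cup(-L)\). Corollary~\ref{Bdumm} then forces \(I\) into three translates of \(L\). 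Since \(P,-P\in S\), the line \(L\) is not parallel to \(R_0\), or the structure pins everything down. In that configuration the count \(\|f\|_1\le 28\) should be checked directly using Lemma~\ref{lem:pedantic}-type arguments, yielding equality cases that contradict~\ref{it:fish2} or the \(2.8\) strictness. If at most two rows exceed \(5.5\), the bound is \(6+2\cdot6+2\cdot5.5<30\). It is still not below \(28\), so the bounds for three- and five-box rows must be refined further. For a five-box row \(R_j\), use \(f(P)>2.8\): a difference \(P\pm R_j=R_j\) shows every value on \(R_j\) is at most \(3-2.8=0.2\) paired against \(P\), so by~\ref{it:fish3} \(f(R_j)\le 5\cdot\ldots\) becomes tiny. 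More generally, whenever \(R_j\) meets \(I\) in \(\ge3\) boxes, some box \(Q\) has \(Q\pm P\in I\), which forces \(f(Q)\le 6-f(P)-\lceil\cdot\rceil<3.2-1\). This makes three-box rows carry well under \(5\).

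The main obstacle is this final bookkeeping: showing that the slack of \(2\) is really consumed, especially when exactly two rows of type ``two boxes with values near \(3\)'' coexist with rows having three boxes. I would first try to establish that every row \(R_j\), \(j\neq0\), with \(|R_j\cap I|\ge3\) has \(f(R_j)<4.4\), using the \(f(P),f(-P)>2.5\) constraint via~\ref{it:fish3} with \(P\) itself. Any \(Q\in R_j\) with \(Q+P\in I\) has \(f(Q)+f(Q+P)\le 6-\lceil f(P)\rceil\)-type bounds. Then \(\|f\|_1<6+2\cdot6+2\cdot4.4<28\) except when all four nonzero rows are two-box rows. In that exceptional case \(|S|\ge 10\) and the normal-set argument applies.
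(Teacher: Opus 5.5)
Your plan has a genuine gap, and it sits exactly at the step you leave unchecked. In the branch where \(S\) is normal and \(I\) lies in three translates of a line \(L\), you expect a direct count to give \(\|f\|_1\le 28\) and hence a contradiction. No contradiction exists there.

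Put \(f(x,y)=3\) if \(x=\pm 1\) and \(f(x,y)=0\) otherwise. Then \(\|f\|_1=30\), \(\|f\|_\infty=3\), and every \(f(X)\) is a multiple of \(3\). Moreover,~\ref{it:fish3} holds vacuously, because for \(P',Q'\in I\) the point \(P'\pm Q'\) has first coordinate in \(\{0,\pm 2\}\). Hence \(f\) is fishy. For \(P=(1,0)\) we have \(f(P)=f(-P)=3\), yet no \(Q\) satisfies \(Q,P+Q\in I\) at all.

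This is precisely your ``exceptional case'' (all nonzero rows are two-box rows and \(|S|=10\)). So the statement as written is false, and no bookkeeping can complete your argument. The paper's proof tacitly uses the additional hypothesis that the support of \(f\) is not covered by three parallel lines. It appeals to Lemma~\ref{lem:P4P5}, which requires this, to conclude that \((\pm1,0)\) and \((\pm1,1)\) are the only points with \(f>2.5\). Where the lemma is used, this hypothesis can be arranged for free by adding it to the definition of acceptability. The reason is that in the proof of Proposition~\ref{prop:0135}, the case where the support of \(g\) lies in three parallel lines is settled by Lemma~\ref{lem:3planes}.

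Even with that hypothesis, your remaining bookkeeping does not go through:
\begin{itemize}
\item The constraint~\ref{it:fish3} with \(\pm P\) only gives bounds of the form \(f(Q)+\lceil f(Q+P)\rceil<3.2\) for adjacent \(Q,Q+P\in I\).
\item This still allows a three-box row to carry \(1.1+1.1+3=5.2\) (boxes in positions \(x,x+1,x+3\)). So three-box rows are not ``well under \(5\)''.
\item It also allows a full row to carry \(5.45\) (all values \(1.09\)). So five-box rows are not ``tiny''.
\item Your final tally ignores the configuration of three two-box rows plus one three-box row.
\item With \(S=\{f>2\}\),~\ref{it:fish3} removes \(S\pm S\) only from \(\{f>1\}\), not from \(I\).
\end{itemize}

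The idea you are missing is that a single rich row suffices. Since \(f(R_0)\le 6\), after rescaling the second coordinate some row, say \(R_1\), has \(f(R_1)\ge 5.5\). By the table, \(R_1\) meets \(I\) in two non-adjacent boxes. A shear \((x,y)\mapsto(x+ay,y)\) moves these to \((\pm1,1)\), and both have values \(>2.5\) by~\ref{it:fish2}.

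Then~\ref{it:fish3} removes all sums and differences of \((\pm1,0),(\pm1,1)\) from \(I\), i.e.\ the set \(\{-2,0,2\}\times\{-1,0,1,2\}\). So \(R_{-1}\) and \(R_2\) meet \(I\) only in their boxes with \(x=\pm1\). By Lemma~\ref{lem:P4P5} no fifth point has \(f\ge 2.5\), so these two rows carry less than \(5\) each. This gives \(\|f\|_1<3\cdot 6+2\cdot 5=28\), which is the contradiction.
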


\begin{proof}
	Assume contrariwise that \(f(1, 0)>2.8\),
	\(f(-1,0)>2.5\), but \(f(x, y)+f(x+1, y)\le 2.2\) for all \((x, y)\in\FF_5^2\) 
	with \((x, y), (x+1, y)\in I\). 
    Now for every \(j\in \FF_5\) we find that 
    
    \begin{table}[h]
    \centering
    \begin{tabular}{c|c}
        if & then \\ 
        \hline     
        \(|R_j\cap I|=1\) & \(f(R_j)\le 3\,,\) \\
        \(|R_j\cap I|=2\) & \(f(R_j)\le 6\,,\) \\
        \(|R_j\cap I|=3\) & \(f(R_j)\le 5.2\,,\) \\
        \(|R_j\cap I|=4\) & \(f(R_j)\le 4.4\,,\) \\
        \(|R_j\cap I|=5\) & \(f(R_j)<5.5\,.\) 
    \end{tabular}
    \caption{Maximal values for $f(R_j)$}
    \label{tab:minifall}
	\end{table}
        
    Due to \(f(R_0)\le 6\) we can suppose, without loss of generality, that 
    \[
    	f(R_1)\ge\tfrac 14\bigl(\|f\|_1-f(R_0)\bigr)\ge \tfrac14 (28-6)=5.5\,.
	\]
	By Table~\ref{tab:minifall} this yields \(|R_1\cap I|=2\) and we can assume that 
    \[
    	R_1\cap I=\{(1, 1), (-1, 1)\}\,.
	\]
    
    The four points \(Q\) marked by circles in Figure~\ref{fig:klfall} satisfy \(f(Q)>2.5\).
    Due to Lemma~\ref{lem:P4P5} they are the only such points and by~\ref{it:fish3} the 
    boxes marked by crosses in Figure~\ref{fig:klfall} are not in \(I\)---they are sums or 
    differences of boxes marked by circles. This entails~\(f(R_{-1})<5\) and~\(f(R_2)<5\), 
    and we reach the contradiction 
    \[
    	28\le \|f\|_1<3\cdot6+2\cdot 5=28\,. \qedhere
	\]
\end{proof} 
    
\section{Acceptable functions}\label{sec:klein}
This entire section deals with the proof of Proposition~\ref{prop:klein}. 
Let \(f\colon \FF_5^3\lra\RR_{\ge 0}\) denote an acceptable function with 
support~\(I\), standard projection~\(g\), and special point \(P_\star\).  
Assume for the sake of contradiction that \(f(L)\le 3\) holds for every 
line \(L\subseteq \FF_5^3\). When applied to the lines~\(L_{x, y}\) this tells 
us \(\|g\|_\infty\le 3\). Since \(g\) is fishy, we have \(g(0, 0)\le 2\). Thus the special 
point~\(P_\star\) cannot be the origin and we can suppose that it is either~\((1, 0)\)
or \((-1, 0)\). We prefer not to commit ourselves to one of these two alternatives, because 
in this manner we keep the current situation symmetric with respect to the 
reflection \(x\longmapsto -x\), which offers some advantages. 

For each point \(Q=(x, y)\in\FF_5^2\) we set 
\[
	I_Q=I_{x,y}=\bigl\{z\in\FF_5\colon (x, y, z)\in I\bigr\}\,.
\]
We will often use that~\ref{it:owl1} implies \(g(Q)\le |I_Q|\) for every \(Q\in\FF_5^2\). 
Similarly, this assumption reveals \(f(x, y, z)\ge g(x, y)-(|I_{x,y}|-1)\)
whenever \(z\in I_{x, y}\). 
Since the group of affine permutations of \(\FF_5\) acts transitively on the 
three-element subsets of~\(\FF_5\), we can suppose by~\ref{it:owl5} that 
\begin{equation}\label{eq:IPstern}
	I_{P_\star}=\{-1, 0, 1\}\,,
\end{equation}
which has the following consequence. 

\begin{claim}\label{clm:1958}
	If \(f(x, y, z)\ge 0.2\), \(\delta\in \{-1, 1\}\), and \(\eta\in \{-1, 0, 1\}\), 
	then \((x+\delta, y, z+\eta)\not\in I\). 
\end{claim}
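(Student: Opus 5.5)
The claim should follow in a single step from \ref{it:owl3}, once we know that $f$ is uniformly large on the three points of $I$ lying on the special line $L_{P_\star}$. The idea is to use these three points as ``forbidden differences'' for any point carrying weight at least $0.2$.

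First we bound $f$ from below on $L_{P_\star}$. Write $P_\star=(s,0)$ with $s\in\{-1,1\}$. By~\eqref{eq:IPstern} the line $L_{P_\star}$ meets $I$ exactly in the three points $(s,0,w)$ with $w\in\{-1,0,1\}$. By~\ref{it:owl4} we have $g(P_\star)>2.8$. By~\ref{it:owl1} each of these three values is at most $1$. Hence, as noted before the claim, every $w\in\{-1,0,1\}$ satisfies
\[
	f(s,0,w)\ge g(P_\star)-2>0.8\,.
\]

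Now let $P=(x,y,z)$ with $f(P)\ge 0.2$, and take $Q=(s,0,w)$ with $w\in\{-1,0,1\}$. Then $f(P)+f(Q)>0.2+0.8=1$. So for either sign $\eps\in\{-1,1\}$, condition~\ref{it:owl3} forbids $P+\eps Q\in I$, where
\[
	P+\eps Q=(x+\eps s,\,y,\,z+\eps w)\,.
\]

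It remains to check that these points exhaust the required set. Given $\delta\in\{-1,1\}$ and $\eta\in\{-1,0,1\}$, choose $\eps=\delta s$, so that $\eps s=\delta$. Then choose $w=\eps\eta\in\{-1,0,1\}$, so that $\eps w=\eta$. With these choices $P+\eps Q=(x+\delta,y,z+\eta)$, and this point is therefore not in $I$.

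No step here is a real obstacle. The only points needing care are the strictness of the inequality $f(P)+f(Q)>1$, which comes from $g(P_\star)>2.8$ being strict, and the symmetric treatment of the two possibilities $P_\star=(\pm1,0)$. The substitution $\eps=\delta s$ handles both at once.
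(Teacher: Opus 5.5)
Your proof is correct and is essentially the paper's argument. The paper writes $P_\star=(\eps,0)$, bounds $f(\eps,0,\delta\eps\eta)\ge g(P_\star)-2>0.8$ via \ref{it:owl1}, \ref{it:owl4}, \ref{it:owl5}, and then applies \ref{it:owl3} with sign $\delta\eps$ to the pair $(x,y,z)$, $(\eps,0,\delta\eps\eta)$, which is exactly your choice of $\eps=\delta s$ and $w=\eps\eta$.
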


\begin{proof}
	Write \(P_\star=(\eps, 0)\) with~\(\eps\in\{-1, 1\}\). 
	From~\ref{it:owl4} and \(\delta\eps\eta\in \{-1, 0, 1\}=I_{P_\star}\) 
	we conclude 
	\[
		f(x, y, z)+f(\eps, 0, \delta\eps\eta)\ge 0.2+\bigl(g(P_\star)-2\bigr)>1\,.
	\]
	Due to~\ref{it:owl3} applied with \(\delta\eps\) here in place of \(\eps\) 
	there it follows that the point 
	\[
		(x, y, z)+\delta\eps(\eps, 0, \delta\eps\eta)=(x+\delta, y, z+\eta)
	\]
	is indeed not in \(I\).
\end{proof}

Denoting the support of \(g\) by \(J\) we can now reformulate~\ref{it:owl6} in a more 
concrete manner. 

\begin{claim}\label{clm:2001}
	If \((x, y), (x+1, y)\in J\) and \(g(x, y)+g(x+1, y)> 2.2\), then there exists 
	some~\({a\in\FF_5}\) such that 
	\begin{itemize}
		\item either \(I_{x, y}=\{a\}\) and \(I_{x+1, y}=\{a-2, a+2\}\)
		\item or \(I_{x, y}=\{a-2, a+2\}\) and \(I_{x+1, y}=\{a\}\).
	\end{itemize}
\end{claim}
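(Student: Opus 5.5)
Set \(Q=(x,y)\). Since \(P_\star=(\pm1,0)\), the hypothesis is exactly the hypothesis of~\ref{it:owl6} for \(Q\) or for \((x+1,y)\). If \(P_\star=(1,0)\) we apply~\ref{it:owl6} to \(Q\). If \(P_\star=(-1,0)\) we apply it to \((x+1,y)\), whose translate by \(P_\star\) is \(Q\). In either case we obtain \(|I_{x,y}|+|I_{x+1,y}|=3\). Both sets are nonempty because both points lie in \(J\), so one of them has size \(1\) and the other has size \(2\). By the reflection symmetry \(x\mapsto -x\), which we deliberately kept available (it swaps the two roles up to relabelling the pair), it suffices to treat one orientation and show that the two-element fibre is symmetric about the singleton at distance~\(2\).

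The main tool is Claim~\ref{clm:1958}. Every element \(z\) of a singleton fibre \(I_{x,y}=\{a\}\) carries the full mass, \(f(x,y,a)=g(x,y)\). Every element of a two-element fibre carries at least \(g-1\). We first show that all points involved have \(f\ge0.2\).

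From \(g(x,y)+g(x+1,y)>2.2\) together with \(g\le|I|\) we get the following. If \(|I_{x,y}|=1\) and \(|I_{x+1,y}|=2\), then \(g(x,y)\le1\) forces \(g(x+1,y)>1.2\), so both points of the two-element fibre have \(f>0.2\). Also \(g(x,y)>0.2\), since \(g(x+1,y)\le 2\).

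Now apply Claim~\ref{clm:1958} with \(\delta=\pm1\) starting from the heavy points. Starting from \((x,y,a)\) with \(\delta=1\), the fibre \(I_{x+1,y}\) avoids \(\{a-1,a,a+1\}\), so \(I_{x+1,y}\subseteq\{a-2,a+2\}\). Since this fibre has two elements, it equals \(\{a-2,a+2\}\), which is the desired conclusion. The mirrored orientation, where \(I_{x,y}\) has two elements and \(I_{x+1,y}=\{a\}\), is handled identically using \(\delta=-1\) from \((x+1,y,a)\). Here we only need \(f(x+1,y,a)=g(x+1,y)>0.2\), which holds by the same bound.

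The only delicate point is verifying the threshold \(0.2\) needed for Claim~\ref{clm:1958}, and that estimate already covers both orientations. The rest is bookkeeping with the two sign conventions for \(P_\star\).
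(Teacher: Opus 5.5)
Your proof is correct and follows the paper's argument. Property~\ref{it:owl6} forces the fibre sizes to be $1$ and $2$. The singleton fibre carries mass $g>2.2-2=0.2$. Claim~\ref{clm:1958} with $\delta=\pm1$ then excludes $a-1,a,a+1$ from the two-element fibre. Your extra bound on the two-element fibre and the appeal to reflection symmetry are unnecessary, since you handle both orientations directly anyway.
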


\begin{proof}
	Since \(P_\star\) is either \((-1, 0)\) or \((1, 0)\), there exists a point \(Q\) 
	such that 
	\[
		\bigl\{Q, P_\star+Q\bigr\}=\bigl\{(x, y), (x+1, y)\bigr\}\,.
	\]
	
	Now~\ref{it:owl6} tells us \(|I_{x, y}|+|I_{x+1, y}|=3\). As both summands are positive, 
	this means that either we have \(|I_{x, y}|=1\) and \(|I_{x+1, y}|=2\), or it is the 
	other way around. 
	In the first case, we write \(I_{x, y}=\{a\}\) with an appropriate element \(a\in \FF_5\). 
	Owing to \(g(x+1, y)\le 2\) we have~\({g(x, y)> 2.2-2=0.2}\)
	and, therefore, Claim~\ref{clm:1958} implies \(a-1, a, a+1\not\in I_{x+1, y}\), which in 
	turn leads to \(I_{x+1, y}=\{a-2, a+2\}\).
	The case that \(|I_{x, y}|=2\) and \(|I_{x+1, y}|=1\) is treated similarly.
\end{proof}

These claims lead to a symmetric strengthening of \eqref{eq:IPstern}.

\begin{claim}\label{clm:1420} 
	We have \(I_{1, 0}=I_{-1, 0}=\{-1, 0, 1\}\).
\end{claim}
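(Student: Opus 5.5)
The plan is to prove first that \(I_{-P_\star}\subseteq\{-1,0,1\}\) and then to use \(g(-P_\star)>2.5\). Together with \(g(Q)\le |I_Q|\), this bound on \(g\) forces \(|I_{-P_\star}|\ge 3\), so the inclusion is an equality. Combined with~\eqref{eq:IPstern}, this gives the claim. Throughout I write \(P_\star=(\eps,0)\) as in the proof of Claim~\ref{clm:1958}.

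\emph{Step 1: empty lines in row \(R_0\).} Every point of \(L_{P_\star}\cap I\) has \(f\)-value at least \(g(P_\star)-2>0.8\). So \(f(P)+f(Q)>1\) for any two such points \(P\) and~\(Q\) (equal or not), and~\ref{it:owl3} excludes \(P\pm Q\) from~\(I\). The sets \(\{-1,0,1\}\pm\{-1,0,1\}\) are all of \(\FF_5\), hence \(L_{0,0}\cap I=\vn\) and \(L_{2\eps,0}\cap I=\vn\). Next, suppose \(g(-2\eps,0)>0\). Then \((-2\eps,0)\) and \((-\eps,0)\) are horizontally adjacent points of \(J\) with \(g\)-sum exceeding \(2.2\). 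Claim~\ref{clm:2001} then gives \(|I_{-\eps,0}|\le 2\), which contradicts \(g(-P_\star)>2.5\). So the support of \(f\) inside \(\FF_5\times\{0\}\times\FF_5\) lies in \(L_{P_\star}\cup L_{-P_\star}\), and \(g(R_0)\le 6\).

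\emph{Step 2: exploit the other rows.} Since \(\|g\|_1\ge 28\), the rows \(R_{\pm1},R_{\pm2}\) carry \(g\)-mass at least \(22\). Because \(\|g\|_\infty\le 3\), a case analysis as in Table~\ref{tab:minifall} produces a row \(R_j\), \(j\ne0\), with two horizontally adjacent points \(Q=(x,j)\) and \(Q+(1,0)\) of \(J\) whose \(g\)-sum exceeds \(2.2\). (If no such pair exists, the mass bound should fail, exactly as in Lemma~\ref{lem:314}.) By Claim~\ref{clm:2001} the fibres over this pair have the rigid shape \(\{a\}\) and \(\{a-2,a+2\}\). Moreover, each of these fibre points carries a definite amount of \(f\)-mass, which I would bound from below using \(f(x,y,z)\ge g(x,y)-(|I_{x,y}|-1)\).

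\emph{Step 3: test \(w\in I_{-P_\star}\).} Now take \(w\in I_{-P_\star}\) and apply~\ref{it:owl3} to \(P=(-\eps,0,w)\) together with a fibre point \((x,j,z)\) from Step~2. The translate \((x,j,z)+(-\eps,0,w)=(x-\eps,j,z+w)\) lies in the neighbouring fibre. If the relevant \(f\)-values sum to more than \(1\), then \(z+w\) must avoid that fibre. The pattern \(\{a\}\) versus \(\{a\pm2\}\) then forces \(w\notin\{\pm2\}\), mirroring how Claim~\ref{clm:1958} was derived from~\eqref{eq:IPstern}. To get the large \(f\)-value at \((-\eps,0,w)\), I would first treat \(|I_{-P_\star}|=3\), where each point has \(f>0.5\); the case \(|I_{-P_\star}|\ge4\) has to be ruled out separately. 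For that I would use the reflection symmetry \(x\mapsto -x\) and the fact that a fibre of size at least \(4\) next to the rigid patterns contradicts Claim~\ref{clm:1958}.

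\emph{Main obstacle.} The hardest step is Step~3 when \(|I_{-P_\star}|\ge 4\). There the individual \(f\)-values on \(L_{-P_\star}\) can be small, so~\ref{it:owl3} does not apply directly. I would try first to show that \(L_{-P_\star}\) must then contain a point of \(f\)-value at least \(0.2\) at a position \(\pm2\). Applying Claim~\ref{clm:1958} to that point, with the roles of \(P_\star\) and \(-P_\star\) swapped, should contradict the rigid fibre pattern from Step~2 in some row \(R_j\) with \(j\ne0\).
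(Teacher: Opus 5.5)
Your reduction matches the paper's: it suffices to exclude \(\pm2\) from \(I_{-P_\star}\), because \(|I_{-P_\star}|\ge\lceil g(-P_\star)\rceil=3\). Your Step~2 is exactly Lemma~\ref{lem:314} combined with Claim~\ref{clm:2001}, and Step~1 is not needed.

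The gap is in Step~3, and the proposal is incomplete there. You apply~\ref{it:owl3} to a point \((-P_\star,w)\) of \(L_{-P_\star}\) together with a fibre point, so you need a lower bound on \(f(-P_\star,\pm2)\). When \(|I_{-P_\star}|\ge4\), nothing you have established gives one. The only a priori bound, \(f(-P_\star,w)\ge g(-P_\star)-(|I_{-P_\star}|-1)\), is vacuous. The mass \(g(-P_\star)>2.5\) could a priori sit on heights \(-1,0,1\) with only a tiny amount at height \(\pm2\). This is precisely the case you leave open, and your sketched rescue does not close it. A swapped version of Claim~\ref{clm:1958} is not available. Its proof needs every point of the special fibre to carry \(f>0.8\), which comes from \(g(P_\star)>2.8\) together with \(|I_{P_\star}|=3\); for \(-P_\star\) you have neither. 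Nor does the sketch explain how a point of value \(0.2\) at height \(\pm2\) would clash with the rigid pattern, since each individual fibre point there is only guaranteed a value \(>0.2\).

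The missing idea is to apply~\ref{it:owl3} to the two rigid fibre points themselves, not to a point of \(L_{-P_\star}\). Suppose, say, \(I_Q=\{a\}\) and \(I_{P_\star+Q}=\{a-2,a+2\}\). Then \(f(Q,a)=g(Q)\) and \(f(P_\star+Q,a\mp2)\ge g(P_\star+Q)-1\), so
\[
f(Q,a)+f(P_\star+Q,a\mp2)\ge g(Q)+g(P_\star+Q)-1>1.2>1\,.
\]
Their difference is \((Q,a)-(P_\star+Q,a\mp2)=(-P_\star,\pm2)\), so~\ref{it:owl3} gives \((-P_\star,\pm2)\not\in I\) at once. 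The other orientation of the pair is symmetric. This uses no information about how \(f\) is distributed on \(L_{-P_\star}\), so the case \(|I_{-P_\star}|\ge4\) never has to be treated separately.
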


\begin{proof}
	Owing to~\eqref{eq:IPstern} and \(|I_{-P_\star}|\ge \lceil g(-P_\star)\rceil=3\)
	it suffices to show \(-2, 2\not\in I_{-P_\star}\). 
	By Lemma~\ref{lem:314} applied to the fishy function \(g\) and to \(P=P_\star\) 
	we find some \(Q\in \FF_5^2\) satisfying~\({Q, P_\star+Q\in J}\) 
	and \(g(Q)+g(P_\star+Q)>2.2\). Due to Claim~\ref{clm:2001} there exists 
	some \(a\in\FF_5\) such that 
	\begin{itemize}
		\item either \(I_{Q}=\{a\}\) and \(I_{P_\star+Q}=\{a-2, a+2\}\)
		\item or \(I_{Q}=\{a-2, a+2\}\) and \(I_{P_\star+Q}=\{a\}\).
	\end{itemize}
	
	In the first case, we have 
	\[
		f(Q, a)+f(P_\star+Q, a\mp 2)\ge g(Q)+g(P_\star+Q)-1>1
	\]
	for both signs and, therefore,~\ref{it:owl3} yields 
	\[
		(-P_\star, \pm 2)=(Q, a)-(P_\star+Q, a\mp 2)\not\in I\,,
	\] 
	i.e., \(\pm 2\not\in I_{-P_\star}\). The second case is analogous. 
\end{proof}

Since \(g\) fulfills~\ref{it:fish3}, we have
\begin{equation}\label{eq:nachbarnkleinerfall}
    g(x,y)+g(x+1,y)\le 3 \text{ whenever } (x,y), (x+1,y)\in J\,. 
\end{equation}

Moreover, \ref{it:owl4} and~\ref{it:fish3} yield \((-2, 0), (0, 0), (2, 0)\not\in J\), 
whence \(g(R_0)\le 6\). 
Thus the average of \(g(R_j)\) over all \(j\in\FF_5\sm\{0\}\) is at least \(5.5\). 
The conjunction of our next three claims shows, however, that \(g(R_j)>5.5\) can 
only occur for some \(j\ne 0\), if~\({|R_j\cap J|=3}\). 

\begin{claim}\label{5.5_5}
    If \(R_j\subseteq J\) holds for some \(j\in \FF_5\), then \(g(R_j)<5.5\).
\end{claim}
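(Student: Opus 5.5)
We have to show that a full row \(R_j\subseteq J\) satisfies \(g(R_j)<5.5\). Averaging~\eqref{eq:nachbarnkleinerfall} over the five consecutive pairs \((x,j),(x+1,j)\) only gives \(g(R_j)\le 7.5\), so a bound of this type will not be enough. Instead we will combine Claim~\ref{clm:2001} with the fibre structure \(I_Q\).

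First, \(j\neq 0\), because \((0,0)\notin J\). So we may take \(j\neq0\) and \(R_j\subseteq J\). Call a pair of horizontally adjacent boxes \((x,j),(x+1,j)\) \emph{heavy} if \(g(x,j)+g(x+1,j)>2.2\).

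Suppose a heavy pair exists. Then Claim~\ref{clm:2001} forces the fibre sizes \(|I_{x,j}|\) and \(|I_{x+1,j}|\) to be \(1\) and \(2\) in some order. Now follow the cycle of five boxes in \(R_j\). If every adjacent pair were heavy, the fibre sizes would have to alternate between \(1\) and \(2\) around a cycle of odd length \(5\), which is impossible. Hence at least one adjacent pair in the cyclic row is light, i.e.\ has sum at most \(2.2\).

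Now bound the total. Sum the five adjacent-pair sums: each box is counted twice, so
\[
2g(R_j)=\sum_x\bigl(g(x,j)+g(x+1,j)\bigr).
\]
Each heavy pair contributes at most \(3\) by~\eqref{eq:nachbarnkleinerfall}, and each light pair at most \(2.2\). With at least one light pair this gives \(2g(R_j)\le 4\cdot 3+2.2=14.2\), i.e.\ \(g(R_j)\le 7.1\). This is still too weak, so the structure has to be used further.

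The refinement uses the fact that on a box with \(|I_Q|=1\) we have \(g(Q)\le 1\), and on a box with \(|I_Q|=2\) we have \(g(Q)\le2\). So a heavy pair actually has sum at most \(3\) with a split of at most \(1+2\). Suppose there are \(k\) light pairs and \(5-k\) heavy pairs, where \(k\ge1\) is odd. Indeed, the number of heavy pairs must be even: going once around the cycle, the fibre size alternates precisely across heavy pairs and the sizes must return to their start, so the number of alternations is even. This leaves \(k\in\{1,3,5\}\).

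\textbf{Case \(k=5\).} Every pair is light, so \(2g(R_j)\le 5\cdot 2.2=11\) and \(g(R_j)\le 5.5\). Equality would require every pair sum to be exactly \(2.2\). A strict inequality should then come from~\ref{it:fish2} or from a rounding argument. For instance, if all pair sums equal \(2.2\), then all values equal \(1.1\), and \(g(R_j)=5.5\) is half an odd integer, which~\ref{it:fish2} forbids.

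\textbf{Case \(k=3\).} Here \(2g(R_j)\le 2\cdot3+3\cdot2.2=12.6\), so \(g(R_j)\le 6.3\), which is too big. We need sharper information. The two heavy pairs each consist of a \(1\)-fibre box and a \(2\)-fibre box. We can then apply Claim~\ref{clm:1958} together with~\ref{it:owl3}, using \(P_\star\) and \(-P_\star\) via Claim~\ref{clm:1420}, to restrict the neighbouring fibres. The aim is to show that the remaining boxes are small, so that the sum of values along the row stays below \(5.5\).

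\textbf{Case \(k=1\).} This case is similar, with four heavy pairs. The sizes then alternate \(1,2,1,2\) along a path of five boxes with two adjacent boxes of equal size, which fixes the pattern of fibres.

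I expect the case analysis for \(k\in\{1,3\}\) to be the main obstacle. The fibres of positions forced to be \(\{a\}\) and \(\{a\pm2\}\) must be chained consistently around the row. Claim~\ref{clm:2001} forces the \(a\)-values to shift by \(\pm2\), and I would try to show that this shifting is inconsistent after wrapping around \(\FF_5\).
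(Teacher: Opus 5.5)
Your treatment of the case without heavy pairs is correct: then $2g(R_j)\le 11$, and \ref{it:fish2} applied to $X=R_j$ rules out $g(R_j)=5.5$ directly. This is the averaging step the paper uses, assuming $g(R_1)>5.5$, to get a pair with $g(0,1)+g(1,1)>2.2$. What follows in your proposal, however, is either wrong or missing.

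The parity argument fails. Claim~\ref{clm:2001} constrains fibre sizes only across heavy pairs. Across a light pair the size may change or stay the same, and boxes outside heavy pairs may have fibres of any size from $1$ to $5$. For example, sizes $1,2,1,1,1$ around the cycle with a single heavy pair between the first two boxes are perfectly consistent. So the number of heavy pairs need not be even, and the cases $k=2,4$ are not excluded. More importantly, the cases with at least one heavy pair, which carry the whole content of the claim, are not proved at all; you only state an aim. Your suggested mechanism of chaining the parameters $a$ of several heavy pairs cannot work in general, because there may be only one heavy pair.

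The missing idea is that a single heavy pair suffices. You need to apply Claim~\ref{clm:1958} to \emph{every} box with $f\ge 0.2$, and use $R_j\subseteq J$ in the form that all five fibres $I_{x,j}$ are nonempty.

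Take $j=1$ and normalise to $I_{0,1}=\{0\}$, $I_{1,1}=\{-2,2\}$, using automorphisms that act on $R_0$ at most by $x\mapsto -x$. Then $f(0,1,0)>0.2$ and $f(1,1,\pm 2)>0.2$, so Claim~\ref{clm:1958} gives $I_{2,1}=\{0\}$ and $I_{-1,1}\subseteq\{-2,2\}$.

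A box $(-2,1,z)$ with $f\ge 0.2$ would remove $z-1,z,z+1$ from both $I_{-1,1}$ and $I_{2,1}$. This empties one of them, so $g(-2,1)<1$. Hence $g(-1,1)>g(R_1)-1-(1+2+1)>0.5$, and by the symmetry $z\mapsto -z$ you may assume $f(-1,1,2)>0.2$. This removes $1,2,-2$ from $I_{-2,1}$. After that, $f(2,1,0)\ge 0.2$ would empty $I_{-2,1}$, so $f(2,1,0)<0.2$.

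Now at most eight boxes of $R_1$ lie in $I$. Since $(-2,1,-1)-(-1,1,-2)=(-1,0,1)\in I$ by Claim~\ref{clm:1420}, property~\ref{it:owl3} gives $f(-2,1,-1)+f(-1,1,-2)\le 1$. Summing yields $g(R_1)<1+4+2\cdot 0.2=5.4$, the required contradiction.
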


\begin{proof}
    Assume for the sake of contradiction that \(R_1\subseteq J\) and \(g(R_1)>5.5\). 
    Without loss of generality we can suppose 
    \begin{equation}
    	g(0, 1)+g(1, 1)>2.2\,,
	\end{equation}
	so that Claim~\ref{clm:2001} gives two possibilities concerning the 
	sets \(I_{0, 1}\) and \(I_{1, 1}\). 
	By applying a suitable automorphism of \(\FF_5^3\), 
    which is either of the form \((x, y, z)\longmapsto (x, y, z-ay)\) or of the form 
    \((x, y, z)\longmapsto (-x+y, y, z-ay)\) for some \(a\in\FF_5\), we can suppose further 
    that \(I_{0, 1}=\{0\}\) and \(I_{1, 1}=\{-2, 2\}\).
    Notice that \(f(0, 1, 0), f(1, 1, 2), f(1, 1, -2)>0.2\). 
    Thus the boxes of \(R_1\) corresponding to \(x=0\) and \(x=1\) are structured as 
    in Figure~\ref{fig:5row.2}. Next, Claim~\ref{clm:1958} leads to the seven crosses 
    we see in the squares for \(x=-1\) and \(x=2\). 
    If for some \(z\in \FF_5\) we had~\({f(-2, 1, z)\ge 0.2}\), then 
    Claim~\ref{clm:1958} would yield further crosses in those two squares, so that 
    a contradiction to \(R_1\subseteq J\) would arise. This explains the five entries
    in the square for \(x=-2\).  
    
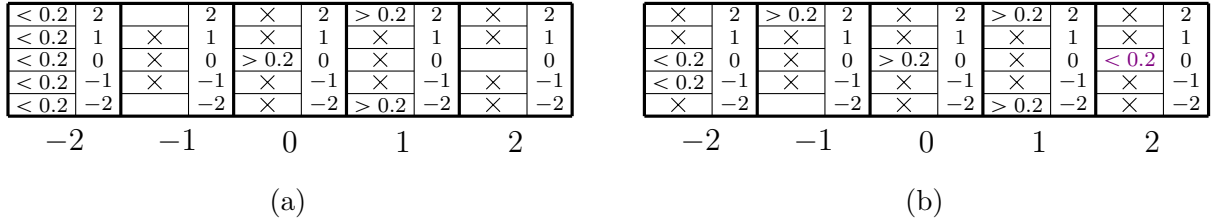
\begin{figure}[h]
	\centering
	\begin{subfigure}[b]{.49\textwidth}
	\centering
		\begin{tikzpicture}[scale=1.5]
		\foreach \x in {-2,...,3}\draw[very thick] (\x-.5, -.5)--(\x-.5, .5);
		\foreach \y in {0,1} \draw[very thick] (-2.5, \y-.5)--(2.5, \y-.5);  
    	\foreach \x in {-2,...,2} {
    		\node at (\x,0) [label={[yshift=-1.52cm]\(\x\)}] {};
    		\draw (\x+.1,-.5)--(\x+.1,.5);
			\foreach \y in {1,...,4} \draw (\x-.5,\y/5-.5)--(\x+.1, \y/5-.5);
			\foreach \y in {-2,...,2}
    			\node at  (\x,\y/5) [label={[xshift=.45cm, yshift=-0.38cm]\tiny\(\y\)}] {};
    	}
    	\foreach \x in {-1,1} \foreach \y in {-1,0,1} \draw (\x-0.2,\y/5) node[crosspt]{};
    	\foreach \x in {0,2} \foreach \y in {-2,-1,1,2} \draw (\x-0.2,\y/5) node[crosspt]{};
    	\foreach \x/\y in {0/0, 1/2, 1/-2} \draw (\x-.2, \y/5) node{\tiny {\(>0.2\)}};
	    \foreach \y in {-2,...,2} \draw (-2.2,\y/5) node{\tiny {\(< 0.2\)}};
    \end{tikzpicture}
    \caption{}\label{fig:5row.2}
\end{subfigure}
\hfill
	\begin{subfigure}[b]{.49\textwidth}
	\centering
		\begin{tikzpicture}[scale=1.5]
        \foreach \x in {-2,...,3}\draw[very thick] (\x-.5, -.5)--(\x-.5, .5);
		\foreach \y in {0,1} \draw[very thick] (-2.5, \y-.5)--(2.5, \y-.5);  
    	\foreach \x in {-2,...,2} {
    		\node at (\x,0) [label={[yshift=-1.52cm]\(\x\)}] {};
    		\draw (\x+.1,-.5)--(\x+.1,.5);
			\foreach \y in {1,...,4} \draw (\x-.5,\y/5-.5)--(\x+.1, \y/5-.5);
			\foreach \y in {-2,...,2}
    			\node at  (\x,\y/5) [label={[xshift=.45cm, yshift=-0.38cm]\tiny\(\y\)}] {};
    	}
    	\foreach \x in {-1,1} \foreach \y in {-1,0,1} \draw (\x-0.2,\y/5) node[crosspt]{};
    	\foreach \x in {0,2} \foreach \y in {-2,-1,1,2} \draw (\x-0.2,\y/5) node[crosspt]{};
		\foreach \y in {-2,1,2} \draw (-2.2,\y/5) node[crosspt]{};
		\foreach \x/\y in {0/0, 1/2, 1/-2, -1/2} \draw (\x-.2, \y/5) node{\tiny {\(>0.2\)}};
        \foreach \y in {0,-1} \draw (-2.2, \y/5) node{\tiny {\(< 0.2\)}};
    	\draw (1.8,0) node[text=violet]{\tiny{\(< 0.2\)}};
        \end{tikzpicture}
    \caption{}\label{fig:5row.3}
\end{subfigure}
	\caption{Internal structure of \(R_1\)}
\end{figure}

	So we have \(g(-2, 1)<1\) and, consequently, \(g(-1, 1)>g(R_1)-g(-2, 1)-4>0.5\). 
	Without loss of generality we can therefore suppose \(f(-1, 1, 2)>0.2\).
	As shown in Figure~\ref{fig:5row.3}, Claim~\ref{clm:1958} yields three further crosses 
	in the left square. Moreover, \(f(2, 1, 0)\ge 0.2\) is impossible, for then 
	Claim~\ref{clm:1958} would lead to the contradiction \((-2, 1)\not\in J\). 

	Observe that there are only eight boxes in Figure~\ref{fig:5row.3} without crosses. 
	Because of 
	\[
		(-2, 1, -1)-(-1, 1, -2)=(-1, 0, 1)\in I
	\]
	property~\ref{it:owl3} yields 
	\[
		f(-2, 1, -1)+f(-1, 1, -2)\le 1\,.
	\]
	So altogether we get the contradiction 
	\[
		g(R_1)< 1+4+2\cdot 0.2=5.4\,. \qedhere
	\]
\end{proof}

Using~\eqref{eq:nachbarnkleinerfall}, Claim \ref{5.5_5}, and \(\|g\|_\infty\le 3\) 
we find for every \(j\in\FF_5\) that 

\begin{table}[h]
    \centering
    \begin{tabular}{c|c}
        if & then \\ 
        \hline     
        \(|R_j\cap J|=1\) & \(f(R_j)\le 3\,,\) \\
        \(|R_j\cap J|=2\) & \(f(R_j)\le 6\,,\) \\
        \(|R_j\cap J|=3\) & \(f(R_j)\le 6\,,\) \\
        \(|R_j\cap J|=4\) & \(f(R_j)\le 6\,,\) \\
        \(|R_j\cap J|=5\) & \(f(R_j)<5.5\,.\) 
    \end{tabular}
    \caption{Maximal values for \(g(R_j)\)}
    \label{tab:kleinerFall}
	\end{table}
	
In particular, we have 

\begin{equation}\label{eq:6}
	g(R_j)\le 6 \quad \text{ for every \(j\in\FF_5\)}\,.
\end{equation}

We plan to improve the second and fourth row of our table, starting with the easier task. 

\begin{claim}\label{5.5_2}
    If \(j\in\FF_5\sm\{0\}\) and \(|R_j\cap J|=2\), then \(g(R_j)<5.5\).
\end{claim}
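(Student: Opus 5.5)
Assume for contradiction that \(j\neq 0\), \(R_j\cap J=\{P,Q\}\) and \(g(R_j)\ge 5.5\). Since \(\|g\|_\infty\le 3\), both values are at least \(2.5\). Property~\ref{it:fish2} rules out the value exactly \(2.5\), so \(g(P),g(Q)\in(2.5,3]\). The plan is to derive a contradiction from the internal structure of the lines \(L_P\) and \(L_Q\), using \(I_{\pm1,0}=\{-1,0,1\}\) from Claim~\ref{clm:1420}.

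First I locate \(P\) and \(Q\) relative to each other. By~\eqref{eq:nachbarnkleinerfall} they cannot be horizontally adjacent, because that would give \(g(P)+g(Q)\le 3\). Hence \(Q=P\pm 2(1,0)\). After the reflection \(x\mapsto -x\), which preserves our symmetric setup, and a translation within the row, I may write \(P=(x,j)\) and \(Q=(x+2,j)\).

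Next I use the lines. Since \(g(P)>2.5\) and \(\|f\|_\infty\le 1\), we have \(|I_P|\ge 3\), and likewise \(|I_Q|\ge 3\). For every \(z\in I_P\) we get \(f(P,z)\ge g(P)-(|I_P|-1)\). The key step is to use~\ref{it:owl3} with the points \((\pm1,0,w)\), \(w\in\{-1,0,1\}\), on which \(f\) has substantial mass. Indeed \(g(\pm1,0)>2.5\) by~\ref{it:owl4}, so each \(f(\pm1,0,w)>0.5\) because the three values sum to more than \(2.5\) and each is at most \(1\). Note that \((x+1,j,z)\) equals \((P,z)+(1,0,w)\) and also \((Q,z')-(1,0,w')\) for suitable \(w,w'\). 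Since \((x+1,j)\notin J\), this gives no restriction, so I instead apply~\ref{it:owl3} in the form ``difference lies in \(I\)'':
\[
(x+2,j,z')-(x,j,z)=(2,0,z'-z),
\]
and \((2,0)\notin J\). This also gives nothing, so the useful relation must involve the row \(R_{2j}\) or \(R_{-j}\) through \(P+Q\), \(2P\), \(2Q\).

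The argument I would actually run is therefore this. From~\ref{it:fish3} applied to \(g\), large values on \(P\) and \(Q\) force \(g\) to vanish on \(2P\), \(2Q\), \(P+Q\) and \(P-Q\), all of which lie in \(R_{2j}\), and also on \(-2P\) and \(-2Q\). This bounds \(g(R_{2j})\) and \(g(R_{-j})\) through Table~\ref{tab:kleinerFall}. The remaining room comes from \(\|g\|_1\ge 28\), \(g(R_0)\le 6\) and~\eqref{eq:6}. I expect an accounting in which \(R_{2j}\) has at most \(2\) support points, among \(\{x+1\}\) and one more; translating, \(2P\) and \(2Q\) lie two apart in \(R_{2j}\). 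I would combine this with Claim~\ref{clm:2001} and Claim~\ref{5.5_5} to show that the four rows cannot reach total \(22\). That total is necessary, since \(28-6=22\) and each row is at most \(6\).

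The main obstacle I expect is this global counting step, which needs precise control of rows \(R_{2j}\) and \(R_{-2j}\). If it fails, the fallback is the finer local route: determine \(I_P\) and \(I_Q\) (each of size \(3\), consecutive after an affine change as in~\eqref{eq:IPstern}) and apply~\ref{it:owl3} to \(P+Q\) and \(P-Q\) with \(z\)-offsets, exactly as in the proof of Claim~\ref{clm:1420}.
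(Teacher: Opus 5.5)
Your proposal has a genuine gap: the decisive counting step is only announced, and the ingredients you list cannot close it.

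Using \(P\) and \(Q\) alone,~\ref{it:fish3} excludes \(2P\), \(2Q\) and \(P+Q\) from \(J\); these lie in \(R_{2j}\). By contrast, \(P-Q\) lies in \(R_0\), and \(-2P,-2Q\) lie in \(R_{-2j}\); their vanishing is not justified by~\ref{it:fish3}. That leaves the two non-adjacent boxes \((2x+1,2j)\) and \((2x+3,2j)\) of \(R_{2j}\), so Table~\ref{tab:kleinerFall} gives only \(g(R_{2j})\le 6\). No box of \(R_{-j}\) is excluded at all. Every row is therefore still bounded merely by \(6\), and \(5\cdot 6=30\ge 28\) yields no contradiction.

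The missing idea is to bring in the boxes \((\pm1,0)\), which also carry \(g>2.5\) by~\ref{it:owl4} and \(\|g\|_\infty\le 3\). Rescale the second coordinate and apply a shear \((x,y)\mapsto(x+cy,y)\); both fix \(R_0\) pointwise, so you may assume \(j=1\) and \(\{P,Q\}=\{(-1,1),(1,1)\}\). All sums and differences of the four boxes \((\pm1,0)\), \((\pm1,1)\) then lie outside \(J\). This empties the columns \(x\in\{-2,0,2\}\) of \(R_{-1},R_0,R_1,R_2\) (Figure~\ref{fig:klfall}), so \(R_2\cap J\subseteq\{(\pm1,2)\}\) and \(R_{-1}\cap J\subseteq\{(\pm1,-1)\}\). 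The paper then uses Lemma~\ref{lem:P4P5} (no fifth box with \(g>2.5\)) and~\ref{it:fish2} to get \(g<2.5\) on these four boxes. Hence \(g(R_2),g(R_{-1})<5\), and \(28\le\|g\|_1<5+5+3\cdot 6=28\). Your fallback via \(I_P,I_Q\) is not needed; moreover \(|I_P|=3\) is unjustified, since only \(|I_P|\ge 3\) follows.

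This last step needs more than the standing assumptions of Section~\ref{sec:klein}. Lemma~\ref{lem:P4P5} requires that the support of \(g\) not be covered by three parallel lines, and in the set-up as written the claim fails without this. Define \(f\) by:
\(f(1,0,z)=0.94\) for \(z\in\{-1,0,1\}\);
\(f(1,y,z)=0.515\) for \(y\ne 0\) and all \(z\);
\(f(-1,y,z)=0.6\) for all \(y,z\);
and \(f=0\) elsewhere.

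Its projection is \(g(1,0)=2.82\), \(g(1,y)=2.575\) for \(y\ne0\), and \(g(-1,y)=3\). Then the hypotheses all hold:
\(\|g\|_1=28.12\), and no sum of values of \(g\) over a set of boxes has fractional part \(0.5\);
\ref{it:fish3}, \ref{it:owl3} and \ref{it:owl6} hold trivially, because the supports lie in the planes \(x=\pm1\);
\ref{it:owl4} and \ref{it:owl5} hold with \(P_\star=(1,0)\) and \(I_{P_\star}=\{-1,0,1\}\).

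Every line satisfies \(f(L)\le 3\). A line not contained in a plane \(\{x=c\}\) meets each plane \(x=\pm1\) once, so \(f(L)\le 0.94+0.6\). Lines in the plane \(x=-1\) give exactly \(3\). Lines in the plane \(x=1\) give at most \(\max\{2.82,\,0.94+4\cdot 0.515\}=3\).

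Yet \(|R_1\cap J|=2\) and \(g(R_1)=5.575\). The same \(f\) also violates Lemma~\ref{lem:314}, Claim~\ref{clm:1420} and Proposition~\ref{prop:klein} as stated. So any proof, the paper's included, must add the hypothesis that the support of \(g\) is not covered by three parallel lines. This is harmless: in the proof of Proposition~\ref{prop:0135} that case is settled at once by Lemma~\ref{lem:3planes}.
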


\begin{proof}
	The argument is very similar to the proof of Lemma~\ref{lem:314}. Assume contrariwise 
	that, without loss of generality, \(|R_1\cap J|=2\) but \(g(R_j)>5.5\). 
	We can suppose further that~\({R_1\cap J=\{(-1, 1), (1, 1)\}}\), so that we again 
	reach Figure~\ref{fig:klfall} with the same meaning of circles and crosses. 
	As there, an argument involving Lemma~\ref{lem:P4P5} discloses \(g(R_2), g(R_{-1})<5\)
	and we again reach the contradiction 
	\(
		28\le \|g\|_1<2\cdot 5+3\cdot 6=28.
	\)
\end{proof}

Rows with exactly four boxes in \(J\) are less easy to analyse. It will be convenient from 
now on to denote by \(R_j\) not only the subset \(\FF_5\times\{j\}\) of \(\FF_5^2\) but 
also the subset \(\FF_5\times\{j\}\times\FF_5 \) of \(\FF_5^3\).

\begin{claim}\label{5.5_4}
    We have \(g(R_j)<5.5\) for every \(j\in\FF_5\) with \(|R_j\cap J|=4\).
\end{claim}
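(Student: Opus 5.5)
We argue indirectly and assume that some row, say \(R_1\) (after possibly replacing \(y\) by \(-y\) or a multiple, which preserves the setup since \(P_\star\in R_0\)), satisfies \(|R_1\cap J|=4\) and \(g(R_1)\ge 5.5\). Because \((0,0)\notin J\) and \(g(R_0)\le 6\), the row \(R_0\) is not the relevant one, so \(j\neq 0\) may be assumed. Using translations \((x,y,z)\mapsto(x+cy,y,z)\) we can arrange that the missing box of \(R_1\) is \((-2,1)\), so that \((-1,1),(0,1),(1,1),(2,1)\) form a horizontal path of four consecutive boxes in \(J\). By \eqref{eq:nachbarnkleinerfall} each of the three adjacent pairs along this path has \(g\)-sum at most \(3\). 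Hence \(g(R_1)\le g(-1,1)+g(0,1)+g(1,1)+g(2,1)\) can only reach \(5.5\) if at least one adjacent pair has sum exceeding \(2.2\). In fact the outer pairs \(\{(-1,1),(0,1)\}\) and \(\{(1,1),(2,1)\}\) together sum to at least \(5.5\), so at least one of them, and typically both, have sum \(>2.2\); the middle pair may be treated similarly when needed.

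Next I would apply Claim~\ref{clm:2001} to every adjacent pair with sum \(>2.2\). Each such pair has fibre sizes \(1\) and \(2\), with the singleton \(\{a\}\) and the pair \(\{a-2,a+2\}\). If two overlapping consecutive pairs both qualify, the fibre sizes along the path must alternate \(1,2,1\) or \(2,1,2\), and the \(z\)-values propagate rigidly. After an automorphism \((x,y,z)\mapsto(x,y,z-ay)\), as in Claim~\ref{5.5_5}, we may fix one singleton to be \(\{0\}\). Then \(g\le|I_Q|\) together with \(\|f\|_\infty\le1\) bounds each box by \(1\) or \(2\). To get \(5.5\) from alternating sizes we need configurations like \(1,2,1,2\) or \(2,1,2,1\), possibly with the middle pair having small sum. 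I would enumerate these few patterns of \(|I_Q|\) along the path. In each, I would record the lower bounds \(f(x,1,z)>0.2\) forced by \(g(Q)-(|I_Q|-1)\) and large pair sums, and then place the crosses supplied by Claim~\ref{clm:1958}, in the style of Figures~\ref{fig:5row.2} and~\ref{fig:5row.3}.

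The final step is to exploit the tension at the end boxes. For example, suppose the pattern is \(\{0\},\{\pm2\},\{b\},\ldots\). Consistency of Claim~\ref{clm:2001} on the pair \((1,1),(2,1)\) forces \(b\) relative to \(\pm2\), and Claim~\ref{clm:1958} may force \(I\)-values to collide, giving a contradiction as in Claim~\ref{5.5_5}. Alternatively, it may leave a middle pair with no valid choice, forcing its sum to be at most \(2.2\) and some boundary box to be small. I would then combine this with \ref{it:owl3}, using \(I_{\pm1,0}=\{-1,0,1\}\) from Claim~\ref{clm:1420}: differences of boxes in \(R_1\) lying in \(L_{\pm1,0}\times\{-1,0,1\}\) cap pair sums at \(1\). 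The aim is to push \(g(R_1)\) below \(5.5\), via a bound of the shape \(1+4+\) small terms.

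The main obstacle is the case where the middle pair \((0,1),(1,1)\) has sum \(\le2.2\) while both outer pairs exceed \(2.2\). The fibres then need not alternate globally, and two independent parameters \(a,a'\) appear. Here I would try the automorphism \((x,y,z)\mapsto(-x+y,y,z)\) to normalise. I would then use \ref{it:owl3} with the lines \(L_{\pm1,0}\) and \(L_{\pm2,0}\notin J\) to show that the two outer blocks cannot both carry total mass near \(3\) unless their \(z\)-positions clash, which should bound \(g(R_1)\le 3+2.2+\text{small}\). If this fails, I would fall back on a global count as in Claim~\ref{5.5_2}: I would show that the large boxes force \(g(R_2)\) or \(g(R_{-1})\) below \(5\), contradicting \(\|g\|_1\ge28\) via \eqref{eq:6}.
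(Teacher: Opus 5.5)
There is a genuine gap. You plan to reach a contradiction essentially inside $R_0\cup R_1$, using only Claims~\ref{clm:1958}, \ref{clm:2001}, \ref{clm:1420} and \ref{it:owl3}. You never invoke the standing assumption $f(L)\le 3$ for lines other than the vertical lines $L_{x,y}$, and without it the argument cannot close.

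In your normalisation (missing box $(-2,1)$), take, up to the reflection you mention, $I_{-1,1}=I_{1,1}=\{0\}$ and $I_{0,1}=I_{2,1}=\{-2,2\}$, with all six values in $(0.5,1]$. This is exactly the alternating pattern in which you expect the $z$-values to collide, but nothing collides:
\begin{itemize}
\item adjacent fibres are $\{0\}$ and $\{\pm2\}$, which is what Claim~\ref{clm:2001} demands for all three adjacent pairs and what Claim~\ref{clm:1958} permits;
\item every difference of two of the six points lies either in $L_{\pm1,0}$ with third coordinate $\pm2$, or in $L_{0,0}\cup L_{\pm2,0}$, so \ref{it:owl3} gives nothing against $I_{\pm1,0}=\{-1,0,1\}$;
\item \ref{it:fish3} holds because $1+2+\lceil g(\pm1,0)\rceil\le 6$;
\item no line in the plane $y=1$ contains more than three of these points.
\end{itemize}
Yet $g(R_1)$ can equal $6$, so no bound of the shape $1+4+{}$small can come out of $R_0\cup R_1$. (Also, both outer pairs automatically exceed $2.5$, not just ``typically''.)

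The case you flag as the obstacle is actually the easier one. Fibre sizes $1,2,2,1$ die at once by Claim~\ref{clm:1958}. Sizes $2,1,1,2$, normalised to $I_{-1,1}=\{\pm2\}$, $I_{0,1}=\{0\}$, $I_{1,1}=\{2\}$, $I_{2,1}=\{0,-1\}$, are again invisible to \ref{it:owl3}. What kills them is that $(-1,1,-2),(0,1,0),(1,1,2),(2,1,-1)$ lie on one line, giving $g(R_1)\le 3+2$. That step already needs the line hypothesis.

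Your fallback is too weak as stated. One row with $g<5$ and four rows with $g\le 6$ only gives $\|g\|_1<29$. Moreover, in the alternating configuration the crosses produced by \ref{it:owl3} only yield $g(R_2)\le 5$ and $g(R_{-1})\le 6$.

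The missing idea is to use lines transversal to the rows together with an equality analysis over all five rows:
\begin{itemize}
\item In the configuration above, the four boxes $B=\{(1,y,0)\colon y=-1,0,1,2\}$ are uncrossed and lie on a line, so $f(B)\le 3$.
\item The complements satisfy $f(R_y\setminus B)\le 6,5,5,5,4$ for $y=-2,-1,0,1,2$. The last bound comes from pairing the eight remaining boxes of $R_2$ along differences $(\pm1,0,\pm1)$ and applying Claim~\ref{clm:1958}.
\item Adding up gives $\|f\|_1\le 28$, so equality holds throughout. This forces $f=1$ on the fifteen uncrossed boxes of $(R_{-1}\cup R_0\cup R_1)\setminus B$ and $f(R_{-2})=6$.
\item A further cross-out in $R_{-2}=R_{-1}+R_{-1}=R_{-1}-R_1$ then gives $f(-1,-2,0)=1$.
\item Hence the line $\{(-1,y,0)\colon y\in\FF_5\}$ carries mass $4>3$, a contradiction.
\end{itemize}
Some global argument of this kind, resting on $f(L)\le 3$ for non-vertical lines, is indispensable here.
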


\begin{proof}
    Suppose for the sake of contradiction that \(R_1\cap J=R_1\sm \{(2, 1)\}\), 
    but \(g(R_1)>5.5\). Due to \eqref{eq:nachbarnkleinerfall} this yields
    \begin{equation}\label{eq:0204}
    	g(-2,1)+g(-1,1)>2.5 
		\quad \text{ and } \quad 
		g(0,1)+g(1,1)>2.5\,.
	\end{equation}
	
	By Claim~\ref{clm:2001} and the first estimate 
    we can suppose that the boxes~\({(-2, 1)}\) and \((-1, 1)\) are structured as in 
    Figure~\ref{fig:erstemgl4.2} or~\ref{fig:zweitemgl4.1}. 
    
    \begin{figure}[h]
	\centering
	\begin{subfigure}[b]{.49\textwidth}
	\centering
		\begin{tikzpicture}[scale=1.5]
		\foreach \x in {-2,...,3}\draw[very thick] (\x-.5, -.5)--(\x-.5, .5);
		\foreach \y in {0,1} \draw[very thick] (-2.5, \y-.5)--(2.5, \y-.5);  
    	\foreach \x in {-2,...,1} {
    		\node at (\x,0) [label={[yshift=-1.52cm]\(\x\)}] {};
    		\draw (\x+.1,-.5)--(\x+.1,.5);
			\foreach \y in {1,...,4} \draw (\x-.5,\y/5-.5)--(\x+.1, \y/5-.5);
			\foreach \y in {-2,...,2}
    			\node at  (\x,\y/5) [label={[xshift=.45cm, yshift=-0.38cm]\tiny\(\y\)}] {};
    	}
		\node at (2,0) [label={[yshift=-1.52cm]\(2\)}] {};
    	\foreach \x in {-1,1} \foreach \y in {-1,0,1} \draw (\x-0.2,\y/5) node[crosspt]{};
    	\foreach \x in {-2,0} \foreach \y in {-2,-1,1,2} \draw (\x-0.2,\y/5) node[crosspt]{};
    	\foreach \x/\y in {-1/-2, -1/2, -2/0, 1/-2, 1/2, 0/0}
			\draw (\x-0.2,\y/5) node{\tiny {\(>0.5\)}};
        \draw (2,0) node[crossgr]{};
    \end{tikzpicture}
    \caption{}\label{fig:erstemgl4.2}
	\end{subfigure}
\hfill
	\begin{subfigure}[b]{.49\textwidth}
	\centering
		\begin{tikzpicture}[scale=1.5]
        \foreach \x in {-2,...,3}\draw[very thick] (\x-.5, -.5)--(\x-.5, .5);
		\foreach \y in {0,1} \draw[very thick] (-2.5, \y-.5)--(2.5, \y-.5);  
    	\foreach \x in {-2,...,1} {
    		\node at (\x,0) [label={[yshift=-1.52cm]\(\x\)}] {};
    		\draw (\x+.1,-.5)--(\x+.1,.5);
			\foreach \y in {1,...,4} \draw (\x-.5,\y/5-.5)--(\x+.1, \y/5-.5);
			\foreach \y in {-2,...,2}
    			\node at  (\x,\y/5) [label={[xshift=.45cm, yshift=-0.38cm]\tiny\(\y\)}] {};
    	}
		\node at (2,0) [label={[yshift=-1.52cm]\(2\)}] {};
    	\foreach \x in {-2,0} \foreach \y in {-1,0,1} \draw (\x-0.2,\y/5) node[crosspt]{};
    	\foreach \y in {-2,-1,1,2} \draw (-1-0.2,\y/5) node[crosspt]{};
    	\foreach \x/\y in {-2/-2,-2/2,-1/0} \draw (\x-0.2,\y/5) node{\tiny {\(>0.5\)}};
   		\draw (2,0) node[crossgr]{};
    	\end{tikzpicture}
    \caption{}\label{fig:zweitemgl4.1}
    \end{subfigure}
	\caption{}
\end{figure}
	    
    In the left case, Claim~\ref{clm:1958} yields the four crosses in the square 
    corresponding to \(x=0\). Then, Claim~\ref{clm:2001} and the second estimate 
    in~\eqref{eq:0204} give the remaining entries of Figure~\ref{fig:erstemgl4.2}.
    In Figure~\ref{fig:zweitemgl4.1}, however, Claim~\ref{clm:1958} only yields three
    crosses in the middle square and it remains unclear whether \(|I_{0, 1}|=1\) or
    \(|I_{0, 2}|=1\). Thus we get two subcases and, without loss of generality, one 
    of the two situations shown in Figure~\ref{fig:zweitemgl4.2} and~\ref{fig:drittemgl4.2}
    occurs.
    
	In Figure~\ref{fig:drittemgl4.2} the four green boxes form a subset \(A\) of 
	the line \((-2, 1, -2)+\langle (1, 0, 2)\rangle\) and thus we have \(f(A)\le 3\), 
	which yields the contradiction \(g(R_1)\le f(A)+2\le 5\). For this reason only 
	the two cases depicted in Figure~\ref{fig:erstemgl4.2} or~\ref{fig:zweitemgl4.2}
	are possible. As the automorphism~\({(x, y, z)\longmapsto (-x-y,y,z)}\) exchanges these 
	two cases and preserves our assumptions on the row \(R_0\), we can suppose without 
	loss generality that \(R_1\) is structured as in Figure~\ref{fig:erstemgl4.2}. 
    
\begin{figure}[h]
	\centering
	\begin{subfigure}[b]{.49\textwidth}
		\centering
		\begin{tikzpicture}[scale=1.5]
		\foreach \x in {-2,...,3}\draw[very thick] (\x-.5, -.5)--(\x-.5, .5);
		\foreach \y in {0,1} \draw[very thick] (-2.5, \y-.5)--(2.5, \y-.5);  
    	\foreach \x in {-2,...,1} {
    		\node at (\x,0) [label={[yshift=-1.52cm]\(\x\)}] {};
    		\draw (\x+.1,-.5)--(\x+.1,.5);
			\foreach \y in {1,...,4} \draw (\x-.5,\y/5-.5)--(\x+.1, \y/5-.5);
			\foreach \y in {-2,...,2}
    			\node at  (\x,\y/5) [label={[xshift=.45cm, yshift=-0.38cm]\tiny\(\y\)}] {};
    	}
		\node at (2,0) [label={[yshift=-1.52cm]\(2\)}] {};
    \foreach \x in {-2,0} \foreach \y in {-1,0,1} \draw (\x-0.2,\y/5) node[crosspt]{};
    \foreach \x in {-1,1} \foreach \y in {-2,-1,1,2} \draw (\x-0.2,\y/5) node[crosspt]{};
    \foreach\x/\y in {-2/-2, -2/2, -1/0, 0/-2, 0/2, 1/0} 
    	\draw (\x-0.2,\y/5) node{\tiny {\(>0.5\)}};
    \draw (2,0) node[crossgr]{};
    \end{tikzpicture}
    \caption{}\label{fig:zweitemgl4.2}
\end{subfigure}
\hfill
\begin{subfigure}[b]{.49\textwidth}
	\centering
		\begin{tikzpicture}[scale=1.5]
		\foreach \x/\y in {-2/-2, -1/0, 0/2, 1/-1}
			\filldraw[SeaGreen] (\x,\y/5) +(-.5,-.1) rectangle ++(.1,.1);
		\foreach \x in {-2,...,3}\draw[very thick] (\x-.5, -.5)--(\x-.5, .5);
		\foreach \y in {0,1} \draw[very thick] (-2.5, \y-.5)--(2.5, \y-.5);  
    	\foreach \x in {-2,...,1} {
    		\node at (\x,0) [label={[yshift=-1.52cm]\(\x\)}] {};
    		\draw (\x+.1,-.5)--(\x+.1,.5);
			\foreach \y in {1,...,4} \draw (\x-.5,\y/5-.5)--(\x+.1, \y/5-.5);
			\foreach \y in {-2,...,2}
    			\node at  (\x,\y/5) [label={[xshift=.45cm, yshift=-0.38cm]\tiny\(\y\)}] {};
    	}
		\node at (2,0) [label={[yshift=-1.52cm]\(2\)}] {};
    	\foreach \y in {-1,0,1} \draw (-2.2,\y/5) node[crosspt]{};
    	\foreach \y in {-2,...,1} \draw (-0.2,\y/5) node[crosspt]{};
		\foreach \y in {-2,-1,1,2} \draw (-1.2,\y/5) node[crosspt]{};
    	\foreach \y in {-2,1,2} \draw (0.8,\y/5) node[crosspt]{};
    	\foreach\x/\y in {-2/-2, -2/2, -1/0, 0/2, 1/0,1/-1} 
    		\draw (\x-0.2,\y/5) node{\tiny {\(>0.5\)}};
    	\draw (2,0) node[crossgr]{};
    	\end{tikzpicture}
    \caption{}\label{fig:drittemgl4.2}
	\end{subfigure}
\caption{}
\end{figure}

	In order to make further progress, we study the entire function \(f\) and not 
	just its restriction to \(R_1\). The entries in \(R_0\) of Figure~\ref{fig:4lemma.1}
	are justified by~\ref{it:owl1},~\ref{it:owl4}, and Claim~\ref{clm:1420}. If \(Q, Q'\in R_1\) 
	are among the six boxes with \(f(Q), f(Q')>0.5\), then~\ref{it:owl3} 
	yields \(Q+Q'\not\in I\). This leads to the crosses in \(R_2\) of 
	Figure~\ref{fig:4lemma.1}. The crosses in \(R_{-1}\) are found similarly, 
	using \(R_{-1}=R_0-R_1\).  
	
\begin{figure}[h]    
    \centering
    \begin{tikzpicture}[scale=1.5]
    \foreach \x/\y/\z in {-2/2/2, -1/2/1}
			\filldraw[yellow] (\x,\y+\z/5) +(-.5,-.1) rectangle ++(.1,.1);
	\foreach \x/\y/\z in {-2/2/-2, -1/2/-1}
			\filldraw[orange] (\x,\y+\z/5) +(-.5,-.1) rectangle ++(.1,.1);
	\foreach \x/\y/\z in {0/2/2, 1/2/1}
			\filldraw[red] (\x,\y+\z/5) +(-.5,-.1) rectangle ++(.1,.1);
	\foreach \x/\y/\z in {0/2/-2, 1/2/-1}
			\filldraw[violet] (\x,\y+\z/5) +(-.5,-.1) rectangle ++(.1,.1);
    \foreach \x/\y in {-1/2, 0/1, 1/0, 2/-1}
			\filldraw[SeaGreen] (\x,\y) +(-.5,-.1) rectangle ++(.1,.1);
    \foreach \x in {-2,...,3} 
    	\draw[very thick] (-2.5, \x-.5)--(2.5, \x-.5) (\x-.5, -2.5)--(\x-.5, 2.5);
	\foreach \x in {-2,...,2}{
    	\node at (\x,0) [label={[yshift=-4.6cm] \(\x\)}] {};
        \node at  (0,\x) [label={[xshift=4.2cm, yshift=-0.4cm]\(\x\)}] {};
    }
    \foreach \x/\y in {-2/2,-1/2,0/2,1/2,-2/1,-1/1,0/1,1/1,-1/0,1/0,-1/-1,0/-1,1/-1,2/-1}{ 
    		\draw (\x+.1,\y-.5)--(\x+.1,\y+.5);
			\foreach \z in {1,...,4} \draw (\x-.5,\y+\z/5-.5)--(\x+.1, \y+\z/5-.5);
			\foreach \z in {-2,...,2}
    			\node at  (\x,\y+\z/5) [label={[xshift=.45cm, yshift=-0.38cm]\tiny\(\z\)}] {};
    	}
    \foreach \x/\y in {2/1, 0/0, 2/0, -2/-1, -2/0, 2/2} \draw (\x,\y) node[crossgr]{};
    \foreach \x/\y/\z in {-1/0/-1, -1/0/0, -1/0/1, 1/0/-1, 1/0/0, 1/0/1, -2/1/0, -1/1/-2,
    	-1/1/2, 0/1/0, 1/1/-2, 1/1/2}
		\draw (\x-0.2,\y+\z/5) node{\tiny {\(>0.5\)}};
	\foreach \x/\y in {-2/2,0/2,-1/1,1/1,-1/-1,1/-1} \foreach \z in {-1,0,1}
    	\draw (\x-0.2,\y+\z/5) node[crosspt]{};
	\foreach \x/\y in {-2/1,0/1,0/-1,2/-1} \foreach \z in {-2,-1,1,2}
    	\draw (\x-0.2,\y+\z/5) node[crosspt]{};
	\foreach \x in {-1,1} \foreach \y in {0,2} \foreach \z in {-2,2}
		\draw (\x-0.2,\y+\z/5) node[crosspt]{};
	\draw (0.8,2) node[crosspt]{};	
    \end{tikzpicture}
    \caption{}\label{fig:4lemma.1}
\end{figure}

	Next, the four green boxes form a subset \(B\) of the 
	line \((0, 1, 0)+\langle(1, -1, 0)\rangle\), whence 
	\[
		f(B)\le 3\,.
	\]
	Due to~\eqref{eq:6} and Figure~\ref{fig:4lemma.1} we have 
	\[
		f(R_{-2}\sm B)\le 6\,, \quad  f(R_{-1}\sm B)\le 5\,, \quad f(R_0\sm B)\le 5\,, 
		\quad \text{ and } \quad
		f(R_1\sm B)\le 5\,.
	\]
	As indicated by different colours, the eight boxes in \(R_2\sm B\) without crosses 
	can be grouped into four pairs whose difference is of the form \((\pm 1, 0, \pm 1)\). 
	So Claim~\ref{clm:1958} implies  
	\begin{equation}\label{eq:paare}
		\begin{cases}
		 	f(-2,2,2)+f(-1,2,1) &\le 1\,, \cr
		 	f(-2,2,-2)+f(-1,2,-1)&\le 1\,, \cr 
		 	f(0,2,2)+f(1,2,1)&\le 1\,, \cr
		 	f(0,2,-2)+f(1,2,-1)&\le 1\,. 
		 \end{cases}
	\end{equation}
	The addition of these four estimates yields \(f(R_2\sm B)\le 4\) and altogether 
	we obtain
	\[
		28\le \|f\|_1\le f(B)+\sum_{j\in \FF_5} f(R_j\sm B)\le 3+6+5+5+5+4=28\,,
	\]
	which means that equality holds throughout. 
	
	In particular, this shows \(f(P)=1\) 
	for the fifteen boxes \(P\in (R_1\cup R_0\cup R_{-1})\sm B\) which are known to 
	satisfy \(f(P)>0\), and that we have \(f(R_{-2})= 6\). 
	Using \(R_{-2}=R_{-1}+R_{-1}=R_{-1}-R_1\) and~\ref{it:owl3} 
	we can now show that, as indicated in Figure~\ref{fig:W}, many boxes are not 
	in \(R_2\cap I\). 
	
\begin{figure}[h]    
    \centering
    \begin{tikzpicture}[scale=1.5]
    \foreach \x/\y/\z in {-1/-2/1, 0/-2/2}
			\filldraw[yellow] (\x,\y+\z/5) +(-.5,-.1) rectangle ++(.1,.1);
	\foreach \x/\y/\z in {-1/-2/-1, 0/-2/-2}
			\filldraw[orange] (\x,\y+\z/5) +(-.5,-.1) rectangle ++(.1,.1);
	\foreach \x/\y/\z in {1/-2/1, 2/-2/2}
			\filldraw[red] (\x,\y+\z/5) +(-.5,-.1) rectangle ++(.1,.1);
	\foreach \x/\y/\z in {1/-2/-1, 2/-2/-2}
			\filldraw[violet] (\x,\y+\z/5) +(-.5,-.1) rectangle ++(.1,.1);
    \foreach \x/\y in {-1/2, 0/1, 1/0, 2/-1}
    \foreach \x/\y in {-2/1, -1/0, 0/-1, 1/-2}
			\filldraw[Cerulean] (\x,\y) +(-.5,-.1) rectangle ++(.1,.1);
    \foreach \x in {-2,...,3} 
    	\draw[very thick] (-2.5, \x-.5)--(2.5, \x-.5) (\x-.5, -2.5)--(\x-.5, 2.5);
	\foreach \x in {-2,...,2}{
    	\node at (\x,0) [label={[yshift=-4.6cm] \(\x\)}] {};
        \node at  (0,\x) [label={[xshift=4.2cm, yshift=-0.4cm]\(\x\)}] {};
    }
    \foreach \x/\y in {-2/2,-1/2,0/2,1/2,-2/1,-1/1,0/1,1/1,-1/0,1/0,-1/-1,0/-1,1/-1,2/-1,
    	-1/-2,0/-2,1/-2,2/-2}{ 
    		\draw (\x+.1,\y-.5)--(\x+.1,\y+.5);
			\foreach \z in {1,...,4} \draw (\x-.5,\y+\z/5-.5)--(\x+.1, \y+\z/5-.5);
			\foreach \z in {-2,...,2}
    			\node at  (\x,\y+\z/5) [label={[xshift=.45cm, yshift=-0.38cm]\tiny\(\z\)}] {};
    	}
    \foreach \x/\y in {2/1, 0/0, 2/0, -2/-1, -2/0, 2/2, -2/-2} \draw (\x,\y) node[crossgr]{};
    \foreach \x/\y/\z in {-1/0/-1, -1/0/0, -1/0/1, 1/0/-1, 1/0/1, -2/1/0, -1/1/-2,
    	-1/1/2, 1/1/-2, 1/1/2, -1/-1/2, -1/-1/-2, 0/-1/0, 1/-1/2, 1/-1/-2}
		\draw (\x-0.2,\y+\z/5) node{\tiny {\(1\)}};
	\foreach \x/\y in {-2/2,0/2,-1/1,1/1,-1/-1,1/-1,0/-2,2/-2} \foreach \z in {-1,0,1}
    	\draw (\x-0.2,\y+\z/5) node[crosspt]{};
	\foreach \x/\y in {-2/1,0/1,0/-1,2/-1} \foreach \z in {-2,-1,1,2}
    	\draw (\x-0.2,\y+\z/5) node[crosspt]{};
	\foreach \x in {-1,1} \foreach \y in {-2,0,2} \foreach \z in {-2,2}
		\draw (\x-0.2,\y+\z/5) node[crosspt]{};
	\draw (0.8,2) node[crosspt]{};	
	\foreach \x/\y/\z in {0/1/0,1/0/0} \draw (\x-0.2,\y+\z/5) node{\tiny {\(>0.5\)}};
    \end{tikzpicture}
    \caption{}\label{fig:W}
\end{figure}

	There remain ten boxes in \(R_2\) without crosses. Similar to~\eqref{eq:paare}
	we can form four pairs of boxes differing by \((\pm 1, 0, \pm 1)\). This yields 
	\(6=f(R_{-2})\le f(-1, -2, 0)+f(1, -2, 0)+4\), wherefore~\({f(1, -2, 0)=1}\).
	We now obtain the contradiction that the four blue boxes form a subset \(C\) 
	of the line \((0, -1, 0)+\langle(1, -1, 0)\rangle\) with \(f(C)=4\). 
\end{proof}

Let us summarise what we know about the ``rich'' rows \(R_y\) whose index \(y\) 
is in the set
\[
	\Psi=\bigl\{y\in \FF_5^\times\colon g(R_y)>5.5\bigr\}\,.
\]
If \(y\in\Psi\), then Claims~\ref{5.5_5}\,--\,\ref{5.5_4} 
imply \(|R_y\cap J|=3\). Among the three boxes 
in~\({R_y\cap J}\) there need to be two horizontally adjacent ones, i.e., there is 
some \(x\in \FF_5\)
such that \((x, y)\) and~\({(x+1, y)}\) are in \(J\). By~\eqref{eq:nachbarnkleinerfall} we have 
\(g(x, y)+g(x+1, y)\le 3\) and thus the remaining box~\({(x', y)}\) in \(R_j\cap J\) 
satisfies~\({g(x', y)>2.5}\). Combined with~\(g(1, 0)>2.5\) this yields 
\((x'\pm 1, y)\not\in J\), whence~\({x'=x+2}\). 
		
\begin{claim}\label{clm:psi}
	If \(y\in\Psi\), then \(2y\not\in\Psi\). 
\end{claim}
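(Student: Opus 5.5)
Suppose for contradiction that \(y\) and \(2y\) both lie in \(\Psi\). The automorphism \((x,y,z)\mapsto(x,\lambda y,z)\) preserves \(R_0\), the special point \(P_\star\) and all our normalisations, so we may take \(y=1\). Then \(R_1\) and \(R_2\) are both rich.

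By the summary just before the claim, there are \(a,b\in\FF_5\) with
\[
	R_1\cap J=\{(a,1),(a+1,1),(a+2,1)\}, \qquad g(a+2,1)>2.5,\qquad g(a,1)+g(a+1,1)>3,
\]
where the last bound holds because \(g(R_1)>5.5\) and \(\|g\|_\infty\le 3\). The same description holds for \(R_2\) with \(b\) in place of \(a\). A shear \((x,y,z)\mapsto(x+cy,y,z)\) fixes \(R_0\), so we may also normalise \(a\). Write \(Q_1=(a+2,1)\) and \(Q_2=(b+2,2)\) for the two heavy boxes, each with \(g>2.5\).

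The contradiction should come from property~\ref{it:fish3} of the fishy function \(g\), using \(R_2=R_1+R_1\) and \(R_1=R_2-R_1\):
\begin{itemize}
	\item \(2Q_1\) lies in \(R_2\) and satisfies \(2g(Q_1)>5\), so \(2Q_1\notin J\). This gives \(b+2\neq 2a+4\).
	\item \(Q_2-Q_1\in R_1\) and \(g(Q_1)+g(Q_2)>5\), so \(Q_2-Q_1\notin J\). This gives \(b-a\notin\{a,a+1,a+2\}\).
	\item Sums \(Q_1+(x,1)\) with \((x,1)\in R_1\cap J\) lie in \(R_2\). They must avoid \(J\) whenever \(g(Q_1)+g(x,1)+\lceil\cdot\rceil>6\), i.e.\ whenever \(g(x,1)\ge 1\) or so. One of the pair \((a,1),(a+1,1)\) has \(g>1.5\), so this excludes further positions in \(R_2\).
	\item Differences with the light pair \((b,2),(b+1,2)\) give analogous exclusions.
\end{itemize}
Since \(R_2\cap J\) must be the specific pattern \(\{b,b+1,b+2\}\), which uses three consecutive positions, these exclusions should leave no admissible \(b\). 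I would check this by a short case analysis over the five values of \(b-2a\).

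If \(g\) alone does not rule out every case, I would descend to \(f\) and use Claim~\ref{clm:2001} on the adjacent pairs in \(R_1\) and \(R_2\). These pairs have sum \(>3>2.2\), so each consists of a singleton fibre \(\{c\}\) next to a fibre \(\{c\pm2\}\). The remaining heavy boxes have \(g>2.5\), so their fibres have size \(\ge 3\) and contain points with \(f>0.5\). Then~\ref{it:owl3} applied to sums and differences of points of \(R_1\) and \(R_2\) should produce points of \(I\) that the fibre structure forbids.

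A last fallback is counting, as in Claim~\ref{5.5_4}. Bounding \(g(R_{-1})\) and \(g(R_{-2})\) via the exclusions \(R_{-1}=R_1-R_2\) and \(R_{-2}=-2R_1\) would contradict \(\|g\|_1\ge 28\) together with \(g(R_0)\le 6\).

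The main obstacle is the residual case analysis when the pure \(g\)-level exclusions leave one arrangement of \(b\) relative to \(a\). There I expect to need the fibre-level information, specifically the \(\pm2\) structure of Claim~\ref{clm:2001} combined with Claim~\ref{clm:1958}.
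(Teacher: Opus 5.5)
\textbf{There is a genuine gap: your description of the rich rows is wrong, and your main local strategy cannot close the argument.}

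\emph{The setup.} The discussion preceding the claim shows \((x'\pm1,y)\notin J\), by applying \ref{it:fish3} to the heavy box and to \((1,0)\), both of which have \(g>2.5\). So the heavy box of a rich row is isolated, and \(R_y\cap J=\{(x'-2,y),(x',y),(x'+2,y)\}\); the printed ``\(x'=x+2\)'' there is a slip for \(x'=x+3\). Your pattern of three consecutive boxes, with the heavy one at the end, is therefore impossible. So is your bound \(g(a,1)+g(a+1,1)>3\), which contradicts \eqref{eq:nachbarnkleinerfall}; the correct bound is \(>5.5-3=2.5\). A case analysis over \(b-2a\) built on these premises proves nothing about the actual configuration. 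Also, \ref{it:fish3} applied to a heavy box and a box with \(g\ge 1\) does not remove their sum from \(J\), since \(3+1+1<6\); it only caps \(g\) at the sum.

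\emph{The strategy.} No analysis confined to \(R_0\), \(R_1\), \(R_2\) can give the contradiction. Normalise \(g(0,1)>2.5\), so \(R_1\cap J=\{(0,1),(\pm2,1)\}\). Since \(2g(0,1)>5\), we get \((0,2)\notin J\). This forces the heavy box of \(R_2\) to be \((\pm1,2)\), say \((1,2)\), with \(R_2\cap J=\{(-2,2),(-1,2),(1,2)\}\).

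Now take \(g=3\) at \((\pm1,0)\), \((0,1)\), \((1,2)\), together with \(g(-2,1)=g(-2,2)=1\) and \(g(2,1)=g(-1,2)=2\). This violates no instance of \ref{it:fish3} inside these three rows and gives \(g(R_1)=g(R_2)=6\). It even lifts to fibres: take \(I_{\pm1,0}=\{-1,0,1\}\), \(I_{2,1}=I_{-1,2}=\{\pm2\}\), \(I_{-2,1}=I_{-2,2}=\{0\}\), \(I_{0,1}\) and \(I_{1,2}\) any three-element subsets of \(\{\pm1,\pm2\}\), and \(f=1\) on \(I\). This is compatible with \ref{it:owl1}, \ref{it:owl3} and \ref{it:owl6} within these rows, so your fibre-level fallback cannot close this case either.

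The contradiction must therefore come from a global count. Bounding \(g(R_{-1})\) and \(g(R_{-2})\) separately, as in your last fallback, gives only \(<5.5\) each, hence \(\|g\|_1<29\), which is not enough. The missing ingredients are:
\begin{itemize}
\item Lemma~\ref{lem:P4P5}, showing that \((\pm1,0)\), \((0,1)\), \((1,2)\) are the only boxes with \(g>2.5\), so that \(g(0,-1)<2.5\) and \(-2\notin\Psi\);
\item \(g(2,-1)=0\), because \((2,-1)=2\cdot(1,2)\);
\item the cross-row inequality \(g(-2,1)+g(-2,-1)+g(1,0)\le 6\), coming from \((-2,1)+(-2,-1)=(1,0)\);
\item the bound \(g(2,1)\le|I_{2,1}|\le 2\), from \ref{it:owl6}.
\end{itemize}
Together these give \(g(R_{-1})+g(R_0)+g(R_1)<6+2.5+3+3+2=16.5\), and then \(28\le\|g\|_1<16.5+6+5.5=28\).
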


\begin{proof}
	Assume contrariwise that \(1, 2\in \Psi\). By \(1\in\Psi\) and the above discussion, 
	we can suppose without loss of generality that \(g(0, 1)>2.5\). As indicated in 
	Figure~\ref{fig:psia},
	the fishiness of~\(g\) implies that sums and differences of \((1, 0)\), \((-1, 0)\),
	and \((0, 1)\) cannot be in \(J\). The boxes~\({(\pm 2, 1)}\), on the other hand, 
	are in \(J\). 
	Because of \((\pm 2, 2)-(0,1)=(\pm 2, 1)\in J\) we need to have \(g(\pm 2, 2)<2.5\). 
	Thus the box \((x, 2)\in R_2\) satisfying \(g(x, 2)>2.5\) can only be \((-1, 2)\) 
	or~\({(1, 2)}\), and without loss of generality we can suppose \(g(1, 2)>2.5\) 
	(see Figure~\ref{fig:psib}). 
	
\begin{figure}[h]
	\centering
\hspace{3em}
	\begin{subfigure}[b]{.45\textwidth}
		\centering
		\begin{tikzpicture}[scale=.9]
		\foreach \x in {-2,...,3} \draw (-2.5, \x-.5)--(2.5, \x-.5) (\x-.5, -2.5)--(\x-.5, 2.5);
		\foreach \x/\y in {0/2,-1/1,1/1,-2/0,0/0,2/0,-1/-1,1/-1} \draw (\x,\y) node[cross]{};
		\foreach \x/\y in {-1/0,0/1,1/0} \draw (\x,\y) node[font=\footnotesize]{\(>2.5\)}; 
		\foreach \x/\y in {-2/2,2/2} \draw (\x,\y) node[font=\footnotesize]{\(<2.5\)}; 
		\foreach \x in {-2,2} \draw (\x, 1) node[font=\footnotesize]{\(>0\)};
		\end{tikzpicture}
		\caption{}\label{fig:psia}
	\end{subfigure}
\hfill
	\begin{subfigure}[b]{.45\textwidth}
		\centering
		\begin{tikzpicture}[scale=.9]
		\foreach \x/\y in {-2/1, -2/-1, 1/0}
			\filldraw[Cerulean] (\x,\y) +(-.5,-.5) rectangle ++(.5,.5);
		\foreach \x in {-2,...,3} \draw (-2.5, \x-.5)--(2.5, \x-.5) (\x-.5, -2.5)--(\x-.5, 2.5);
		\foreach \x/\y in {0/2,-1/1,1/1,-2/0,0/0,2/0,-1/-1,1/-1,2/-1}\draw (\x,\y) node[cross]{};
		\foreach \x/\y in {-1/0,0/1,1/0,1/2} \draw (\x,\y) node[font=\footnotesize]{\(>2.5\)};
		\draw (0,-1) node[font=\footnotesize]{\(<2.5\)};
		\draw (2,1) node[font=\footnotesize]{\(\le 2\)};
		\end{tikzpicture}
		\caption{}\label{fig:psib}
	\end{subfigure}
	\caption{}
\hspace{3em}
\end{figure}
    
	We have now found four boxes \(Q\) satisfying \(g(Q)>2.5\) and by Lemma~\ref{lem:P4P5} 
	there are no further such boxes. In particular, this proves \(g(0, -1)<2.5\) and 
	\(-2\not\in\Psi\), i.e.,
	\[
		g(R_{-2})<5.5\,.
	\]
	Moreover, \((2, -1)=2\cdot (1, 2)\) entails \(g(2, -1)=0\). Next, we contend
	\[
		g(-2, 1)+g(-2, -1)+g(1, 0)\le 6\,.
	\]
	Due to \(\|g\|_\infty\le 3\) this is trivial if one of the summands on the left side 
	vanishes. In the remaining case, we appeal to \((-2, 1)+(-2, -1)=(1, 0)\) and the 
	fact that \(g\) is fishy. 
	
	Finally, \(g(-2, 1)+g(2, 1)=g(R_1)-g(0, 1)>5.5-3>2.2\) and~\ref{it:owl6} 
	imply 
	\[
		g(2, 1)\le |I_{2, 1}|\le 2\,.
	\]
	Adding everything, we arrive first at
	\[
		g(R_{-1})+g(R_0)+g(R_1)<6+3+3+2+2.5=16.5\,,
	\]
	and then at the contradiction 
	\[
		28\le \|g\|_1<16.5+6+5.5=28\,. \qedhere
	\]
\end{proof}

\begin{claim}\label{clm:strange}
	There exists some \(y\in\Psi\) such that \(g(R_{2y})\ge 5\).
\end{claim}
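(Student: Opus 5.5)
We argue by double counting over the four nonzero rows. The standing facts are \(g(R_0)\le 6\), \eqref{eq:6}, and \(\|g\|_1\ge 28\). Together they give
\[
	\sum_{y\in\FF_5^\times} g(R_y)\ge 22\,.
\]
Since \(g(R_y)\le 6\) for \(y\in\Psi\) and \(g(R_y)\le 5.5\) for \(y\in\FF_5^\times\sm\Psi\), this yields \(22\le 6|\Psi|+5.5(4-|\Psi|)=22+0.5|\Psi|\). That only says \(\Psi\) may be empty with equality, so the first step is to rule out \(\Psi=\vn\). If \(\Psi=\vn\), equality \(g(R_y)=5.5\) must hold for every \(y\ne 0\), and \(g(R_0)=6\). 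Now \(g(R_y)=5.5\) is impossible: a row sum equal to half an odd integer contradicts~\ref{it:fish2}. Hence \(\Psi\neq\vn\).

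The second step organises the multiplicative structure. The map \(y\mapsto 2y\) permutes \(\FF_5^\times\) as the \(4\)-cycle \(1\to2\to-1\to-2\to1\). By Claim~\ref{clm:psi}, \(\Psi\) contains no two consecutive elements of this cycle, so \(|\Psi|\le 2\). If \(|\Psi|=2\), then \(\Psi\) is \(\{1,-1\}\) or \(\{2,-2\}\), and the complement consists of exactly the two elements \(2y\) for \(y\in\Psi\). If \(|\Psi|=1\), say \(\Psi=\{y\}\), the other three rows \(2y\), \(-y\), \(-2y\) all lie outside \(\Psi\).

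The third step assumes the claim fails, that is, \(g(R_{2y})<5\) for every \(y\in\Psi\). If \(|\Psi|=2\), the sum over nonzero rows is less than \(6+6+5+5=22\), contradicting the bound above. If \(|\Psi|=1\), the sum is less than
\[
	6+5+5.5+5.5=22\,,
\]
again a contradiction. This is because the rows \(\pm y\) and \(-2y\) other than \(2y\) contribute at most \(5.5\) each, using strictness from \(R_{2y}\).

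So the whole argument reduces to the parity trick excluding \(\Psi=\vn\), combined with the cycle structure from Claim~\ref{clm:psi}. I expect the main obstacle to be making the equality case rigorous. It requires \(g(R_0)\le 6\), which follows from \((\pm2,0),(0,0)\notin J\), \(\|g\|_\infty\le3\), and the vanishing at \((\pm2,0)\) and \((0,0)\). It also requires every row bound to be attained exactly, and then \ref{it:fish2} applied to the set \(X=R_y\) to kill the value \(5.5\).
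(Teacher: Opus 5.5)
Your proof is correct and uses the paper's ingredients: the parity argument via~\ref{it:fish2} ruling out \(\Psi=\vn\), Claim~\ref{clm:psi}, and the bounds \(g(R_0)\le 6\), \eqref{eq:6}, and \(g(R_y)\le 5.5\) for \(y\not\in\Psi\); you organise them as a case split on \(|\Psi|\in\{1,2\}\) via the \(4\)-cycle \(y\mapsto 2y\), whereas the paper argues directly that if some \(y\in\Psi\) has \(g(R_{2y})<5\), then \(-y\in\Psi\) and \(g(R_{-2y})>5\). One small wording slip: in the \(|\Psi|=1\) case only the rows \(-y\) and \(-2y\) are bounded by \(5.5\), while row \(y\) is bounded by \(6\) — your displayed sum already uses these bounds correctly.
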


\begin{proof}
	If \(\Psi\) is empty, we get the contradiction \(28\le \|g\|_1<6+4\cdot 5.5=28\). 
	So there is some~\({y\in\Psi}\) and, since we are otherwise done, 
	we can assume \(g(R_{2y})<5\). By Claim~\ref{clm:psi} applied to~\(-2y\) instead 
	of~\(y\) we obtain \((-2y)\not\in \Psi\). But now 
	\[
		g(R_{-y})
		= 
		\|g\|_1-g(R_0)-g(R_{-2y})-g(R_y)-g(R_{2y})
		>
		28-6-5.5-6-5=5.5
	\]
	shows \(-y\in \Psi\) and due to 
	\[
		g(R_{-2y})
		=
		\|g\|_1-g(R_0)-g(R_{-y})-g(R_y)-g(R_{2y})
		>
		28-6-6-6-5=5
	\]
	the desired property is possessed by \(-y\).
\end{proof}

We proceed with some assumptions that can be made without loss of generality. 
Owing to Claim~\ref{clm:strange} we can suppose 
\begin{equation}\label{eq:1301}
	g(R_1)>5.5
	\quad \text{ and } \quad 
	g(R_2)\ge 5\,.
\end{equation}
By Claim~\ref{clm:psi} applied to \(y=1\) and \(y=-2\) we also have \(\pm 2\not\in \Psi\), i.e., 
\begin{equation}\label{eq:0015}
	\max\{g(R_2), g(R_{-2})\}<5.5\,.
\end{equation}
Because of \(\|g\|_\infty \ge 28\) this implies 
\begin{equation}\label{eq:17}
	g(R_{-1})+g(R_0)+g(R_1)>17\,.
\end{equation}
Due to \(1\in\Psi\) we can suppose \(g(0, 1)>2.5\), which immediately justifies 
all the big crosses in Figure~\ref{fig:grossesblid5} except for the one in \((0, -2)\). 
In order to see \((0, -2)\not\in J\) we subtract from~\eqref{eq:17} the upper bounds 
\begin{align*}
	g(R_0)&\le 6\,, \\
	g(-2, 1)+g(2, 1) &\le 3\,, \\
	g(-2, -1)+g(2, -1)&\le 3\,, 
\end{align*}
the latter two of which follow from~\eqref{eq:nachbarnkleinerfall}. 
This yields \(g(0, 1)+g(0, -1)>5\) and because of~\({(0, -1)-(0, 1)=(0,-2)}\) 
we have indeed \((0, -2)\not\in J\). 

Having thereby explained the big crosses in Figure~\ref{fig:grossesblid5} we take a closer 
look at the row~\(R_1\). Due to \(g(-2, 1)+g(2, 1)=g(R_1)-g(0, 1)>5.5-3=2.5\) 
Claim~\ref{clm:2001} allows us to suppose, without loss of generality, that the internal 
structure of the boxes \((-2, 1)\) and \((2, 1)\) is as suggested by 
Figure~\ref{fig:grossesblid5}. The seven small crosses in~\(R_{-1}\) are then deduced 
from~\ref{it:owl3}, using~\({L_{2, -1}=L_{-1, 0}-L_{2, 1}}\) as well 
as \(L_{-2, -1}=L_{1, 0}-L_{-2, 1}\). 

Notice that~\eqref{eq:17} yields \(g(-2, -1)+g(2,-1)>17-6-6-3=2\), 
whence \((-2, -1, 2)\), \((-2, -1, -2)\), and \((2, -1, 0)\) are in \(I\). 
Similarly, if \(Q\), \(Q'\) are two distinct ones among the six boxes of the form
\((\pm 2, \pm 1, z)\) belonging to~\(I\), then~\eqref{eq:17} yields 
\(f(Q)+f(Q')>17-4\cdot 1-4\cdot 3=1\), whence \(Q\pm Q'\not\in I\). This argument 
leads to the remaining small crosses in Figure~\ref{fig:grossesblid5}. 
The meaning of the colours is soon going to become clear. 
  
\begin{figure}[h]
    \centering
    \begin{tikzpicture}[scale=1.5]
    \foreach \x/\y in {-1/2, 2/1, 1/-2, -2/-1}
			\filldraw[orange] (\x,\y-.4) +(-.5,-.1) rectangle ++(.1,.1);
	\foreach \x/\y in {-1/2, 2/1, 1/-2, -2/-1}
			\filldraw[yellow] (\x,\y+.4) +(-.5,-.1) rectangle ++(.1,.1);
    \foreach \x in {-2,...,3} 
    	\draw[very thick] (-2.5, \x-.5)--(2.5, \x-.5) (\x-.5, -2.5)--(\x-.5, 2.5);
	\foreach \x in {-2,...,2}{
    	\node at (\x,0) [label={[yshift=-4.6cm] \(\x\)}] {};
        \node at  (0,\x) [label={[xshift=4.2cm, yshift=-0.4cm]\(\x\)}] {};
    }
    \foreach \x/\y in {-1/2,1/2,-2/1,2/1,-1/0,1/0,-2/-1,2/-1,-1/-2,1/-2}{ 
    		\draw (\x+.1,\y-.5)--(\x+.1,\y+.5);
			\foreach \z in {1,...,4} \draw (\x-.5,\y+\z/5-.5)--(\x+.1, \y+\z/5-.5);
			\foreach \z in {-2,...,2}
    			\node at  (\x,\y+\z/5) [label={[xshift=.45cm, yshift=-0.38cm]\tiny\(\z\)}] {};
    	}
    \foreach \x/\y in {0/2,-1/1,1/1,-2/0,0/0,2/0,-1/-1,1/-1,0/-2} \draw (\x,\y) node[crossgr]{};
    \foreach \x/\y/\z in {-2/1/0,2/1/2,2/1/-2,-1/0/-1,-1/0/0,-1/0/1,1/0/-1,1/0/0,1/0/1}
		\draw (\x-0.2,\y+\z/5) node{\tiny {\(>0.5\)}};
	\foreach \x/\y in {-1/2,2/1,-2/-1,1/-2} \foreach \z in {-1,0,1}
    	\draw (\x-0.2,\y+\z/5) node[crosspt]{};
	\foreach \x/\y in {-2/1,2/-1} \foreach \z in {-2,-1,1,2}
    	\draw (\x-0.2,\y+\z/5) node[crosspt]{};
	\foreach \x in {-1,1} \foreach \z in {-2,2}
		\draw (\x-0.2,\z/5) node[crosspt]{};
	\draw (0.8,2) node[crosspt]{};
	\draw (-1.2,-2) node[crosspt]{};
	\draw (0,1) node{\large \(>2.5\)};
	\foreach \x/\y/\z in {-2/-1/2,-2/-1/-2,2/-1/0}
		\draw (\x-0.2,\y+\z/5) node{\tiny {\(>0\)}};
	\end{tikzpicture}
    \caption{}\label{fig:grossesblid5}
\end{figure}

Roughly speaking, our strategy for concluding the argument is to estimate~\(g\)
on boxes in~\(R_2\) until we reach a contradiction. 

\begin{claim}\label{clm:1129}
	We have \(g(-1, 2)<1.5\).
\end{claim}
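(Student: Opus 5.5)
The plan is to exploit the colouring in Figure~\ref{fig:grossesblid5}. The small crosses in the square of \((-1,2)\) give \(I_{-1,2}\subseteq\{-2,2\}\), so
\[
	g(-1,2)=f(-1,2,2)+f(-1,2,-2)\,.
\]
The four yellow boxes \((-1,2,2)\), \((2,1,2)\), \((1,-2,2)\), \((-2,-1,2)\) lie on the affine line \((-1,2,2)+\langle(2,1,0)\rangle\), and the four orange boxes lie on the parallel line through \((-1,2,-2)\). By our standing assumption \(f(L)\le 3\) for every line, each colour class has total \(f\)-mass at most \(3\). Since the squares of \((2,1)\) and \((1,-2)\) also only leave the levels \(\pm2\) available (recall \(I_{2,1}=\{-2,2\}\) by the choice made via Claim~\ref{clm:2001}), adding the two line bounds gives
\[
	g(-1,2)+g(2,1)+g(1,-2)+f(-2,-1,2)+f(-2,-1,-2)\le 6\,.
\]
So it suffices to show that \(g(2,1)+g(1,-2)+f(-2,-1,2)+f(-2,-1,-2)>4.5\).

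For this lower bound I would use the mass estimate~\eqref{eq:17}, refined row by row, together with the known structure of the relevant squares.

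Row \(R_1\): from \(g(R_1)>5.5\), \(g(0,1)\le 3\) and \(I_{-2,1}=\{0\}\) (so \(g(-2,1)\le 1\)), we get \(g(2,1)>1.5\).

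Row \(R_{-1}\): the boxes \((\pm1,-1)\) are not in \(J\) and \(I_{2,-1}=\{0\}\). Combining~\eqref{eq:17} with \(g(R_0)\le 6\), \(g(0,-1)\le 3\) and \(g(2,-1)\le 1\) should then give a lower bound on \(f(-2,-1,2)+f(-2,-1,-2)\) close to \(2\).

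Row \(R_{-2}\): to get some mass at \((1,-2)\), I would use \(\|g\|_1\ge 28\), \(g(R_2)<5.5\), \(g(R_{-2})\ge 28-g(R_2)-17\)-type bookkeeping, and the fact that most boxes of \(R_{-2}\) are excluded from \(J\) by the big crosses.

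If these three bounds do not add up to more than \(4.5\) directly, I would sharpen them with \ref{it:fish3} for \(g\). For instance, \((-1,2)+(-2,-1)=(2,1)\in J\) gives
\[
	g(-1,2)+g(-2,-1)+\lceil g(2,1)\rceil\le 6\,,
\]
and \(\lceil g(2,1)\rceil=2\) by the bound \(g(2,1)>1.5\) above. I would also apply \ref{it:owl3} to pairs of points at the levels \(\pm2\) in the coloured squares, which should force each of \(f(-1,2,\pm2)\) to be small individually.

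The main obstacle is obtaining a sharp enough lower bound for the mass on the three companion squares \((2,1)\), \((1,-2)\), \((-2,-1)\). The crude estimate from~\eqref{eq:17} alone only gives about \(3\) instead of the needed \(4.5\). I expect the fix to be a combination of the fishy inequality above (which alone yields \(g(-1,2)<4-g(-2,-1)\)) with the yellow/orange line bounds. If that is still short, I would fall back on a case split according to which of \(f(-1,2,2)\), \(f(-1,2,-2)\) exceeds \(0.75\). In each case I would derive a forbidden sum or difference via \ref{it:owl3} with the boxes \((2,1,\pm2)\) or \((-2,-1,\pm2)\), both of which are known to lie in \(I\).
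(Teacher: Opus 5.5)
Your starting point matches the paper's: the yellow and orange boxes lie on two parallel lines, so \(g(-1,2)+g(2,1)+g(1,-2)+g(-2,-1)\le 6\). There is a genuine gap, though. The lower bound \(g(2,1)+g(1,-2)+g(-2,-1)>4.5\) that you need cannot be obtained from the row bookkeeping you describe, nor from the sharpenings you list.

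\textbf{Why the row counts fall short.} In \(R_{-2}\) only \((0,-2)\) carries a big cross. The boxes \((-2,-2)\), \((-1,-2)\), \((2,-2)\) are constrained only by~\eqref{eq:nachbarnkleinerfall}, so the mass that \(\|g\|_1\ge 28\) forces into \(R_{-2}\) need not sit on \((1,-2)\). Use \(g(R_0)\le 6\), the bound \(4\) on what remains of \(R_{\pm1}\) after removing \((2,1)\) and \((-2,-1)\), \(g(-2,-2)+g(-1,-2)\le 3\), and \(g(R_2)<5.5\). The count then gives exactly \(g(2,1)+g(1,-2)+g(-2,-1)>5.5-g(2,-2)\). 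This is useless once \(g(2,-2)>1\). Your \(R_{-1}\) estimate is also weaker than you expect: \eqref{eq:17} yields only \(g(-2,-1)>1\), not something close to \(2\).

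\textbf{Why your sharpenings do not help.} Applying \ref{it:fish3} to \((-1,2)+(-2,-1)=(2,1)\) gives only \(g(-1,2)<3\). Your case split via \ref{it:owl3} with \((2,1,\pm2)\) or \((-2,-1,\pm2)\) yields no constraint at all. Every sum or difference of one of these points with \((-1,2,\pm2)\) lands on a level \(z\in\{-1,0,1\}\) of one of the four coloured squares. All of these are already crossed in Figure~\ref{fig:grossesblid5}, i.e.\ outside \(I\).

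\textbf{The missing idea.} Bring in the box \((2,-2)=(1,0)-(-1,2)\).
\begin{itemize}
\item If \(g(2,-2)\le 1\), your counting does give companion mass \(>4.5\), hence \(g(-1,2)<1.5\).
\item If \(g(2,-2)>1\), then \(\lceil g(2,-2)\rceil\ge 2\). Apply \ref{it:fish3} with \(P=(1,0)\), \(Q=(-1,2)\), \(\eps=-1\), assuming \(g(-1,2)>0\) (otherwise there is nothing to prove). Since \(g(1,0)>2.5\), this gives \(g(-1,2)\le 6-g(1,0)-2<1.5\).
\end{itemize}
The paper does both steps at once. Assuming \(g(-1,2)>1.5\), the line bound together with the row counts gives \(g(R_2)-g(-1,2)\ge 5-g(2,-2)\). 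Then \eqref{eq:0015} forces \(g(2,-2)>1\), which contradicts \ref{it:fish3} for the triple \((1,0)\), \((-1,2)\), \((2,-2)\).
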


\begin{proof}
	In Figure~\ref{fig:grossesblid5} the yellow and orange boxes form subsets \(A\) and \(B\)
	of the lines 
   	\[
		(0, 0, 2)+\langle (1, -2, 0)\rangle
		\quad \text{ and } \quad 
		(0, 0, -2)+\langle (1, -2, 0)\rangle\,,
	\]
	respectively. This proves \(f(A)+f(B)\le 6\), which together with 
	\[
		\sum_{y\in\FF_5\sm\{2\}} f\bigl(R_y\sm (A\cup B)\bigr)
		\le 3+g(2, -2)+4+6+4=17+g(2, -2)
	\]
	yields 
	\[
		g(R_2)-g(-1, 2)\ge 28-6-\bigl(17+g(2, -2)\bigr)=5-g(2, -2)\,.
	\]
	So if contrary to our claim \(g(-1, 2)>1.5\) holds,~\eqref{eq:0015} 
	entails 
	\[
		g(2, -2)\ge 5+g(-1, 2)-g(R_2)>5+1.5-5.5=1\,,
	\]
	whence \(g(1, 0)+g(-1, 2)+\lceil g(2, -2)\rceil>2.5+1.5+2=6\). 
	But due to \((1, 0)-(-1, 2)=(2, -2)\) this contradicts~\ref{it:fish3}.	
\end{proof}

Next we study the `right half' of \(R_2\). 

\begin{claim}\label{clm:ersterclaim}
    We have \(g(1,2)+g(2,2)<2.5\).
\end{claim}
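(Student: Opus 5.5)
We argue indirectly and assume \(g(1,2)+g(2,2)\ge 2.5\). Since \(g(1,2)\) and \(g(2,2)\) are then both positive (each is at most \(\|g\|_\infty\le 3\), but one alone cannot carry the sum past~\(3\) without the other being large), we first settle whether both boxes lie in \(J\). If one of them is zero, the other is at least \(2.5\), so it is a fourth box with \(g>2.5\) besides \((\pm1,0)\) and \((0,1)\). We treat this degenerate case the same way as the main one, through the fiber structure of Figure~\ref{fig:grossesblid5}.

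In the main case both boxes are in \(J\) and \(g(1,2)+g(2,2)\ge 2.5>2.2\). Claim~\ref{clm:2001} applied with \(x=1\), \(y=2\) then gives some \(a\in\FF_5\) such that either \(I_{1,2}=\{a\}\) and \(I_{2,2}=\{a-2,a+2\}\), or the other way around. In both cases every point of \(I\) in these two fibers carries \(f\)-value more than \(0.5\). This follows from \(f(Q)\ge g(Q)-(|I_Q|-1)\) together with \(g\le 3\) and the sum being at least \(2.5\), although one point might need \(>0.2\) rather than \(>0.5\).

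Next I would test these points against the known points of \(I\) in Figure~\ref{fig:grossesblid5} using~\ref{it:owl3}:
\begin{itemize}
\item The points of \(L_{\pm1,0}\) at heights \(-1,0,1\) carry \(f>0.5\). Subtracting them gives \(L_{1,2}-L_{1,0}=L_{0,2}\notin J\), which is no use. Instead I would use \((1,2)=(2,1)+(-1,1)\) and \((2,2)=(2,1)+(0,1)\).
\item The box \(L_{0,1}\) has \(g>2.5\) and hence a point of value \(>0.5\). Since \((1,2)-(0,1)=(1,1)\notin J\) is vacuous, I would use sums instead: \((1,2)+(0,1)=(1,-2)\) and \((2,2)+(0,1)=(2,-2)\). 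Also \((2,2)-(2,1)=(0,1)\) and \((1,2)-(-2,1)=(-2,1)\)…, and \((2,2)+(-2,-1)=(0,1)\), \((1,2)+(2,-1)=(-2,1)\).
\item These relations pair the new points in \(L_{1,2},L_{2,2}\) with the points \((2,1,\pm2)\), \((-2,1,0)\), \((-2,-1,\pm2)\), \((2,-1,0)\), and they exclude specific heights in \(L_{0,1}\), \(L_{-2,1}\), \(L_{2,1}\). Together with Claim~\ref{clm:1958}, they should force the height \(a\) to be incompatible with the heights \(0\) and \(\pm2\) already fixed in \(L_{\pm2,\pm1}\).
\end{itemize}

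If this exclusion does not reach a direct contradiction, I would fall back on counting, as in Claim~\ref{clm:1129}. The lines through the new points in direction \((1,-2,0)\) or \((1,-1,0)\) carry \(f\le 3\). Combining this with~\eqref{eq:0015}, \(g(R_0)\le 6\), the row bounds from Table~\ref{tab:kleinerFall}, and Claim~\ref{clm:1129} should give \(\|g\|_1<28\).

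The main obstacle is the bookkeeping of heights. The automorphisms used so far have fixed \(I_{\pm1,0}\) and the structure of \(R_{\pm1}\), so \(a\) ranges over all of \(\FF_5\), and each value of \(a\) and each orientation from Claim~\ref{clm:2001} must be eliminated. I would first try to use the symmetry \(z\mapsto -z\), which preserves Figure~\ref{fig:grossesblid5}, to reduce to \(a\in\{0,1,2\}\). I would then look for a line in direction \((1,0,\pm1)\) or \((1,1,c)\) containing four points of value close to \(1\), as in the final step of Claim~\ref{5.5_4}.
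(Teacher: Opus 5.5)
Your proposal is a plan, not a proof: every decisive step is phrased as ``should force'' or ``should give''. It also misses the observation the paper's argument rests on. Subtracting \(g(R_{-1})+g(R_0)+g(-2,1)\le 6+6+1\) from~\eqref{eq:17} gives \(g(0,1)+g(2,1)>4\). Since \((2,2)=(2,1)+(0,1)\), property~\ref{it:fish3} then forces \(g(2,2)\le 1\). This disposes of the degenerate case \(g(1,2)=0\) at once. Via~\ref{it:owl6} it also fixes the orientation in Claim~\ref{clm:2001}: \(I_{2,2}=\{a\}\), \(I_{1,2}=\{a-2,a+2\}\), and \(g(1,2)>1.5\).

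Your treatment of the other degenerate case, \(g(2,2)=0\), is empty. Claim~\ref{clm:2001} needs both boxes in \(J\), and a fourth box with value above \(2.5\) is permitted by Lemma~\ref{lem:P4P5}. What excludes it is \(2\cdot(1,2)=(2,-1)\in J\) (recall \((2,-1,0)\in I\)), so~\ref{it:fish3} and~\ref{it:fish2} give \(g(1,2)<2.5\).

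In the main case the paper eliminates the possible heights \(a\) by three separate mechanisms, and your list of relations does not contain the ones that matter.
\begin{enumerate}
\item The point \((1,2,0)=(-2,1,0)-(2,-1,0)\) is not in \(I\); it is already a cross in Figure~\ref{fig:grossesblid5}. This rules out \(a=\pm 2\).
\item The case \(a=0\) falls to a relation inside the single fibre \(L_{1,2}\): \((1,2,2)+(1,2,-2)=(2,-1,0)\in I\), so~\ref{it:owl3} gives \(g(1,2)\le 1\). All of your relations pair a new point with an old one, so this is absent.
\item For \(a=\pm1\) (by \(z\mapsto -z\) take \(I_{2,2}=\{-1\}\), \(I_{1,2}=\{1,2\}\)), the exclusions alone do not produce a contradiction. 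So your hope that they make \(a\) ``incompatible'' with the heights in \(L_{\pm2,\pm1}\) does not materialise, and a count is unavoidable. The paper's chain is:
  \begin{itemize}
  \item \(I_{0,1}\subseteq\{-2,-1,0\}\), from \(L_{2,2}-L_{2,1}=L_{0,1}\);
  \item \((2,-2)\not\in J\), from \(L_{2,-2}=L_{2,2}+L_{0,1}=L_{-1,0}-L_{2,2}\);
  \item \((1,-2,2)=(0,1,0)+(1,2,2)\not\in I\).
  \end{itemize}
  Together these give \(g(R_{-2})\le 4\). With~\eqref{eq:6} and~\eqref{eq:0015} this yields the contradiction \(28\le\|g\|_1<4+6+6+6+5.5\).
\end{enumerate}

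Your fallback, using lines in directions \((1,-2,0)\) or \((1,-1,0)\) plus row bounds, is stated without a single concrete inequality. The counting step therefore remains open; the bound that actually closes it is \(g(R_{-2})\le 4\).
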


\begin{proof}
    By subtracting \(g(R_{-1})+g(R_0)+g(-2,1)\le 6+6+1=13\) from~\eqref{eq:17} 
    we obtain~\({g(0,1)+g(2,1)>4}\). Since \(g\) is fishy, this implies \(g(2,2)\le 1\). 
    Assuming from now on that the claim fails,~\ref{it:owl6} tells us~\(|I_{2,2}|=1\) 
    and~\(|I_{1,2}|=2\).
    
\begin{figure}[h]
    \centering
    \begin{tikzpicture}[scale=1.5]
    \foreach \x in {-2,...,3} 
    	\draw[very thick] (-2.5, \x-.5)--(2.5, \x-.5) (\x-.5, -2.5)--(\x-.5, 2.5);
	\foreach \x in {-2,...,2}{
    	\node at (\x,0) [label={[yshift=-4.6cm] \(\x\)}] {};
        \node at  (0,\x) [label={[xshift=4.2cm, yshift=-0.4cm]\(\x\)}] {};
    }
    \foreach \x/\y in {-1/2,1/2,-2/1,2/1,-1/0,1/0,-2/-1,2/-1,-1/-2,1/-2,2/2,0/1}{ 
    		\draw (\x+.1,\y-.5)--(\x+.1,\y+.5);
			\foreach \z in {1,...,4} \draw (\x-.5,\y+\z/5-.5)--(\x+.1, \y+\z/5-.5);
			\foreach \z in {-2,...,2}
    			\node at  (\x,\y+\z/5) [label={[xshift=.45cm, yshift=-0.38cm]\tiny\(\z\)}] {};
    	}
    \foreach \x/\y in {0/2,-1/1,1/1,-2/0,0/0,2/0,-1/-1,1/-1,0/-2,2/-2} 
    	\draw (\x,\y) node[crossgr]{};
    \foreach \x/\y/\z in {-2/1/0,2/1/2,2/1/-2,-1/0/-1,-1/0/0,-1/0/1,1/0/-1,1/0/0,1/0/1,
    	0/1/-2,0/1/-1,0/1/0,1/2/1,1/2/2,2/2/-1}
		\draw (\x-0.2,\y+\z/5) node{\tiny {\(>0.5\)}};
	\foreach \x/\y in {-1/2,2/1,-2/-1,1/-2} \foreach \z in {-1,0,1}
    	\draw (\x-0.2,\y+\z/5) node[crosspt]{};
	\foreach \x/\y in {-2/1,2/-1} \foreach \z in {-2,-1,1,2}
    	\draw (\x-0.2,\y+\z/5) node[crosspt]{};
	\foreach \x in {-1,1} \foreach \z in {-2,2}
		\draw (\x-0.2,\z/5) node[crosspt]{};
	\foreach \x/\y/\z in {1/2/-2,1/2/-1,1/2/0,2/2/-2,2/2/0,2/2/1,2/2/2,0/1/1,
		0/1/2,-1/-2/0,1/-2/2}
		\draw (\x-0.2,\y+\z/5) node[crosspt]{};
	\end{tikzpicture}
    \caption{}\label{fig:grossesblidersterclaim}
\end{figure}

	Because of~\({0\not\in I_{1, 2}}\) Claim~\ref{clm:2001} 
    implies that the unique element of \(I_{2,2}\) cannot be~\(+2\) or~\({-2}\). 
    Suppose next that~\({I_{2,2}=\{0\}}\), which
    by Claim~\ref{clm:2001} implies \(I_{1, 2}=\{-2, 2\}\). 
    Together with 
    \[
    	(1, 2, 2)+(1, 2, -2)=(2, -1, 0)\in I
	\]
	and~\ref{it:owl3} this leads to the contradiction 
    \[
    	g(1,2)=f(1, 2, 2)+f(1, 2, -2)\le 1\,.
	\]
    
    Without loss of generality it remains to discuss the case \(I_{2,2}=\{-1\}\),
    whence \(I_{1, 2}=\{1,2\}\). By the failure of our claim the three  
    boxes \(P\in (L_{1,2}\cup L_{2,2})\cap I\) satisfy \(f(P)>0.5\)
    (see Figure~\ref{fig:grossesblidersterclaim}). Using \(L_{2,2}-L_{2,1}=L_{0,1}\) 
    and~\ref{it:owl3} we infer \(I_{0,1}\subseteq \{-2,-1,0\}\), 
    which together with \(g(0,1)>2.5\) entails \(f(0, 1, z)>0.5\) for all \(z\in\{-2, -1, 0\}\).
    Now \((2, -2)\not\in J\) follows from~\({L_{2, -2}=L_{2,2}+L_{0,1}=L_{-1,0}-L_{2,2}}\) 
    and~\ref{it:owl3}. 
    Moreover, \((1, -2, 2)=(0,1,0)+(1, 2, 2)\) cannot be in \(I\). This explains the 
    new crosses in \(R_{-2}\) we drew in Figure~\ref{fig:grossesblidersterclaim}.

	Because of~\eqref{eq:nachbarnkleinerfall} they lead to \(g(R_{-2})\le 4\),
	which combined with~\eqref{eq:6} and~\eqref{eq:0015} gives the contradiction
	\[
    28\le \|g\|_1<4+6+6+6+5.5=27.5\,. \qedhere
    \]
\end{proof}

We are now ready to complete the proof of Proposition~\ref{prop:0135}. 
By~\eqref{eq:1301} and Claim~\ref{clm:ersterclaim} we have \(g(-2,2)+g(-1,2)>2.5\).
Together with Claim~\ref{clm:1129} this proves \(g(-2, 2)>1\). We also have \(g(-1, 2)>0\),
because \(g(-2, 2)>2.5\) and \(g(0, 1)\) implied \(g(-2, 1)=0\), which would contradict 
\(f(-2, 1, 0)>0\).

\begin{figure}[h]
    \centering
    \begin{tikzpicture}[scale=1.5]
    \foreach \x/\y/\z in {-1/2/2, 0/1/-2, 1/0/-1, 2/-1/0}
			\filldraw[SeaGreen] (\x,\y+\z/5) +(-.5,-.1) rectangle ++(.1,.1);
    \foreach \x in {-2,...,3} 
    	\draw[very thick] (-2.5, \x-.5)--(2.5, \x-.5) (\x-.5, -2.5)--(\x-.5, 2.5);
	\foreach \x in {-2,...,2}{
    	\node at (\x,0) [label={[yshift=-4.6cm] \(\x\)}] {};
        \node at  (0,\x) [label={[xshift=4.2cm, yshift=-0.4cm]\(\x\)}] {};
    }
    \foreach \x/\y in {-1/2,1/2,-2/1,2/1,-1/0,1/0,-2/-1,2/-1,-1/-2,1/-2,0/1,-2/2}{ 
    		\draw (\x+.1,\y-.5)--(\x+.1,\y+.5);
			\foreach \z in {1,...,4} \draw (\x-.5,\y+\z/5-.5)--(\x+.1, \y+\z/5-.5);
			\foreach \z in {-2,...,2}
    			\node at  (\x,\y+\z/5) [label={[xshift=.45cm, yshift=-0.38cm]\tiny\(\z\)}] {};
    	}
    \foreach \x/\y in {0/2,-1/1,1/1,-2/0,0/0,2/0,-1/-1,1/-1,0/-2} \draw (\x,\y) node[crossgr]{};
    \foreach \x/\y/\z in {-2/1/0,2/1/2,2/1/-2,-1/0/-1,-1/0/0,-1/0/1,1/0/-1,1/0/0,1/0/1,
    	0/1/-2,0/1/1,0/1/2,-2/2/-1,-2/2/0,-1/2/2}
		\draw (\x-0.2,\y+\z/5) node{\tiny {\(>0.5\)}};
	\foreach \x/\y in {-1/2,2/1,-2/-1,1/-2} \foreach \z in {-1,0,1}
    	\draw (\x-0.2,\y+\z/5) node[crosspt]{};
	\foreach \x/\y in {-2/1,2/-1} \foreach \z in {-2,-1,1,2}
    	\draw (\x-0.2,\y+\z/5) node[crosspt]{};
	\foreach \x in {-1,1} \foreach \z in {-2,2}
		\draw (\x-0.2,\z/5) node[crosspt]{};
	\foreach \x/\y/\z in {-2/2/-2,-2/2/1,-2/2/2,-1/2/-2,0/1/-1,0/1/0,1/2/0,-1/-2/0}
		\draw (\x-0.2,\y+\z/5) node[crosspt]{};
	\end{tikzpicture}
    \caption{}\label{fig:grossesblidzweiterclaim1}
\end{figure}

Now~\ref{it:owl6} tells us \(|I_{-2,2}|=2\) and \(|I_{-1,2}|=1\). 
By Claim~\ref{clm:2001} we can suppose, without loss of generality, that 
\(I_{-1,2}=\{2\}\) and \(I_{-2,2}=\{-1, 0\}\). Using \(L_{0,1}=L_{-2,2}-L_{-2,1}\)
we find~\({L_{0,1}=\{-2, 1, 2\}}\). The current situation is drawn in 
Figure~\ref{fig:grossesblidzweiterclaim1}. Its four green boxes form a subset \(D\)
of the line \((0, 1, 2)+\langle(1, -1, 1)\rangle\). So we have \(f(D)\le 3\). It is plain 
that \(f(R_y\sm D)\le 5\) holds for \(y\in \{-1, 0, 1\}\). 
Moreover, Claim~\ref{clm:ersterclaim} and~\eqref{eq:0015} 
show \(f(R_2\sm D)<2+2.5=4.5\) and \(f(R_{-2}\sm D)<5.5\), respectively.
So altogether the contradiction 
\[
    28\le f(D)+\sum_{y\in\FF_5} f(R_y\sm D)<3+5.5+3\cdot 5+4.5=28
\]
arises and Proposition~\ref{prop:0135} is proved. 

\section{Applications of Kneser's theorem}\label{sec:kneser}

\subsection{Hyperplanes}
In this subsection we study dense sum-free sets contained in the union of a small number 
of parallel hyperplanes. 

\begin{lemma}\label{lem:2planes}
	If for some \(n\ge 2\) a sum-free set \(A\subseteq \FF_5^n\) of size \(|A|>5^{n-1}\)
	can be covered by two parallel hyperplanes, then it is normal. 
\end{lemma}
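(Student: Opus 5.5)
Suppose \(A\subseteq H_a\cup H_b\), where \(H_c=\{x\colon \xi(x)=c\}\) for some nonzero linear functional \(\xi\) and \(a,b\in\FF_5\). Write \(A_c=A\cap H_c\). The plan is to case-split on the pair \(\{a,b\}\) up to scaling \(\xi\). If \(\{a,b\}=\{c,-c\}\) with \(c\neq 0\), then \(A\subseteq H_c\cup(-H_c)\) and \(A\) is normal by definition. So we assume \(\{a,b\}\) is not of this form and aim for a contradiction with \(|A|>5^{n-1}\).

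First suppose one of the hyperplanes is the linear one, say \(a=0\). The set \(A_0\) is a sum-free subset of \(H_0\cong\FF_5^{n-1}\), so Fact~\ref{f:23} gives \(|A_0|\le 2\cdot 5^{n-2}\). If \(A_0=\vn\), then \(A\subseteq H_b\) and \(A\) is normal (take \(H=H_b\)). Otherwise both parts are nonempty and \((A_0+A_b)\cap A_b=\vn\) inside the coset \(H_b\). We translate everything into \(\FF_5^{n-1}\) and apply Fact~\ref{f:23} with the triple \(A_0\), \(A_b\), \(A_b\) (shifted). This gives \(|A_0|+2|A_b|\le 6\cdot 5^{n-2}\). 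Adding \(|A_0|\le 2\cdot 5^{n-2}\) yields \(2|A|\le 8\cdot 5^{n-2}\), that is \(|A|\le 4\cdot 5^{n-2}<5^{n-1}\), a contradiction. If \(b=0\) as well the set is empty, which is trivial.

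Now let \(a,b\neq 0\) with \(b\neq -a\). If \(b=a\), then \(A\) lies in one hyperplane and is normal. Otherwise, after scaling \(\xi\), we may take \(a=1\) and \(b\in\{2,-2\}\); since \(b=-2\) is equivalent via \(\xi\mapsto-\xi\) after rescaling, we may take \(b=2\). Because \(1+1=2\), we get \((A_1+A_1)\cap A_2=\vn\). If either part is empty, \(A\) lies in one hyperplane and is normal. Otherwise Fact~\ref{f:23}, applied to translates in \(\FF_5^{n-1}\), gives \(2|A_1|+|A_2|\le 6\cdot 5^{n-2}\).

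This alone does not bound \(|A|\) by \(5^{n-1}\), so we need a second relation. We use \(2+2=4=-1\), and hence \(A_2+A_2\subseteq H_{-1}\), which is useless. Instead we use \(A_2-A_1\subseteq H_1\), so \((A_2-A_1)\cap A_1=\vn\). Fact~\ref{f:23} then gives \(|A_2|+2|A_1|\le 6\cdot 5^{n-2}\), the same inequality. A sharper tool is Corollary~\ref{Bdumm}: if \(|A_1|+|A_1|>5^{n-1}\), then \(A_1+A_1\) fills \(H_2\), forcing \(A_2=\vn\). So we may assume \(|A_1|\le 5^{n-1}/2\). Similarly, \(A_2-A_1\) must miss \(A_1\), so \(|A_2|+|A_1|\le 5^{n-1}\) unless \(A_1=\vn\). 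That already gives \(|A|\le 5^{n-1}\), a contradiction.

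The step needing the most care is this last case, where we must choose the right sum relation: the difference \(A_2-A_1\subseteq H_1\) combined with Corollary~\ref{Bdumm}. Checking that the scaling reductions cover every pair \(\{a,b\}\) is routine.
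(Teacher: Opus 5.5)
Your proof is correct and follows the same structure as the paper's. You reduce the pair of levels, modulo multiplication by \(\FF_5^\times\), to \(\{0,1\}\), \(\{1,2\}\) or \(\{1,-1\}\), note that the last case is normal, and refute the other two by a disjointness count inside a single translate of the hyperplane.

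The differences are in the counting tools. In the \(\{0,b\}\) case you invoke Fact~\ref{f:23} twice, and hence ultimately Kneser's theorem, to get the stronger bound \(|A|\le 4\cdot 5^{n-2}\). The paper only uses that \(A_0+A_b\) and \(A_b\) are disjoint subsets of \(H_b\), together with the trivial estimate \(|A_0+A_b|\ge|A_0|\). That already gives \(|A|\le 5^{n-1}\) with no machinery.

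In the \(\{1,2\}\) case, your remark that the relation \((A_1+A_1)\cap A_2=\vn\) ``alone does not bound \(|A|\)'' is mistaken. It does, via \(|A_1|+|A_2|\le|A_1+A_1|+|A_2|\le|H_2|=5^{n-1}\), and this is exactly the paper's argument. Your reformulation \((A_2-A_1)\cap A_1=\vn\) combined with Corollary~\ref{Bdumm} is the same count in a different form. The intermediate reduction to \(|A_1|\le 5^{n-1}/2\) is never used.

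The degenerate cases could also be tidied. Since \(|A|>5^{n-1}=|H_c|\), both parts are automatically nonempty. The case \(a=b\) simply cannot occur, and this includes \(a=b=0\), where \(A\) need not be empty as you wrote; it just cannot fit in \(H_0\).
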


%
%

\begin{proof}
	Suppose that for some hyperplane \(H\le \FF_5^n\), vector \(v\in \FF_5^n\sm H\), 
	and scalars \(\lambda, \mu\in \FF_5\) we have 
	\[
		A\subseteq (\lambda v+H)\cup (\mu v+H)\,.
	\]
	Due to \(|A|>|H|\) the sets \(A_\lambda=A\cap (\lambda v+H)\) and \(A_\mu=A\cap (\mu v+H)\)
	are nonempty. The group~\(\FF_5^\times\) acts by multiplication on the two-element subsets 
	of~\(\FF_5\), with three orbits represented by \(\{0, 1\}\), \(\{1, 2\}\), and \(\{1, -1\}\).
	We can therefore suppose that \(\{\lambda, \mu\}\) is one of these three sets. 
	In the last case,~\(A\) is clearly normal, so it suffices to show that the first two cases
	lead to contradictions. 
	
	Suppose first that \(\lambda=0\) and \(\mu=1\). Since \(A\) is sum-free, 
	we have \({(A_0+A_1)\cap A_1=\vn}\). But now 
	\(5^{n-1}=|H+v|\ge |A_0+A_1|+|A_1|\ge |A_0|+|A_1|=|A|\) contradicts our assumption 
	on the size of \(A\).

	The case \(\lambda=1\) and \(\mu=2\) is similar, the contradiction being 
	\[
		5^{n-1}=|H+2v|\ge |A_1+A_1|+|A_2|\ge |A_1|+|A_2|=|A|\,. \qedhere
	\]
\end{proof}

Our remaining applications of Kneser's theorem factorise through the following result. 

\begin{lemma}\label{lem:1419}
	Let \(X, Y, Z\subseteq \FF_5^\ell\) be three nonempty sets 
	satisfying \((X+Y)\cap Z=\vn\). If there exists an integer \(t\)
	such that \(|X|+|Y|>(25-t)5^{\ell-2}\) and \(|Z|>t\cdot 5^{\ell-2}\), 
	then \(t\) is not divisible by \(5\) and there exists a hyperplane \(K\le \FF_5^\ell\) 
	together with positive integers \(a\), \(b\) 
	such that \(a+b+\lceil t/5\rceil=6\), and \(X\), \(Y\), \(Z\) can be covered by \(a\), 
	\(b\), \(\lceil t/5\rceil\) translates of \(K\), respectively. 
\end{lemma}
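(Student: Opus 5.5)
We apply Kneser's theorem (Theorem~\ref{Kneser}) to \(X\) and \(Y\), in the same way as in the proof of Fact~\ref{f:23}. Let \(K=\Sym(X+Y)\), a linear subspace of \(\FF_5^\ell\). Since \(X+Y\) is disjoint from the nonempty set \(Z\), we have \(\dim K\le \ell-1\). Set \(k=\dim K\), and let \(a', b', c'\) be the numbers of cosets of \(K\) meeting \(X\), \(Y\) and \(Z\), respectively. Kneser's theorem gives \(|X+Y|\ge (a'+b'-1)|K|\). Since \(X+Y\) is a union of \(K\)-cosets disjoint from \(Z\), the number of \(K\)-cosets meeting \(Z\) is at most \(5^{\ell-k}-(a'+b'-1)\).

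\emph{First step: rule out \(k\le \ell-2\).} In this case \(|K|\le 5^{\ell-2}\), and we obtain
\[
	5^\ell\ge |X+Y|+|Z|\ge |X|+|Y|-|K|+|Z|>(25-t)5^{\ell-2}-5^{\ell-2}+t\cdot 5^{\ell-2}=24\cdot 5^{\ell-2}.
\]
This alone is not a contradiction, so we refine it using coset counts. We have \(|X|+|Y|\le (a'+b')|K|\), and \(Z\) lies in at most \(5^{\ell-k}-a'-b'+1\) cosets. Writing \(m=5^{\ell-k}\) and \(|K|=5^{\ell}/m\), the hypotheses become
\[
	a'+b'>(25-t)m/25 \qquad\text{and}\qquad m-a'-b'+1>tm/25 .
\]
Adding them gives \(m+1>m\), so again no contradiction arises directly. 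Integrality is the key: with \(m=25\cdot 5^{j}\) for \(j\ge 0\), both quantities \((25-t)m/25\) and \(tm/25\) are integers. Hence \(a'+b'\ge (25-t)m/25+1\) and \(m-a'-b'+1\ge tm/25+1\), whose sum yields \(m+1\ge m+2\), a contradiction. Therefore \(k=\ell-1\), and \(K\) is a hyperplane.

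\emph{Second step: the case \(k=\ell-1\).} Now \(m=5\) and \(|K|=5^{\ell-1}\). Set \(a=a'\), \(b=b'\) and \(c=c'\). Then \(a+b-1+c\le 5\) and \(a,b,c\ge1\). From \(|Z|>t\cdot 5^{\ell-2}\) and \(|Z|\le c\cdot 5^{\ell-1}\) we get \(c>t/5\), so \(c\ge \lfloor t/5\rfloor+1\). From \(|X|+|Y|\le (a+b)5^{\ell-1}\) we get \(a+b>5-t/5\).

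If \(5\mid t\), these give \(c\ge t/5+1\) and \(a+b\ge 6-t/5\), whose sum \(a+b+c\ge 7\) contradicts \(a+b+c\le 6\). Hence \(5\nmid t\), and then \(c\ge\lceil t/5\rceil\) and \(a+b\ge 6-\lceil t/5\rceil\). Combined with \(a+b+c\le 6\), this forces \(c=\lceil t/5\rceil\) and \(a+b=6-\lceil t/5\rceil\), which is exactly the claimed conclusion. Positivity of \(a\) and \(b\) follows from the nonemptiness of \(X\) and \(Y\).

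\emph{Expected obstacle.} The main point requiring care is the first step. The naive estimate is exactly borderline, and the contradiction comes only from the integrality of the coset counts combined with strict inequalities. This works because \(25\mid m\) exactly when \(k\le \ell-2\), and it is also the reason the divisibility condition \(5\nmid t\) surfaces in the case \(k=\ell-1\).
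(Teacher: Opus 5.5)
Your proof is correct and follows essentially the same route as the paper: Kneser's theorem with \(K=\Sym(X+Y)\), ruling out \(\dim K\le \ell-2\) by divisibility by \(|K|\), and then counting cosets of the hyperplane \(K\) to get \(a+b+c\le 6\), \(a+b>5-t/5\), and \(c>t/5\). In the first step the paper uses integrality only on the \(Z\)-side, since \(|Z+K|-|K|\ge t\cdot 5^{\ell-2}\) already forces \(|X|+|Y|\le(25-t)5^{\ell-2}\); your symmetric version, rounding both inequalities, works equally well.
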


\begin{proof}
	Since \(Z\ne\vn\) implies \(X+Y\ne \FF^\ell_5\), the symmetry set \(K=\Sym(X+Y)\) 
	is a proper linear subspace of \(\FF^\ell_5\). Moreover, the 
	fact that the sum set \(X+Y\) is a union of translates of~\(K\) implies that it is
	even disjoint to \(Z+K\) and, therefore, Kneser's theorem reveals
	\begin{equation}\label{eq:1404}
		 5^\ell\ge |X+Y|+|Z+K|\ge |X+K|+|Y+K|+|Z+K|-|K|\,.
	\end{equation}
	  
	Now suppose for the sake of contradiction that \(\dim(K)\le \ell-2\), so that not 
	only \(|Z+K|\) but also \(t\cdot 5^{\ell-2}\) is divisible by \(|K|\). For this reason,
	\(|Z|>t\cdot 5^{\ell-2}\) entails \(|Z+K|-|K|\ge t\cdot 5^{\ell-2}\) and~\eqref{eq:1404}
	leads to the contradiction 
	\[
		(25-t)5^{\ell-2}\ge |X+K|+|Y+K|\ge |X|+|Y|\,.
	\]
	
	We have thereby shown that \(K\) is a hyperplane. Define the positive 
	integers \(a\), \(b\), and \(c\) by 
	\[
		|X+K|=a|K|\,, \quad 
		|Y+K|=b|K|\,, 
		\quad \text{ and } \quad 
		|Z+K|=c|K|\,.
	\]
	Clearly, \(X\), \(Y\), and \(Z\) can be covered by \(a\), \(b\), and \(c\) translates 
	of \(K\), respectively. Moreover,~\eqref{eq:1404} shows \(a+b+c\le 6\), while the 
	lower bounds 
	\[
		a+b\ge \frac{|X|+|Y|}{|K|}>\frac{25-t}{5}
	\]
	and 
	\[
		c\ge \frac{|Z|}{|K|}>\frac{t}{5}
	\]
	are clear. For these reasons, \(t\) cannot be divisible by \(5\) and the equalities  
	\(a+b+c=6\) as well as \(c=\lceil t/5\rceil\) hold. 
\end{proof}

\begin{cor}\label{cor:1135}
	If three nonempty sets \(X, Y, Z\subseteq \FF_5^\ell\) satisfy \((X+Y)\cap Z=\vn\) and 
	\[
		|X|+|Y|+|Z|>26\cdot 5^{\ell-2}\,,
	\]
	then there exists a linear epimorphism \(\phi\colon \FF^\ell_5\lra\FF_5\) 
	such that \(|\phi(X)|+|\phi(Y)|+|\phi(Z)|=6\).
\end{cor}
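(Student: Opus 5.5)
Corollary~\ref{cor:1135} should follow from Lemma~\ref{lem:1419} once we choose an integer \(t\) that is not divisible by \(5\) and for which both hypotheses \(|X|+|Y|>(25-t)5^{\ell-2}\) and \(|Z|>t\cdot 5^{\ell-2}\) hold. The lemma then gives a hyperplane \(K\) and positive integers \(a,b,c=\lceil t/5\rceil\) with \(a+b+c=6\), such that \(X\), \(Y\), \(Z\) are covered by \(a\), \(b\), \(c\) translates of \(K\), respectively. We take \(\phi\colon\FF_5^\ell\lra\FF_5\) to be any linear epimorphism with kernel \(K\). Then \(|\phi(X)|\le a\), \(|\phi(Y)|\le b\), and \(|\phi(Z)|\le c\), so \(|\phi(X)|+|\phi(Y)|+|\phi(Z)|\le 6\).

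For the reverse inequality we use \((X+Y)\cap Z=\vn\): it gives \((\phi(X)+\phi(Y))\cap\phi(Z)=\vn\), because if \(\phi(x)+\phi(y)=\phi(z)\) then \(x+y\) lies in \(z+K\), and \(X+Y\) is a union of \(K\)-cosets (since \(K=\Sym(X+Y)\)), so \(z\in X+Y\). If we prefer not to rely on that internal detail of the proof of Lemma~\ref{lem:1419}, we can instead argue by cardinality in \(\FF_5\): Fact~\ref{f:23} with \(n=1\) bounds the sum by \(6\) from the other side, which is not what we need. So the lower bound must come from the sizes. Since \(|X|\le |\phi(X)|5^{\ell-1}\) and similarly for \(Y\) and \(Z\), we get \(|\phi(X)|+|\phi(Y)|+|\phi(Z)|\ge (|X|+|Y|+|Z|)/5^{\ell-1}>26/5\). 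The left-hand side is an integer, so it is at least \(6\), and therefore equal to \(6\). This lower bound does not depend on the choice of \(K\), so the \(\Sym\) detail is not needed at all.

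The main step is choosing \(t\). Write \(|Z|=s\cdot 5^{\ell-2}\) with a real \(s\ge 0\). We need an integer \(t\) with \(t<s\) and \(t>25-(|X|+|Y|)/5^{\ell-2}\). Since \(|X|+|Y|>(26-s)5^{\ell-2}\), it suffices to have \(25-(26-s)\le t<s\), that is, \(s-1\le t<s\). Such an integer always exists, namely \(t=\lceil s\rceil-1\), and it satisfies \(t\ge0\). We do not have to ensure by hand that \(5\nmid t\): Lemma~\ref{lem:1419} itself concludes that \(t\) is not divisible by \(5\), so its hypotheses alone suffice. The only remaining check is that the hypotheses are genuinely strict with this \(t\): \(|Z|>t5^{\ell-2}\) holds because \(t<s\), and \(|X|+|Y|>(26-s)5^{\ell-2}\ge(25-t)5^{\ell-2}\) holds because \(t\ge s-1\). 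Nonemptiness of \(X\), \(Y\), \(Z\) is assumed, so the lemma applies and the corollary follows.
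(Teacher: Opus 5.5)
Your proof is correct and is essentially the paper's argument: the same choice \(t=\lceil |Z|/5^{\ell-2}\rceil-1\), the same application of Lemma~\ref{lem:1419}, the upper bound from the coverings by translates of \(K\), and the lower bound \(|\phi(X)|+|\phi(Y)|+|\phi(Z)|>26/5\) from the fibre sizes \(5^{\ell-1}\). The digression on \((\phi(X)+\phi(Y))\cap\phi(Z)=\vn\) and Fact~\ref{f:23} is not needed, as you noted yourself, and can be cut.
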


\begin{proof}
	Setting \(t=\lceil |Z|/5^{\ell-2}\rceil-1\) 
	we have 
	\[
		(t+1)5^{\ell-2}\ge |Z|>t\cdot 5^{\ell-2}
	\]
	and 
	\[
		|X|+|Y|>\bigl(26-(t+1)\bigr)5^{\ell-2}=(25-t)5^{\ell-2}\,.
	\] 
	
	Thus Lemma~\ref{lem:1419} delivers a hyperplane \(K\le \FF_5^\ell\) 
	and integers \(a\), \(b\), \(c\) such that \(a+b+c=6\) and~\(X\), \(Y\), \(Z\) 
	can be covered by \(a\), \(b\), \(c\) translates of \(K\), respectively. 
	Now any linear epi\-morphism \(\phi\colon \FF^\ell_5\lra\FF_5\) with 
	kernel \(K\) satisfies \(|\phi(X)|+|\phi(Y)|+|\phi(Z)|\le a+b+c=6\).
	Finally, \(5.2\cdot 5^{\ell-1}<|X|+|Y|+|Z|\le (|\phi(X)|+|\phi(Y)|+|\phi(Z)|)5^{\ell-1}\)
	shows that this needs to hold with equality. 
\end{proof}

Theorem~\ref{5n=2} yields an analogue of Lemma~\ref{lem:2planes} concerning sum-free 
sets that can be covered by six translates of a subspace with codimension \(2\). 

\begin{lemma}\label{lem:62}
	If \(A\subseteq \FF_5^n\) denotes a sum-free set of size \(|A|>5^{n-1}\) and there 
	exists a linear epimorphism \(\phi\colon \FF_5^n\lra \FF_5^2\) such that \(|\phi(A)|\le 6\),
	then \(A\) is normal. 
\end{lemma}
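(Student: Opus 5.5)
The plan is to transfer the problem to the image \(S=\phi(A)\subseteq\FF_5^2\) and apply Theorem~\ref{5n=2} there. The first step is to set \(K=\ker\phi\), which has size \(5^{n-2}\), and to write \(A_P=A\cap\phi^{-1}(P)\) for the fibres over \(P\in S\). Since \(|S|\le 6\) and \(|A|>5^{n-1}=5\cdot 5^{n-2}\), we get \(|S|=6\). Moreover, any five fibres have total size at most \(5\cdot 5^{n-2}\), so every fibre satisfies \(|A_P|>0\) and in fact \(|A_P|+|A_Q|\) is large for each pair.

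The key step is to show that \(S\) itself is sum-free, using Corollary~\ref{Bdumm} inside cosets of \(K\). Suppose \(P+Q=R\) with \(P,Q,R\in S\). Then \(A_P+A_Q\) and \(A_R\) lie in the same coset of \(K\) and are disjoint. Hence \(|A_P+A_Q|+|A_R|\le 5^{n-2}\), which gives \(|A_P|+|A_R|\le 5^{n-2}\), and similarly \(|A_Q|+|A_R|\le 5^{n-2}\). (If \(P=Q\), use \(|A_P+A_P|\ge|A_P|\).) I would then count the fibres. When \(P,Q,R\) are distinct, these bounds control four of the six fibres by \(2\cdot 5^{n-2}\), and the other two are at most \(5^{n-2}\) each, so \(|A|\le 4\cdot 5^{n-2}\), which is a contradiction. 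When \(P=Q\), the fibres \(A_P\) and \(A_R\) together contribute at most \(5^{n-2}\), and the remaining four at most \(4\cdot 5^{n-2}\), again a contradiction. So \(S\) is a sum-free subset of \(\FF_5^2\) with six elements.

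By Theorem~\ref{5n=2}, \(S\) is normal, so \(S\subseteq \ell\cup(-\ell)\) for an affine line \(\ell\subseteq\FF_5^2\) avoiding the origin. Let \(\psi\colon\FF_5^2\lra\FF_5\) be a linear functional whose fibres are parallel to \(\ell\) and with \(\psi(\ell)=1\). Then \(A\subseteq(\psi\circ\phi)^{-1}(\{1,-1\})\). This is a union of two parallel hyperplanes \(H\) and \(-H\), where \(H\) does not contain the origin, so \(A\) is normal. Alternatively, one can quote Lemma~\ref{lem:2planes} at this point.

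The main obstacle is the case analysis in the step showing \(S\) is sum-free. In particular, one has to handle degenerate cases such as \(P=Q\), or \(R=-P\), where fibres coincide, and check that the counting bound still leaves a contradiction with \(|A|>5^{n-1}\). I expect this is routine since at most three fibres are involved.
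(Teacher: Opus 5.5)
Your argument is essentially the paper's. Both show that \(S=\phi(A)\) is a sum-free six-element subset of \(\FF_5^2\) by comparing fibre sizes with \(|\ker\phi|=5^{n-2}\), then invoke Theorem~\ref{5n=2} and pull back the covering line. The paper phrases the fibre step positively, via Corollary~\ref{Bdumm}. A fibre with \(|A_P|>\tfrac12 5^{n-2}\) gives \(A_P-A_P=\ker\phi\), hence \(A_0=\vn\). Then \(|A_P|+|A_Q|>5^{n-2}\) for distinct \(P,Q\) forces \(A_{P\pm Q}=\vn\). Your version is the contrapositive: a relation \(P+Q=R\) forces two fibres to be jointly small.

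Two details in your counting need repair.

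First, in the case of distinct \(P,Q,R\) only three fibres are involved, not four. You get \(|A|\le 2\cdot 5^{n-2}+3\cdot 5^{n-2}=5^{n-1}\), not \(4\cdot 5^{n-2}\). This still contradicts \(|A|>5^{n-1}\), but only because that hypothesis is strict.

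Second, your two cases (``distinct'' and ``\(P=Q\)'') are not exhaustive. Relations with a zero summand are missing: \(P=0\), \(Q=R\), and \(Q=0\), \(P=R\). For the relation \(0+0=0\) your count actually breaks down. There \(A_P\) and \(A_R\) are the same fibre, so you only get \(|A_0|\le\tfrac12 5^{n-2}\) plus five further fibres, i.e.\ \(|A|\le 5.5\cdot 5^{n-2}\), which is no contradiction.

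The clean fix has two parts:
\begin{enumerate}
\item Any relation \(P+Q=R\) with \(P\ne R\) already gives \(|A|\le |A_P|+|A_R|+4\cdot 5^{n-2}\le 5^{n-1}\). This covers \(P=Q\ne 0\), since \(2P\ne P\).
\item If \(P=R\), then \(0\in S\), and you can instead use the relation \(0+x=x\) for some \(x\in S\sm\{0\}\), which has distinct \(P\) and \(R\).
\end{enumerate}
With that patch the proof is complete.
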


\begin{proof}
	Let us write \(A_P=\phi^{-1}(P)\cap A\) for every \(P\in \FF_5^2\). By averaging
	there exists a point~\({P\in \phi(A)}\) such that \(|A_P|>\frac 16|A|>0.5\cdot 5^{n-2}\).
	Now Corollary~\ref{Bdumm} yields \(A_P-A_P=\ker(\phi)\), whence \((0, 0)\not\in \phi(A)\). 
	Similarly, we have \(|A_P|+|A_Q|\ge |A|-4\cdot 5^{n-2}> 5^{n-2}\) for all 
	distinct points \(P, Q\in \phi(A)\), which leads to \(A_{P\pm Q}=\vn\). 
	For these reasons, \(\phi(A)\) is sum-free.
	
	Moreover,~\({|A|>5^{n-1}}\) implies \(\phi(A)\ge 6\) and by Theorem~\ref{5n=2} \(\phi(A)\)
	is normal. If~\({L\subseteq \FF_5^2}\) denotes any line with \(\phi(A)\subseteq L\cup (-L)\), 
	then the affine hyperplane \(H=\phi^{-1}(L)\) satisfies~\({A\subseteq H\cup (-H)}\).  
\end{proof}

Under a slightly more restrictive density assumption on \(A\), we can now extend Lemma~\ref{lem:2planes} to sets coverable by three parallel hyperplanes. 

\begin{lemma}\label{lem:3planes}
    If for some \(n\ge 2\) a sum-free set \(A\subseteq \FF_5^n\) of 
    size \(|A|\ge 28\cdot 5^{n-3}\) can be covered by three parallel 
    hyperplanes, then it is normal. 
\end{lemma}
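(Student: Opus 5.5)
The plan is to reduce to Lemma~\ref{lem:2planes} or Lemma~\ref{lem:62}. Write \(A\subseteq(\lambda v+H)\cup(\mu v+H)\cup(\nu v+H)\) for a hyperplane \(H\le\FF_5^n\) and a vector \(v\notin H\). Put \(A_\kappa=A\cap(\kappa v+H)\), \(a_\kappa=|A_\kappa|\) and \(m=5^{n-2}\), so that \(\sum a_\kappa\ge 5.6\,m\) and every \(a_\kappa\le 5m\). If at most two of the \(A_\kappa\) are nonempty, Lemma~\ref{lem:2planes} applies because \(|A|>5^{n-1}\). So we may assume all three are nonempty.

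The group \(\FF_5^\times\) acts on the three-element subsets of \(\FF_5\) with three orbits, represented by \(\{0,1,-1\}\), \(\{1,-1,2\}\) and \(\{0,1,2\}\), and we treat these in turn. In each case we identify every coset \(\kappa v+H\) with \(H\cong\FF_5^{n-1}\) and read off from sum-freeness relations of the form \((X+Y)\cap Z=\vn\), where \(X,Y,Z\) are translates of pieces \(\pm A_\kappa\).

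\textbf{Case \(\{0,1,-1\}\).} Here \((A_1+A_{-1})\cap A_0=\vn\), and \(|A_1|+|A_{-1}|+|A_0|=|A|\ge 28\cdot5^{n-3}>26\cdot5^{(n-1)-2}\). So Corollary~\ref{cor:1135}, applied inside \(H\) with \(\ell=n-1\), gives an epimorphism \(\psi\colon H\to\FF_5\) with \(|\psi(A_1)|+|\psi(A_{-1})|+|\psi(A_0)|=6\). Extend \(\psi\) to \(\FF_5^n\) and combine it with the coordinate in the \(v\)-direction to obtain a linear epimorphism \(\phi\colon\FF_5^n\to\FF_5^2\) with \(|\phi(A)|\le 6\). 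Lemma~\ref{lem:62} then yields normality.

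\textbf{Case \(\{1,-1,2\}\).} Here \(2+(-1)=1\) gives \((A_2+A_{-1})\cap A_1=\vn\). This triple again exhausts \(A\), and the same argument with Corollary~\ref{cor:1135} followed by Lemma~\ref{lem:62} finishes.

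\textbf{Case \(\{0,1,2\}\).} This is the main obstacle, since there is no single sum relation whose triple exhausts all of \(A\). The relations available are \((A_0+A_1)\cap A_1=\vn\), \((A_0+A_2)\cap A_2=\vn\), \((A_1+A_1)\cap A_2=\vn\), \((A_2-A_1)\cap A_1=\vn\) and \((A_0-A_0)\cap A_0=\vn\). Via Fact~\ref{f:23} in \(H\) and the cheap bounds \(a_0+a_\kappa\le 5m\) (as in the proof of Lemma~\ref{lem:2planes}), they give
\[
	a_0+2a_1\le 6m,\qquad a_0+2a_2\le 6m,\qquad 2a_1+a_2\le 6m,\qquad a_0+a_1\le 5m,\qquad a_0+a_2\le 5m .
\]
I will first try to show directly that these force \(a_0+a_1+a_2\le 5.6\,m\) with strict inequality. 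If that fails, I will fall back on the structural conclusion. Take the triple \((A_1,A_1,A_2)\), or \((A_0,A_2,A_2)\), whichever carries more than \(26\cdot5^{n-3}\). Lemma~\ref{lem:1419} applied to it produces a hyperplane \(K\) of \(H\) covering these pieces by few translates. Feeding this back in, with Corollary~\ref{Bdumm} applied to pieces of size more than half a \(K\)-coset, should show that \(A_0\) sits inside boundedly many \(K\)-cosets as well. This again gives \(|\phi(A)|\le 6\) for a suitable \(\phi\colon\FF_5^n\to\FF_5^2\), and Lemma~\ref{lem:62} finishes. Alternatively, it gives a contradiction with the density assumption. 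The delicate part is the boundary configuration where \(a_0\) is tiny and \(a_2\) is close to \(5m\). There I expect to need the exact equality cases of Lemma~\ref{lem:1419}, namely \(a+b+\lceil t/5\rceil=6\).
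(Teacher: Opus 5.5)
Your treatment of the orbits \(\{0,1,-1\}\) and \(\{1,-1,2\}\) is exactly the paper's first case, and it is correct. The case \(\{0,1,2\}\), however, is not proved.

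\textbf{The counting route cannot work.} Your inequalities, even together with \(a_0\le 2m\) from Fact~\ref{f:23}, are all satisfied by \(a_0=a_1=a_2=2m\), which gives \(|A|=6m>5.6m\). This point is realised by genuinely normal sets. Take, e.g., \(\psi^{-1}(\{\pm1\}\times\{0,1,2\})\) for an epimorphism \(\psi\colon\FF_5^n\lra\FF_5^2\) mapping each \(\kappa v+H\) onto the row \(\FF_5\times\{\kappa\}\). So no counting contradiction exists, and normality must be proved structurally. The configuration you call delicate cannot occur at all: your own bound \(a_0+2a_2\le 6m\) gives \(a_2\le 3m\).

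\textbf{What the fallback is missing.} You name the right two triples, as the paper does, but three ingredients are absent.

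First, you must show that one of the two triples exceeds \(26\cdot 5^{n-3}\). The paper gets this from \(2(2a_1+a_2)+(a_0+2a_2)=4|A|-3a_0\ge(22.4-6)m>3\cdot 5.2m\).

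Second, for \((A_1,A_1,A_2)\) your Corollary~\ref{Bdumm} idea does suffice. Corollary~\ref{cor:1135} gives \(\psi\) with \(2|\psi(A_1)|+|\psi(A_2)|=6\). The fibres are large enough to exclude \((0,0)\) and every \(P-Q\) with \(P\ne Q\) in the same row of \(\psi(A_1\cup A_2)\). The split \((1,4)\) then empties the row \(\psi(H)\), contradicting \(A_0\ne\vn\). The split \((2,2)\) leaves \(|\psi(A_0)|\le 2\).

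Third, and this is the genuinely missing idea, the triple \((A_0,A_2,A_2)\) needs a different argument. Here you get \(|\psi(A_0)|=|\psi(A_2)|=2\), since the split \((4,1)\) would force \(a_0>3.2m\). Now it is \(\psi(A_1)\), not \(\psi(A_0)\), that must be bounded. Your sketch only addresses \(A_0\), and the relations involving \(A_1\) give nothing without prior knowledge of its fibres. The paper resolves this by noting
\(2a_1+a_2=2|A|-2a_0-a_2\ge(11.2-4-2)m=5.2m\).
So the first triple takes over unless equality holds. In the equality case \(a_0=2m\), both fibres of \(A_0\) are full cosets of \(\ker\psi\). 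Then \((A_0+A_1)\cap A_1=\vn\) makes \(\psi(A_1)\) disjoint from a nonzero translate of itself inside its row, so \(|\psi(A_1)|\le 2\). Hence \(|\psi(A)|\le 6\), and Lemma~\ref{lem:62} finishes.
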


\begin{proof}
	Again we suppose 
	\[
		A\subseteq (\lambda v+H)\cup (\mu v+H)\cup (\nu v+H)\,,
	\]
	where \(H\le \FF_5^n\) denotes a hyperplane, \(v\in\FF_5^n\sm H\), 
	and \(\lambda, \mu, \nu\in \FF_5\) are distinct. By Lemma~\ref{lem:2planes}
	it suffices to treat the case that 
	\[
		A_\lambda=A\cap (\lambda v+H)\,, \quad 
		A_\mu=A\cap (\mu v+H)\,, 
		\quad \text{ and } \quad 
		A_\nu=A\cap (\nu v+H)
	\]
	are nonempty. By multiplicative symmetry, we only need to consider the cases  
	that \(\{\lambda, \mu, \nu\}\) is one of~\(\{1, 2, -2\}\), \(\{0, 1, -1\}\), 
	or \(\{0, 1, 2\}\). 
	
	\medskip

	{\it \hskip2em First Case: Either \(\{\lambda, \mu, \nu\}=\{1, 2, -2\}\) 
		or \(\{\lambda, \mu, \nu\}=\{0, 1, -1\}\).}
        
	\smallskip
	
	Without loss of generality, we can now suppose \(\lambda=\mu+\nu\). 
	By Corollary~\ref{cor:1135} applied to~\(H\) instead of \(\FF_5^\ell\) 
	and to the sets \(X=A_\mu-\mu v\), \(Y=A_\nu-\nu v\), \(Z=A_\lambda-\lambda v\)
	we get a certain linear epimorphism \(\phi\colon H\lra \FF_5\). Now the linear 
	epimorphism \(\psi\colon \FF_5^n\lra \FF_5^2\) defined by~\({\psi(h+\xi v)=(\phi(h), \xi)}\)
	for all \(h\in H\) and \(\xi\in\FF_5\) satisfies 
	\[
		|\psi(A)|=|\phi(X)|+|\phi(Y)|+|\phi(Z)|=6
	\] 
	and Lemma~\ref{lem:62} implies the normality of \(A\). 
    
	\medskip

	{\it \hskip2em Second Case: We have \(\{\lambda, \mu, \nu\}=\{0, 1, 2\}\).}
        
	\smallskip

	Since \(A_0\) is a sum-free subset of \(H\), Fact~\ref{f:23} tells 
	us \(|A_0|\le 2\cdot 5^{n-2}\), whence 
	\[
		2(2|A_1|+|A_2|)+(|A_0|+2|A_2|)
		=
		4|A|-3|A_0|
		\ge 
		(4\cdot 5.6-3\cdot 2)5^{n-2}
		>
		3\cdot (5.2\cdot 5^{n-2})\,.
	\]
	This shows that
	\[
		2|A_1|+|A_2|>5.2\cdot 5^{n-2}
		\quad \text{ or } \quad 
		|A_0|+2|A_2|>5.2\cdot 5^{n-2}\,.
	\]
	
	\begin{claim}\label{clm:1204}
		If \(2|A_1|+|A_2|>5.2\cdot 5^{n-2}\), then \(A\) is normal.
	\end{claim}
	
	\begin{proof}	
		Arguing as in the first case, Corollary~\ref{cor:1135} leads to a linear 
		epimorphism \(\psi\colon \FF_5^n\lra\FF_5^2\)
		such that \(\ker(\psi)\le H\) and \(2|\psi(A_1)|+|\psi(A_2)|=6\). As in the 
		proof of Lemma~\ref{lem:62} one shows \((0, 0)\not\in \psi(A)\) 
		and \(P\pm Q\not\in \psi(A)\) for all distinct \(P, Q\in\psi(A_1\cup A_2)\).  
		So if either \(|\psi(A_1)|\ge 3\) or \(|\psi(A_2)|\ge 3\), 
		then \(\psi(A)\cap \psi(H)=\vn\)
		and we reach the contradiction~\({A_0=\vn}\). 
		In other words, we have \(|\psi(A_1)|=|\psi(A_2)|=2\) and \(|\psi(A_0)|\le 2\) 
		follows. Now \(|\psi(A)|\le 6\) and Lemma~\ref{lem:62} imply that \(A\) is normal.
	\end{proof} 
	
	It remains to discuss the case \(|A_0|+2|A_2|>5.2\cdot 5^{n-2}\). As before, 
	we find a linear epimorphism \(\psi\colon \FF_5^n\lra\FF_5^2\)
	satisfying \(\ker(\psi)\le H\) and \(|\psi(A_0)|+2|\psi(A_2)|=6\), and observe that 
	only the case \(|\psi(A_0)|=|\psi(A_2)|=2\) is possible. This yields 
	\[
		2|A_1|+|A_2|=2|A|-2|A_0|-|A_2|\ge (2\cdot 5.6-2\cdot 2-2)5^{n-2}=5.2\cdot 5^{n-2}
	\]
	and, unless this holds with equality, Claim~\ref{clm:1204} implies the normality 
	of \(A\). So we can suppose \(|A_0|=2\cdot 5^{n-2}\), which means that
	\(\psi^{-1}(P)\subseteq A\) holds for both points \(P\in \psi(A_0)\). 
	As this leads to \(|\psi(A_1)|\le 2\), we again have \(|\psi(A)|\le 6\) 
	and \(A\) is normal by Lemma~\ref{lem:62}.   
\end{proof}

\subsection{VL-sets}
The goal of this subsection is to establish a connection between the three special 
fishy functions~\(f_\alpha\), \(f_\beta\), and \(f_\gamma\) appearing in 
Proposition~\ref{prop:fish} and \(\VL\)-sets. We commence with the three dimensional
case. 

\begin{lemma}\label{lem:3dim}
	 Given a sum-free set \(A\subseteq \FF_5^3\), 
	 let \(I_{x, y}=\{z\in\FF_5\colon (x, y, z)\in A\}\) for every planar 
	 point \((x, y)\in\FF_5^2\). If there exists a letter \(\tau\in\{\alpha, \beta, \gamma\}\)
	 such that \(|I_{x, y}|=f_\tau(x, y)\) holds for every \((x, y)\in\FF_5^2\), 
	 then \(A\) is isomorphic to \(\Lambda\). 
\end{lemma}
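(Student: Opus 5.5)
We treat the three letters separately. In each case the fibre sizes force most of the structure, and the remaining freedom is removed by automorphisms of \(\FF_5^3\) that preserve the fibration by the lines \(L_P\). These are the maps \((x,y,z)\mapsto(\phi(x,y),\lambda z+\ell(x,y)+c)\) with \(\phi\in\mathrm{Aut}(\FF_5^2)\), \(\lambda\neq0\), \(\ell\) linear and \(c\) constant. Translations in \(z\) are allowed only when they keep \(0\notin A\) and preserve sum-freeness; it is cleaner to use only \((x,y,z)\mapsto(\phi(x,y),\lambda z+\ell(x,y))\), which are genuine linear automorphisms. First we rule out \(\beta\) and \(\gamma\), and then we show that \(\alpha\) forces \(\Lambda\).

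\emph{Case \(\tau=\beta\).} The four rows \(R_{\pm1},R_{\pm2}\) have all fibres of size \(1\), say \(I_{x,y}=\{h(x,y)\}\) for \(y\neq0\), and \(I_{\pm1,0}\) has size \(4\). For \(y_1+y_2=y_3\) with all three nonzero, sum-freeness gives \(h(x_1,y_1)+h(x_2,y_2)\neq h(x_1+x_2,y_3)\) for all \(x_1,x_2\). A counting argument should exclude this. For fixed \(y_1=y_2=1\), the map \((x_1,x_2)\mapsto h(x_1,1)+h(x_2,1)-h(x_1+x_2,2)\) must avoid \(0\) on all \(25\) pairs. Combined with the rows \(y=-1\) and the \(4\)-element fibres over \((\pm1,0)\), where \(A\cap L_{1,0}\) together with \(A\cap L_{-1,0}\) blocks nearly every difference, this should yield a contradiction via Corollary~\ref{Bdumm}. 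Concretely, \((1,0,z)+(x,1,h(x,1))\) lies in \(L_{x+1,1}\), so \(h(x+1,1)\notin h(x,1)+I_{1,0}\), whence \(h(x+1,1)-h(x,1)\) is the unique element outside \(I_{1,0}\). The same holds with \(I_{-1,0}\). Thus \(h(\cdot,1)\) is affine with slope \(s\), where \(\FF_5\setminus I_{1,0}=\{s\}\) and \(\FF_5\setminus I_{-1,0}=\{-s\}\). Summing the five steps gives \(5s=0\), which is consistent, so the contradiction must come elsewhere. After shearing \(z\mapsto z-sx-\dots\) we may take all \(h\) constant on rows, \(h(x,y)=c_y\). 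Then \(I_{1,0}=\FF_5\setminus\{0\}\), and \(c_1+c_1\neq c_2\) together with the analogous inequalities must hold, while \((1,0,z)+(x,y,c_y)\) lands in \((x+1,y,c_y+z)\) with \(z\neq0\). That is fine. However, \((x,1,c_1)+(x',1,c_1)=(x+x',2,2c_1)\) requires \(c_2\neq2c_1\), and similar conditions over all \(y\) force \(c\colon\FF_5^\times\to\FF_5\) with \(c_{y}+c_{y'}\neq c_{y+y'}\) and \(c_y-c_{y'}\neq c_{y-y'}\). Also \(y'=-y\) gives \(c_y+c_{-y}\notin I_{0,0}=\emptyset\), which is harmless. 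I expect a small finite check, done by hand by scaling \(c_1=0\) or \(c_1=1\), to show that no such \(c\) exists, and this kills \(\beta\).

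\emph{Case \(\tau=\gamma\).} The argument is similar. The fibres over \((\pm1,0),(0,\pm1)\) have size \(4\), so each misses exactly one value \(m_P\), and sums of two such fibres cover everything. Any point \(Q\) with \(f_\gamma(Q)=1\) that equals \(P\pm P'\) for size-\(4\) fibres \(P,P'\) is killed by Corollary~\ref{Bdumm}, since \(4+4>5\). Indeed \((1,1)=(1,0)+(0,1)\) already has \(f_\gamma=0\), so instead I check the size-\(1\) points against one size-\(4\) point. For example, \((2,1)-(1,0)=(1,1)\) gives no constraint, whereas \((2,1)+(-1,0)\) and \((2,2)-(0,1)=(2,1)\) do: a singleton \(\{h\}\) at \((2,2)\) together with the \(4\)-set \(I_{0,1}\) forces \(h(2,1)=h(2,2)-m\) for the missing value \(m\). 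Chasing these relations around the \(12\) singleton points yields a linear recursion whose consistency around a cycle should force a nonzero multiple of some \(m\)-combination to vanish, giving a contradiction analogous to the \(\beta\) case.

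\emph{Case \(\tau=\alpha\).} The fibres over \((\pm1,0),(0,\pm1)\) are full lines, and those over \(\pm(-1,2),\pm(2,1)\) have size \(2\). Full lines plus a fibre cover everything, so they impose no constraints on \(z\) beyond their own support. The pair \((-1,2),(2,1)\) differs by \((-1,-1)\)… no wait: the constraints are \((-1,2)+(2,1)=(1,-2)\) and \((2,1)-(-1,2)=(-2,-1)\), and \((2,1)+(2,1)=(-1,2)\) links the fibres via \(2I_{2,1}\). Writing the fibres as pairs and using shears \(z\mapsto\lambda z+ax+by\) to normalise \(I_{-1,2}=\{0,1\}\), the remaining sum-free conditions \(I_P+I_Q\cap I_{P+Q}=\emptyset\) and \(2I_P\cap I_{2P}=\emptyset\) leave finitely many options, which should all be equivalent to \(\Lambda\). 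The main obstacle is this final enumeration, together with the \(\gamma\) cycle argument. I would organise the \(\alpha\) enumeration by first determining \(I_{2,1}\) from \(2I_{2,1}\cap I_{-1,2}=\emptyset\) and \(2I_{-1,2}\cap I_{-2,-1}=\emptyset\), and then propagating the constraints.
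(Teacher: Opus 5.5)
Your proposal has genuine gaps: the $\beta$ case aims at a false conclusion, the $\gamma$ mechanism yields no contradiction, and the $\alpha$ case is never carried out.

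\textbf{The case $\tau=\beta$.} You set out to show this case is impossible, but it is not. The set $\{(\pm1,0,z)\colon z\ne0\}\cup\{(x,\pm1,0),(x,2,1),(x,-2,-1)\colon x\in\FF_5\}$ has fibre sizes exactly $f_\beta$. It is sum-free, as a quick case check on the second coordinates of the two summands shows. It corresponds to $c_1=c_{-1}=0$, $c_2=1$, $c_{-2}=-1$, which satisfies every condition you list, so no finite check on $c$ can ``kill'' $\beta$.

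The condition you dismiss as harmless is not harmless. For $y'=-y$ the sum lies over $(x+x',0)$ with $x+x'$ arbitrary, so it can land in the fibres over $(\pm1,0)$, and this forces $c_{-y}=-c_y$.

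What you actually need to show is that every $\beta$-configuration is a copy of $\Lambda$. The paper normalises as you began: $I_{1,0}=\FF_5^\times$, and each row $y\ne0$ is constant equal to $\{c_y\}$. It then derives $c_{-y}=-c_y$ and $I_{-1,0}=\FF_5^\times$, and after an automorphism takes $c_1=0$, $c_2\in\{1,2\}$. Finally it projects along the first coordinate axis instead. The lines parallel to $(1,0,0)$ over $(y,z)=(\pm1,0),\pm(2,c_2)$ lie entirely in $A$, and the other eight points lie over $(0,z)$ with $z\ne0$. So this projection is isomorphic to $f_\alpha$, and the $\alpha$-case finishes the argument.

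\textbf{The case $\tau=\gamma$.} Your cycle mechanism cannot produce a contradiction. The relations $h(x+1,y)-h(x,y)=m_{1,0}$ and $h(x,y+1)-h(x,y)=m_{0,1}$ among the twelve singleton fibres are all satisfied by the affine function $a+m_{1,0}x+m_{0,1}y$. Every closed walk has net displacement divisible by $5$, so cycle consistency yields nothing. After a shear you get $I_{1,0}=I_{0,1}=\FF_5^\times$, and all twelve singletons equal a single $\{a\}$.

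The contradiction comes instead from sums of two singleton fibres. First, $(-1,-2)+(2,2)=(1,0)$ forces $2a\notin\FF_5^\times$, so $a=0$. Then $(2,1)+(1,2)=(-2,-2)$ puts $0+0$ into $I_{-2,-2}=\{0\}$.

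\textbf{The case $\tau=\alpha$.} The enumeration you defer can be avoided. The plane $U=\{(x,-2x,z)\}$ contains exactly the four two-element fibres. So $A\cap U$ is an $8$-element sum-free subset of $U\cong\FF_5^2$, hence normal by Theorem~\ref{5n=2}. The line $L$ with $A\cap U\subseteq L\cup(-L)$ is not parallel to $L_{0,0}$. A map $(x,y,z)\mapsto(x,y,\lambda z+\mu x)$ moves it to $\{(x,-2x,2x+2)\colon x\in\FF_5\}$, which gives $A=\Lambda$.
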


\begin{proof}
	Beginning with the case \(\tau=\alpha\) we need to study the two-element 
	sets \(I_{-1, 2}\), \(I_{2, 1}\), \(I_{1, -2}\), and \(I_{-2, -1}\). 
	The main point is that 
	\[
		U=\{(x, -2x, y)\colon x, y\in \FF_5\}
	\] 
	is a two-dimensional subspace of \(\FF_5^3\) and \(A\cap U\) is a sum-free subset 
	of \(U\) of size \(8\). Thus there exists a line \(L\subseteq U\) such 
	that \(A\cap U\subseteq L\cup (-L)\). 
	As \(L\) cannot be parallel to \(L_{0, 0}\), we can assume without loss of 
	generality that \(L=\{(x, -2x, 2x+2)\colon x\in \FF_5\}\), 
	which leads to
    \[
    	I_{-1, 2}=\{0, 1\}\,, \quad
		I_{2, 1}=\{1, 2\}\,, \quad
		I_{1, -2}=\{-1, 0\}\,, \quad
		I_{-2, -1}=\{-2, -1\}\,,
	\]
	and~\(A=\Lambda\).
	
	Next we consider the case \(\tau=\beta\), i.e., \(|I_{x, y}|=f_\beta(x, y)\) for 
	all \((x, y)\in \FF_5^2\). The missing element of \(I_{1, 0}\) can be 
	supposed to be \(0\). For each point \((x, y)\in \FF_5 \times \FF_5^\times\)
	we have \((I_{x, y}+I_{1, 0})\cap I_{x+1, y}=\vn\) and, 
	consequently, \(I_{x, y}=I_{x+1, y}\). 
	In other word, there exists for every \(y\in \FF_5^\times\) a constant~\({c_y\in \FF_5}\) 
	such that \(I_{x, y}=\{c_y\}\) holds for every \(x\in \FF_5\). 
	Due to \((I_{0, y}+I_{1, -y})\cap I_{1, 0}=\vn\) we have \(c_{-y}=-c_y\) for 
	every \(y\in \FF_5^\times\). An appropriate automorphism of \(\FF_5^3\) allows us to 
	suppose \(c_1=c_{-1}=0\) and \(c_2\in \{0, 1, 2\}\). The case \(c_2=0\) is impossible 
	due to \((I_{0,1}+I_{0, 1})\cap I_{0, 2}=\vn\). 
	Finally, \(I_{-1, 0}\cap \{0\}=(I_{-1, 0}+I_{1, 1})\cap I_{0, 1}=\vn\) 
	yields \(I_{-1, 0}=\FF_5^\times\). 
	
	Now the structure of \(A\) becomes more transparent when we project this set not
	into the \mbox{\(x\)-\(y\)-plane} but rather into the \(x\)-\(z\)-plane. That is, 
	we consider the function \(h\colon \FF_5^2\lra\RR_{\ge 0}\) defined by 
	\(h(x, z)=|\{y\in \FF_5\colon (x, y, z)\in A\}|\) for all \((x, z)\in \FF_5^2\). 
	For both cases of \(c_2\), this function is displayed in Figure~\ref{fig:c2}.  
	It is now immediate that \(h\) is isomorphic to \(f_\alpha\), whence \(A\) is isomorphic 
	to \(\Lambda\). 
	
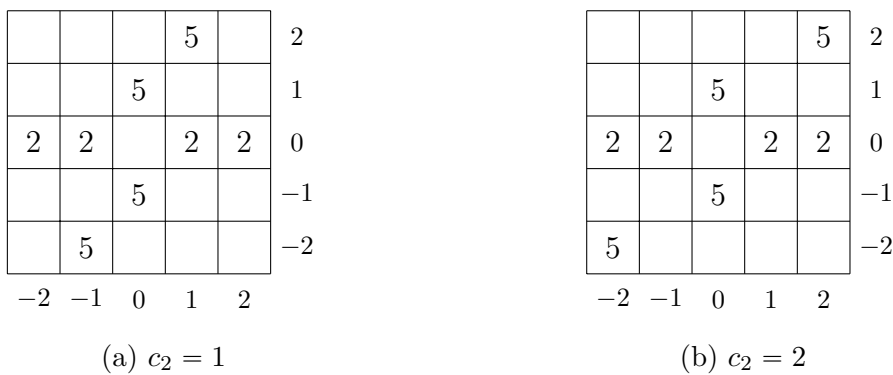
\begin{figure}[h]
	\centering
\hspace{4em}
	\begin{subfigure}[b]{.32\textwidth}
	\centering
		\begin{tikzpicture}[scale=0.7]
		\foreach \x in {-2,...,3} \draw (-2.5, \x-.5)--(2.5, \x-.5) (\x-.5, -2.5)--(\x-.5, 2.5);
		\foreach \x in {-2,...,2} 
			\node at (\x,0) [label={[yshift=-2.5cm]\footnotesize\(\x\)}] {};
		\foreach \y in {-2,...,2} 
			\node at (0,\y) [label={[xshift=2.1cm, yshift=-0.4cm]\footnotesize\(\y\)}] {};
		\foreach \x/\y in {0/1,0/-1,1/2,-1/-2} \draw (\x,\y) node{\(5\)};
		\foreach \x in {-2,-1,1,2} \draw (\x,0) node{\(2\)};
		\end{tikzpicture}
	\caption{\(c_2=1\)}\label{fig:c21}
	\end{subfigure}
\hfill
	\begin{subfigure}[b]{.32\textwidth}
	\centering
		\begin{tikzpicture}[scale=0.7]
		\foreach \x in {-2,...,3} \draw (-2.5, \x-.5)--(2.5, \x-.5) (\x-.5, -2.5)--(\x-.5, 2.5);
		\foreach \x in {-2,...,2} 
			\node at (\x,0) [label={[yshift=-2.5cm]\footnotesize\(\x\)}] {};
		\foreach \y in {-2,...,2} 
			\node at (0,\y) [label={[xshift=2.1cm, yshift=-0.4cm]\footnotesize\(\y\)}] {};
		\foreach \x/\y in {0/1,0/-1,2/2,-2/-2} \draw (\x,\y) node{\(5\)};
		\foreach \x in {-2,-1,1,2} \draw (\x,0) node{\(2\)};
		\end{tikzpicture}
	\caption{\(c_2=2\)}\label{fig:c22}
	\end{subfigure}
\hspace{4em}
\caption{The function \(h\)}\label{fig:c2}
\end{figure}	 
	
	Let us finally address the case \(\tau=\gamma\). Arguing as in the previous case, we can 
	suppose \(I_{1, 0}=I_{0, 1}=\FF_5^\times\) and prove the existence of some \(a\in \FF_5\)
	such that \(I_{x, y}=\{a\}\) holds for the twelve points \((x, y)\in \FF_5^2\) 
	with \(f_\gamma(x, y)=1\). Due to $(I_{-1,-2}+I_{2,2})\cap I_{1,0}=\vn$ we have \(a=0\) 
	but now the contradiction $(I_{2,1}+I_{1,2})\cap I_{-2,-2}\ne\vn\) arises.
	Thus the last case is impossible. 
\end{proof}

Our next goal is to extend this result to higher dimensions. 
As explained in~\S\ref{sec:overview} with every sum-free 
set \(A\subseteq \FF_5^n\) and every linear epimorphism 
\(\phi\colon \FF_5^n\lra\FF_5^2\) we associate the function 
\[
	f^A_\phi\colon \FF_5^2\lra \RR_{\ge 0}\,, \qquad 
	P\longmapsto \frac{|A\cap \phi^{-1}(P)|}{5^{n-3}}\,.
\]
It will also be convenient to set \(A^\phi_P=A\cap \phi^{-1}(P)\) 
for every \(P\in \FF_5^2\). We would like to point out an obvious 
consequence of Corollary~\ref{cor:1135}

\begin{cor}\label{lem:1500}
	Let a sum-free set \(A\subseteq \FF_5^n\) and  a linear 
	epimorphism \(\phi\colon \FF_5^n\lra \FF_5^2\) be given. 
	If for some \(P, Q, R\in \FF_5^2\) and \(\eps\in \{-1, 1\}\) with \(R=P+\eps Q\)
	we have 
	\begin{itemize}
		\item \(f^A_\phi(P), f^A_\phi(Q), f^A_\phi(R)\in \{1, 2, 3, 4\}\) 
		\item and \(f^A_\phi(P)+f^A_\phi(Q)+f^A_\phi(R)=6\), 
	\end{itemize}
	then \(K=\Sym(A^\phi_P)=\Sym(A^\phi_Q)=\Sym(A^\phi_R)\) 
	is a \((n-3)\)-dimensional subspace of \(\ker(\phi)\) and for 
	each \(X\in \{P, Q, R\}\) the set~\(A^\phi_X\) 
	is a union of \(f^A_\phi(X)\) translates of \(K\). \qed
\end{cor}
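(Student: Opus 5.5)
We reduce the statement to Lemma~\ref{lem:1419} applied inside the fibre space \(\ker(\phi)\cong\FF_5^{n-2}\), which is of the form \(\FF_5^\ell\) with \(\ell=n-2\).

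\textbf{Setup.} Since \(R=P+\eps Q\), choose vectors \(p,q\in\FF_5^n\) with \(\phi(p)=P\), \(\phi(q)=Q\), and put \(r=p+\eps q\), so that \(\phi(r)=R\). Define
\[
X=A^\phi_P-p,\qquad Y=\eps(A^\phi_Q-q),\qquad Z=A^\phi_R-r,
\]
which are subsets of \(\ker(\phi)\). Because \(A\) is sum-free (applied as \(a+b=c\) when \(\eps=1\), and as \(c+b=a\) when \(\eps=-1\)), we obtain \((X+Y)\cap Z=\vn\). Write \(f^A_\phi(P)=\alpha\), \(f^A_\phi(Q)=\beta\), \(f^A_\phi(R)=\gamma\), so that \(|X|=\alpha 5^{n-3}\) and similarly for \(Y\) and \(Z\). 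All three sets are nonempty since \(\alpha,\beta,\gamma\ge1\).

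\textbf{Applying the lemma.} In the units \(5^{\ell-2}=5^{n-4}\) we have \(|X|+|Y|=5(\alpha+\beta)5^{\ell-2}\) and \(|Z|=5\gamma\, 5^{\ell-2}\). Take \(t=5\gamma-1\). Then \(|Z|>t\cdot5^{\ell-2}\), and
\[
|X|+|Y|=5(6-\gamma)5^{\ell-2}=(30-5\gamma)5^{\ell-2}>(25-t)5^{\ell-2}=(26-5\gamma)5^{\ell-2}.
\]
Since \(5\nmid t\), Lemma~\ref{lem:1419} yields a hyperplane \(K\) of \(\ker(\phi)\), so \(\dim K=n-3\), together with integers \(a,b\) satisfying \(a+b+\lceil t/5\rceil=a+b+\gamma=6\), such that \(X,Y,Z\) are covered by \(a,b,\gamma\) translates of \(K\) respectively.

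\textbf{Counting.} The covering gives \(|X|\le a|K|=a5^{n-3}\), so \(a\ge\alpha\); likewise \(b\ge\beta\). Combined with \(a+b=6-\gamma=\alpha+\beta\), this forces \(a=\alpha\) and \(b=\beta\). So each of the three sets has exactly its cardinality's worth of cosets, meaning each is a union of full cosets: \(X\) is a union of \(\alpha\) cosets of \(K\), \(Y\) of \(\beta\), and \(Z\) of \(\gamma\). Translating back, \(A^\phi_X\) is a union of \(f^A_\phi(X)\) translates of \(K\) for each \(X\in\{P,Q,R\}\), and in particular \(K\subseteq\Sym(A^\phi_X)\).

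\textbf{Identifying the symmetry sets.} It remains to show \(\Sym(A^\phi_X)=K\) exactly. The symmetry set is a subspace of \(\ker(\phi)\) containing \(K\), so it is either \(K\) or all of \(\ker(\phi)\). The latter would force \(A^\phi_X\) to be empty or the whole fibre, i.e.\ \(f^A_\phi(X)\in\{0,5\}\), which is excluded by the hypothesis \(f^A_\phi(X)\in\{1,2,3,4\}\). This is exactly why that range condition is imposed.

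The only delicate step is the choice of \(t\), which must be non-divisible by \(5\) so that Lemma~\ref{lem:1419} applies with \(\lceil t/5\rceil=\gamma\); everything else is bookkeeping.
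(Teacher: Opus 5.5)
Your argument is correct and is essentially the paper's route: the paper presents this as an immediate consequence of Corollary~\ref{cor:1135}, whose proof applies Lemma~\ref{lem:1419} with $t=\lceil |Z|/5^{\ell-2}\rceil-1$, which here is exactly your $t=5\gamma-1$, and your counting ($a\ge\alpha$, $b\ge\beta$, $a+b=\alpha+\beta$) together with the observation that $\Sym(A^\phi_X)$ cannot be all of $\ker(\phi)$ is precisely the routine step the paper leaves to the reader. One small wording point: $5\nmid t$ is a conclusion of Lemma~\ref{lem:1419}, not a hypothesis, so what actually matters is that your $t$ satisfies both strict inequalities, which you did check.
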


\begin{lemma}\label{lem:VL}
	Let \(A\subseteq \FF_5^n\) be sum-free. 
    If there exists a linear epimorphism \(\phi\colon \FF_5^n\lra \FF_5^2\) such 
    that \(f^A_\phi=f_\tau\) holds for some letter \(\tau\in\{\alpha, \beta, \gamma\}\), 
    then \(A\) is a \(\VL\)-set. 
\end{lemma}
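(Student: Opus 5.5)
Our plan is to find an \((n-3)\)-dimensional subspace \(K\le\ker(\phi)\) such that every nonempty fibre \(A^\phi_P\) is a union of \(f_\tau(P)\) cosets of \(K\). Once we have it, we quotient by \(K\): the projection \(\pi\colon\FF_5^n\lra\FF_5^n/K\cong\FF_5^3\) gives \(A=\pi^{-1}(\pi(A))\). Moreover \(\pi(A)\) is sum-free, because \(A\) is a union of \(K\)-cosets and a sum \(x+y=z\) in the quotient lifts to one in \(A\). The map \(\phi\) factors through \(\pi\) as a linear epimorphism \(\bar\phi\colon\FF_5^3\lra\FF_5^2\) whose fibres meet \(\pi(A)\) in exactly \(f_\tau(P)\) points. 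After choosing coordinates with \(\bar\phi\) the projection to the first two coordinates, Lemma~\ref{lem:3dim} shows \(\pi(A)\cong\Lambda\). Consequently \(A\cong\Lambda\times\FF_5^{n-3}\) is a \(\VL\)-set.

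To produce \(K\) we use Corollary~\ref{lem:1500}. For each \(\tau\) we list the triples \(P,Q,R=P\pm Q\) in the support of \(f_\tau\) whose values lie in \(\{1,2,3,4\}\) and sum to \(6\).

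For \(\tau=\beta\), take \(P=(1,0)\) with value \(4\) and \(Q,R\) in a nonzero row with value \(1\) each, for example \(Q=(x,y)\) and \(R=(x+1,y)\). This gives a common symmetry group \(K\) for the fibres over \((1,0)\) and over every point of the rows \(R_{\pm1},R_{\pm2}\). The fibre over \((-1,0)\) is handled with \(P=(-1,0)\) in the same way.

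For \(\tau=\gamma\), the triple \(4,1,1\) works, e.g.\ \((1,0)=(2,1)+(-1,-1)\)? That example is invalid, since \((-1,-1)\) is not in the support. We will instead search the support of \(f_\gamma\) for genuine relations of the form \(4+1+1\) connecting each value-\(4\) point to pairs of value-\(1\) points, say \((1,0)=(2,2)-(1,2)\). Chaining such relations should link all sixteen support points and so produce one common \(K\).

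The main obstacle is \(\tau=\alpha\). There the values are \(5\) and \(2\), and a value \(5\) lies outside \(\{1,2,3,4\}\), so Corollary~\ref{lem:1500} does not apply directly. The value-\(5\) fibres are entire cosets of \(\ker\phi\), so every subspace of \(\ker\phi\) is among their symmetries anyway. What we need is a common \(K\) for the four fibres of value \(2\), which lie on the line \(\langle(2,1)\rangle\); no relation \(2+2+2\) exists inside the support.

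Our first attempt is to work inside \(U=\phi^{-1}(\langle(2,1)\rangle)\), which meets \(A\) in the four value-\(2\) fibres. The set \(A\cap U\) is sum-free of size \(8\cdot5^{n-3}\) in a space of dimension \(n-1\), and \(8\cdot 5^{n-3}>5^{n-2}=5^{(n-1)-1}\). Applying the hyperplane argument of Lemma~\ref{lem:3planes} within \(U\), or directly Kneser's theorem (Lemma~\ref{lem:1419}) to pairs of fibres such as \((A_{2,1}+A_{-1,2})\cap A_{1,-2}\)-type relations, yields that \(A\cap U\) is normal in \(U\). We then use the value-\(5\) fibres: they force, via \((A_P+A_{(1,0)})\cap A_{P+(1,0)}=\vn\)-type constraints, that each value-\(2\) fibre is a union of two cosets of a codimension-\(3\) subspace. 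Normality together with Kneser equality cases should then make these subspaces coincide. We expect this to be the delicate step.
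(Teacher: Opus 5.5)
The framework you propose is the paper's: get one $(n-3)$-dimensional $K\le\ker(\phi)$ from Corollary~\ref{lem:1500}, quotient by $K$, and apply Lemma~\ref{lem:3dim}. The quotient step and the cases $\tau=\beta$ and $\tau=\gamma$ are fine. For $\gamma$, relations such as $(1,0)=(2,2)-(1,2)=(-2,2)-(2,2)=(-2,1)-(2,1)$, together with their vertical analogues, do link all sixteen support points.

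\textbf{The gap is the case $\tau=\alpha$.} This is the case the paper actually writes out, and you leave it open because of a false claim. The four points where $f_\alpha$ equals $2$ are $\lambda\cdot(2,1)$ with $\lambda\in\FF_5^\times$, so sums of two of them are again among them: $(-1,2)+(2,1)=(1,-2)$ and $(2,1)+(1,-2)=(-2,-1)$. These are relations $2+2+2=6$ with all values in $\{1,2,3,4\}$, so Corollary~\ref{lem:1500} applies directly. The two triples share the fibres over $(2,1)$ and $(1,-2)$, so all four value-$2$ fibres have the same symmetry set $K$ of dimension $n-3$, and each is a union of two $K$-cosets. You even write down $(A_{2,1}+A_{-1,2})\cap A_{1,-2}=\vn$ yourself without noticing that it is such a triple. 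With this $K$, your quotient argument finishes the proof exactly as in the paper.

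The substitute route you sketch would not close the gap:
\begin{itemize}
\item $f_\alpha$ takes the value $5$ at $(\pm1,0)$ and $(0,\pm1)$. A constraint $(A_P+A_{(1,0)})\cap A_{P+(1,0)}=\vn$ with $P$ a value-$2$ point therefore only says that $A_{P+(1,0)}$ is empty, since the sum is a whole coset of $\ker\phi$. The value-$5$ fibres carry no information about how the value-$2$ fibres sit inside their cosets. So they cannot force those fibres to be unions of two cosets of a codimension-$3$ subspace.
\item Lemma~\ref{lem:3planes} is not available inside $U$ as stated. A priori, $A\cap U$ is only known to lie in four parallel hyperplanes of $U$, not three.
\item Even normality of $A\cap U$ would not by itself make the subspaces coincide. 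Your sentence ``Kneser equality cases should then make these subspaces coincide'' is exactly the missing step, and Corollary~\ref{lem:1500} supplies it as above.
\end{itemize}
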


\begin{proof}
	We commence with the case \(\tau=\alpha\), where our main task is to analyse the 
	sets \(A^\phi_{-1, 2}\), \(A^\phi_{2, 1}\), \(A^\phi_{1, -2}\), and \(A^\phi_{-2, -1}\). 
	Since 
	\[
		(-1, 2)+(2, 1)=(1, -2)\,, \qquad (2, 1)+(1, -2)=(-2, -1)\,,
	\] 
	and 
	\[
		f^A_\phi(-1, 2)+f^A_\phi(2, 1)+f^A_\phi(1, -2)
		=
		f^A_\phi(2, 1)+f^A_\phi(1, -2)+f^A_\phi(-2, -1)
		=
		6\,,
	\]
	Corollary~\ref{lem:1500} tells us that all four of them have the same symmetry set 
	\[
		K=\Sym(A_{-1, 2})=\Sym(A_{2, 1})=\Sym(A_{1, -2})=\Sym(A_{-2, -1})\,,
	\]
	which is a hyperplane in~\(\ker(\phi)\). Furthermore, each of our four sets is a 
	union two translates of \(K\). 
	Altogether, \(A\) is a union of \(28\) translates of \(K\) and thus 
	isomorphic to a set of the form~\({B\times \FF_5^{n-3}}\), where 
	\(B\subseteq \FF_5^3\) is a sum-free set satisfying the hypothesis of 
	Lemma~\ref{lem:3dim} for~\({\tau=\alpha}\). So~\(B\) is isomorphic to~\(\Lambda\) 
	and~\(A\) is a \(\VL\)-set. 
	
	The cases \(\tau=\beta\) and \(\tau=\gamma\) are similar.
\end{proof}

\subsection{Fishy projections}
The next result connects the study of sum-free sets to our work on fishy functions.

\begin{lemma}\label{lem:1922}
	For every sum-free set \(A\subseteq \FF_5^n\) of size \(|A|\ge 28\cdot 5^{n-3}\) 
	and every linear epimorphism \(\phi\colon \FF_5^n\lra\FF_5^2\) the 
	function \(f^A_\phi\) is fishy.
\end{lemma}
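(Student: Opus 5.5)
We verify the three conditions of Definition~\ref{dfn:fish} for \(f=f^A_\phi\), writing \(A_P=A\cap\phi^{-1}(P)\) and noting that each fibre \(\phi^{-1}(P)\) is a translate of the \((n-2)\)-dimensional kernel \(W=\ker(\phi)\), so \(|A_P|\le 5^{n-2}\).

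\ref{it:fish1} is immediate. We have \(\|f\|_1=|A|/5^{n-3}\ge 28\), and \(f(P)\le 5^{n-2}/5^{n-3}=5\) for every \(P\).

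\ref{it:fish2} should follow from integrality. For any \(X\subseteq\FF_5^2\) we have \(f(X)=|A\cap\phi^{-1}(X)|/5^{n-3}\). If \(n\ge 4\), this is \(|A\cap\phi^{-1}(X)|\cdot 5^{3-n}\), whose denominator is a power of \(5\), so it can never equal \(k/2\) with \(k\) odd. If \(n=3\), then \(f(X)\) is an integer. Either way \(f(X)\) is never half an odd integer.

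\ref{it:fish3} is the substantive part. Fix \(P,Q\) and \(\eps\) with \(R=P+\eps Q\) and \(f(P),f(Q),f(R)>0\). Translating into \(W\), put \(X=A_P-p\), \(Y=\eps(A_Q-q)\) and \(Z=A_R-(p+\eps q)\) for representatives \(p\in\phi^{-1}(P)\), \(q\in\phi^{-1}(Q)\). These are nonempty subsets of \(W\cong\FF_5^{n-2}\). Sum-freeness of \(A\) (together with \(x-y\notin A\) whenever \(x,y\in A\), for \(\eps=-1\)) gives \((X+Y)\cap Z=\vn\). Note that \(P=Q\) is allowed; then \(X\) and \(Y\) come from the same set, which is harmless.

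We now apply Lemma~\ref{lem:1419} with \(\ell=n-2\), so \(5^{\ell-2}=5^{n-4}\). When \(n\ge 4\), the claim \(f(P)+f(Q)+\lceil f(R)\rceil\le 6\) amounts to \(|X|+|Y|\le (30-5\lceil |Z|/5^{n-3}\rceil)5^{n-4}\). Suppose this fails, and let \(c=\lceil |Z|/5^{n-3}\rceil\) and \(t=5(c-1)\). Then \(|Z|>t\cdot 5^{n-4}\) and \(|X|+|Y|>(30-5c)5^{n-4}=(25-t)5^{n-4}\). Lemma~\ref{lem:1419} then says \(t\) is not divisible by \(5\), which contradicts \(t=5(c-1)\).

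If instead \(n=3\), then \(W\) is a line, so \(|X|,|Y|,|Z|\) are integers and \(f\) equals these sizes. The bound \(|X|+|Y|+|Z|\le 6\) follows from Fact~\ref{f:23} in dimension \(1\), or directly from Cauchy--Davenport.

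The main care needed is in this last step: matching the ceiling with the choice \(t\equiv 0\pmod 5\), and handling the degenerate cases \(n=3\) and \(\ell<2\). For \(n=3\), Fact~\ref{f:23} already gives the bound, since \(\lceil f(R)\rceil=f(R)\) there.
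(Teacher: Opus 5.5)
Your proof is correct and follows the paper's argument. (F1) and (F2) use the same counting and integrality observations, and (F3) applies Lemma~\ref{lem:1419} inside \(\ker(\phi)\) with exactly the paper's choice \(t=5\lceil f(R)\rceil-5\); the only differences are that you write out the translation of the fibres into \(\ker(\phi)\) and treat \(n=3\) separately via Fact~\ref{f:23}, whereas the paper applies Lemma~\ref{lem:1419} uniformly, since its proof also goes through for \(\ell=1\).
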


\begin{proof}
	Set \(f=f^A_\phi\). Property~\ref{it:fish1} follows from 
	\[
		\|f\|_1=\frac{|A|}{5^{n-3}}\ge 28
	\]
	and from the fact that  
	\[
		f(P)\le \frac{|\phi^{-1}(P)|}{5^{n-3}}=5
	\]
	is fulfilled for every \(P\in\FF_5^2\). Since for every \(X\subseteq \FF_5^2\) 
	the number \(5^{n-3}f(X)\) is an integer,~\ref{it:fish2} holds as well.  
	
	Proceeding with~\ref{it:fish3} we consider two points \(P, Q\in\FF_5^2\) and a sign 
	\(\eps\in\{-1, 1\}\) such that~\(P\),~\(Q\), and~\(R=P+\eps Q\) are in the support 
	of \(f\). Setting \(t=5\lceil f(R)\rceil-5\) we 
	have 
	\[
		|A^\phi_R|=5f(R)\cdot 5^{n-4}>t\cdot 5^{n-4}\,.
	\]
	Since \(t\) is divisible by \(5\), Lemma~\ref{lem:1419} yields 
	\[
		|A^\phi_P|+|A^\phi_Q|\le (25-t)5^{n-4}=(6-\lceil f(R)\rceil)5^{n-3}\,,
	\]
	whence \(f(P)+f(Q)\le 6-\lceil f(R)\rceil\). 
\end{proof}

\begin{proof}[Proof of Proposition~\ref{prop:0133}]
	Let \(\phi\colon \FF_5^n\lra\FF_5^2\) be a linear epimorphism mapping \(T\) to some 
	point \(P\in \FF_5^2\). By Lemma~\ref{lem:1922} the function \(f=f^A_\phi\) is fishy. 
	Since 
	\[
		\|f\|_\infty\ge f(P)=\frac{|A\cap T|}{5^{n-3}}>3\,,
	\]
	Proposition~\ref{prop:fish} tells us that either the support of \(f\) 
	can be covered by three parallel lines or~\(f\) is isomorphic to one 
	of the functions \(f_\alpha\), \(f_\beta\), or \(f_\gamma\). In the 
	former case, \(A\) can be covered by three parallel hyperplanes and 
	Lemma~\ref{lem:3planes} informs us that \(A\) is normal. In the latter 
	case, Lemma~\ref{lem:VL} applies and \(A\) is a VL-set. 
\end{proof}

\begin{proof}[Proof of Proposition~\ref{prop:0135}]
	Fix a linear epimorphism \(\phi\colon \FF_5^n\lra\FF_5^2\) sending~\(T\) 
	to some point \(P_\star\in \FF_5^2\) and, consequently,~\(-T\) to~\(-P_\star\). 
	By Lemma~\ref{lem:1922} the function \(g=f^A_\phi\) is fishy and our hypothesis 
	on \(T\) translates to 
	\begin{equation}\label{eq:0149}
		g(P_\star)+g(-P_\star)=\frac{|A\cap T|+|A\cap (-T)|}{5^{n-3}}>5.6\,,
	\end{equation}
	so we can suppose without loss of generality that \(g(P_\star)>2.8\). 
	If \(\|g\|_\infty>3\), then Proposition~\ref{prop:0133} implies immediately that \(A\)
	has the desired form, so it suffices to treat the case \(\|g\|_\infty\le 3\) 
	in the sequel. Now~\eqref{eq:0149} implies \(g(-P_\star)>5.6-3>2.5\). 
	
	\begin{claim}\label{clm:1312}
		For every point \(Q\in \FF_5^2\) such that \(g(Q), g(P_\star+Q)>0\) 
		and \(g(Q)+g(P_\star+Q)>2.2\) there exists a hyperplane \(K_Q\le \ker(\phi)\)
		such that each of \(A^\phi_{P_\star}\) and \(A^\phi_Q\cup A^\phi_{P_\star+Q}\)  
		can be covered by three translates of \(K_Q\). 
	\end{claim}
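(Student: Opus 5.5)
We apply Lemma~\ref{lem:1419} inside the \((n-2)\)-dimensional space \(\ker(\phi)\), with \(\ell=n-2\). Put \(R=P_\star+Q\). Since \(A\) is sum-free and \(Q+P_\star=R\), we have \((A^\phi_Q+A^\phi_{P_\star})\cap A^\phi_R=\vn\). Equivalently, \(R-Q=P_\star\) gives \((A^\phi_R-A^\phi_Q)\cap A^\phi_{P_\star}=\vn\). After translating everything into \(\ker(\phi)\), this produces three nonempty sets \(X=u-A^\phi_Q\), \(Y=A^\phi_R-w\) and \(Z=A^\phi_{P_\star}-p\), for suitable base points \(u,w,p\), with \((X+Y)\cap Z=\vn\). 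Their sizes are
\[
|X|+|Y|=\bigl(g(Q)+g(R)\bigr)5^{n-3}\quad\text{and}\quad |Z|=g(P_\star)\,5^{n-3}.
\]

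Measured in units of \(5^{\ell-2}=5^{n-4}\), these are \(|X|+|Y|>11\cdot 5^{n-4}\) because \(g(Q)+g(R)>2.2\), and \(|Z|>14\cdot 5^{n-4}\) because \(g(P_\star)>2.8\). We take \(t=14\). Then \(|Z|>t\cdot 5^{\ell-2}\) and \(|X|+|Y|>11\cdot 5^{\ell-2}=(25-t)5^{\ell-2}\), so the hypotheses of Lemma~\ref{lem:1419} hold, and \(t=14\) is indeed not divisible by \(5\).

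Lemma~\ref{lem:1419} then yields a hyperplane \(K_Q\) of \(\ker(\phi)\) and positive integers \(a,b\) with \(a+b+\lceil 14/5\rceil=6\), that is, \(a+b=3\). The set \(X\) is covered by \(a\) translates of \(K_Q\), the set \(Y\) by \(b\) translates, and \(Z\) by \(3\) translates. Translating back, \(A^\phi_{P_\star}\) is covered by three translates of \(K_Q\), and \(A^\phi_Q\cup A^\phi_{P_\star+Q}\) is covered by \(a+b=3\) translates of \(K_Q\). This is exactly the statement of Claim~\ref{clm:1312}.

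The only delicate point is the choice of \(t\), since Lemma~\ref{lem:1419} needs both strict inequalities at once. The thresholds \(2.2\) and \(2.8\) sum to exactly \(5\), which is what makes \(t=14\) work. I would double-check the sign bookkeeping in the translations, namely that the relation \(R-Q=P_\star\) gives a genuine condition of the form \((X+Y)\cap Z=\vn\) with \(X,Y,Z\) inside \(\ker(\phi)\). Once that is confirmed, the rest is routine. Presumably \(a,b\ge 1\) will later be used to show that each of \(A^\phi_Q\) and \(A^\phi_R\) occupies one or two cosets, matching condition~\ref{it:owl6}.
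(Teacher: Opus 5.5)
Your proposal is correct and is essentially the paper's proof: the paper applies Lemma~\ref{lem:1419} in \(\ker(\phi)\) with \(t=14\) to \(X=A^\phi_{P_\star+Q}-(v_{P_\star}+v_Q)\), \(Y=v_Q-A^\phi_Q\), \(Z=A^\phi_{P_\star}-v_{P_\star}\), which is your construction with the roles of \(X\) and \(Y\) swapped. The sign bookkeeping you flagged works when the base points are chosen compatibly, namely \(w=u+p\): then \(X+Y=(A^\phi_R-A^\phi_Q)-p\), and this is disjoint from \(Z=A^\phi_{P_\star}-p\) because \(A\) is sum-free.
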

	
	\begin{proof}
		We pick arbitrary points \(v_{P_\star}\in \phi^{-1}(P_\star)\) 
		and \(v_Q\in\phi^{-1}(Q)\).
		Now Lemma~\ref{lem:1419} applied to~\(\ker(\phi)\) instead of \(\FF_5^\ell\),
		the sets, 
		\[
			X=A^\phi_{P_\star+Q}-(v_{P_\star}+v_Q)\,, \quad
		 	Y=v_Q-A^\phi_Q\,, \quad  
		 	Z=A^\phi_{P_\star}-v_{P_\star}\,,
		\] 
		and to \(t=14\) yields the desired hyperplane \(K_Q\). 
	\end{proof}
	
	Because of \(g(P_\star)>2.8\) and \(\|g\|_\infty\le 3\) Lemma~\ref{lem:314} shows that 
	the assumption of Claim~\ref{clm:1312} is satisfied for at least one point \(Q\) and, 
	consequently, there exists a hyperplane \(K\le \ker(\phi)\) such that \(A^\phi_{P_\star}\)
	is contained in the union of three translates of \(K\). Assume for the sake of contradiction 
	that \(K\) is not uniquely determined by this property. Since any two nonparallel affine 
	hyperplanes in \(\ker(\phi)\) intersect in a \((n-4)\)-dimensional affine subspace 
	of \(\ker(\phi)\), this yields \(|A^\phi_{P_\star}|\le 3\cdot 3\cdot 5^{n-4}\) and we 
	get the contradiction \(g(P_\star)\le 1.8\). 
	
	This proves \(K_Q=K\) for all points \(Q\in\FF_5^2\) satisfying the hypothesis 
	of Claim~\ref{clm:1312}. Let \(\pi\colon \FF^3_5\lra \FF_5^2\) be the projection 
	to the first two coordinates, i.e., the map \((x, y, z)\longmapsto (x, y)\), choose  
	a linear epimorphism  \(\psi\colon \FF_5^n\lra \FF_5^3\) with kernel~\(K\)
	such that \(\phi=\pi\circ\psi\), and define  
	\[
		f\colon \FF_5^3\lra\RR_{\ge 0}  
		\quad \text{ by } \quad 
		f(P)=\frac{|A\cap \psi^{-1}(P)|}{5^{n-3}}
	\] 
	for every \(P\in\FF_5^3\). Owing to \(\phi=\pi\circ\psi\) we have 
	\(\psi^{-1}(L_P)=\phi^{-1}(P)\) for every \(P\in\FF_5^2\) and, 
	therefore, \(g\) is the standard projection of~\(f\).
	
	We contend that \(f\) is acceptable. The properties~\ref{it:owl2} 
	and~\ref{it:owl4}\,--\,\ref{it:owl6} have already been addressed. 
	Moreover,~\ref{it:owl1} is clear and~\ref{it:owl3} follows easily 
	from~Corollary~\ref{Bdumm}.
	
	Now Proposition~\ref{prop:klein} delivers a line \(L\subseteq\FF_5^3\)
	such that \(f(L)>3\). The affine subspace~\({U=\psi^{-1}(L)}\) of \(\FF_5^n\)
	has codimension~\(2\) and satisfies \(|A\cap U|=f(L)\cdot 5^{n-3}>3\cdot 5^{n-3}\).
	Owing to Proposition~\ref{prop:0133} it follows that \(A\) is either normal 
	or a \(\VL\)-set.  
\end{proof}

\section{Concluding remarks}\label{sec:conclude}

The notation for sum-free sets employed in the introduction 
suggests the following hierarchy of sum-free sets introduced 
in~\cite{RZ24a}*{Definition~1.2}. 

\begin{definition}\label{dfn:traum}
    Let \(G\) be a finite abelian group. Starting with 
    \[
    	\SFR_0(G)=\{A\subseteq G\colon A \text{ is sum-free}\}
	\]
	we define by recursion on \(k\) the numbers and sets 
    \begin{itemize}
    	\item \(\sfr_k(G)=\max\{|A|\colon A\in \SFR_k(G)\}\)\,,
        \item \(\SFRR_k(G)=\{A\in \SFR_k(G)\colon |A|=\sfr_k(G)\}\)\,,   	
    	\item \(\SFR_{k+1}(G)=\{A\in \SFR_{k}(G)\colon \text{there
				is no \(B\in\SFRR_k(G)\) with \(A\subseteq B\)}\}\)\,.
	\end{itemize}
\end{definition}

This is to some extent inspired by a similarly defined hierarchy for intersecting set 
systems, which has a vast literature (see e.g.,~\cites{EKR, HK, HM, Joanna}). As we have 
already pointed out, \(\SFRR_0(G)\) is known for every finite abelian group \(G\) 
(see~\cite{Ba16}). Research on \(\sfr_1(G)\) has so far mainly focused on vector spaces 
over finite fields. In fact, prior to this work \(\sfr_1(\FF_p^n)\) and even \(\SFRR_1(\FF_p^n)\) 
were known for all~\(p\ne 5\), so that Theorem~\ref{thm:main} solves the last open problem 
of its kind. It would of course be interesting if one could determine \(\sfr_1(G)\) 
for broader classes of abelian groups as well. 

For \(k\ge 2\) some discussion on \(\sfr_k(\FF_p^n)\) can be found in~\cites{RZ24a, R24c}.
Here we would only like to point out that if \(p\) denotes a prime number with \(p\ge 11\)
and \(p\equiv -1\pmod{3}\), then for \(n\ge 2\) there are sets \(A\in \SFRR_1(\FF_p^n)\)
such that there are two distinct vectors \(x, y\in \FF_p^n\) not belonging to 
\[
	A\cup (A+A)\cup (A-A)\,.
\]   
Thus the only reason why \(A\cup\{x\}\) and \(A\cup\{y\}\) fail to be sum-free is that
\(2x\) and \(2y\) are in \(A\). In the situations we have in mind \(x+y\in A\) 
and \(x-y\not\in A\) hold  as well, and the sum-free 
set \(A\cup \{x, y\}\sm \{2x, x+y, 2y\}\) exemplifies 
\(\sfr_2(\FF_p^n)=\sfr_1(\FF_p^n)-1\), which is, perhaps, a bit underwhelming. 
The same kind of construction shows \(\sfr_1(\FF_p^n)=\sfr_0(\FF_p^n)-1\) 
whenever \(p\equiv 1\pmod{3}\).

For \(p=2\) and \(p=3\), on the other hand, the entire hierarchy 
\(\bigl(\sfr_k(\FF_p^n)\bigr)_{k\in\NN_0}\) is interesting (see~\cites{DT, R24c}). 
It is not difficult to check \(\Lambda\cup (\Lambda+\Lambda)\cup (\Lambda-\Lambda)=\FF_5^3\)
and thus there is some reason to believe that the investigation of \(\sfr_2(\FF_5^n)\) 
will lead to new insights as well. 

The following related group invariant has been introduced by Vsevolod Lev~\cite{VL05}. 
Call a subset~\(X\) of an abelian group \(G\) {\it aperiodic} if \(\Sym(X)=\{0\}\). 
If there exists an aperiodic, inclusion-wise maximal sum-free subset of \(G\), 
we denote the largest size that such a set can have by \(t(G)\); otherwise, we simply 
set \(t(G)=0\). Whenever \(p\ne 5\) the numbers \(t(\FF_p^n)\) are known  
(see~\cites{DT, VL05, R24c}), while our understanding of the case \(p=5\) is very limited. 
Clearly, we have \(t(\FF_5)=0\) and Theorem~\ref{5n=2} shows \(t(\FF_5^2)=5\). 
Moreover, the aperiodicity of \(\Lambda\) implies~\({t(\FF_5^3)=28}\), but for \(n\ge 4\) 
the determination of \(t(\FF_5^4)\) is open.

\subsection*{Acknowledgement} 
The second author would like to thank Leo Versteegen for introducing 
her to this topic and some early discussions.

\end{document}